\newtheorem{theorem}{Theorem}
\newtheorem{proposition}[theorem]{Proposition}
\newtheorem{lemma}[theorem]{Lemma}
\newtheorem{corollary}[theorem]{Corollary}
\theoremstyle{definition}
\newtheorem{definition}[theorem]{Definition}
\newtheorem{example}[theorem]{Example}
\newtheorem{remark}[theorem]{Remark}
\theoremstyle{remark}
\newcommand{\cC}{\mathcal{C}}
\newcommand{\cM}{\mathcal{M}}
\newcommand{\bE}{\mathbb{E}}\newcommand{\bF}{\mathbb{F}}
\newcommand{\bN}{\mathbb{N}}
\newcommand{\bR}{\mathbb{R}}
\newcommand{\bZ}{\mathbb{Z}}
\newcommand{\bfG}{\mathbf{G}}
\newcommand{\bfI}{\mathbf{I}}
\newcommand{\bfK}{\mathbf{K}}
\newcommand{\bfL}{\mathbf{L}}
\newcommand{\bfW}{\mathbf{W}}
\newcommand{\1}{\mathds{1}}
\newcommand{\Ber}{\operatorname{Ber}}
\newcommand{\poly}{\operatorname{poly}}
\newcommand{\disTV}{d_{\text{TV}}}
\newcommand{\Cay}{\operatorname{Cay}}
\newcommand{\ud}{\uparrow\downarrow}
\newcommand{\du}{\downarrow\uparrow}
\newcommand{\nc}{\newcommand}
\nc{\on}{\operatorname}
\nc{\Spec}{\on{Spec}}
\nc{\Aut}{\textit{Aut}}
\nc{\id}{\textit{id}}
\nc{\chr}{\on{char}}
\nc{\im}{\on{im}}
\nc{\Hom}{\on{Hom}}
\nc{\lcm}{\on{lcm}}
\nc{\dual}[1]{\prescript{t}{}{#1}}
\nc{\transpose}[1]{{#1}^{\intercal}}
\nc{\Sym}{\on{Sym}}
\nc{\End}{\on{End}}
\nc{\stab}{\on{stab}}
\nc{\Li}{\on{Li}}
\nc{\spn}{\on{span}}
\nc{\sgn}{\on{sgn}}
\nc{\supp}{\on{supp}}
\nc{\Unif}{\on{Unif}}
\title{From Grassmannian to Simplicial High-Dimensional Expanders}
\author{Louis Golowich\thanks{UC Berkeley. Email: \texttt{lgolowich@berkeley.edu}. Supported by an NSF Graduate Research Fellowship.}}
\begin{document}

\pagenumbering{gobble}

\maketitle

\begin{abstract}
  In this paper, we present a new construction of simplicial complexes of subpolynomial degree with arbitrarily good local spectral expansion. Previously, the only known high-dimensional expanders (HDXs) with arbitrarily good expansion and less than polynomial degree were based on one of two constructions, namely Ramanujan complexes and coset complexes. In contrast, our construction is a Cayley complex over the group $\bF_2^k$, with Cayley generating set given by a Grassmannian HDX.

  Our construction is in part motivated by a coding-theoretic interpretation of Grassmannian HDXs that we present, which provides a formal connection between Grassmannian HDXs, simplicial HDXs, and LDPC codes. We apply this interpretation to prove a general characterization of the 1-homology groups over $\bF_2$ of Cayley simplicial complexes over $\bF_2^k$. Using this result, we construct simplicial complexes on $N$ vertices with arbitrarily good local expansion for which the dimension of the 1-homology group grows as $\Omega(\log^2N)$. No prior constructions in the literature have been shown to achieve as large a 1-homology group.
\end{abstract}

\newpage

\tableofcontents

\newpage

\pagenumbering{arabic}

% TODOS:
% \begin{itemize}
% \item In expansion analysis, point out that we're being lazy but can use Grassmannian trickle-down to get arbitrarily good expansion for fixed field size by increasing rank $r$
% \item Look into restricting to symmetric matrices, which is closer to being true Reed-Muller; seems everything should still go through?
% \end{itemize}

\section{Introduction}
\label{sec:intro}
High-dimensional expanders, which generalize expander graphs to higher-dimensional objects such as hypergraphs, have been of significant recent interest due to various applications, including to coding theory \cite{evra_decodable_2020,kaufman_new_2021,dikstein_locally_2020-1,dinur_locally_2022,panteleev_asymptotically_2022,leverrier_quantum_2022-1,dinur_good_2023,first_good_2022}, sampling algorithms \cite{anari_log-concave_2019,alev_improved_2020,anari_spectral_2020,chen_optimal_2021,chen_localization_2022}, and constraint satisfaction problems \cite{alev_approximating_2019,dinur_explicit_2021,hopkins_explicit_2022-1} (among many more papers). Yet there have only been essentially two constructions of high-dimensional expanders (HDXs) with arbitrarily good expansion and constant degree, namely Ramanujan complexes \cite{cartwright_ramanujan_2003,li_ramanujan_2004,lubotzky_ramanujan_2005,lubotzky_explicit_2005} and coset complexes \cite{kaufman_construction_2018,odonnell_high-dimensional_2022-1}. Various extensions and spinoffs of these two constructions, both of which are group theoretic in nature, have been introduced, but they rely on similar underlying machinery. Even upon allowing the degree to grow as a small polynomial, these two constructions and their extensions remain the only known ones with arbitrarily good expansion; one recent construction \cite{liu_local_2022} of 2-dimensional complexes using random geometric graphs achieves nontrivial (but non-optimal) expansion with small polynomial degree.

It is a major open question to obtain new constructions of low-degree HDXs with good expansion. The current literature on HDX constructions mirrors the early literature on expander graph constructions, which were also group theoretic in nature, e.g.~\cite{margulis_explicit_1973,lubotzky_ramanujan_1988}. However, more elementary and combinatorial constructions were later constructed \cite{reingold_entropy_2002}, which inspired multiple breakthroughs in complexity and coding theory \cite{dinur_pcp_2007,reingold_undirected_2008,ta-shma_explicit_2017}. Additional constructions of HDXs with new properties may similarly yield further applications.

The HDX constructions described above give expanding simplicial complexes, a class of hypergraphs that are the traditional object studied in the HDX literature. However, there has been recent interest in more general posets with high-dimensional expansion. A notable example is Grassmannian complexes, whose elements are subspaces of a vector space. The expansion of such complexes was used in the proof of the 2-to-1 games conjecture \cite{dinur_towards_2018,khot_pseudorandom_2018,barak_small-set_2018}, which has helped motivate more recent work on Grassmannian HDXs \cite{dikstein_boolean_2022,kaufman_garlands_2022,gaitonde_eigenstripping_2023}, with the hope of further applications to complexity theory. Following these recent works, we will specifically be interested in Grassmannian complexes that have good expansion properties despite being sparser than the complete complex, meaning they consist of only a subset of all the subspaces of a vector space.

In this paper, we introduce a new construction of simplicial high-dimensional expanders of all dimensions with arbitrarily good expansion and subpolynomial degree. We construct these simplicial HDXs by proving a relationship between simplicial and Grassmannian HDXs: a Grassmannian HDX over $\bF_2$ generates a simplicial HDX as a Cayley complex. We then construct Grassmannian HDXs using a third type of high-dimensional expanding poset that we introduce and analyze, called the \textit{matrix poset}.

Our approach is perhaps surprising given that low-degree Grassmannian HDXs are strictly more difficult to construct than simplicial HDXs, in a formal sense (see Section~\ref{sec:basisification}). Indeed, the Grassmannian HDXs we construct have polynomially large degree. However, we observe that Grassmannian HDXs have an alternative notion of sparsity, namely sparsity within their ambient vector space. We show that if a Grassmannian complex is sparse within its ambient vector space, then the Cayley simplicial complex it generates has low degree. Thus we are able to obtain subpolynomial-degree Cayley simplicial HDXs from polynomial-degree Grassmannian HDXs.

Our construction is in part motivated by a coding theoretic interpretation of Grassmannian HDXs that we introduce. We show how a Grassmannian HDX $X$ is naturally associated to a linear LDPC code. We then show that good expansion of $X$ implies its code has good distance, while good sparsity of $X$ within the ambient vector space is equivalent to its code having good rate. It turns out that $X$ can only have good expansion if its code has many redundant parity checks, a property satisfied by codes with locality properties such as locally testable codes and locally correctable codes. From this viewpoint, our Grassmannian HDX construction is related to degree-2 Reed-Muller codes, or more specifically, to the tensor product of 2 Hadamard codes.

We make use of our coding theoretic interpretation to analyze the 1-homology groups over $\bF_2$ of our Cayley simplicial HDXs. Specifically, for a Grassmannian complex $X$, we present a general homomorphism from the 1-homology group of the Cayley simplicial complex to the quotient of two different linear codes associated to $X$. Using this result, we construct $N$-vertex simplicial HDXs with 1-homology of dimension $\Omega(\log^2N)$. To the best of our knowledge prior HDX constructions all had smaller 1-homology groups. This problem of constructing HDXs with large homology groups is interesting from a coding theoretic perspective. For instance, \cite{evra_decodable_2020,kaufman_new_2021} construct quantum LDPC codes from HDXs, for which the homology dimension corresponds to the dimension of the associated code.

We remark that our coding theoretic view of Cayley simplicial HDXs builds on a construction of \cite{vadhan_construction_2018}, which constructs 2-dimensional Cayley HDXs over the group $\bF_2^k$ using random LDPC codes. Related constructions of Cayley HDXs are also given in \cite{conlon_hypergraph_2019,conlon_hypergraph_2020}. These constructions achieve logarithmic degree, which is better than our construction's subpolynomial degree. However, we achieve arbitrarily good expansion, whereas the expansion exhibited by the constructions in \cite{vadhan_construction_2018,conlon_hypergraph_2019,conlon_hypergraph_2020} is not strong enough to yield certain local-to-global properties inherent to high-dimensional expansion (see for instance Theorem~\ref{thm:tricklesimpinf} and the subsequent discussion). By considering such local-to-global properties, we build upon \cite{vadhan_construction_2018} to show an even closer connection between expansion and coding theoretic properties than was previously known (see for instance Proposition~\ref{prop:exptodisinf}).

\subsection{Background}
\label{sec:background}
This section describes background definitions necessary to describe our main results, along with some relevant prior work. We begin with the following definitions for posets, which generally follow \cite{kaufman_garlands_2022}.

\begin{definition}
  A \textbf{poset $X$} is a set with a binary relation $\prec$ such that if $x\prec x'$ and $x'\prec x''$, then $x\prec x''$ and $x'\not\prec x$. If either $x\prec x'$ or $x=x'$, we say that \textbf{$x'$ dominates $x$}, which we denote $x\preceq x'$. The poset is \textbf{graded} if it has a rank function $\rank:X\rightarrow\bZ_{\geq-1}$ such that there is a single element of rank $-1$, and such that if $x\prec x'$ with no $x''$ having $x\prec x''\prec x'$, then $\rank(x)=\rank(x')-1$. We let $X(i)=\rank^{-1}(i)$ and $\rank(X)=\max_{x\in X}\{\rank(x)\}$. We refer to elements of $X(i)$ as \textbf{vertices}, and to elements of $X$ as \textbf{faces}. The poset if \textbf{pure} if every element is dominated by a top-rank element. A \textbf{standard weight function} is then a function $m_X:X\rightarrow[0,1]$ such that each restriction $m_{X(i)}=m|_{X(i)}:X(i)\rightarrow[0,1]$ is a probability distribution, and such that for each $i<r$ we may sample $x_i\sim m_{X(i)}$ by sampling $x_{i+1}\sim m_{X(i+1)}$, then sampling $x_i\sim\Unif\{x_i'\in X(i):x_i'\prec x_{i+1}\}$.
\end{definition}

In this paper we restrict attention to pure graded posets with standard weight functions, and simply refer to such objects as ``posets.'' Most posets we consider have all distributions $m_{X(i)}$ be uniform, which will be the case unless otherwise specified.

In this paper we consider three types of posets. The first two, simplicial and Grassmannian complexes, are two of the most prominent objects of study in the high-dimensional expander literature.

\begin{definition}
  For a vertex set $V$, a \textbf{simplicial complex $X$} is a poset whose elements are subsets of $V$, such that if $x\in X$ then every $x'\subseteq x$ has $x'\in X$. Furthermore $x\prec x'$ if and only if $x\subseteq x'$, and $\rank(x)=|x|-1$.
\end{definition}

\begin{definition}
  For a vector space $\bF_q^k$, a \textbf{$\bF_q$-Grassmannian complex $X$} is a poset whose elements are linear subspaces of the \textbf{ambient vector space} $\bF_q^k$, such that if $x\in X$ then every subspace $x'\subseteq x$ has $x'\in X$. Furthermore $x\prec x'$ if and only if $x\subseteq x'$ is a linear subspace, and $\rank(x)=\dim(x)-1$.
\end{definition}

We emphasize that for a Grassmannian complex $X$, we do not require $X(i)$ to contain all $(i+1)$-dimensional subspaces of $\bF_q^k$. If $X(i)$ does contain all $(i+1)$-dimensional subspaces for all $i$, we say that $X$ is a complete Grassmannian complex. Note that elsewhere in the literature, the term ``Grassmannian'' is often used to refer to such complete Grassmannian complexes.

The third type of poset we study, which we call the matrix poset, is the poset of matrices in $\bF_q^{n\times n}$, with rank given by matrix rank. To the best of our knowledge, this object has not been previously studied in the HDX literature. We will introduce and analyze this poset to prove that our Grassmann HDX construction has good expansion.

The local structure of a poset is measured by its links:

\begin{definition}
  For a poset $X$, the \textbf{link} of an element $x\in X$ is the subposet $X_x=\{x'\in X:x\preceq x'\}$ with weight function $m_{X_x(i)}$ proportional to $m_X|_{X_x(i+\rank(x)+1)}$.
\end{definition}

To define poset expansion, we first associate the following graph to each poset, which captures the low-rank structure.

\begin{definition}
  The \textbf{1-skeleton graph $\bfG_X$} of a poset $X$ has vertex set $V(\bfG_X)=X(0)$, edge set $E(\bfG_X) = \{\{x_0,x_0'\}:x_0\neq x_0'\in X(0),\; \exists x_1\in X(1):x_0,x_0'\prec x_1\}$, and weight function $m_{\bfG_X}(\{x_0,x_0'\}) = \sum_{x_1\in X(1):x_0,x_0'\prec x_1}m_X(x_1)$.
\end{definition}

The expansion of posets is then measured by local expansion, defined below as the worst expansion of the 1-skeleton graph of any link. Recall that the (spectral) expansion $\lambda(\bfG)\in[0,1]$ of a graph $\bfG$ is the second largest absolute value of an eigenvalue of its random walk matrix.

\begin{definition}
  For a rank-$r$ poset $X$, the \textbf{rank-$i$ local expansion $\lambda^{(i)}(X)$} is defined to be
  \begin{equation*}
    \lambda^{(i)}(X) = \max_{x_i\in X(i)}\{\lambda(\bfG_{X_{x_i}})\}.
  \end{equation*}
  The \textbf{local expansion $\lambda(X)$} is defined to be
  \begin{equation*}
    \lambda(X) = \max_{-1\leq i\leq r-2}\{\lambda^{(i)}(X)\}.
  \end{equation*}
\end{definition}

Just as the expander graph literature aims to achieve good expansion with low degree, we are interested in posets with good local expansion and low degree as defined below.

\begin{definition}
  For a poset $X$, the \textbf{degree of $x\in X$} is given by $\deg(x)=|X_x|$, and the \textbf{degree of $X$} is $\deg(X)=\max_{x\in X:\rank(x)\geq 0}\{\deg(x)\}$.
\end{definition}

A central question in the high-dimensional expander literature is to construct families of low-degree simplicial complexes with good local expansion. That is, we are interested in infinite families of simplicial complexes $X$ such that as the number of vertices $|X(0)|\rightarrow\infty$, the local expansion $\lambda(X)$ stays below some arbitrarily small fixed constant $\epsilon$, and the rank $\rank(X)$ stays at some arbitrarily large fixed constant $r$. Furthermore, we want the degree to grow slowly (ideally as a constant) with respect to the number of vertices.

This question is of interest in part due to the following ``trickle-down theorem,'' which was proven by Oppenheim~\cite{oppenheim_local_2018} for simplicial complexes, and extended to more general posets such as Grassmannian complexes by Kaufman and Tessler~\cite{kaufman_garlands_2022}.

\begin{theorem}[Trickle-down for simplicial complexes \cite{oppenheim_local_2018}]
  \label{thm:tricklesimpinf}
  Let $X$ be a rank-$r$ simplicial complex such that for every $x\in X$ with $\rank(x)\leq r-2$, the 1-skeleton graph $\bfG_{X_x}$ of the link $X_x$ is connected. Then for every $-1\leq i\leq r-3$, it holds that
  \begin{equation*}
    \lambda^{(i)}(X) \leq \frac{\lambda^{(i+1)}(X)}{1-\lambda^{(i+1)}(X)}.
  \end{equation*}
\end{theorem}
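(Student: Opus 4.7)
The plan is to reduce the theorem to a local lemma on a single rank-$\geq 2$ complex and then prove that lemma by a two-step computation of a triangle-indexed Dirichlet form, following Oppenheim. Fix $x_i \in X(i)$ with $-1 \leq i \leq r-3$, and set $Y := X_{x_i}$ and $\mu := \lambda^{(i+1)}(X)$. Then $Y$ is a simplicial complex of rank $r-i-1 \geq 2$ with connected 1-skeleton, and each vertex link $Y_v$ coincides with the link $X_{x_i \cup \{v\}}$ of $X$ at a rank-$(i+1)$ face; hence $\lambda(\bfG_{Y_v}) \leq \mu$ by definition. The theorem therefore reduces to the local lemma: if $Y$ is a rank-$\geq 2$ simplicial complex with connected 1-skeleton and $\lambda(\bfG_{Y_v}) \leq \mu$ for every $v \in Y(0)$, then $\lambda(\bfG_Y) \leq \mu/(1-\mu)$. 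Taking the maximum over $x_i$ then yields the stated inequality.

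To prove the local lemma, let $M_Y$ denote the random walk matrix of $\bfG_Y$ and let $f : Y(0) \to \bR$ be an eigenfunction of $M_Y$ with eigenvalue $\lambda$, orthogonal to constants in the weighted inner product $\langle \cdot,\cdot\rangle$ induced by $m_{Y(0)}$. The eigenvalue equation reads $\bE_{u \sim v}[f(u)] = \lambda f(v)$ for every $v$. The key object is the \emph{triangle dispersion}
\[
T(f) \;:=\; \bE_{x_2 \sim m_{Y(2)}} \,\bE_{v \in x_2}\bigl[(f(u)-f(u'))^2\bigr], \qquad \{u,u'\} := x_2 \setminus \{v\},
\]
which we evaluate two ways using the standard weight function property. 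First, conditioning on $x_2$ and sampling an unordered edge $\{u,u'\} \subseteq x_2$ uniformly is the same as sampling an edge $\{u,u'\} \sim m_{Y(1)}$ directly, so $T(f) = \bE_{\{u,u'\} \in Y(1)}[(f(u)-f(u'))^2] = 2(1-\lambda)\|f\|^2$. Second, sampling $v \sim m_{Y(0)}$ first and then the pair $\{u,u'\}$ makes $\{u,u'\}$ an $m_{Y_v(1)}$-distributed edge of the link $Y_v$, so the inner expectation equals twice the Dirichlet form $\langle f_v, (I - M_v) f_v \rangle_{Y_v}$, where $f_v := f|_{Y_v(0)}$ and $M_v$ is the random walk matrix of $\bfG_{Y_v}$.

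Applying the local expansion bound in each link gives $\langle f_v, (I-M_v)f_v\rangle_{Y_v} \geq (1-\mu)\bigl(\|f_v\|^2_{Y_v} - \lambda^2 f(v)^2\bigr)$, using $\bE_{u \sim v}[f(u)] = \lambda f(v)$ to identify the link-mean of $f_v$. Taking expectation over $v$ and invoking the standard weight identity $\bE_v \bE_{u \sim v}[f(u)^2] = \|f\|^2$ yields $T(f) \geq 2(1-\mu)(1-\lambda^2)\|f\|^2$. Equating the two evaluations and dividing by $2(1-\lambda)\|f\|^2 > 0$ (using connectedness of $\bfG_Y$ to ensure $\lambda < 1$) gives $(1-\mu)(1+\lambda) \leq 1$, i.e., $\lambda \leq \mu/(1-\mu)$, as desired. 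The main obstacle is the triangle identity itself: once the standard weight property is invoked twice, first to swap averaging over triangle-edges with averaging directly over $Y(1)$, and second to rewrite the link-edge average as an $\bE_v$ of Dirichlet forms in $Y_v$, the remaining manipulations are routine algebra on $\lambda$ and $\mu$.
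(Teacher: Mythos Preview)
The paper does not give its own proof of this theorem; it is quoted as a known result from Oppenheim~\cite{oppenheim_local_2018}. Your argument is exactly Oppenheim's proof via the triangle-indexed Dirichlet form, and the computation is correct as far as it goes.

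There is one gap relative to the paper's conventions. The paper defines $\lambda(\bfG)=\max\{|\lambda_2|,|\lambda_{|V|}|\}$ as a two-sided quantity, but your derivation only produces the inequality $\lambda\leq\mu/(1-\mu)$ for every nontrivial eigenvalue $\lambda$ of $M_Y$; this does not control negative eigenvalues. To close the gap, use the \emph{upper} Dirichlet bound in each link as well: since $\lambda(\bfG_{Y_v})\leq\mu$ also forces the smallest eigenvalue of $M_v$ to be at least $-\mu$, one has $\langle f_v,(I-M_v)f_v\rangle_{Y_v}\leq(1+\mu)\bigl(\|f_v\|_{Y_v}^2-\lambda^2 f(v)^2\bigr)$. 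Averaging over $v$ as before yields $(1-\lambda)\leq(1+\mu)(1-\lambda^2)$, hence $\lambda\geq-\mu/(1+\mu)\geq-\mu/(1-\mu)$. Together with your upper bound this gives $|\lambda|\leq\mu/(1-\mu)$, which is the two-sided statement the paper quotes.
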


\begin{theorem}[Trickle-down for Grassmannian complexes \cite{kaufman_garlands_2022}]
  \label{thm:tricklegrassinf}
  Let $X$ be a rank-$r$ $\bF_q$-Grassmannian complex such that for every $x\in X$ with $\rank(x)\leq r-2$, the 1-skeleton graph $\bfG_{X_x}$ of the link $X_x$ is connected. Then for every $-1\leq i\leq r-3$, it holds that
  \begin{equation*}
    \lambda^{(i)}(X) \leq \frac{\lambda^{(i+1)}(X)}{q(1-\lambda^{(i+1)}(X))}.
  \end{equation*}
\end{theorem}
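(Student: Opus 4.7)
The plan is to adapt Oppenheim's trickle-down argument (used to prove Theorem~\ref{thm:tricklesimpinf}) to the Grassmannian setting, with care taken for the new local combinatorics. First I would reduce to the following local statement: for any rank-$r'$ $\bF_q$-Grassmannian complex $Y$ with $r' \geq 2$ and connected link 1-skeletons, $\lambda(\bfG_Y) \leq \lambda^{(0)}(Y)/(q(1-\lambda^{(0)}(Y)))$, where $\lambda^{(0)}(Y) := \max_{v\in Y(0)}\lambda(\bfG_{Y_v})$. Applying this to $Y = X_x$ for each face $x \in X$ of rank $i$, noting that $\lambda^{(0)}(X_x) \leq \lambda^{(i+1)}(X)$ and that $t \mapsto t/(q(1-t))$ is monotone increasing on $[0,1)$, yields the stated bound.

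Let $M$ be the random walk operator on $\bfG_Y$ and $M_v$ the random walk operator on $\bfG_{Y_v}$. For any $f:Y(0)\to\bR$, I would define the link restriction $f_v:Y_v(0)\to\bR$ on planes $P\ni v$ by $f_v(P):=\tfrac{1}{q}\sum_{w\in P,\ w\neq v}f(w)$; this is precisely the value observed after one $M$-step from $v$ through $P$, so $(Mf)(v)=\sum_P \pi_v(P)f_v(P)$. The central technical step is an algebraic identity relating $M$, the link-averaged form $\sum_v \pi(v)\langle f_v,M_v f_v\rangle_{\pi_v}$, and the positive semidefinite walk $\tilde M$ through rank-$2$ (three-dimensional) faces. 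One obtains this identity by computing the quadratic form of $\tilde M$ in two ways: first by sampling a plane $P\subset U$ containing a given vertex $v_0$ and then applying $M$, and second by sampling two distinct planes $P_1,P_2\subset U$ through $v_0$ and applying the link walk $M_{v_0}$. The factor of $q$ in the target inequality arises because the complete graph on the $q+1$ lines of a single plane has non-principal eigenvalue $-1/q$, producing a structural negative contribution that is absent in the simplicial case (formally $q=1$).

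To convert the identity into the spectral bound, I would take a $\pi$-orthogonal eigenfunction $f$ of $M$ with eigenvalue $\mu = \lambda(\bfG_Y)$ and substitute. The bound $\langle f,\tilde M f\rangle\geq 0$ (from positivity of $\tilde M$, which is a convex combination of rank-$1$ projectors onto uniform measures on $U\in Y(2)$) provides a lower bound, while the link expansion estimate $\langle f_v-\overline{f}_v,\ M_v(f_v-\overline{f}_v)\rangle \leq \lambda^{(0)}(Y)\|f_v-\overline{f}_v\|^2$ on each link (where $\overline{f}_v:=\sum_P \pi_v(P) f_v(P)$ is the link mean) provides an upper bound on the link-averaged form. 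The connectedness hypothesis ensures $\lambda(\bfG_{Y_v})<1$, so the link bound applies on the orthogonal complement of constants in each link. Rearranging the resulting inequality yields $\mu\leq \lambda^{(0)}(Y)/(q(1-\lambda^{(0)}(Y)))$.

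The main obstacle is deriving the algebraic identity with the correct $q$-dependent coefficients, and in particular handling the link-mean correction $\sum_v \pi(v)\overline{f}_v^2$. This sum is itself a quadratic form in $f$ — essentially $\langle f,\hat M f\rangle$ for a specific lazy variant $\hat M$ of $M$ — and must be rewritten in terms of $\|f\|^2$ and $\langle f,Mf\rangle$ using the standard weight function's compatibility between plane-level and line-level distributions. The $q$-dependent bookkeeping here is exactly what produces the factor $q$ in the denominator; once it is in place, the trickle-down inequality falls out by combining positivity of $\tilde M$ with the link expansion bound.
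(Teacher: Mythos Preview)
The paper does not prove this theorem; it is stated with a citation to \cite{kaufman_garlands_2022} and used as a black box (see also its restatement as Theorem~\ref{thm:tricklegrass}). So there is no paper proof to compare against.

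That said, your outline is the correct one and matches the Kaufman--Tessler argument: reduce to a single link, relate the global walk $M$ on $\bfG_Y$ to the localized walks $M_v$ and to a PSD ``through rank-$2$'' operator, and use that the complete graph on the $q+1$ lines of a plane has nontrivial eigenvalue $-1/q$ to pick up the extra factor of $q$. The only place to be careful is exactly where you flag it: the bookkeeping for the link-mean term $\sum_v \pi(v)\overline{f}_v^2$ and the coefficients in the identity relating $\langle f, Mf\rangle$, $\sum_v \pi(v)\langle f_v, M_v f_v\rangle$, and $\langle f, \tilde M f\rangle$ are where the $q$-dependence enters, and getting those constants right (rather than the simplicial $q=1$ values) is the whole content of the Grassmannian version. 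Your plan is sound; if you carry out the coefficient computation you will recover the stated bound.
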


Theorem~\ref{thm:tricklesimpinf} in particular implies that if $\lambda^{(0)}(X)<1/2$, then $\lambda^{(-1)}(X)<1$, so it shows that local expansion in rank $0$ implies global expansion of the entire 1-skeleton graph. Indeed, we use this property to bound $\lambda^{(-1)}$ for our complexes.

This ``local-to-global'' phenomenon is one of the most notable properties of HDXs, and for this reason, there is particular interest in simplicial complexes with very small local expansion, or at least with $\lambda^{(0)}(X)<1/2$. Yet only a few such constructions of low-degree complexes with such good expansion are known. As described earlier, in Section~\ref{sec:intro}, there are essentially only two constructions of constant-degree simplicial complexes of arbitrarily good expansion and arbitrarily large rank, namely Ramanujan complexes \cite{cartwright_ramanujan_2003,li_ramanujan_2004,lubotzky_ramanujan_2005,lubotzky_explicit_2005} and the Kaufman-Oppenheim coset complexes \cite{kaufman_construction_2018}, though several extensions and spinoffs that use these constructions to obtain more HDXs have been introduced \cite{friedgut_hyper-regular_2020,odonnell_high-dimensional_2022-1,dikstein_new_2022}. These constructions are group theoretic in nature, as they are based on matrix groups over finite fields.

Even upon allowing the degree to grow as a small polynomial $N^\epsilon$ in the number of vertices $N$, to the best of our knowledge all prior known simplicial HDX constructions of arbitrarily good expansion and large rank are based on either Ramanujan complexes or coset complexes. Further restricting our requirements to complexes just of rank $2$ and allowing any expansion $\lambda^{(0)}<1/2$, there is one additional recent construction of \cite{liu_local_2022}, which obtains rank-2 simplicial complexes from random geometric graphs on $N$ vertices with degree $N^\epsilon$ and $\lambda^{(0)}=1/2-\delta$ for some $\delta=\delta(\epsilon)>0$.

We remark that there have been several more constructions of simplicial HDXs that achieve local expansion $\lambda(X)<1$, but do not achieve $\lambda^{(0)}(X)<1/2$ \cite{vadhan_construction_2018,conlon_hypergraph_2019,conlon_hypergraph_2020,chapman_expander_2020,liu_high-dimensional_2020,golowich_improved_2021}. In particular, \cite{liu_high-dimensional_2020,golowich_improved_2021} show that such objects with constant degree and arbitrarily large rank can be obtained from arbitrary expander graphs, and can achieve $\lambda^{(0)}(X)=1/2$. But getting arbitrarily good local expansion, or even just any $\lambda^{(0)}(X)<1/2$, seems significantly more challenging, as described above.

To present our main results, we will also need the following standard definitions from algebraic topology. Here we restrict attention to chain complexes over $\bF_2$ that are associated to simplicial complexes.

\begin{definition}
  Let $X$ be a rank-$r$ simplicial complex. For $0\leq i\leq r$, let $\partial_i:\bF_2^{X(i)}\rightarrow\bF_2^{X(i-1)}$ be the linear map defined by $\partial_i\1_{x_i}=\sum_{x_{i-1}\in X(i-1):x_{i-1}\prec x_i}\1_{x_{i-1}}$ for $x_i\in X(i)$, and extended to all of $\bF_2^{X(i)}$ by linearity. Then we define:
  \begin{itemize}
  \item Elements of $Z_i(X)=\ker\partial_i$ are called \textbf{$i$-cycles}.
  \item Elements of $B_i(X)=\im\partial_{i+1}$ are called \textbf{$i$-boundaries}.
  \item Because $\partial_{i-1}\partial_i=0$, it holds that $B_i(X)\subseteq Z_i(X)$. Therefore there is a well defined group $H_i(X)=Z_i(X)/B_i(X)$, which is called the \textbf{$i$-homology group}.
  \end{itemize}
\end{definition}

Intuitively, the homology groups of a complex measure how topologically ``connected'' it is, with smaller homology groups corresponding to more well-connected complexes. For instance, complete simplicial complexes, as well as other sufficiently dense complexes, are known to have vanishing homology groups (see for instance \cite{meshulam_homological_2009,kahle_random_2011}). As described below, motivated by applications to (quantum) coding theory, we focus on constructing complexes with good spectral expansion and large homology groups. The fact that such constructions exist is perhaps surprising, as intuitively such complexes are simultaneously well-connected in a spectral sense, and yet poorly-connected in a topological sense.

\subsection{Main results}
This section presents our main theorems. We state the parameters of our main simplicial HDX construction below.

\begin{theorem}[Informal statement of Theorem~\ref{thm:maincayley}]
  \label{thm:maincayleyinf}
  For every $r\in\bN$ and every $\lambda>0$, there exists an infinite family of rank-$r$ simplicial complexes $Y$ with local expansion $\lambda(Y)\leq\lambda$ and degree $\deg(Y)\leq 2^{O_{r,\lambda}(\sqrt{\log N})}$, where $N$ denotes the number of vertices.
\end{theorem}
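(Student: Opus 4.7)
The plan is to construct $Y$ as a Cayley simplicial complex $\Cay(\bF_2^{n^2}, X)$ whose generating set is a Grassmannian HDX $X$ in the ambient space $\bF_2^{n \times n} \cong \bF_2^{n^2}$, where the Grassmannian HDX is in turn built from the matrix poset on $\bF_2^{n\times n}$. The key parameter balance driving the $2^{O_{r,\lambda}(\sqrt{\log N})}$ degree bound is that rank-$1$ matrices are sparse inside $\bF_2^{n\times n}$: there are only $\Theta(2^{2n})$ of them inside an ambient space of size $2^{n^2}$.

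For the first step, I would construct the rank-$r$ Grassmannian complex $X$ in $\bF_2^{n \times n}$ by taking $X(0)$ to be the $1$-dimensional subspaces spanned by rank-$1$ matrices, with higher-rank faces corresponding to subspaces generated by bounded-rank matrices in accordance with the matrix poset structure. The new technical input is a spectral analysis of the matrix poset at the top rank, which I expect to carry out by exploiting the natural $\GL_n \times \GL_n$ symmetry of $\bF_2^{n\times n}$ to decompose the relevant random walk operators and bound their non-trivial eigenvalues. Good top-rank local expansion of the matrix poset then propagates into good local expansion of $X$ at all ranks via the Grassmannian trickle-down theorem (Theorem~\ref{thm:tricklegrassinf}), yielding $\lambda(X) \leq \lambda'$ for any target $\lambda' > 0$ as soon as $n \geq n(r, \lambda')$.

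For the second step, I would define $Y = \Cay(\bF_2^{n^2}, X)$ on vertex set $\bF_2^{n^2}$, declaring $\{v_0, \ldots, v_i\}$ to be a rank-$i$ face exactly when $\{v_1 - v_0, \ldots, v_i - v_0\}$ spans a subspace lying in $X$. Translation invariance of $\bF_2^{n^2}$ together with this definition implies that the link of any vertex $v \in Y(0)$ is canonically isomorphic as a poset to $X$ itself, and more generally that the link of a rank-$i$ face of $Y$ is isomorphic to the link of a corresponding rank-$(i-1)$ face of $X$. Consequently $\lambda^{(i)}(Y) = \lambda^{(i-1)}(X)$ for every $i \geq 0$, and combining this with the simplicial trickle-down theorem (Theorem~\ref{thm:tricklesimpinf}) to control $\lambda^{(-1)}(Y)$ gives $\lambda(Y) \leq \lambda$ after choosing $\lambda' \leq \lambda/2$ (say) in the first step.

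Finally, I count parameters. Since $N = |Y(0)| = 2^{n^2}$, we have $n = \sqrt{\log_2 N}$. The degree of $Y$ at a vertex is bounded by the total number of faces of $X$ containing that vertex, which in turn is at most $|X|\leq |X(0)|^{r+1} = 2^{O_r(n)} = 2^{O_{r,\lambda}(\sqrt{\log N})}$, as desired. The main obstacle I anticipate is the first step: the matrix poset is the new poset-theoretic object introduced in this paper, and establishing sharp top-rank local expansion for it has no direct analogue in the existing HDX literature, so a careful representation-theoretic or combinatorial random-walk analysis will be required there. By contrast, the Grassmannian-to-Cayley expansion transfer and the final parameter counting in steps two and three should follow relatively cleanly from the algebraic structure of Cayley complexes.
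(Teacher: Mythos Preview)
Your proposal has a genuine gap in the first step: you work over $\bF_2$ and assert that $\lambda(X)\leq\lambda'$ for any target $\lambda'$ once $n\geq n(r,\lambda')$. This is the wrong parameter dependence, and in fact the construction over $\bF_2$ alone cannot achieve arbitrarily small local expansion. In the paper, the ambient space is $\bF_q^{n\times n}\cong\bF_2^{bn^2}$ with $q=2^b$, and the expansion bound is $\lambda(X)\leq 11/q$ (Corollary~\ref{cor:applytrickle}); the parameter $n$ plays no role in the expansion. The reason is structural: the top-rank links decompose (Proposition~\ref{prop:tensornice}) into tensor products of graphs $\bfG_1,\bfG_2$, where $\bfG_1$ lives in the \emph{fixed-size} matrix poset $\cM_q^4$ and hence has expansion depending only on $q$. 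With $q=2$ these bounds are vacuous (e.g.~Lemma~\ref{lem:mat1ud} gives $\lambda\leq 10/q$). The paper introduces the extension field $\bF_q\supseteq\bF_2$ precisely so that increasing $b$ drives the expansion down, while $X$ remains an $\bF_2$-Grassmannian complex so that the Cayley machinery over $\bF_2^{bn^2}$ applies.

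A secondary issue is that your description of $X$ is too vague to support the analysis. You take $X(0)$ to be rank-$1$ matrices with higher faces ``generated by bounded-rank matrices in accordance with the matrix poset structure,'' but the paper's construction (Definition~\ref{def:grassconstruct}) is considerably more specific: $X(0)$ consists of rank-$2^r$ matrices, and top faces are images of $E\cdot G_{\text{Had}}^{(r)}$ for a Hadamard generator matrix and $E$ built from rank-$1$ tensors with jointly independent row and column vectors. This Hadamard structure is what makes the link decomposition into a tensor product work, and without it there is no clear route to bounding the top-rank local expansion. Your steps two and three (the Cayley transfer via Lemma~\ref{lem:cayleyF2k} and the parameter count) are essentially correct once step one is fixed.
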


Thus we obtain simplicial complexes with arbitrarily good local expansion, arbitrarily large rank, and subpolynomial degree. As discussed above, the only other such constructions are based on Ramanujan complexes or coset complexes, both of which in fact achieve constant degree. However, as we will explain below, our construction is based on different techniques, which reveal a close relationship between simplicial and Grassmannian HDXs, and permits a coding theoretic interpretation that yields a clean characterization of the 1-homology group over $\bF_2$. Using this characterization, we obtain the following result by quotienting our construction from Theorem~\ref{thm:maincayleyinf}.

\begin{theorem}[Informal statement of Theorem~\ref{thm:bighom}]
  \label{thm:bighominf}
  For every $r\in\bN$ and every $\lambda>0$, there exists an infinite family of rank-$r$ simplicial complexes $Y$ with local expansion $\lambda(Y)\leq\lambda$ and 1-homology over $\bF_2$ of dimension $\Omega_{r,\lambda}(\log^2N)$, where $N$ denotes the number of vertices.
\end{theorem}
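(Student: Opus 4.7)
The plan is to obtain $Y$ by quotienting the Cayley simplicial HDX $\widetilde{Y}=\Cay(\bF_2^k,X)$ of Theorem~\ref{thm:maincayleyinf}, where $X$ is the generating Grassmannian HDX sitting inside $\bF_2^k$, by a well-chosen subspace $L\leq\bF_2^k$, setting $Y:=\Cay(\bF_2^k/L,\pi(X))$ for the projection $\pi$. With $k_0:=k-\dim L$ and $N=|Y(0)|=2^{k_0}$, I would scale the construction so that $k=\Theta(k_0^2)$ and $\dim L=\Theta(k_0^2)$, so that both the local expansion of $Y$ inherits the bound from $\widetilde{Y}$ and the 1-homology can reach dimension $\Omega(k_0^2)=\Omega(\log^2 N)$. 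Local expansion is preserved because every link of $Y$ is isomorphic to a link of $\widetilde{Y}$ whenever $L$ contains no short $\bF_2$-linear combination of generators from $X$, and there are only $\poly(k_0)$ such short combinations to avoid.

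For the homology analysis I would invoke the characterization of $H_1$ of a Cayley simplicial complex $\Cay(\bF_2^k,X)$ over $\bF_2$ described in the main results, which produces a homomorphism into a quotient $C_1(X)/C_2(X)$ of two $\bF_2$-linear codes attached to $X$. In the quotient complex $Y$, each $\ell\in L$ produces a canonical 1-cycle (any $\widetilde{Y}$-path from $0$ to $\ell$ closes after projection to $\bF_2^k/L$), and the class of this cycle in $H_1(Y;\bF_2)$ is detected by the image of $\ell$ inside $C_1(X)/C_2(X)$. A lower bound on $\dim H_1(Y)$ thus reduces to lower bounding the dimension of the image of $L$ under this map.

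To actually achieve $\Omega(k_0^2)$ I would exploit the coding-theoretic interpretation of our Grassmannian HDX that the paper develops: the construction is essentially a tensor product of two Hadamard codes (a degree-$2$ Reed--Muller flavor), and for this family the ``redundant parity-check'' phenomenon that drives Grassmannian expansion forces $C_1(X)/C_2(X)$ itself to have dimension $\Theta(k_0^2)$ when $X$ is placed inside $\bF_2^{\Theta(k_0^2)}$. Taking $L$ to be (a lift of) a generic $\Theta(k_0^2)$-dimensional subspace of $C_1(X)/C_2(X)$ then gives an injective image of $L$ in that quotient, and a simple dimension-counting or pigeonhole argument shows that the same $L$ can simultaneously be chosen to avoid the $\poly(k_0)$ short generator sums that would otherwise collapse a link, since these forbidden vectors span a subspace of dimension much smaller than $\Theta(k_0^2)$.

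The main technical obstacle is upgrading the homomorphism $H_1(Y;\bF_2)\to C_1(X)/C_2(X)$ into a genuine lower bound on $\dim H_1(Y)$ matching the size of the image of $L$: I need $\ell\in L$ with distinct images in the code quotient to give $\bF_2$-linearly independent classes in $H_1(Y)$, rather than just distinct images downstream, which means verifying that quotienting by $L$ does not introduce spurious $2$-boundaries killing them. This rests delicately on the precise description of the kernel of the Cayley-to-code homomorphism, and is where I expect the bulk of the work. By contrast, propagating local expansion through the quotient is routine, since Cayley links are local invariants disturbed only by short words in the generators, and those form a $\poly(k_0)$-size set that is easy to dodge in a $\Theta(k_0^2)$-dimensional choice space.
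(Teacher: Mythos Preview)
Your approach matches the paper's: quotient the ambient space $\bF_2^k$ (with $k=\Theta(k_0^2)$) by a subspace $L$ that avoids $\{0\}\cup X(0)\cup(X(0)+X(0))$, so that the poset structure of $X$---and hence all local links and their expansion---is preserved, and then invoke the surjection of Theorem~\ref{thm:hommodswap} on the quotiented complex $X'$ to lower-bound $\dim H_1(Y)$. The paper carries this out one dimension at a time (Lemma~\ref{lem:quotient}, Corollary~\ref{cor:increasehom}), but the content is the same.

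Two corrections. First, the forbidden set $X(0)\cup(X(0)+X(0))$ has size $\Theta(|X(0)|^2)=2^{\Theta(k_0)}$, not $\poly(k_0)$; this is still negligible compared to $2^k=2^{\Theta(k_0^2)}$, so your avoidance argument survives, but the count is exponential in $k_0$. Second, what you flag as the ``main technical obstacle'' is not one. Once $L$ avoids pairwise sums from $X(0)$, the projection is a poset isomorphism, so $H_{X'}=H_X$ and hence $\im H_{X'}^\top=\im H_X^\top\subseteq\ker G_X^\top$; meanwhile $\ker G_{X'}^\top/\ker G_X^\top\cong L$ via $w\mapsto\sum_{x_0}w_{x_0}x_0$. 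This gives $\dim(\ker G_{X'}^\top/\im H_{X'}^\top)\geq\dim L$ for \emph{any} such $L$, with no genericity needed and no appeal to the size of $\ker G_X^\top/\im H_X^\top$ for the unquotiented $X$. Since Theorem~\ref{thm:hommodswap} provides a surjection $H_1(Y)\to\ker G_{X'}^\top/\im H_{X'}^\top$, the bound $\dim H_1(Y)\geq\dim L$ is immediate; linearly independent images under a linear map always come from linearly independent classes, so there is nothing to check about ``spurious 2-boundaries.''
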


This problem of constructing HDXs with large homology groups is motivated by applications to coding theory, and specifically to quantum codes. For instance, \cite{evra_decodable_2020,kaufman_new_2021} construct quantum LDPC codes from HDXs, where the homology dimension corresponds to the code's dimension, that is, the number of qubits it encodes. Because our HDXs do not have constant degree, they do not give quantum LDPC codes of constant locality as in \cite{evra_decodable_2020,kaufman_new_2021}. However, Theorem~\ref{thm:bighominf} highlights the potential of our quotienting technique (see Section~\ref{sec:homcharinf} and Section~\ref{sec:cayleyhom}) to improve the dimension of such codes.

Our construction technique presents a tradeoff between 1-homology dimension and degree. Therefore although the complexes in Theorem~\ref{thm:bighominf} have degree $\deg(Y)=\poly(N)$, we also for instance obtain $N$-vertex complexes of subpolynomial degree $2^{O_{r,\lambda}(\log N/\log\log N)}=N^{o_{r,\lambda}(1)}$ and nearly as large 1-homology dimension $\tilde{\Omega}_{r,\lambda}(\log^2N)$ as in Theorem~\ref{thm:bighominf}. Similarly, we can get $N$-vertex complexes of even smaller subpolynomial degree $\deg(Y)\leq 2^{O_{r,\lambda}(\sqrt{\log N})}$ with 1-homology of dimension $\Omega_{r,\lambda}(\log N)$. To the best of our knowledge, Theorem~\ref{thm:bighominf} is new, in that no prior known simplicial complexes of any degree with good local expansion were shown to have 1-homology of dimension $\Omega(\log^2N)$; see the remark below.

\begin{remark}
  \label{rem:homdim}
  To the best of our knowledge, prior $N$-vertex HDX constructions such as Ramanujan complexes and the Kaufman-Oppenheim coset complexes are at best known to have 1-homology of dimension order $\log N$. The only reference in the literature we were able to find is \cite{kaufman_isoperimetric_2016}, which shows that there are Ramanujan complexes with nonvanishing 1- and 2-homology groups over $\bF_2$. Word-of-mouth discussions have suggested some prior constructions have homology of logarithmically large dimension, though we do not have a formal reference.
\end{remark}

% TODO: could point out that by association of 1-homology and 2-lifts, 1-homology of dimension $\Omega(\log^2N)$ implies the 2-lift is an HDX of subpolynomial degree. But would need to show that each successive lift does not disturb the remaining homology. At least this observation suggests there are not trivial dense complexes with such large 1-homology.

\subsection{Cayley simplicial HDXs generated by Grassmannian HDXs}
\label{sec:cayleyinf}
The HDXs in Theorem~\ref{thm:maincayleyinf} and Theorem~\ref{thm:bighominf} are constructed as Cayley simplicial complexes generated by Grassmannian HDXs. Below, we present the framework for such Cayley complexes. However, we first need the following definition.

\begin{definition}
  Let $X$ be rank-$r$ a Grassmannian complex. The \textbf{basisification} $\beta(X)$ is the rank-$r$ simplicial complex containing all sets of linearly independent elements of $X(0)$ whose span lies in $X$.
\end{definition}

Thus $\beta(X)(0)=X(0)$, and $\beta(X)(i)$ consists of all bases of elements $x\in X$. We show that basisification preserves local expansion, that is $\lambda^{(i)}(\beta(X))=\lambda^{(i)}(X)$, which leads to the following framework for constructing simplicial HDXs.

\begin{definition}
    Let $X$ be a rank-$r$ Grassmannian complex in ambient vector space $\bF_2^k$. Define the rank-$(r+1)$ \textbf{Cayley simplicial complex} $\Cay(\bF_2^k,\beta(X))$ to have vertex set $\bF_2^k$, and rank-$i$ faces
  \begin{equation*}
    \Cay(\bF_2^k,\beta(X))(i) = \left\{\{v,v+v^{(0)},\dots,v+v^{(i)}\}:v,v^{(0)},\dots,v^{(i)}\in\bF_2^k,\;\{v^{(0)},\dots,v^{(i)}\}\in \beta(X)(i)\right\}.
  \end{equation*}
\end{definition}

We show that Cayley simplicial complexes inherit their local expansion from the generating Grassmannian complex:

\begin{lemma}[Informal statement of Lemma~\ref{lem:cayleyF2k}]
  \label{lem:cayleyF2kinf}
  Let $X$ be a rank-$r$ Grassmannian complex in ambient vector space $\bF_2^k$ such that $\spn X(0)=\bF_2^k$. Then for all $0\leq i\leq r+1$,
  \begin{equation*}
    \lambda^{(i)}(\Cay(\bF_2^k,\beta(X))) = \lambda^{(i-1)}(X),
  \end{equation*}
  so that in particular
  \begin{equation*}
    \lambda(\Cay(\bF_2^k,\beta(X))) \geq \frac{\lambda(X)}{1-\lambda(X)}.
  \end{equation*}
\end{lemma}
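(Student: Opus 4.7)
The plan is to exploit the vertex-transitivity of the Cayley complex $Y := \Cay(\bF_2^k, \beta(X))$, reducing all link computations to a single model link at the identity $0 \in \bF_2^k$, which I will identify with $\beta(X)$ itself. The equality in the lemma then reduces to the already-established fact that basisification preserves local expansion.

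First, I would observe that for each $w \in \bF_2^k$, the translation $T_w\colon x \mapsto x + w$ is a weight-preserving poset automorphism of $Y$: the defining Cayley face condition depends only on the multiset of difference vectors, which translation leaves invariant. Consequently all rank-$i$ links of $Y$ are mutually isomorphic, and I may restrict to a single face $f$ containing the vertex $\{0\}$.

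Next, I claim that $Y_{\{0\}}$, the link of the vertex $\{0\}$, is isomorphic as a weighted poset to $\beta(X)$ via the bijection $g \mapsto g \setminus \{0\}$. Unwinding the Cayley definition, a face of $Y$ containing $0$ can always be written with base point $v=0$ as $\{0, v^{(0)}, \ldots, v^{(i)}\}$, and such a set is a face of $Y$ iff $\{v^{(0)}, \ldots, v^{(i)}\}$ lies in $\beta(X)(i)$. (Working over $\bF_2$, where $-v = v$, eliminates any base-point ambiguity.) The map $g \mapsto g \setminus \{0\}$ is then a rank-shifting poset isomorphism, and because both posets carry the uniform weight on top faces, the induced weights agree. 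Thus for any rank-$i$ face $f \ni 0$ of $Y$, the link $Y_f$ is isomorphic to the link of $f \setminus \{0\} \in \beta(X)(i-1)$ in $\beta(X)$. Taking maxima over faces gives
\begin{equation*}
\lambda^{(i)}(Y) \;=\; \lambda^{(i-1)}(\beta(X)) \;=\; \lambda^{(i-1)}(X),
\end{equation*}
where the second equality is the basisification-preservation fact stated earlier in the paper.

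For the ``in particular'' bound on the global quantity $\lambda(Y)$, the equality handles $\lambda^{(i)}(Y)$ for $i \geq 0$ directly; the remaining quantity $\lambda^{(-1)}(Y)$ is related to $\lambda^{(0)}(Y) = \lambda^{(-1)}(X)$ through Oppenheim's trickle-down theorem (Theorem~\ref{thm:tricklesimpinf}) applied to $Y$. The hypothesis $\spn X(0) = \bF_2^k$ is precisely what makes the 1-skeleton of $Y$ connected, the prerequisite for invoking trickle-down. Combining the equality with trickle-down and using monotonicity of $t \mapsto t/(1-t)$ on $[0,1)$ then yields the stated bound in terms of $\lambda(X)$.

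The main subtlety I anticipate is a careful verification of weight-preservation in the isomorphism $Y_{\{0\}} \cong \beta(X)$: because the Cayley formula presents each top face via multiple ``base-point'' choices $v$, one must check that these redundant presentations induce the correct uniform weights on the link, and that weight normalization commutes with the $g \mapsto g \setminus \{0\}$ bijection at every rank. Over $\bF_2$ the counting is clean, but it warrants care.
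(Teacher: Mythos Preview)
Your proposal is correct and follows essentially the same approach as the paper: the paper likewise reduces to the link at a single vertex via Lemma~\ref{lem:cayleylinks} (which is exactly your $Y_{\{0\}}\cong\beta(X)$ identification), invokes Corollary~\ref{cor:basexp} for the basisification-preserves-expansion step, and then applies the trickle-down Theorem~\ref{thm:tricklesimp} together with the connectedness coming from $\spn X(0)=\bF_2^k$. Your flagged subtlety about weight-preservation under the $g\mapsto g\setminus\{0\}$ bijection is indeed the one place requiring care, and the paper handles it separately via its explicit computation that the Cayley weight function is standard.
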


The proof of Lemma~\ref{lem:cayleyF2kinf} follows from the observation that for each vertex $v\in\Cay(\bF_2^k,\beta(X))$, the link $\Cay(\bF_2^k,\beta(X))_v$ is isomorphic to $\beta(X)$. The final expansion statement in the lemma is simply an application of the trickle-down Theorem~\ref{thm:tricklesimpinf}.

While Lemma~\ref{lem:cayleyF2kinf} shows that the Cayley complex $\Cay(\bF_2^k,\beta(X))$ has the same local expansion as $\beta(X)$, we show that $\Cay(\bF_2^k,\beta(X))$ can have much lower degree than $\beta(X)$. Specifically, while we were not able to construct low-degree Grassmannian HDXs (see Section~\ref{sec:openquestions}), we do construct Grassmannian HDXs $X$ that possess a different form of sparsity, namely, that only contain a small fraction of vectors in their ambient vector space. We then show that if $X$ possess this form of sparsity within the ambient vector space, the associated Cayley complex $\Cay(\bF_2^k,\beta(X))$ will have low degree.

Prior works have also studied Cayley simplicial complexes over $\bF_2^k$ in the context of high-dimensional expansion \cite{vadhan_construction_2018,conlon_hypergraph_2019,conlon_hypergraph_2020}. However, none of these works obtained local expansion $<1/2$, which is needed to apply the trickle-down Theorem~\ref{thm:tricklesimpinf} in order to achieve the notable ``local-to-global'' behavior associated with high-dimensional expansion.

Furthermore, \cite{vadhan_construction_2018} presents a coding-theoretic formulation of Cayley HDXs over $\bF_2^k$ similar to our framework described in Section~\ref{sec:codinginf} below; we further develop this connection and show additional coding theoretic implications in the regime where local expansion $<1/2$. For instance, in Proposition~\ref{prop:exptodisinf} we show that local expansion $<1/2$ for such a Cayley HDX implies good distance for an associated LDPC code. We also use our coding theoretic interpretation to study the 1-homology groups of Cayley HDXs over $\bF_2^k$ in Section~\ref{sec:homcharinf}.

\subsection{Construction of Grassmannian HDXs with low-rank matrices}
\label{sec:grassconstructinf}
We prove Theorem~\ref{thm:maincayleyinf} by applying Lemma~\ref{lem:cayleyF2kinf} to the Grassmannian complex in the following definition. Below, we think of $r,b$ as fixed constants while $n\rightarrow\infty$.

\begin{definition}[Informal statement of Definition~\ref{def:grassconstruct}]
  \label{def:grassconstructinf}
  Given integers $r\geq 1$, $b\geq 1$, $n\geq 2^{r+1}$, let $q=2^b$, and let $X=X^{r,b,n}$ be the rank-$r$ $\bF_2$-Grassmannian complex in ambient vector space $\bF_q^{n\times n}\cong\bF_2^{bn^2}$ defined as follows. Let $C_{\text{Had}}^{(r)}\subseteq\bF_2^{2^{r+1}-1}$ denote the Hadamard code of dimension $r+1$. Then the rank-$r$ faces in $X(r)$ are the $(r+1)$-dimensional $\bF_2$-subspaces that can be expressed in the form
  \begin{equation}
    \label{eq:Xrinf}
    \left\{\sum_{k=1}^{2^{r+1}-1}c_iM_i:(c_1,\dots,c_{2^{r+1}-1})\in C_{\text{Had}}^{(r)}\right\}
  \end{equation}
  for any rank-1 matrices $M_1,\dots,M_{2^{r+1}-1}\in\bF_q^{n\times n}$ whose sum $\sum_{i=1}^{2^{r+1}-1}M_i$ is a rank-$(2^{r+1}-1)$ matrix.
\end{definition}

Note that in Definition~\ref{def:grassconstructinf}, it is sufficient to define $X(r)$, which is a set of $(r+1)$-dimensional subspaces of $\bF_2^{bn^2}$, as then $X$ is the downward closure of $X(r)$. % To see that each set of the form~(\ref{eq:Xrinf}) is indeed an $(r+1)$-dimensional $\bF_2$-subspace of $\bF_2^{bn^2}$, observe that for fixed $M_1,\dots,M_{2^{r+1}-1}$ whose sum has rank $2^{r+1}-1$ as a matrix in $\bF_q^{n\times n}$, then all $M_i$ are linearly independent. Therefore the set in~(\ref{eq:Xrinf}) is by definition isomorphic to the Hadamard code $C_{\text{Had}}^{(r)}$, which is a $\bF_2$-vector space of dimension $r+1$.

Our use of rank-1 matrices in Definition~\ref{def:grassconstructinf} stems from the coding theoretic interpretation described in Section~\ref{sec:codinginf}. In particular, we will show that there is a close relationship between Grassmannian HDXs and LDPC codes with many redundant parity checks, such as locally testable or locally correctable codes. If we view the rank-1 matrices in $\bF_q^{n\times n}$ as vectors in $\bF_q^{n^2}$, they form the rows of the generating matrix for the tensor product of two $n$-dimensional $q$-ary Hadamard codes. This code indeed has many redundant parity checks, as can be seen intuitively because it is similar in nature to a degree-2 Reed-Muller code, which is well known to be locally testable and locally correctable.

Recall that a rank-$0$ face of a $\bF_2$-Grassmannian complex is a 1-dimensional subspace $\{0,x_0\}$, which by abuse of notation may be viewed as a vertex given by the vector $x_0$. Then for $X=X^{r,b,n}$ the rank-0 faces $X(0)$ are precisely the rank-$2^r$ matrices in $\bF_q^{n\times n}$.

We now provide intuition for why we let $X(0)$ consist of rank-$2^r$ matrices, and for why we use the Hadamard code in the construction in Definition~\ref{def:grassconstructinf}. Recall that each face in $X(r)$ must be an $(r+1)$-dimensional subspace of $\bF_2^{bn^2}$ supported entirely on elements of $X(0)$. To ensure this requirement is satisfied in Definition~\ref{def:grassconstructinf}, we construct faces in $X(r)$ by summing together rank-1 matrices according to codewords of an $(r+1)$-dimensional Hadamard code. Because all such Hadamard codewords have weight exactly $2^r$, the resulting $(r+1)$-dimensional subspaces we construct consist entirely of rank-$2^r$ matrices (excluding the all-$0$s matrix), and thus are valid faces supported on $X(0)$.

The following example illustrates Definition~\ref{def:grassconstructinf} in the rank-1 case. Below, we let $\cM_q^n(s)\subseteq\bF_q^{n\times n}$ denote the set of rank-$s$ matrices in $\bF_q^{n\times n}$.

\begin{example}
  \label{ex:rank1}
  Given integers $b\geq 1$, $n\geq 2^{r+1}$, let $q=2^b$, and let $X=X^{1,b,n}$ be the $\bF_2$-Grassmannian complex in Definition~\ref{def:grassconstructinf}. Then
  \begin{align*}
    X(0) &= \cM_q^n(2) \\
    X(1) &= \left\{\{0,L_1+L_2,L_1+L_3,L_2+L_3\}:L_1,L_2,L_3\in\cM_q^n(1),\;L_1+L_2+L_3\in\cM_q^n(3)\right\}.
  \end{align*}
\end{example}

Theorem~\ref{thm:maincayleyinf} then follows from Lemma~\ref{lem:cayleyF2kinf} along with the following result showing that $X^{r,b,n}$ has good local expansion.

\begin{theorem}[Restatement of Corollary~\ref{cor:applytrickle}]
  \label{thm:applytrickleinf}
  For integers $r\geq 1$, $b\geq 5$, $n\geq 2^{r+1}$, let $q=2^b$, and let $X=X^{r,b,n}$ be the Grassmannian poset in Definition~\ref{def:grassconstructinf}. Then
  \begin{align*}
    \lambda(X) &\leq \frac{11}{q}.
  \end{align*}
\end{theorem}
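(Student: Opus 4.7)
The overall strategy is to invoke the Grassmannian trickle-down theorem, Theorem~\ref{thm:tricklegrassinf}, which reduces the proof to bounding the top-level local expansion $\lambda^{(r-2)}(X)$ (and verifying that every intermediate 1-skeleton is connected). Once we have a bound of the form $\lambda^{(r-2)}(X) \leq O(1/q)$, iterating
\begin{equation*}
\lambda^{(i)}(X) \leq \frac{\lambda^{(i+1)}(X)}{q\left(1-\lambda^{(i+1)}(X)\right)}
\end{equation*}
downward from $i = r-3$ to $i = -1$ shows that $\lambda^{(i)}(X)$ decays by roughly a factor of $q$ at each step, so the top level dominates and a bound of the form $\lambda(X) \leq 11/q$ follows after absorbing the extra $1/(1-\lambda^{(r-2)})$ factor into the constant. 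The connectedness hypothesis of trickle-down should be a free by-product of the spectral control: once $\lambda^{(0)}<1$, the global 1-skeleton of every link is connected, and the same reasoning propagates down.

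Hence the core of the argument is a direct spectral bound on the 1-skeleton graph of the link $X_y$ of an arbitrary rank-$(r-2)$ face $y$; this $X_y$ is a rank-1 Grassmannian complex. Using the Hadamard-code description of the top-rank faces in Definition~\ref{def:grassconstructinf}, I would fix a top-rank face containing $y$ and use the Hadamard structure to choose a convenient basis in which $y$ is spanned by specific partial sums $\sum_{i\in S_j}M_i$ of the underlying rank-1 matrices $M_1,\ldots,M_{2^{r+1}-1}$. Extending $y$ to an $r$-dimensional subspace in $X$ (a vertex of $X_y$) then amounts to specifying one additional Hadamard-coded combination of completing rank-1 matrices, while extending further to an $(r+1)$-dimensional top-rank face (an edge of $X_y$) amounts to specifying two such completing combinations that jointly realize a rank-$(2^{r+1}-1)$ configuration.

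The main technical step, and the principal obstacle of the proof, is to show that the resulting graph on such rank-1 completions is an $O(1/q)$-expander. The natural tool is the matrix poset introduced in Section~\ref{sec:grassconstructinf}: the distribution of a uniformly random rank-1 matrix in $\bF_q^{n\times n}$ completing a fixed partial configuration to a sum of prescribed full rank should be close, up to an additive $O(1/q)$ total variation error, to the uniform distribution over all rank-1 matrices. This near-uniformity can be converted into a spectral bound on the link graph via a standard coupling/swap argument in which a step of the random walk re-samples a completing rank-1 matrix, and the small total-variation error yields a second-eigenvalue bound of order $O(1/q)$. The constant $11$ in the final statement is what results after tracking the small error terms from the rank constraint, the Hadamard-code combinatorics, and the trickle-down amplification; the hypothesis $b\geq 5$ (equivalently $q\geq 32$) ensures that these lower-order corrections fit comfortably inside the claimed bound.

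In summary, the proof splits cleanly into a one-line invocation of trickle-down plus a routine connectedness check, and a detailed matrix-poset-based spectral estimate on the top-level link graph, which is where essentially all of the work lies.
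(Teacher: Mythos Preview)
Your high-level structure is right and matches the paper: invoke Grassmannian trickle-down (Theorem~\ref{thm:tricklegrassinf}), prove a bound $\lambda^{(r-2)}(X)\le 11/q$ on the top-level links, and verify connectedness at every rank. But there are two genuine gaps.

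First, your treatment of connectedness is circular. You write that connectedness ``should be a free by-product of the spectral control: once $\lambda^{(0)}<1$\ldots the same reasoning propagates down.'' But trickle-down \emph{requires} connectedness of every link $X_x$ with $\rank(x)\le r-2$ as a hypothesis; you cannot use trickle-down to get $\lambda^{(0)}<1$ and then cite that to justify the connectedness you needed to run trickle-down in the first place. Connectedness of rank-$(r-2)$ links (which does follow from $\lambda^{(r-2)}<1$) does not imply connectedness of lower-rank links---two disjoint triangles give a counterexample already in the simplicial case. The paper handles this with a separate and nontrivial inductive argument (Proposition~\ref{prop:localconn}, via Lemmas~\ref{lem:G1conn} and~\ref{lem:G2conn}), which is not a routine check.

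Second, your sketch of the top-level bound misses the key structural step. The coupling argument you describe---show a walk step is $O(1/q)$-close in total variation to resampling a uniform rank-$1$ matrix---is essentially what the paper does for the $r=1$ case and for one component of the general case. But for $r\ge 2$ the link $X_{x_{r-2}}$ is not directly amenable to this: the paper first proves that its $1$-skeleton is (a projection of) a tensor product $\bfG_1^{\otimes(2^{r-1}-1)}\otimes\bfG_2$ (Proposition~\ref{prop:tensornice}), then bounds $\lambda(\bfG_2)$ by the coupling you outline, but bounds $\lambda(\bfG_1)$ by a localization argument on the matrix poset (Proposition~\ref{prop:G1exp}), decomposing the Laplacian into local pieces whose expansion is controlled by Lemma~\ref{lem:localizedexp}. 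Your proposal does not mention the tensor decomposition or the localization step, and a direct coupling argument on the full link graph does not obviously give the $O(1/q)$ bound for general $r$.
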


In Section~\ref{sec:proofoverview}, we outline a proof of the rank $r=1$ case of Theorem~\ref{thm:applytrickleinf}, which consists of showing that the 1-skeleton graph of the complex $X^{1,b,n}$ described in Example~\ref{ex:rank1} has good expansion. The proof of this $r=1$ case is relatively concise, as it essentially consists of a coupling-style argument that relates $X^{1,b,n}$ to a random walk that we can prove mixes rapidly. These rank-1 Grassmannian expanders $X^{1,b,n}$ already give the rank-2 simplicial HDXs in Theorem~\ref{thm:maincayleyinf}.

The proof of the more general high-rank case of Theorem~\ref{thm:applytrickleinf} is more involved. By the trickle-down Theorem~\ref{thm:tricklegrassinf}, it suffices to prove expansion of the top-level links. For this purpose, for $x_{r-2}\in X^{r,b,n}(r-2)$, we express the 1-skeleton graph of the link $X_{x_{r-2}}$ as a tensor product $\bfG_1^{\otimes 2^{r-1}-1}\otimes\bfG_2$ for two specific graphs $\bfG_1,\bfG_2$ that we define (see Proposition~\ref{prop:tensornice}). We then show that both $\bfG_1,\bfG_2$ have good expansion. The analysis of $\bfG_2$ is similar to the analysis of the $r=1$ case described in Section~\ref{sec:proofoverview}. The analysis of $\bfG_1$ is more involved. We first consider the object $\cM_q^m$ defined above as a poset:

\begin{definition}
  \label{def:matrixposetinf}
  Given a vector space $\bF_q^m$, the \textbf{matrix poset $\cM_q^m$} is the graded poset consisting of all matrices in $\bF_q^{m\times m}$, where rank is given by matrix rank, that is $\cM_q^m(r)$ is the set of rank-$r$ matrices in $\bF_q^{m\times m}$, and $M_1\preceq M_2$ if and only if $\rank(M_2-M_1)=\rank(M_2)-\rank(M_1)$.
\end{definition}

It is a basic exercise to verify that the matrix poset is a well defined poset (see Lemma~\ref{lem:matrixposetok}).

We express the random walk on $\bfG_1$ as a sort of high-dimensional walk on the matrix poset. We then show that this walk has good expansion using a localization argument, in which we show that an appropriate form of local expansion implies global expansion of the desired walk. This localization technique has been developed for analyzing high-dimensional walks on HDXs, first for simplicial complexes \cite{kaufman_high_2017,dinur_high_2017,kaufman_high_2018,alev_improved_2020,gotlib_fine_2023} but also for more general posets \cite{kaufman_garlands_2022}. Our localization argument adapts such techniques, and specifically those of \cite{alev_improved_2020}, to the relevant walk on the matrix poset.

To the best of our knowledge, our paper is the first to consider the matrix poset in the context of high-dimensional expansion. We remark that while Kaufman and Tessler~\cite{kaufman_garlands_2022} apply localization arguments to show general expansion bounds on high-dimensional walks on posets, their results rely on the poset satisfying certain regularity conditions. The matrix poset does not satisfy these conditions; it in particular violates what \cite{kaufman_garlands_2022} calls ``$\wedge\rightarrow\vee$ regularity,'' which requires that for every $i$, there is some constant $N_i$ such that for all distinct $x_i,x_i'\in X(i)$ and $x_{i+1}\in X(i+1)$ where $x_i,x_i'\prec x_{i+1}$, there are exactly $N_i$ faces $x_{i-1}\in X(i-1)$ satisfying $x_{i-1}\prec x_i,x_i'$. For instance, in simplicial and Grassmannian complexes, all $N_i=1$, as there is always a unique such $x_{i-1}$, and it is given by $x_{i-1}=x_i\cap x_i'$. The following example shows that the matrix poset violates this regularity condition.

\begin{example}
  In the matrix poset $\cM_q^3$ we have
  \begin{equation*}
    \begin{pmatrix}1&0&0\\0&0&0\\0&0&0\end{pmatrix},\begin{pmatrix}1&1&0\\0&0&0\\0&0&0\end{pmatrix} \prec \begin{pmatrix}1&0&0\\0&1&0\\0&0&0\end{pmatrix},\begin{pmatrix}1&0&0\\0&1&0\\0&1&0\end{pmatrix} \prec \begin{pmatrix}1&0&0\\0&1&0\\0&0&1\end{pmatrix},
  \end{equation*}
  whereas 
  \begin{equation*}
    \begin{pmatrix}1&0&0\\0&0&0\\0&0&0\end{pmatrix} \prec \begin{pmatrix}1&0&0\\0&1&0\\0&0&0\end{pmatrix},\begin{pmatrix}1&0&0\\0&0&0\\0&0&1\end{pmatrix} \prec \begin{pmatrix}1&0&0\\0&1&0\\0&0&1\end{pmatrix},
  \end{equation*}
  where the matrix on the left above is the unique rank-1 matrix dominated by the two rank-2 matrices in the middle. Thus in $\cM_q^3$, given a pair of distinct rank-2 matrices $x_2,x_2'\prec I_3$, there can be a variable number of rank-1 matrices $x_1\prec x_2,x_2'$ depending on the choice of $x_2,x_2'$.
\end{example}

Therefore it may seem that high-dimensional walks on $\cM_q^m$ cannot be analyzed using standard localization techniques. However, it turns out that the specific high-dimensional walk we analyze is not the standard ``up-down walk'' considered in \cite{kaufman_garlands_2022} (see Definition~\ref{def:highwalks}), but is rather a modified walk for which the necessary regularity conditions hold.

We believe the high-dimensional expansion of the matrix poset may be of independent interest, as it does not seem to be captured by prior works, and yet lies at the core of our HDX construction. Many open questions remain in this topic that we do not address, such as the expansion of the up-down walks (Definition~\ref{def:highwalks}).

\subsection{Coding theoretic interpretation}
\label{sec:codinginf}
In this section, we describe our coding theoretic interpretation of rank-1 Grassmannian expanders. This viewpoint motivates our construction in Definition~\ref{def:grassconstructinf}, and also forms the basis for our analysis of homology groups described in Section~\ref{sec:homcharinf}.

Let $X$ be a $\bF_2$-Grassmannian complex in ambient vectors space $\bF_2^k$. We define two matrices naturally associated to $X$:
\begin{itemize}
\item Define a generating matrix $G_X\in\bF_2^{X(0)\times k}$ to have $x_0$th row equal to $x_0\in X(0)$.
\item Define a parity check $H_X\in\bF_2^{X(1)\times X(0)}$ to have $x_1$th row equal to $\1_{x_1\setminus\{0\}}$ for $x_1\in X(1)$.
\end{itemize}
Thus $X$ has two naturally associated codes $\im G_X,\ker H_X\subseteq\bF_2^{X(0)}$. Observe that as each $x_1\in X(1)$ is a 2-dimensional $\bF_2$-vector space, each row in $H_X$ has $|x_1\setminus\{0\}|=3$ nonzero entries, so $\ker H_X$ is LDPC with parity checks of weight 3. Because the three nonzero elements of any face $x_1\in X(1)$ sum to $0$, we have $H_XG_X=0$, that is, $\im G_X\subseteq\ker H_X$, or equivalently,
\begin{equation}
  \label{eq:HsubG}
  \im H_X^\top\subseteq\ker G_X^\top.
\end{equation}

To understand the connection to coding theory, consider that the rate of $\im G_X$ is the ratio of $k$ to $|X(0)|$, which is a measure of the sparsity of $X$ within its ambient vector space. For instance, assuming that $\bF_2^k=\spn X(0)$, then if $X$ is optimally sparse, $|X(0)|$ could be as small as $O(k)$, which would mean $\im G_X$ has constant rate. Meanwhile, if $X$ is maximally dense in $\bF_2^k$, then $|X(0)|=2^k$ is exponentially large in $k$, and $\im G_X$ has inverse exponential rate.

The above coding theoretic formulation of rank-1 Grassmannian complexes was implicit in \cite{vadhan_construction_2018}, though the presentation in \cite{vadhan_construction_2018} did not use the language of Grassmannian complexes, and rather focused on constructing 2-dimensional Cayley simplicial complexes over the group $\bF_2^k$.

Below, we further develop this coding theoretic perspective. Specifically, we apply the trickle-down theorem to show that the distance of the code $\im G_X$ is determined by the expansion of $X$.

\begin{proposition}[Informal statement of Proposition~\ref{prop:exptodis}]
  \label{prop:exptodisinf}
  Let $X$ be a $\bF_2$-Grassmannian complex in ambient vector space $\bF_2^k=\spn X(0)$, such that all vertices have the same weight. Let $\lambda=\lambda^{(-1)}(X)$. Then the Hamming weight of every nonzero codeword $c\in\im G_X$ satisfies
  \begin{equation*}
    \frac12-\frac{\lambda}{2(1-\lambda)} \leq \frac{|c|}{|X(0)|} \leq \frac12+\frac{\lambda}{2(1-\lambda)}.
  \end{equation*}
\end{proposition}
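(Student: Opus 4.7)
The plan is to identify a nonzero codeword $c \in \im G_X$ with a $\{\pm 1\}$-valued function on the vertex set $X(0)$ and apply a standard expander-mixing argument on the $1$-skeleton graph $\bfG_X$, whose second-largest-magnitude random-walk eigenvalue equals $\lambda = \lambda^{(-1)}(X)$. Write $c = G_X v$ for some nonzero $v \in \bF_2^k$, so that $c_{x_0} = \langle x_0, v\rangle$, and define $\chi : X(0) \to \{-1, +1\}$ by $\chi(x_0) = (-1)^{\langle x_0, v\rangle}$. With $\alpha = |c|/|X(0)|$, the uniform-vertex-weight assumption yields $\bE_{x_0 \sim m_{X(0)}}[\chi(x_0)] = 1 - 2\alpha$, so the task reduces to bounding $|1-2\alpha|$ in terms of $\lambda$.

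The crucial identity is $\langle \chi, A\chi \rangle = 1 - 2\alpha$, where $A$ is the random-walk matrix of $\bfG_X$ (reversible with uniform stationary distribution, by the same weight hypothesis). Each edge $\{x_0, x_0'\}$ of $\bfG_X$ is contained in the unique $2$-dimensional face $x_1 = \{0, x_0, x_0', x_0+x_0'\} \in X(1)$, and $\bF_2$-linearity yields $\chi(x_0)\chi(x_0') = \chi(x_0+x_0')$. Summing over the three unordered pairs inside such an $x_1$ gives $\sum_{\{x_0,x_0'\} \subset x_1}\chi(x_0)\chi(x_0') = \sum_{x_0 \prec x_1}\chi(x_0)$. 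Since sampling an edge with its natural weight is equivalent to sampling $x_1 \sim m_{X(1)}$ followed by a uniform pair in $x_1$, and the standard-weight axiom identifies $x_1 \sim m_{X(1)}$ followed by a uniform $x_0 \prec x_1$ with $x_0 \sim m_{X(0)}$, one obtains
\begin{equation*}
  \langle \chi, A\chi \rangle \;=\; \bE_{x_0 \sim m_{X(0)}}[\chi(x_0)] \;=\; 1 - 2\alpha.
\end{equation*}

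Now decompose $\chi = \mu\mathbf{1} + \chi^\perp$ in $L^2(m_{X(0)})$ with $\mu = 1-2\alpha$, so $\|\chi^\perp\|^2 = 1-\mu^2 = 4\alpha(1-\alpha)$. Since $A\mathbf{1} = \mathbf{1}$ while all other eigenvalues of $A$ have magnitude at most $\lambda$, the spectral theorem gives $|\langle \chi^\perp, A\chi^\perp\rangle| \leq \lambda\|\chi^\perp\|^2 = 4\lambda\alpha(1-\alpha)$. Combining with the previous display,
\begin{equation*}
  |1-2\alpha|\cdot 2\alpha \;=\; \bigl|(1-2\alpha) - (1-2\alpha)^2\bigr| \;=\; \bigl|\langle\chi, A\chi\rangle - \mu^2\bigr| \;\leq\; 4\lambda\alpha(1-\alpha).
\end{equation*}
Because $c\neq 0$ forces $\alpha > 0$, dividing by $2\alpha$ gives $|1-2\alpha| \leq 2\lambda(1-\alpha)$, and handling the signs of $1-2\alpha$ separately rearranges to $\alpha \geq \tfrac{1}{2} - \tfrac{\lambda}{2(1-\lambda)}$ and $\alpha \leq \tfrac{1}{2} + \tfrac{\lambda}{2(1+\lambda)} \leq \tfrac{1}{2} + \tfrac{\lambda}{2(1-\lambda)}$, which is the claim.

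The only delicate step is the identity $\langle \chi, A\chi\rangle = \bE[\chi]$, which relies on the $\bF_2$-Grassmannian structure: the three nonzero vectors of any $2$-dimensional $\bF_2$-subspace XOR to $0$, collapsing each pairwise product of $\chi$ over an edge into a value of $\chi$ at the ``missing third'' vertex. Everything else is routine spectral expander computation; in particular, the case $\lambda = \lambda^{(-1)}(X)$ of the proposition does not itself invoke the trickle-down theorem (though trickle-down is typically what supplies a small bound on $\lambda^{(-1)}(X)$ from higher-rank local expansion).
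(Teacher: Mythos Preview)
Your proof is correct and takes a genuinely different route from the paper's. The paper passes to the Cayley simplicial complex $Y=\Cay(\spn X(0),\beta(X))$, observes that $\lambda^{(0)}(Y)=\lambda^{(-1)}(X)=\lambda$, applies the simplicial trickle-down theorem to conclude $\lambda^{(-1)}(Y)\leq\lambda/(1-\lambda)$, notes that the $1$-skeleton of $Y$ is the Cayley graph $\Cay(\spn X(0),X(0))$, and then invokes the standard equivalence between abelian Cayley $\epsilon$-expanders, $\epsilon$-biased sets, and $\epsilon$-balanced codes to obtain the stated balance of $\im G_X$.

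You instead work entirely inside $\bfG_X$ and never touch the Cayley complex or trickle-down. The engine is your identity $\langle\chi,A\chi\rangle=\bE[\chi]$, which holds because the three nonzero vectors of any $x_1\in X(1)$ sum to $0$ over $\bF_2$, so the three pairwise products $\chi(x_0)\chi(x_0')$ over edges of $x_1$ coincide with the three values $\chi(x_0'')$ over its vertices; combined with the standard-weight axiom this collapses the quadratic form to a linear expectation, after which a one-line Rayleigh-quotient bound finishes. Your argument is more elementary and self-contained, and it even yields the slightly sharper upper bound $\alpha\leq\tfrac12+\tfrac{\lambda}{2(1+\lambda)}$ before you relax it. The paper's route, by contrast, situates the statement within the local-to-global framework and makes explicit that the same quantity $\lambda/(1-\lambda)$ bounds the expansion of the Cayley graph $\Cay(\spn X(0),X(0))$, a fact that is reused elsewhere in the paper.
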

\begin{proof}
  By Lemma~\ref{lem:cayleyF2kinf}, $\Cay(\spn X(0),\beta(X))$ has rank-0 local expansion $\lambda$, so by the trickle-down Theorem~\ref{thm:tricklesimpinf}, $\Cay(\spn X(0),\beta(X))$ has rank-$(-1)$ local expansion $\leq\lambda/(1-\lambda)$. But the 1-skeleton of $\Cay(\spn X(0),\beta(X))$ is by definition the Cayley graph $\Cay(\spn X(0),X(0))$. The desired result then follows by the well known equivalence (see Lemma~\ref{lem:epbias}) between Cayley $\epsilon$-expanders over $\bF_2^k$, $\epsilon$-biased sets, and $\epsilon$-balanced codes.
\end{proof}

Thus we have reduced the problem of constructing rank-2 Cayley simplicial HDXs, or equivalently rank-1 Grassmannian expanders, to the problem of constructing LDPC codes with weight-3 checks of good rate and distance, such that the graph induced by the checks is expanding.

To get good expansion here, the Alon-Boppana bound implies that there must be many more than $|X(0)|$ rank-1 faces, that is, the desired codes must have many redundant parity checks. Thus a natural place to look for such codes is locally testable or locally correctable codes, which have such redundant low-weight parity checks. Our construction in Definition~\ref{def:grassconstructinf} is then similar in flavor to the tensor product of two Hadamard codes, which is similar to a degree-2 Reed Muller code. Indeed, as described in Section~\ref{sec:grassconstructinf} above, our construction is built from rank-$2^r$ matrices in $\bF_q^{n\times n}$; the rank-$1$ matrices in $\bF_q^{n\times n}$ form the rows of a generating matrix for the tensor product of two $q$-ary Hadamard codes.

It is an interesting question whether more sophisticated codes with locality properties could give better HDXs. Indeed, recent breakthrough constructions of asymptotically good locally testable codes (LTCs) \cite{dinur_locally_2022,panteleev_asymptotically_2022} may appear to be promising candidates for our purposes. However, we were unable to obtain Grassmannian complexes with good expansion from these LTCs. The difficulty arises from the fact that these LTCs have fewer redundant parity checks, which results in fewer edges in the associated Grassmannian complexes, than the Reed-Muller/tensor product of Hadamard codes.

\subsection{Homology characterization}
\label{sec:homcharinf}
In this section, we explain how we apply the above coding theoretic interpretation to characterize the 1-homology of the Cayley simplicial complexes in Section~\ref{sec:cayleyinf}. We then describe how we use this characterization to prove Theorem~\ref{thm:bighominf}.

We show below that the extent to which the two spaces in~(\ref{eq:HsubG}) differ is closely related to the 1-homology of the Cayley complex generated by $X$.

\begin{theorem}[Informal statement of Theorem~\ref{thm:hommodswap}]
  \label{thm:hommodswapinf}
  Let $X$ be a $\bF_2$-Grassmannian complex and let $Y=\Cay(\spn X(0),\beta(X))$. Then there is a natural surjective homomorphism
  \begin{equation*}
    H_1(Y) \rightarrow \ker G_X^\top/\im H_X^\top.
  \end{equation*}

  Futhermore, the kernel of this homomorphism is generated by those 1-cycles of the form
  \begin{equation*}
    \1_{\{\{v,v+x_0\},\{v+x_0,v+x_0+x_0'\},\{v+x_0',v+x_0+x_0'\},\{v,v+x_0'\}\}}
  \end{equation*}
  for $v\in\spn X(0)$ and for $x_0,x_0'\in X(0)$.
\end{theorem}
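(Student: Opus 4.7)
The plan is to define the map $\phi \colon \bF_2^{Y(1)} \to \bF_2^{X(0)}$ sending each edge $\{v, v+x_0\} \in Y(1)$ to $\1_{x_0}$; here $x_0 \in X(0)$ is the unique direction determined by the two endpoints (in $\bF_2$ the sum of endpoints is $x_0$). I first verify that $\phi$ descends to the claimed map on homology. For any $z \in Z_1(Y)$, writing $z$ as a sum of closed walks and telescoping gives $G_X^\top \phi(z) = 0$ in $\bF_2^k$, so $\phi(z) \in \ker G_X^\top$. For a triangle $\tau = \{v, v+v^{(0)}, v+v^{(1)}\} \in Y(2)$ with $x_1 := \spn\{v^{(0)}, v^{(1)}\} \in X(1)$, a direct computation gives $\phi(\partial_2 \tau) = \1_{v^{(0)}} + \1_{v^{(1)}} + \1_{v^{(0)} + v^{(1)}} = \1_{x_1 \setminus \{0\}} = H_X^\top \1_{x_1}$, so $\phi(B_1(Y)) \subseteq \im H_X^\top$ and $\bar\phi$ is well-defined.

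For surjectivity, given $c \in \ker G_X^\top$ with support $\{x_0^{(1)}, \dots, x_0^{(m)}\} \subseteq X(0)$, the identity $\sum_i x_0^{(i)} = 0$ makes $W = (0, x_0^{(1)}, x_0^{(1)} + x_0^{(2)}, \dots, 0)$ a closed walk in the 1-skeleton whose associated 1-cycle $[W]$ has $\phi([W]) = c$ (the $m$ edges of $W$ are distinct since each direction appears only once).

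For the kernel description, I first reduce to the case $\phi(z) = 0$ by subtracting triangles: if $\phi(z) = H_X^\top d$, then for each $x_1 \in \supp d$ pick any triangle $\tau_{x_1} \in Y(2)$ with $\spn\{v^{(0)}, v^{(1)}\} = x_1$ (which exists as $x_1 \in X(1)$), and replace $z$ with the homologous $z' := z - \partial_2 \sum_{x_1} \tau_{x_1}$ satisfying $\phi(z') = 0$. Writing $S \subseteq Z_1(Y)$ for the span of the square 1-cycles listed in the theorem, it suffices to show any such $z'$ lies in $S$. Using connectedness of the 1-skeleton (since $\spn X(0) = \bF_2^k$), I represent $z' = [W]$ for a single closed walk $W$ based at $0$: combine Eulerian tours of the connected components of $\supp z'$ together with back-and-forth traversals of connecting paths (whose edges are each used twice, contributing $0$ to $[W]$). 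The hypothesis $\phi(z') = 0$ then translates into each direction in $X(0)$ appearing an even number of times in the direction sequence of $W$.

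The main technical step is an algorithm on closed walks using two moves: swapping two adjacent steps, and cancelling a consecutive pair of equal directions. Swapping positions $i, i+1$ of $W$, with directions $x_0, x_0'$, replaces the segment $v_{i-1}, v_i, v_{i+1}$ by $v_{i-1}, v_{i-1} + x_0', v_{i+1}$ and changes $[W]$ by exactly the square 1-cycle with $v = v_{i-1}$ and those $x_0, x_0'$, which is one of the theorem's generators of $S$ (and the zero square when $x_0 = x_0'$). Bubble-sorting the direction sequence of $W$ thus changes $[W]$ by an element of $S$ and arranges equal directions into consecutive pairs; each such pair is a backtracking since $v_{i-1} + x_0 + x_0 = v_{i-1}$, so the two edges coincide and can be deleted from $W$ without changing $[W]$. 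After both moves are exhausted, the walk is trivial with $[W] = 0$, so the original $[W]$ lies in $S$. I expect the main technical care will be in verifying that each swap square has all four edges in $Y(1)$ (they do, since both new edges have directions in $X(0)$) and in handling the degenerate case of consecutive equal directions consistently, as described above.
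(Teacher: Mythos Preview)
Your proposal is correct and follows essentially the same approach as the paper: you define the same label-counting map $\phi$, verify it sends $Z_1$ into $\ker G_X^\top$ and $B_1$ into $\im H_X^\top$, and then use the adjacent-swap (bubble-sort) argument on closed walks to identify the kernel with the span of the square cycles. The only cosmetic difference is that the paper packages the swap argument as a proof that an explicitly defined inverse map $\psi$ is well-defined and satisfies $\psi\phi=I$, whereas you compute surjectivity and the kernel of $\bar\phi$ directly; the content is the same.
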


Observe that $\ker G_X^\top$ is the space of linear dependencies satisfied by elements of $X(0)$, while $\im H_X^\top$ is the space of linear dependencies generated by faces in $X(1)$, where $\{0,x_0,x_0',x_0''\}\in X(1)$ generates the constraint $x_0+x_0'+x_0''=0$. Thus Theorem~\ref{thm:hommodswapinf} implies that every linear dependency satisfied by vertices in $X(0)$ (i.e.~every element of $\ker G_X^\top$) that is not implied by the face incidence structure of $X$ (i.e.~not generated by the constraints $x_0+x_0'+x_0''=0$ for $\{0,x_0,x_0',x_0''\}\in X(1)$) induces nontrivial homology.

We prove Theorem~\ref{thm:bighominf} by applying quotients to artifically introduce linear dependencies among the vertices of a complex $X$. Specifically, if $X$ is an $\bF_2$-Grassmannian complex with $|\spn X(0)|\gg|X(0)|^2$, then we can find a 1-dimensional subspace $V\subseteq\spn X(0)$ such that quotienting by $V$ induces a one-to-one mapping on $X(0)$, so we obtain a quotiented complex $X'$ with the same poset incidence structure as $X$, so that $H_{X'}=H_X$ while $\ker G_X^\top$ is a codimension-1 subspace of $\ker G_{X'}^\top$. Theorem~\ref{thm:hommodswapinf} then implies that $\dim H_1(\Cay(\spn X'(0),\beta(X'(0))))\geq 1$. If we repeat this quotienting procedure $t$ times, we obtain a quotiented complex $X'$ with $\dim H_1(\Cay(\spn X'(0),\beta(X'(0))))\geq t$.

To prove Theorem~\ref{thm:bighominf}, we set our initial complex $X$ to be our construction in Definition~\ref{def:grassconstructinf}, and then we perform $t=bn^2-2^{r+3}bn$ iterations of the quotienting procedure to obtain the desired complex $X'$. We cannot perform more quotients because $X'$ is sufficiently dense in its ambient vector space that $|\spn X'(0)|\not\gg|'X(0)|^2$.

\subsection{Overview of expansion proof}
\label{sec:proofoverview}
In this section, we outline the proof of the $r=1$ case of Theorem~\ref{thm:applytrickleinf}. As described in Section~\ref{sec:grassconstructinf}, this proof for $r=1$ is simpler than the proof for general $r$, so by Lemma~\ref{lem:cayleyF2kinf} we obtain a simpler construction of rank-2 simplicial HDXs of subpolynomial degree.

Our goal is to show that the 1-skeleton graph $\bfG_X$ of the rank-1 $\bF_2$-Grassmannian complex $X=X^{1,b,n}$ described in Example~\ref{ex:rank1} is a good expander. Letting $\cM=\cM_q^n$ denote the matrix poset, recall that $\bfG_X$ is defined by
\begin{align*}
  V(\bfG_X) &= \cM_q^n(2) \\
  E(\bfG_X) &= \left\{\{L_1+L_2,L_1+L_3\}:L_1,L_2,L_3\in\cM_q^n(1),\;L_1+L_2+L_3\in\cM_q^n(3)\right\}.
\end{align*}
Note that our notation in the definition of $E(\bfG_X)$ above allows for permuting $L_1,L_2,L_3$, so the edges $\{L_1+L_2,L_2+L_3\}$ and $\{L_1+L_3,L_2+L_3\}$ also lie in $E(\bfG_X)$.

To show that $\bfG_X$ is a good expander, we will relate it to the following random walks:
\begin{itemize}
\item The \textbf{up-walk} $\bfW_{\cM(1)}^{\uparrow}\in\bR^{\cM(1)\times \cM(2)}$ that walks from $M\in\cM(1)$ to a uniformly random $M'\in\cM(2)$ such that $M\prec M'$.
\item The \textbf{down-walk} $\bfW_{\cM(2)}^{\downarrow}\in\bR^{\cM(2)\times \cM(1)}$ that walks from $M\in\cM(2)$ to a uniformly random $M'\in\cM(1)$ such that $M\succ M'$.
\item The \textbf{up-down} walk $\bfW_{\cM(1)}^{\ud}=\bfW_{\cM(1)}^{\uparrow}\bfW_{\cM(2)}^{\downarrow}\in\bR^{\cM(1)\times\cM(1)}$ that performs an up-walk step followed by a down-walk step.
\item The \textbf{down-up} walk $\bfW_{\cM(2)}^{\du}=\bfW_{\cM(2)}^{\downarrow}\bfW_{\cM(1)}^{\uparrow}\in\bR^{\cM(2)\times\cM(2)}$ that performs a down-walk step followed by an up-walk step.
\end{itemize}
Note that these walks generalize to arbitrary ranks and posets; see Section~\ref{sec:posetprelim}.

To show that $\bfG_X$ has good expansion, we first show that the random walk $\bfW_X$ on $\bfG_X$ is closely correlated to the down-up walk defined above, so that the expansion of $\bfG_X$ is close to the expansion of the down-up walk. Now the expansion of the down-up walk by definition equals the expansion of the up-down walk, as $\bfW_{\cM(2)}^{\du}=\bfW_{X(i)}^{\downarrow}\bfW_{X(i-1)}^{\uparrow}\in\bR^{\cM(2)\times\cM(2)}$ and $\bfW_{\cM(1)}^{\ud}=\bfW_{X(i)}^{\uparrow}\bfW_{X(i+1)}^{\downarrow}\in\bR^{\cM(1)\times\cM(1)}$ must have the same nonzero eigenvalues. We finally show that the up-down walk is closely correlated to the random walk on a nearly complete graph, so that the up-down walk must have good expansion. Combining the above statements, we deduce that $\bfG_X$ has good expansion.

The following technical lemma formalizes the idea that closely correlated random walks have similar expansion.

\begin{lemma}[Restatement of Lemma~\ref{lem:expclose}]
  \label{lem:expcloseinf}
  For vertex sets $V\subseteq V'$, let $\bfW\in\bR^{V\times V}$ and $\bfW'\in\bR^{V'\times V'}$ be symmetric random walk matrices and let $\epsilon>0$ be such that for every $v\in V$, the distributions obtained from starting at vertex $v$ and taking a step in $\bfW$ vs.~in $\bfW'$ differ by a total variation distance of at most $\epsilon/2$. Then
  \begin{equation*}
    \lambda(\bfW)\leq\lambda(\bfW')+\epsilon.
  \end{equation*}
\end{lemma}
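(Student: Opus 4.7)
The plan is to replace $\bfW$ by the compression of $\bfW'$ to $V$ and bound the error in operator norm using the total variation hypothesis. Let $E : \ell^2(V) \hookrightarrow \ell^2(V')$ denote extension by zero and let $R = E^\top : \ell^2(V') \to \ell^2(V)$ denote restriction. Define $\bfD = \bfW - R\bfW' E \in \bR^{V\times V}$, whose entries are $\bfD_{u,v} = \bfW_{u,v} - \bfW'_{u,v}$ for $u,v\in V$. The proof then splits into (a) bounding the operator norm $\|\bfD\|_{2\to 2}\leq \epsilon$, and (b) controlling $R\bfW' E$ on $\1_V^\perp$ via the spectral bound for $\bfW'$.

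For (a), view the $v$-th row of $\bfW$ as a probability distribution on $V' \supseteq V$ supported on $V$. The TV hypothesis then reads
\begin{equation*}
\frac{\epsilon}{2} \;\geq\; \disTV(\bfW_v,\bfW'_v) \;=\; \frac12\sum_{u\in V'}|\bfW_{v,u} - \bfW'_{v,u}| \;\geq\; \frac12\sum_{u\in V}|\bfD_{v,u}|,
\end{equation*}
so every row of $\bfD$ has $\ell^1$-norm at most $\epsilon$. Since $\bfW, \bfW'$ are symmetric, so is $\bfD$, and hence the same bound holds for columns. The standard interpolation inequality $\|\bfD\|_{2\to 2} \leq \sqrt{\|\bfD\|_{\infty\to\infty}\|\bfD\|_{1\to 1}}$ then gives $\|\bfD\|_{2\to 2}\leq \epsilon$.

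For (b), the key observation is that since $V \subseteq V'$, extension by zero preserves orthogonality to the all-ones vector: any $f\in\ell^2(V)$ with $\langle f,\1_V\rangle=0$ satisfies $\langle Ef,\1_{V'}\rangle = \sum_{v\in V} f(v) = 0$. Because $\bfW'$ is symmetric and stochastic, $\1_{V'}$ is a top eigenvector, so the variational characterization of $\lambda(\bfW')$ gives
\begin{equation*}
|\langle R\bfW' E f, f\rangle| \;=\; |\langle \bfW'(Ef), Ef\rangle| \;\leq\; \lambda(\bfW')\|Ef\|^2 \;=\; \lambda(\bfW')\|f\|^2.
\end{equation*}
Combining (a) and (b), for every $f\in\ell^2(V)$ orthogonal to $\1_V$ we obtain $|\langle \bfW f,f\rangle| \leq (\lambda(\bfW')+\epsilon)\|f\|^2$, and the variational characterization of $\lambda(\bfW)$ concludes. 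There is no genuinely hard step in this argument; the one conceptual point worth flagging is the containment $V\subseteq V'$, which is what makes extension by zero carry $\1_V^\perp$ into $\1_{V'}^\perp$ and allows the spectral bound on $\bfW'$ to be applied directly to the extended function.
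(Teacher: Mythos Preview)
Your proof is correct and follows essentially the same approach as the paper: define the difference matrix $\bfD = \bfW - \bfW'|_{V\times V}$, bound its operator norm by $\epsilon$ via the row/column $\ell^1$ bounds coming from the TV hypothesis, and then use that zero-extension carries $\1_V^\perp$ into $\1_{V'}^\perp$ so that the variational characterization of $\lambda(\bfW')$ applies to the extended vector. The paper's write-up is terser but the argument is identical.
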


Using Lemma~\ref{lem:expcloseinf}, we prove the two necessary expansion bounds as described above:

\begin{lemma}[Implied by Proposition~\ref{prop:G2exp}]
  \label{lem:G2expinf}
  It holds that $\lambda(\bfW_X)\leq\lambda(\bfW_{\cM(2)}^{\du})+1/q^{\Omega(n)}$.
\end{lemma}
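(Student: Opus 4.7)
The plan is to invoke Lemma~\ref{lem:expcloseinf} with $\bfW = \bfW_X$ and $\bfW' = \bfW_{\cM(2)}^{\du}$, both viewed as symmetric random walk matrices on $\cM_q^n(2)$. It suffices to bound, for every starting vertex $M \in \cM_q^n(2)$, the total variation distance between the two one-step distributions by $q^{-\Omega(n)}$.

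I plan to do this by coupling. Both walks can be described in a common form: (i) sample a uniform rank-$1$ matrix $L$ with $L \prec M$ (the set of such $L$ has size $q^2 + q$ regardless of $M$); (ii) sample a uniform rank-$1$ matrix $L''$ from a walk-dependent set; (iii) output $M' = L + L''$. For $\bfW_{\cM(2)}^{\du}$ the set in (ii) is $S_{\du}(M,L) = \{L'' \in \cM_q^n(1) : L + L'' \in \cM_q^n(2)\}$. For $\bfW_X$, unwinding the face definition of $X(1)$, the triple $(L_1, L_2, L_3) = (L, M+L, L'')$ must satisfy $L_1 + L_2 + L_3 = M + L'' \in \cM_q^n(3)$, so the relevant set is $S_X(M) = \{L'' \in \cM_q^n(1) : M + L'' \in \cM_q^n(3)\}$. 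Since $L \prec M$ forces the column space of $L$ to sit inside the $2$-dimensional column space of $M$ (and similarly for row spaces), the rank-$3$ condition is strictly stronger than the rank-$2$ one, giving $S_X(M) \subseteq S_{\du}(M,L)$. A direct count based on where the column and row vectors of $L'' = u v^{\top}$ can lie yields $|S_X(M)| = q^4(q^{n-2}-1)^2/(q-1)$ and $|S_{\du}(M, L)| = q^2(q^{n-1}-1)^2/(q-1)$, independent of $L$, so that
\[
  \frac{|S_X(M)|}{|S_{\du}(M,L)|} = \left(\frac{q^{n-1} - q}{q^{n-1} - 1}\right)^{\!2} \geq 1 - O(q^{2-n}).
\]
The coupling samples the same $L$ for both walks and a uniform $L'' \in S_{\du}(M,L)$; it succeeds whenever $L'' \in S_X(M)$, so the TV distance is at most $O(q^{2-n}) = q^{-\Omega(n)}$. (The tiny self-loop probability $O(q^{1-2n})$ of $\bfW_{\cM(2)}^{\du}$, which has no counterpart in $\bfW_X$, contributes only to this same failure event and is absorbed.)

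The main obstacle is to verify that the sampling scheme above really equals the unweighted random walk $\bfW_X$ on the $1$-skeleton graph $\bfG_X$, rather than a multiplicity-weighted variant. Since a single face $\{0, M, V, M+V\} \in X(1)$ can admit several rank-$1$ triples $(L_1, L_2, L_3)$ with $L_1 + L_2 = M$, uniform sampling of such triples may over- or under-weight certain output vertices $V$. I expect any such multiplicity to be constant, or nearly so, across the faces containing $M$, thanks to the transitivity of $\GL_n(\bF_q) \times \GL_n(\bF_q)$ on the relevant rank configurations; any residual non-uniformity should be of order $q^{-\Omega(n)}$ and can be absorbed into the coupling error, for instance by introducing an intermediate multiplicity-weighted walk as a stepping stone between $\bfW_X$ and the coupled sampling process. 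Once this bookkeeping is completed, Lemma~\ref{lem:expcloseinf} delivers $\lambda(\bfW_X) \leq \lambda(\bfW_{\cM(2)}^{\du}) + 1/q^{\Omega(n)}$.
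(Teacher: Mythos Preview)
Your approach is correct and is essentially the same coupling as the paper's proof sketch: sample the down-up step and accept only when $M+L_3\in\cM_q^n(3)$, then apply Lemma~\ref{lem:expcloseinf}. The ``main obstacle'' you flag is not actually an obstacle: for any edge $\{M,V\}\in E(\bfG_X)$, Lemma~\ref{lem:commonprec} gives a \emph{unique} $L_1\in\cM_q^n(1)$ with $L_1\prec M$ and $L_1\prec V$ (since $V-L_1=L_3$ must have rank~$1$ forces $L_1\prec V$ as well), so the map $(L_1,L_3)\mapsto L_1+L_3$ from your sampled pairs to neighbors of $M$ is a bijection and the induced distribution is exactly uniform---no transitivity argument or residual error term is needed.
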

\begin{proof}[Proof sketch]
  Given $M\in\cM(2)$, by definition we may sample $M'$ from the distribution obtained by starting at $M$ and taking a step in $\bfW_{\cM(2)}^{\du}$ as follows. First sample a uniformly random element $L_1\in\cM(1)$ such that $L_1\prec M$, and let $L_2=M-L_1$. Then sample a uniformly random $L_3\in\cM(1)$ such that $\rank(L_1+L_3)=2$, and let $M'=L_1+L_3$.

  Let $E$ be the event that $\rank(L_1+L_2+L_3)=3$. Conditioned on $E$, then $M'$ is distributed as a step in $\bfW_X$ starting from $M$. Thus by Lemma~\ref{lem:expcloseinf},
  \begin{align*}
    \lambda(\bfW_X)
    &\leq \lambda(\bfW_{\cM(2)}^{\du})+\Pr[E] \\
    &\leq \lambda(\bfW_{\cM(2)}^{\du})+\frac{1}{q^{\Omega(n)}}.
  \end{align*}
\end{proof}

\begin{lemma}[Informal statement of Lemma~\ref{lem:mat1ud}]
  \label{lem:mat1udinf}
  It holds that $\lambda(\bfW_{\cM(1)}^{\ud}) \leq O(1/q)$.
\end{lemma}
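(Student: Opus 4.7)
The plan is to apply Lemma~\ref{lem:expcloseinf} by comparing $\bfW_{\cM(1)}^\ud$ to the trivial symmetric walk $\bfW^*$ on $\cM_q^n(1)$ whose transition sends every $L_1$ to the uniform distribution on $\cM_q^n(1)$. Since $\bfW^*$ has a rank-one transition matrix, $\lambda(\bfW^*) = 0$, and by Lemma~\ref{lem:expcloseinf} it then suffices to prove that for every $L_1 \in \cM_q^n(1)$,
\[
\disTV\bigl(\bfW_{\cM(1)}^\ud(L_1,\cdot),\, \Unif(\cM_q^n(1))\bigr) \;\leq\; O(1/q).
\]

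To bound this TV distance, I would first write the up-down transition probability in closed form. Using that in the matrix poset there are $(q^n - q)^2 / (q-1)$ rank-2 matrices above any rank-1 matrix and $(q+1)q$ rank-1 matrices below any rank-2 matrix (both verified via the factorization $L = uv^\top$ together with the constraint $b^\top a = 1$ that characterizes $\prec$ between rank-1 and rank-2 matrices), one obtains
\[
\Pr[L_1 \to L^*] \;=\; \frac{(q-1)\, N(L_1, L^*)}{(q^n - q)^2 (q+1) q}, \quad\text{where}\quad N(L_1, L^*) = |\{M \in \cM_q^n(2) : L_1, L^* \prec M\}|.
\]
I would then evaluate $N(L_1, L^*)$ by case analysis on the relationship between $L^* = u^* v^{*\top}$ and $L_1 = u_1 v_1^\top$: (i) if $L^* = L_1$, then $N = (q^n - q)^2/(q-1)$; (ii) for a nontrivial scalar multiple $L^* = c L_1$, one checks $N = 0$; (iii) if $L^*$ shares exactly one of its column or row space with $L_1$ (e.g., $u^* \in \spn(u_1)$ and $v^* \notin \spn(v_1)$), then $N = q^n - q$; and (iv) in the generic case $u^* \notin \spn(u_1)$, $v^* \notin \spn(v_1)$, parametrizing $M = [u_1, u^*]\, A\, [v_1, v^*]^\top$ for $A \in \GL_2(\bF_q)$ turns the conditions $L_1, L^* \prec M$ into $\det(A) = a_{11} = a_{22} \neq 0$, which by counting pairs $(a_{12}, a_{21})$ with $a_{12} a_{21} = a_{11}(a_{11} - 1)$ yields $N = q^2 - q + 1$.

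Given these counts, I would compare $\Pr[L_1 \to L^*]$ to the uniform mass $U(L^*) = (q-1)/(q^n - 1)^2$ and sum the absolute differences over all $L^*$. Cases (i) and (ii) contribute only $O(1/q^2)$ and $O(1/q^{2n-2})$. The two dominant contributions come from case (iii), where each of the $\Theta(q^n)$ aligned targets carries $\Theta(1/q^{n+1})$ of excess probability, totaling $\Theta(1/q)$; and case (iv), where $(q^2 - q + 1)/(q^2 + q) = 1 - 2/q + O(1/q^2)$ means each of the $\Theta(q^{2n-1})$ generic targets falls short of $U$ by $\Theta(U/q)$, again totaling $\Theta(1/q)$. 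Summing yields the required TV bound and hence the lemma. The main technical obstacle is case (iv): the asymmetry between $\det(A) = 1$ (admitting $2q - 1$ factorizations $a_{12} a_{21} = 0$) and $\det(A) \in \bF_q^* \setminus \{1\}$ (admitting $q - 1$ factorizations each with $a_{12} a_{21} \neq 0$) produces the "$+1$" in $q^2 - q + 1$, and this precise $-2/q$ deviation from the average value $q^2 + q$ is exactly what balances the case-(iii) excess so that the overall TV bound is $O(1/q)$ rather than $\Theta(1)$.
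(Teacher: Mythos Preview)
Your approach is correct and complete: the case analysis on $N(L_1,L^*)$ is accurate (the count $q^2-q+1$ in case~(iv) and $q^n-q$ in case~(iii) are both right, and cases~(i)--(iv) are exhaustive since two rank-1 matrices sharing both row and column space must be scalar multiples), and summing the deviations does give total variation $O(1/q)$, so Lemma~\ref{lem:expcloseinf} applied against the all-uniform walk $\bfW^*$ finishes.

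However, this is a genuinely different route from the paper's argument. The paper never computes $N(L_1,L^*)$ or does a case split on the target; instead it builds an explicit \emph{coupling} between a step of $\bfW_{\cM(1)}^{\ud}$ and a uniform sample from $\cM(1)$. Concretely, starting from $L=e_1e_2^\top$ it samples fresh $e_1^{(1)},e_2^{(1)}$ and random coefficients $(\alpha_j,\beta_j)$, and shows that conditioned on a ``good'' event of probability $1-O(1/q)$ the output $e_1^{(2)}e_2^{(2)\top}$ is simultaneously (a) distributed as an up-down step from $L$ and (b) uniform on $\cM(1)$. The TV bound then falls out as the probability of the bad event. What your approach buys is an exact formula for the transition kernel (useful if one wanted sharper constants or higher-order terms); what the paper's coupling buys is that the same template, with essentially the same bad events, is reused verbatim for the related walks in Lemmas~\ref{lem:mat1uddomI} and~\ref{lem:localizedexp} and Proposition~\ref{prop:G2exp}, where the constraints ($M\preceq I$, or row/column span avoiding a subspace) would force you to redo the case analysis from scratch. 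So your method is more self-contained for this single lemma, while the paper's is more modular across the section.
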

\begin{proof}[Proof sketch]
  Given $L=e_1e_2^\top\in\cM(1)$ for $e_1,e_2\in\bF_q^n$, we may sample $L'$ from the distribution obtained by starting at $L$ and taking a step in $\bfW_{\cM(1)}^{\ud}$ as follows. Sample uniformly random vectors $e_1^{(1)}\in\bF_q^n\setminus\spn\{e_1\}$ and $e_2^{(1)}\in\bF_q^n\setminus\spn\{e_2\}$. Then sample a uniformly random pair of elements $(\alpha_1,\beta_1),(\alpha_2,\beta_2)\in\bF_q^2\setminus\{(0,0)\}$ for which $\alpha_1\alpha_2+\beta_1\beta_2=1$, and let $e_1^{(2)}=\alpha_1e_1+\beta_1e_1^{(1)}$, $e_2^{(2)}=\alpha_2e_2+\beta_2e_2^{(1)}$, and $L'=e_1^{(2)}{e_2^{(2)}}^\top$.

  By definition the distribution of $\spn\{e_1e_2^\top,e_1^{(1)}{e_2^{(2)}}^\top\}$ is simply that of an up-walk step starting at $L=e_1e_2^\top$, and we show that (see Lemma~\ref{lem:domI}) the distribution of $L'$ is that of a down-walk step starting at $\spn\{e_1e_2^\top,e_1^{(1)}\otimes {e_2^{(2)}}^\top\}$, so $L'$ is indeed distributed as an up-down walk step starting at $L$.

  Let $E$ be the event that $\alpha_1,\alpha_2=0$. Conditioned on $E^C$, then $e_1^{(2)}$ and $e_2^{(2)}$ are independently distributed uniformly over the respective sets $\bF_q^n\setminus\spn\{e_1\}$ and $\bF_q^n\setminus\spn\{e_2\}$, so the distribution of $L'$ conditioned on $E^C$ is that of a step in the tensor product of two nearly complete graphs on $\bF_q^n\setminus\{0\}$, which has expansion $o(1)$ as $n\rightarrow\infty$. Thus by Lemma~\ref{lem:expcloseinf}, 
  \begin{align*}
    \lambda(\bfW_{\cM(1)}^{\ud})
    &\leq o(1)+2\Pr[E] \\
    &\leq O\left(\frac{1}{q}\right).
  \end{align*}
\end{proof}

Combining the above lemmas, we immediately obtain the $r=1$ case of Theorem~\ref{thm:applytrickleinf}:

\begin{proof}[Proof of $r=1$ case of Theorem~\ref{thm:applytrickleinf}]
  We have that
  \begin{align*}
    \lambda(X)
    &= \lambda(\bfW_X) \\
    &\leq \lambda(\bfW_{\cM(2)}^{\du})+1/q^{\Omega(n)} \\
    &= \lambda(\bfW_{\cM(1)}^{\ud})+1/q^{\Omega(n)} \\
    &\leq O(1/q),
  \end{align*}
  where the first equality above holds because $X$ has rank $1$, the first inequality holds by Lemma~\ref{lem:G2expinf}, the second equality holds because $\bfW_{\cM(2)}^{\du}$ and $\bfW_{\cM(1)}^{\ud}$ by definition have the same nonzero eigenvalues, and the second inequality holds by Lemma~\ref{lem:mat1udinf}.
\end{proof}

\subsection{Open questions}
\label{sec:openquestions}
Our work raises the following open questions:
\begin{itemize}
\item Are there constructions of Grassmannian HDXs that are sparser within their ambient vector spaces than our construction in Definition~\ref{def:grassconstructinf}? Specifically, is the optimal sparsity $|X(0)|=\Theta(\dim\spn X(0))$ achievable? Such objects would yield logarithmic degree Cayley simplicial HDXs by Lemma~\ref{lem:cayleyF2k}, asymptotically good binary LDPC codes with check weight 3 and distance near $1/2$ by the results in Section~\ref{sec:coding}, and simplicial HDXs with polynomially large 1-homology dimension by Corollary~\ref{cor:increasehom}.
\item There have been various works on constructing codes from high-dimensional expanders, e.g.~\cite{evra_decodable_2020,kaufman_new_2021,dikstein_locally_2020-1,first_good_2022}, including recent breakthrough constructions of locally testable codes and quantum LDPC codes \cite{dinur_locally_2022,panteleev_asymptotically_2022,leverrier_quantum_2022-1,dinur_good_2023}. Our coding theoretic interpretation of Grassmannian HDXs and the relationship to simplicial Cayley HDXs suggests the possibility of a two-way relationship. That is, can good HDXs be constructed from codes such as locally testable or quantum LDPC codes?
\item Are there constructions of low-degree Grassmannian HDXs? We obtain our subpolynomial-degree simplicial HDXs in Theorem~\ref{thm:maincayleyinf} by taking the Cayley simplicial complex generated by our Grassmannian HDXs in Definition~\ref{def:grassconstructinf}, which in turn are sparse within their ambient vector space. However, the Grassmannian HDXs we construct have polynomially large degree. Low-degree Grassmannian HDXs could be interesting of their own right (see Conjecture~8.29 of \cite{dikstein_boolean_2022}), and their basisification would furthermore immediately yield low-degree simplicial HDXs, without the need for passing to Cayley complexes.
\item Our study of the expansion of graphs based on low-rank matrices (Definition~\ref{def:grassconstructinf}) and the matrix poset (Definition~\ref{def:matrixposetinf}) seems similar in spirit to prior work on the short code that is often studied in the context of the unique games conjecture \cite{barak_making_2015,barak_small-set_2018}. Is there a way to formalize this connection, or to apply our results to unique games or hardness of approximation? For instance, the degree-2 shortcode graph studied in \cite{barak_small-set_2018} has vertices given by matrices, and edges given by pairs of matrices whose difference is a rank-1 matrix. While we do not study this precise graph, it is similar to the matrix poset in Definition~\ref{def:matrixposetinf}. Can tools from high-dimensional expanders be leveraged in the analysis of this shortcode graph?
% \item Can more be proven about the high-dimensional expansion of the matrix poset in Definition~\ref{def:matrixposetinf}? Can one obtain sparsifications of this poset with good expansion?
\end{itemize}

% TODO: could also raise question of topological expansion, or whether higher rank Grassmannian complexes yield codes with stronger properties as distance, such as local testability

\subsection{Organization}
The organization of the remainder of this paper is as follows. Section~\ref{sec:prelim} presents definitions and describes some prior work. Section~\ref{sec:caycomplex} presents the notion of Cayley simplicial complexes, and shows how they can be generated by Grasmannian HDXs. Section~\ref{sec:grassHDX} presents our main construction of Grassmannian HDXs using low-rank matrices, and analyzes their sparsity and expansion. Section~\ref{sec:cayconstruct} analyzes the Cayley simplicial complexes generated by these Grassmannian HDXs. Section~\ref{sec:coding} presents our coding theoretic interpretation of Grassmannian HDXs. Section~\ref{sec:cayleyhom} presents our general characterization of 1-homology of Cayley simplicial complexes, and applies it to our construction.

\section{Preliminaries}
\label{sec:prelim}
This section presents preliminary definitions and results.

\subsection{Expander graphs}
We begin by defining random walk matrices and spectral expansion for graphs.

\begin{definition}
  Let $\bfG=(V,E,m)$ be a graph with weight function $m:E\rightarrow\bR_{\geq 0}$. For a vertex $v\in V$, denote the degree by $\deg(v)=\sum_{v'\in V}m(v,v')$. Then the \textbf{random walk matrix $\bfW\in\bR^{V\times V}$} of $\bfG$ is the matrix with
  \begin{equation*}
    \bfW_{v,v'} = \frac{m(v,v')}{\deg(v)}.
  \end{equation*}
  Let $\bfW$ have eigenvalues $1=\lambda_1\geq\lambda_2\geq\cdots\geq\lambda_{|V|}$. The \textbf{spectral expansion } (or simply \textbf{expansion}) of $\bfG$ is the quantity
  \begin{equation*}
    \lambda(\bfG) = \lambda(\bfW) = \max\{|\lambda_2|,|\lambda_{|V|}|\}.
  \end{equation*}
\end{definition}

In some of our expansion analysis, we will use the following notion of a graph projection.

\begin{definition}
  \label{def:graphproj}
  For directed weighted graphs $\bfG,\bfG'$, a \textbf{graph projection $\Pi:\bfG'\rightarrow\bfG$} is a surjective vertex mapping $\Pi:V(\bfG')\rightarrow V(\bfG)$ such that
  \begin{equation*}
    m_{\bfG}(v_1,v_2)=\sum_{v_1'\in\Pi^{-1}(v_1),v_2'\in\Pi^{-1}(v_2)}m_{\bfG'}(v_1',v_2').
  \end{equation*}
\end{definition}

The definition above naturally extends to undirected graphs by simply replacing each undirected edge with a directed copy in each direction. Such projections of undirected graphs can only improve expansion, as stated in the well-known lemma below.

\begin{lemma}[Well known]
  \label{lem:projexp}
  If $\Pi:\bfG'\rightarrow\bfG$ is a graph projection of undirected graphs $\bfG',\bfG$, then $\lambda(\bfG)\leq\lambda(\bfG')$.
\end{lemma}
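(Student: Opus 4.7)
The plan is to prove the inequality via the variational characterization of spectral expansion: every test function on $V(\bfG)$ will be lifted to a test function on $V(\bfG')$ with the same Rayleigh quotient, after which the max-definition of $\lambda$ immediately gives the bound. Let $\pi(v) = \deg_{\bfG}(v)/\mathrm{vol}(\bfG)$ and $\pi'(v') = \deg_{\bfG'}(v')/\mathrm{vol}(\bfG')$ denote the stationary distributions; by summing the projection identity $m_{\bfG}(v_1,v_2) = \sum_{v_1' \in \Pi^{-1}(v_1),\,v_2' \in \Pi^{-1}(v_2)} m_{\bfG'}(v_1',v_2')$ over all $v_1,v_2$ I first observe $\mathrm{vol}(\bfG)=\mathrm{vol}(\bfG')$, and hence $\pi(v) = \sum_{v' \in \Pi^{-1}(v)} \pi'(v')$.

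Next I would define the lift of any $f: V(\bfG) \to \bR$ as $f' := f \circ \Pi$, and verify the two key identities
$$\langle f', f' \rangle_{\pi'} = \langle f, f \rangle_\pi, \qquad \langle f', \bfW' f' \rangle_{\pi'} = \langle f, \bfW f \rangle_\pi.$$
Both are routine: partition the sums over $V(\bfG')$ (resp.\ $V(\bfG') \times V(\bfG')$) into fibers of $\Pi$, and use the fiber formula for $\pi$ and the projection identity itself. It is also immediate that $f \not\equiv 0$ forces $f' \not\equiv 0$ since $\Pi$ is surjective, and that $f \perp_\pi \mathbf{1}$ forces $f' \perp_{\pi'} \mathbf{1}$.

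To finish, note that since $\bfG,\bfG'$ are undirected the random walk matrices $\bfW,\bfW'$ are self-adjoint with respect to the $\pi$-weighted and $\pi'$-weighted inner products, so the variational formula
$$\lambda(\bfG) \;=\; \max_{0 \neq f \perp_\pi \mathbf{1}} \frac{|\langle f, \bfW f\rangle_\pi|}{\langle f, f\rangle_\pi}$$
applies (and the analogous one for $\bfG'$). The lift of any maximizing $f$ is an admissible test function for $\bfG'$ with exactly the same Rayleigh quotient, yielding $\lambda(\bfG) \le \lambda(\bfG')$. There is no real obstacle here; the only care needed is the bookkeeping with $\pi$-weighted inner products and ensuring that both $\lambda_2$ and $\lambda_{|V|}$ are controlled simultaneously, which is why the absolute-value form of the variational principle is the right tool.
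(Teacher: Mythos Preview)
Your proof is correct and follows essentially the route the paper sketches: pull back each function on $V(\bfG)$ to $f'=f\circ\Pi$ on $V(\bfG')$ and compare. The paper phrases this as ``each eigenvector of $\bfW_{\bfG}$ lifts to an eigenvector of $\bfW_{\bfG'}$ with the same eigenvalue''; your variational formulation is actually the cleaner way to carry this out, since under the bare projection hypothesis of Definition~\ref{def:graphproj} the pullback $f\circ\Pi$ need not be an exact eigenvector of $\bfW'$, but it is always an admissible test function with identical Rayleigh quotient, which is all that is needed.
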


% TODO: may want to include entire proof of above lemma for completeness?
We omit the proof of Lemma~\ref{lem:projexp}, which follows from the observation that for each eigenvector of the random walk matrix of $\bfG$, we may construct an eigenvector of the random walk matrix of $\bfG'$ with the same eigenvalue.

\subsection{Posets and high-dimensional expansion}
\label{sec:posetprelim}
We now turn to defining high-dimensional expansion. While the literature on high-dimensional expanders traditionally focused on simplicial complexes, recent work has generalized such expansion notions to more general posets. For such posets, we generally follow the definitions in \cite{kaufman_garlands_2022}.

\begin{definition}
  \label{def:poset}
  We introduce the following notions regarding posets:
  \begin{itemize}
  \item A \textbf{poset $X$} is a set $X$ with a binary relation $\prec$, which specifies a subset of $X\times X$ denoted by pairs $x,x'$ with $x\prec x'$, such that if $x\prec x'$ and $x'\prec x''$, then $x\prec x''$ and $x'\not\prec x$. We write $x\preceq x'$ if either $x\prec x'$ or $x=x'$. We sometimes refer to poset elements $x\in X$ as \textbf{faces}.
  \item If $x\preceq x'$ we say that \textbf{$x'$ dominates $x$}.
  \item A poset $X$ is \textbf{graded} if it has a rank function $\rank:X\rightarrow\bZ_{\geq-1}$ such there is a single element in $X$ of rank $-1$, and such that for any $x\prec x'$ for which there is no $x''$ satisfying $x\prec x''\prec x'$, then $\rank(x)=\rank(x')-1$. We let $X(i)=\{x\in X:\rank(x)=i\}$ denote the rank-$i$ elements of $X$. The \textbf{rank of $X$}, denoted $\rank(X)$, is the maximum rank of any element $x\in X$. We sometimes refer to rank-0 elements $x\in X(0)$ as \textbf{vertices}.
  \item A graded poset $X$ is \textbf{pure} if each element $x\in X$ is dominated by some element of rank $r=\rank(X)$.
  \item A pure graded poset $X$ is \textbf{weighted} if $X$ has a weight function $m_X:X\rightarrow[0,1]$ such that for every $-1\leq i\leq r$, the restriction $m_{X(i)}=m_X|_{X(i)}$ is a probability distribution on the set of rank-$i$ elements.
  \item The weight function $m_X$ is \textbf{standard} if for every $-1\leq i\leq r-1$, we may sample $x_i\sim m_{X(i)}$ by first sampling $x_{i+1}\sim m_{X(i+1)}$, and then sampling $x_i\sim\Unif\{x_i'\in X(i):x_i'\prec x_{i+1}\}$. Observe that a standard weight function $m_X$ is determined by its distribution $m_{X(r)}$ on top-rank faces.
  \end{itemize}
\end{definition}

In this paper, we restrict attention to pure graded weighted posets with standard weight functions, and simply refer to such objects as ``posets.'' Nearly all of the posets we consider have uniform weight function, meaning each $m_{X(i)}$ is the uniform distribution on $X(i)$. Thus when a weight function is not explicitly stated, it is assumed to be uniform.

Posets come with the following natural random walk operators.

\begin{definition}
  \label{def:highwalks}
  For a poset $X$ (with standard weight function $m_X$), we have the following random walk operators:
  \begin{itemize}
  \item The \textbf{up-walk} $\bfW_{X(i)}^{\uparrow}\in\bR^{X(i)\times X(i+1)}$ given by
    \begin{equation*}
      (\bfW_{X(i)}^{\uparrow})_{x_i,x_{i+1}} = \1_{x_i\prec x_{i+1}}\cdot\frac{m_X(x_{i+1})}{m_X(x_i)\cdot|\{x_i'\in X(i):x_i'\prec x_{i+1}\}|}.
    \end{equation*}
  \item The \textbf{down-walk} $\bfW_{X(i)}^{\downarrow}\in\bR^{X(i)\times X(i-1)}$ given by
    \begin{equation*}
      (\bfW_{X(i)}^{\downarrow})_{x_i,x_{i-1}} = \1_{x_i\succ x_{i-1}}\cdot\frac{1}{|\{x_{i-1}'\in X(i-1):x_i\succ x_{i-1}\}|}.
    \end{equation*}
  \item The \textbf{up-down} walk $\bfW_{X(i)}^{\ud}\in\bR^{X(i)\times X(i)}$ given by
    \begin{equation*}
      \bfW_{X(i)}^{\ud} = \bfW_{X(i)}^{\uparrow}\bfW_{X(i+1)}^{\downarrow}.
    \end{equation*}
  \item The \textbf{down-up} walk $\bfW_{X(i)}^{\du}\in\bR^{X(i)\times X(i)}$ given by
    \begin{equation*}
      \bfW_{X(i)}^{\ud} = \bfW_{X(i)}^{\downarrow}\bfW_{X(i-1)}^{\uparrow}.
    \end{equation*}
  \end{itemize}
\end{definition}

Observe that if we sample a joint distribution over $x_i\in X(i)$ for $-1\leq i\leq r$ by first drawing $x_r\sim m_{X(r)}$ and then drawing each $x_i$ uniformly at random from the set of rank-$i$ elements dominated by $x_{i+1}$, we obtain the up and down walk transition probabilities as
\begin{align*}
  (\bfW_{X(i)}^{\uparrow})_{x_i',x_{i+1}'} &= \Pr[x_{i+1}=x_{i+1}'|x_i=x_i'] \\
  (\bfW_{X(i)}^{\downarrow})_{x_i',x_{i-1}'} &= \Pr[x_{i-1}=x_{i-1}'|x_i=x_i'].
\end{align*}

A notable feature of high-dimensional expanders is that global expansion, such as expansion of the up-down and down-up walks, can be deduced from local expansion, the latter of which captures expansion at local parts of a poset. To formally define this notion of local expansion, we require the following definitions. The local structure of a poset is captured by its links:

\begin{definition}
  For a rank-$r$ poset $x$ and an element $x\in X$, the \textbf{link $X_x$} of $x$ is the poset of rank $r-\rank(x)-1$ given by
  \begin{align*}
    X_x &= \{x'\in X:x\preceq x'\} \\
    \rank_{X_x}(x') &= \rank_X(x')-\rank_X(x)-1 \\
    m_{X_x}(x') &= \frac{m_X(x')}{\sum_{x''\in X_x:\rank(x'')=\rank(x')}m_X(x'')}.
  \end{align*}
\end{definition}

Expansion (and connectedness) at a link will be measured by the expansion (and connectedness) of the associated 1-skeleton graph, defined below.

\begin{definition}
  The \textbf{$i$-skeleton} of a poset $X$ is the subposet $\{x\in X:\rank(x)\leq i\}$. The \textbf{1-skeleton graph $\bfG_X$} of a poset $X$ with standard weight function $m_X$ is the undirected weighted graph defined by
  \begin{align*}
    V(\bfG_X) &= X(0) \\
    E(\bfG_X) &= \{\{x_0,x_0'\}:x_0\neq x_0'\in X(0),\; \exists x_1\in X(1):x_0,x_0'\prec x_1\} \\
    m_{\bfG_X}(\{x_0,x_0'\}) &= \sum_{x_1\in X(1):x_0,x_0'\prec x_1}m_X(x_1).
  \end{align*}
  We say that $X$ is \textbf{connected} if its 1-skeleton graph $\bfG_X$ is connected.
\end{definition}

We can now define local expansion.

\begin{definition}
  For a rank-$r$ poset $X$, the \textbf{rank-$i$ local expansion $\lambda^{(i)}(X)$} is defined to be
  \begin{equation*}
    \lambda^{(i)}(X) = \max_{x_i\in X(i)}\{\lambda(\bfG_{X_{x_i}})\}.
  \end{equation*}
  The \textbf{global expansion} refers to the rank-$(-1)$ local expansion $\lambda^{(-1)}(X)$, and the \textbf{local expansion $\lambda(X)$} is defined to be
  \begin{equation*}
    \lambda(X) = \max_{-1\leq i\leq r-2}\{\lambda^{(i)}(X)\}.
  \end{equation*}
\end{definition}

A key property of local expansion is the trickle-down theorem, which says that good local expansion in high ranks implies good local expansion in lower ranks. This result was proven by Oppenheim~\cite{oppenheim_local_2018} for simplicial complexes, and subsequently generalized by Kaufman and Tessler~\cite{kaufman_garlands_2022} to more general posets satisfying certain regularity conditions. As their fully general result requires more notation, we will simply state the two cases of the trickle-down theorem that are relevant for the purposes of this paper, namely, the result for simplicial complexes, and for Grassmannian complexes. However, we must first introduce these types of posets.

\begin{definition}
  A \textbf{simplicial complex} $X$ on vertex set $V$ is a poset whose elements consist of a subset $X\subseteq 2^V$, such that if $x\in X$ then all subsets $x'\subseteq x$ also satisfy $x'\in X$. The partial ordering is given by set inclusion, that is $x\preceq x'$ if and only if $x\subseteq x'$. The rank function is given by $\rank(x)=|x|-1$, and the term ``dimension'' is sometimes used as a synonym for rank, so that $\rank(x)=\dim(x)$.
\end{definition}

\begin{definition}
  A \textbf{$\bF_q$-Grassmannian complex} $X$ in ambient vector space $\bF_q^k$ is a poset whose elements are linear subspaces of $\bF_q^k$, such that if $x\in X$ then all linear subspaces $x'\subseteq x$ also satisfy $x'\in X$. The partial ordering is given by subspace inclusion, that is $x\preceq x'$ if and only if $x\subseteq x'$. The rank of a linear subspace is one less than its dimension, that is $\rank(x)=\dim(x)-1$.
\end{definition}

Note that links in both simplicial and Grassmannian complexes are complexes of the same respective type. Specifically, if $X$ is a simplicial complex, then the link $X_x$ of a face $x\in X$ is isomorphic to the simplicial complex $\{x'\setminus x:x'\in X,x\subseteq x'\}$. Meanwhile, if $X$ is a Grassmannian complex in ambient vector space $\bF_q^k$, then the link $X_x$ of a face $x\in X$ is isomorphic to the Grassmannian complex $\{x'/x:x'\in X,x\subseteq x'\}$ in ambient vector space $\bF_q^k/x$.

In the special case of $\bF_2$-Grassmannian complexes, the vertices are 1-dimensional subspaces $\{0,x\}\subseteq\bF_2^k$ for $x\in\bF_2^k$. Here we often abuse notation and simply refer to a vertex by the nonzero element $x\in\bF_2^k$.

In this paper, we will consider three different types of posets: simplicial complexes, Grassmannian complexes and the matrix poset given in Definition~\ref{def:matrixposet}. Simplicial complexes have been widely studied in mathematics and have traditionally been the principal object of study in the high-dimensional expander literature, while Grassmannian complexes have recently gained interest in this literature \cite{dikstein_boolean_2018,kaufman_garlands_2022}. To the best of our knowledge, we are the first to study the matrix poset in the context of high-dimensional expanders. Interestingly, the matrix poset is sufficiently irregular that local-to-global results from \cite{kaufman_garlands_2022} do not (at least directly) apply to it; for more details on this poset, see Section~\ref{sec:grassHDX}.

We now state the trickle-down theorems for simplicial and Grassmannian complexes.

\begin{theorem}[Trickle-down for simplicial complexes \cite{oppenheim_local_2018}]
  \label{thm:tricklesimp}
  Let $X$ be a rank-$r$ simplicial complex such that for every $x\in X$ with $\rank(x)\leq r-2$, the link $X_x$ is connected. Then for every $-1\leq i\leq r-3$, it holds that
  \begin{equation*}
    \lambda^{(i)}(X) \leq \frac{\lambda^{(i+1)}(X)}{1-\lambda^{(i+1)}(X)}.
  \end{equation*}
\end{theorem}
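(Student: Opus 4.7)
The plan is to prove this via a two-step reduction followed by a spectral identity of Garland--Oppenheim type. First, I would reduce to the special case $i = -1$: for any rank-$r' \geq 2$ simplicial complex $Y$ with all vertex-links connected, $\lambda^{(-1)}(Y) \leq \lambda^{(0)}(Y)/(1-\lambda^{(0)}(Y))$. Applying this special case to each link $Y := X_{x_i}$ (which inherits the connected-links hypothesis from $X$, since a face of rank $\leq r-i-3$ in $X_{x_i}$ corresponds to a face of rank $\leq r-2$ in $X$) handles general $i$: the vertex-links of $X_{x_i}$ are exactly the links $X_{x_{i+1}}$ for $x_{i+1} \succ x_i$, so $\lambda^{(0)}(X_{x_i}) \leq \lambda^{(i+1)}(X)$, and since $t \mapsto t/(1-t)$ is monotone increasing on $[0,1)$, taking the maximum over $x_i$ yields the target inequality.

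Second, for the core case, fix such a $Y$, let $M$ be the random walk matrix of $\bfG_Y$ with stationary distribution $\pi$, and for each vertex $v \in Y(0)$ let $M_v$ denote the random walk matrix of $\bfG_{Y_v}$ with stationary distribution $\pi_v$. For any $f : Y(0) \to \bR$, write $f_v$ for its restriction to $Y_v(0)$. Unpacking the standard weight function on $Y$ and its induced weights on links, I would establish a localization identity that relates $\langle f, M^2 f\rangle_\pi$ to the average over $v \sim \pi$ of $\langle f_v, M_v f_v\rangle_{\pi_v}$, up to correction terms involving $\langle f, f\rangle_\pi$ and $\langle f, Mf\rangle_\pi$. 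The content of this identity is that a length-2 walk $v \to u \to w$ in $\bfG_Y$ for which $\{v,u,w\}$ is a 2-face corresponds to a single step in the link walk at $u$, while the remaining ``degenerate'' contributions (where $v = w$ or $\{v, u, w\}$ is not a 2-face) can be evaluated explicitly from the weights and absorbed into the correction terms.

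Third, I would feed in the local expansion hypothesis. Since $Y_v$ is connected, $M_v$ has a simple top eigenvalue $1$ with constant eigenvector $\mathbf{1}$, and every other eigenvalue has absolute value at most $\lambda := \lambda^{(0)}(Y)$. Hence
\begin{equation*}
\langle f_v, M_v f_v\rangle_{\pi_v} \;\leq\; \lambda\,\|f_v\|_{\pi_v}^2 + (1-\lambda)\,\mathbb{E}_{\pi_v}[f_v]^2,
\end{equation*}
and consistency of the standard weight function gives $\mathbb{E}_{\pi_v}[f_v] = (Mf)(v)$. Taking $f$ to be an eigenvector of $M$ with $Mf = \mu f$ and $\mathbb{E}_\pi[f] = 0$, substituting $\langle f, Mf\rangle_\pi = \mu\|f\|_\pi^2$ and $\langle f, M^2 f\rangle_\pi = \mu^2\|f\|_\pi^2$ into the identity combined with the bound above, and rearranging, should yield $|\mu|(1-\lambda) \leq \lambda$, i.e., $|\mu| \leq \lambda/(1-\lambda)$.

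The main obstacle is the second step: establishing the localization identity with the correct weight bookkeeping. One must carefully relate the distribution on length-2 walks in $\bfG_Y$ arising from $M^2$ to the distribution on 2-faces of $Y$ used to define the link walks $M_v$, isolate the degenerate contributions, and verify the consistency relation $\mathbb{E}_{\pi_v}[f_v] = (Mf)(v)$. Once the identity and the local spectral bound are in place, the final rearrangement is routine algebra. The connectedness hypothesis is used in two essential places: it guarantees $M_v$ has a one-dimensional top eigenspace (so the inequality above is meaningful with the $(1-\lambda)$ factor), and it propagates down to links, which is needed for the reduction in step one to apply recursively.
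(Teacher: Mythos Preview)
The paper does not prove this theorem; it is stated with a citation to Oppenheim. Your overall strategy---reduce to the case $i=-1$, localize to vertex links, feed in the link spectral gap, and solve a quadratic in the eigenvalue---is exactly the standard Garland--Oppenheim argument, and your first and third steps are correct as written.

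There is, however, a genuine error in your second step. The localization identity does \emph{not} relate $\langle f, M^2 f\rangle_\pi$ to the link walks; it relates $\langle f, Mf\rangle_\pi$ directly. Unwinding the standard weights on a rank-$\geq 2$ complex gives the exact equality
\[
\langle f, Mf\rangle_\pi \;=\; \mathbb{E}_{v\sim\pi}\bigl[\langle f_v, M_v f_v\rangle_{\pi_v}\bigr],
\]
with no correction terms: both sides equal the expectation of $f(u)f(w)$ over a random ordered triple $(v,u,w)$ with $\{u,v,w\}\in Y(2)$. Your interpretation via length-$2$ walks $v\to u\to w$ in $\bfG_Y$ matches the incidence structure but not the transition probabilities, and crucially the contribution from $2$-step walks where $\{v,u,w\}$ is \emph{not} a $2$-face cannot be ``evaluated explicitly from the weights''---it depends on the global combinatorics of $Y$, not just local data. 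The $M^2$ term enters only through your third-step bound: since $\mathbb{E}_{\pi_v}[f_v] = (Mf)(v)$ and $\mathbb{E}_{v\sim\pi}\|f_v\|_{\pi_v}^2=\|f\|_\pi^2$, combining the identity above with $\langle f_v, M_v f_v\rangle_{\pi_v}\leq \lambda\|f_v\|_{\pi_v}^2 + (1-\lambda)(Mf(v))^2$ yields $\mu \leq \lambda + (1-\lambda)\mu^2$ for any eigenvalue $\mu<1$, and factoring gives $\mu\leq \lambda/(1-\lambda)$; the lower bound on $\mu$ follows analogously from the reverse inequality on the links. So your plan goes through once the identity is corrected from $M^2$ to $M$.
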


\begin{theorem}[Trickle-down for Grassmannian complexes \cite{kaufman_garlands_2022}]
  \label{thm:tricklegrass}
  Let $X$ be a rank-$r$ $\bF_q$-Grassmannian complex such that for every $x\in X$ with $\rank(x)\leq r-2$, the link $X_x$ is connected. Then for every $-1\leq i\leq r-3$, it holds that
  \begin{equation*}
    \lambda^{(i)}(X) \leq \frac{\lambda^{(i+1)}(X)}{q(1-\lambda^{(i+1)}(X))}.
  \end{equation*}
\end{theorem}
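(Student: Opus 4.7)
The plan is to adapt Oppenheim's proof of the simplicial trickle-down (Theorem~\ref{thm:tricklesimp}) to the Grassmannian setting. The extra factor of $q$ in the conclusion arises from the combinatorial fact that a rank-1 face in an $\bF_q$-Grassmannian complex (a 2-dimensional subspace) contains $q+1$ vertices (1-dimensional subspaces) rather than only $3$. First I would reduce to the case $i=-1$: since the link of any face $x \in X(j)$ in an $\bF_q$-Grassmannian complex is itself an $\bF_q$-Grassmannian complex (isomorphic to one in the quotient space $\bF_q^k/x$), the rank-$i$ local expansion of $X$ equals $\lambda^{(-1)}$ of the appropriate link. So it suffices to show that if $\bfG_X$ is connected and $\lambda^{(0)}(X) \leq \mu$, then $\lambda(\bfG_X) \leq \mu/(q(1-\mu))$.

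Next I would set up the localized operators. For each vertex $v \in X(0)$, let $P_v$ denote the normalized random walk operator on $\bfG_{X_v}$, and let $P$ denote that of $\bfG_X$. The vertices of the link $X_v$ are 2-dimensional subspaces $x_1 \supset v$, each of which corresponds to the $q$ 1-dimensional subspaces $u \subset x_1$ with $u \neq v$ (equivalently, to the single 1-dimensional subspace $x_1/v$ of $\bF_q^k/v$). I would define a natural averaging projection $\pi_v : \bR^{X(0)} \to \bR^{X_v(0)}$ by
\[
(\pi_v f)(x_1) \;=\; \frac{1}{q}\sum_{u \subset x_1,\; u \neq v} f(u),
\]
whose adjoint $\pi_v^{*}$ spreads a value at $x_1 \in X_v(0)$ uniformly across the $q$ corresponding 1-dimensional subspaces.

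The heart of the argument is a local-to-global operator identity expressing $\mathbb{E}_{v \sim m_{X(0)}}\bigl[\pi_v^{*} P_v \pi_v\bigr]$ as a combination of $P$, the identity, and the rank-one projection $\Pi_{\mathbf 1}$ onto the stationary vector. The contribution to an off-diagonal entry indexed by distinct $u, w \in X(0)$ comes from rank-1 faces $x_1 \supset u,w$ together with an intermediate $v \subset x_1$ with $v \notin \{u,w\}$; for each such $x_1$ there are exactly $q - 1$ choices of $v$, while the pair $(u,w)$ is one of $\binom{q+1}{2}$ ordered pairs in $x_1$. Careful normalization yields a schematic identity
\[
\mathbb{E}_v\bigl[\pi_v^{*} P_v \pi_v\bigr] \;=\; \alpha \, P \;+\; \beta\, I \;+\; \gamma\, \Pi_{\mathbf 1},
\]
for explicit constants depending on $q$ in which $\alpha$ inherits a factor of $q$ from the count above. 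Applying this to an eigenvector $f$ of $P$ with eigenvalue $\lambda$, orthogonal to $\mathbf 1$, combining the local bound $\langle \pi_v^{*} P_v \pi_v f, f\rangle \leq \mu \|\pi_v f\|^2$ with the fact that $\mathbb{E}_v \|\pi_v f\|^2$ can itself be expressed via $\langle f, P f\rangle$ and $\|f\|^2$, and rearranging the resulting inequality yields $\lambda \leq \mu/(q(1-\mu))$.

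The main obstacle is the bookkeeping of normalization constants and the tight relation between $\mathbb{E}_v \|\pi_v f\|^2$ and $\langle (I + cP) f, f\rangle$: it is exactly this coupling (rather than a loose Cauchy--Schwarz bound) that produces the correct factor $q$ in the denominator, as opposed to merely $q - 1$ or $q + 1$. A useful sanity check is that setting $q = 1$ formally recovers Oppenheim's identity and reproduces the simplicial denominator $(1-\mu)$, consistent with Theorem~\ref{thm:tricklesimp}. Finally, connectedness of every positive-rank link is used to guarantee that each $P_v$ has a simple top eigenvalue $1$, so that $\mu = \lambda^{(0)}(X) < 1$ and the argument goes through without degenerate cases.
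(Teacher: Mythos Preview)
The paper does not prove Theorem~\ref{thm:tricklegrass}; it is quoted from \cite{kaufman_garlands_2022} and used as a black box. So there is no ``paper's own proof'' to compare against, and your proposal should be read as an independent attempt to reconstruct the Kaufman--Tessler argument.

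Your outline is the right shape: reduce to $i=-1$ via the fact that links of Grassmannian complexes are Grassmannian, then run a Garland/Oppenheim-style localization where the factor $q$ enters through the count $q+1$ of lines in a $2$-dimensional $\bF_q$-subspace. The projection $\pi_v$ you define, averaging over the $q$ lines $u\subset x_1$ with $u\neq v$, is exactly the right bridge between functions on $X(0)$ and functions on $X_v(0)$. This matches the general poset framework of \cite{kaufman_garlands_2022}, where the trickle-down constant is governed by the regularity parameter counting rank-$0$ elements below a rank-$1$ element.

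That said, what you have written is a plan, not a proof. The schematic identity $\mathbb{E}_v[\pi_v^*P_v\pi_v]=\alpha P+\beta I+\gamma\Pi_{\mathbf 1}$ is asserted but not derived, and the whole content of the theorem lives in the precise values of $\alpha,\beta,\gamma$ together with the companion identity for $\mathbb{E}_v\|\pi_v f\|^2$. You correctly flag this bookkeeping as the main obstacle, but you do not carry it out; in particular you do not verify that the two identities combine to give exactly $\lambda\leq\mu/(q(1-\mu))$ rather than, say, $\mu/((q-1)(1-\mu))$ or $\mu/(q-\mu(q+1))$. The informal $q=1$ sanity check is suggestive but not a substitute, since $\bF_1$ is not a field and the simplicial and Grassmannian localization identities are structurally different (in the simplicial case there is no nontrivial averaging $\pi_v$, as each edge has only one vertex besides $v$). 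To turn this into a proof you need to actually compute both operator identities and solve the resulting linear inequality in $\lambda$.
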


Observe that for simplicial complexes $X$ there is a critical value of $1/2$, in the sense that if $\lambda^{(i)}(X)=1/2-\epsilon$, then Theorem~\ref{thm:tricklesimp} implies that $\lambda^{(i-1)}(X)=1-\Theta(\epsilon)$. Thus in particular if the rank-$0$ local expansion $\lambda^{(0)}(X)<1/2$ is bounded away from $1/2$, then the global expansion $\lambda^{(-1)}(X)$ is bounded away from $1$. This local-to-global phenomenon is a key property of high-dimensional expanders, and for this reason we are particularly interested in simplicial high-dimensional expanders with local expansion $<1/2$. Our construction of Cayley HDXs in this paper indeed has arbitrarily small local expansion.

Whereas for simplicial complexes the trickle-down theorem gives decaying expansion bounds and has critical value $1/2$, for $\bF_q$-Grassmannian complexes the trickle-down theorem can actually give improved expansion in lower ranks due to behavior around a critical value of $(q-1)/q$. That is, if $X$ is a $\bF_q$-Grassmannian complex and $\lambda^{(i+1)}(X)<(q-1)/q$, then Theorem~\ref{thm:tricklegrass} implies that
\begin{equation*}
  \lambda^{(i)}(X) \leq \frac{\lambda^{(i+1)}(X)}{q(1-\lambda^{(i+1)}(X))} < \lambda^{(i+1)}(X).
\end{equation*}

The trickle-down theorem is one of two major local-to-global results for high-dimensional expanders. The other result shows that local expansion at all ranks implies good expansion of the up-down walks. A line of work has successively improved this result for simplicial complexes \cite{kaufman_high_2017,dinur_high_2017,kaufman_high_2018,alev_improved_2020,gotlib_fine_2023}, and Kaufman and Tessler~\cite{kaufman_garlands_2022} give a generalization to posets satisfying some regularity conditions. We do not state these results here, as we will not use them directly in this paper, though our expansion analysis in Section~\ref{sec:expansion} (specifically the proof of Proposition~\ref{prop:G1exp}) uses similar proof techniques.

\subsection{Basisification of Grassmannian complexes}
\label{sec:basisification}
In this section, we present an operation we call basisification, which transforms a rank-$r$ Grassmannian complex into a rank-$r$ simplicial complex. To the best of our knowledge, basisification has not been previously studied in the literature. Notably, this operation preserves local expansion, and preserves sparsity up to constant factors depending only on $q,r$. Thus the basisification of a low-degree Grassmannian HDX is a low-degree simplicial HDX, so low-degree Grassmannian HDXs can be viewed as a strictly harder object to construct than low-degree simplicial HDXs.

\begin{definition}
  Let $X$ be a rank-$r$ $\bF_q$-Grassmannian complex. The \textbf{basisification} $B=\beta(X)$ of $X$ is the rank-$r$ simplicial complex defined so that $B(i)$ contains every basis for each element of $X(i)$. Here a basis of $x_i\in X(i)$ is a set of $i+1$ lines (that is, projective points) in $x_i$ that span $x_i$. The weight function $m_B$ of $B$ is defined by $m_B(b_i)=m_X(\spn b_i)/N_i$, where $N_i$ is the normalization constant given by the number of bases of $\bF_q^{i+1}$.
\end{definition}

The lemma below shows that basisification preserves the 1-skeleton graphs of links up to tensoring by complete graphs. In particular, it immediately yields the corollary that basisification preserves local expansion.

\begin{lemma}
  Let $X$ be a rank-$r$ $\bF_q$-Grassmannian complex with basisification $B=\beta(X)$. For every $-1\leq i\leq r-2$ and every $b_i\in B(i)$, letting $\bfG_{\text{complete}}^{(N)}$ denote the complete graph with self loops on $N$ vertices, we have
  \begin{equation*}
    \bfG_{B_{b_i}} \cong \bfG_{X_{\spn b_i}}\otimes\bfG_{\text{complete}}^{(q^{i+1})}.
  \end{equation*}
\end{lemma}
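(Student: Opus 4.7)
The plan is to exhibit an explicit isomorphism of weighted graphs via a vertex bijection that collapses the $q^{i+1}$ lines giving each $(i+2)$-dimensional extension of $\spn b_i$ into a fiber. Let $W_0 = \spn b_i$, which has dimension $i+1$. The vertices of $\bfG_{B_{b_i}}$ are the lines $\ell$ (1-dimensional subspaces of $\bF_q^k$) with $\ell \not\subseteq W_0$ and $W_0 + \ell \in X(i+1)$, whereas the vertices of $\bfG_{X_{W_0}}$ are the $(i+2)$-dimensional subspaces $W \supsetneq W_0$ with $W \in X(i+1)$. For each such $W$, the number of lines $\ell \subseteq W$ not contained in $W_0$ is $(q^{i+2}-q^{i+1})/(q-1) = q^{i+1}$, so after picking, for each $W$, an arbitrary bijection $\sigma_W$ between these $q^{i+1}$ lines and $[q^{i+1}]$, the map $\phi(\ell) = (W_0 + \ell,\; \sigma_{W_0+\ell}(\ell))$ is a bijection of vertex sets.

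Next I verify that edges correspond under $\phi$. An unordered pair $\{\ell_1,\ell_2\}$ of vertices is a rank-1 face of $B_{b_i}$ iff $b_i \cup \{\ell_1,\ell_2\}$ is a basis of some element of $X(i+2)$; writing $W_j = W_0 + \ell_j$, this condition is equivalent to $W_1 \neq W_2$ (i.e.\ linear independence modulo $b_i$) together with $W_1 + W_2 \in X(i+2)$. The analogous condition for an edge $\{W_1,W_2\}$ in $\bfG_{X_{W_0}}$ is that some covering $U \in X_{W_0}(1)$ dominates both; by dimension counting $U$ must equal $W_1+W_2$, giving exactly the same criterion. Because $\bfG_{\text{complete}}^{(q^{i+1})}$ has self loops and all cross edges, tensoring preserves this criterion and promotes each edge of $\bfG_{X_{W_0}}$ to a complete bipartite connection between the two size-$q^{i+1}$ fibers above $W_1$ and $W_2$; simultaneously, the absence of self loops in $\bfG_{X_{W_0}}$ rules out within-fiber edges in the tensor, matching the fact that two distinct lines $\ell_1 \neq \ell_2$ with $W_0+\ell_1 = W_0+\ell_2$ fail to be linearly independent modulo $b_i$ and so never form a rank-1 face of $B_{b_i}$.

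For weights, the edge weight $m_{\bfG_{B_{b_i}}}(\{\ell_1,\ell_2\})$ equals $m_{B_{b_i}}(\{\ell_1,\ell_2\})$, since that pair is the unique rank-1 link face containing both vertices; by the definition of basisification this value is proportional, with a constant depending only on $i$ and the overall normalization of the link, to $m_X(W_1+W_2)$. The tensor product weight at $\{(W_1,s_1),(W_2,s_2)\}$ factorizes as $m_{\bfG_{X_{W_0}}}(\{W_1,W_2\}) \cdot m_{\text{complete}}(s_1,s_2)$, whose first factor is likewise proportional to $m_X(W_1+W_2)$ and whose second factor is uniform in $(s_1,s_2)$. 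Both weight functions are therefore proportional, via $\phi$, to the same function $m_X(W_1+W_2)$ on the common edge set, and since both normalize to give probability distributions (equivalently, induce the same random walk transition probabilities), the two proportionality constants coincide and we obtain the claimed isomorphism. The main obstacle is purely bookkeeping: one must keep the tensor-product conventions for graphs with and without self loops straight and verify that the link weight $m_{B_{b_i}}$ and the tensor weight collapse to the same function under $\phi$; once one observes that both edge weights depend on $\{\ell_1,\ell_2\}$ only through $W_1+W_2$, the proof is essentially complete.
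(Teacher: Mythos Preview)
Your proof is correct and follows essentially the same approach as the paper's: you build the same fiber bijection $\phi(\ell)=(W_0+\ell,\sigma_{W_0+\ell}(\ell))$, use the same counting $(q^{i+2}-q^{i+1})/(q-1)=q^{i+1}$ for the fiber size, identify edges via $\spn\{b_i,\ell_1,\ell_2\}\in X(i+2)$, and observe that both edge weights are proportional to $m_X(W_1+W_2)$. The paper handles the final normalization by declaring that the complete-graph weights are scaled by an appropriate constant, which is the same content as your last paragraph.
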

\begin{proof}
  Fix $b_i\in B(i)$ and let $x_i=\spn b_i$. Then $V(\bfG_{B_{b_i}})$ is by definition the set of $b_0\in B(0)$ such that $\spn\{b_i,b_0\}\in X(i+1)$, or equivalently, such that $\spn\{b_i,b_0\}\in X_{x_i}(0)=V(\bfG_{X_{x_i}})$. Thus we may construct a bijection
  \begin{equation*}
    \phi:V(\bfG_{B_{b_i}})\rightarrow V(\bfG_{X_{\spn b_i}}\otimes\bfG_{\text{complete}}^{(q^{i+1})}) = V(\bfG_{X_{x_i}})\times[q^{i+1}]
  \end{equation*}
  such that for each $x_{i+1}\in X(i+1)$ that contains $x_i$, the map $\phi$ sends the $(q^{i+2}-q^{i+1})/(q-1)=q^{i+1}$ lines $b_0\in B(0)$ with $\spn\{b_i,b_0\}=x_{i+1}$ to the $q^{i+1}$ elements of $\{x_{i+1}\}\times[q^{i+1}]$; any bijection between these $q^{i+1}$-element vertex subsets will suffice. Now observe that $\phi$ induces our desired graph isormorphism, as $\{b_0,b_0'\}\in E(\bfG_{B_{b_i}})$ if and only if $\spn\{b_i,b_0,b_0'\}=\spn\{\spn\{b_i,b_0\},\spn\{b_i,b_0'\}\}\in X(i+2)$. More formally,
  \begin{equation*}
    m_{\bfG_{B_{b_i}}}(\{b_0,b_0'\}) = m_{\bfG_{X_{\spn b_i}}\otimes\bfG_{\text{complete}}^{(q^{i+1})}}(\{\phi(b_0),\phi(b_0')\}),
  \end{equation*}
  as both sides above are by definition proportional to $m_X(\spn\{b_i,b_0,b_0'\})$, and the equality then holds assuming we normalize the edge weights in $\bfG_{\text{complete}}^{(q^{i+1})}$ by an appropriate constant.
\end{proof}

\begin{corollary}
  \label{cor:basexp}
  Let $X$ be a rank-$r$ $\bF_q$-Grassmannian complex with basisification $B=\beta(X)$. For every $-1\leq i\leq r-2$,
  \begin{equation*}
    \lambda^{(i)}(B) = \lambda^{(i)}(X).
  \end{equation*}
\end{corollary}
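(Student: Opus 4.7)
The plan is to deduce this corollary essentially for free from the preceding lemma, which identifies $\bfG_{B_{b_i}}$ with the tensor product $\bfG_{X_{\spn b_i}} \otimes \bfG_{\text{complete}}^{(q^{i+1})}$. The key observation is that tensoring with the complete graph with self-loops does not change the nontrivial spectrum of a random walk matrix, so $\bfG_{B_{b_i}}$ and $\bfG_{X_{\spn b_i}}$ have equal spectral expansion.

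To carry this out, first I would compute the spectrum of $\bfG_{\text{complete}}^{(N)}$: its random walk matrix equals $\frac{1}{N} J$, where $J$ is the all-ones matrix, so its eigenvalues are $1$ (with eigenvector $\mathbf{1}$) and $0$ (with multiplicity $N-1$). Next, I would invoke the standard fact that the random walk matrix of a tensor product of graphs equals the tensor product of the individual random walk matrices, whose spectrum consists of all pairwise products of eigenvalues from the two factors. Applying this to $\bfG_{B_{b_i}} \cong \bfG_{X_{\spn b_i}} \otimes \bfG_{\text{complete}}^{(q^{i+1})}$, every eigenvalue is of the form $\mu \cdot \nu$ where $\mu$ is an eigenvalue of $\bfG_{X_{\spn b_i}}$ and $\nu \in \{1, 0\}$. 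Removing the top eigenvalue $1 = 1 \cdot 1$, the remaining nonzero eigenvalues are exactly the nontrivial eigenvalues of $\bfG_{X_{\spn b_i}}$, giving $\lambda(\bfG_{B_{b_i}}) = \lambda(\bfG_{X_{\spn b_i}})$.

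Finally, I would conclude by taking maxima over links. Since every element $x_i \in X(i)$ admits at least one basis $b_i \in B(i)$ with $\spn b_i = x_i$, and conversely each $b_i \in B(i)$ spans some element of $X(i)$, the map $b_i \mapsto \spn b_i$ is surjective from $B(i)$ onto $X(i)$, so
\begin{equation*}
  \lambda^{(i)}(B) = \max_{b_i \in B(i)} \lambda(\bfG_{B_{b_i}}) = \max_{b_i \in B(i)} \lambda(\bfG_{X_{\spn b_i}}) = \max_{x_i \in X(i)} \lambda(\bfG_{X_{x_i}}) = \lambda^{(i)}(X),
\end{equation*}
as desired. The boundary case $i = -1$ is handled automatically, since then $q^{i+1} = 1$ and the tensor product with a single-vertex graph is trivial. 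There is no real obstacle here: the preceding lemma does the structural work, and the corollary reduces to a short spectral calculation together with the elementary observation that basisification is surjective onto faces of $X$.
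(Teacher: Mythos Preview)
Your proposal is correct and matches the paper's intended argument: the paper states the corollary without proof, treating it as immediate from the preceding lemma, and your spectral calculation (that tensoring with $\bfG_{\text{complete}}^{(N)}$ leaves the nontrivial spectrum unchanged) together with the surjectivity of $b_i \mapsto \spn b_i$ is exactly the routine verification the paper omits.
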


Corollary~\ref{cor:basexp} implies that a family of constant-degree Grassmannian HDXs immediately yields a family of constant-degree simplicial HDXs by passing to the basisification, so in some sense, Grassmannian HDXs are the strictly harder object to construct. Yet in this paper we nevertheless use polynomial-degree Grassmannian HDXs to construct subpolynomial-degree simplicial HDXs. The key observation, presented in Section~\ref{sec:caycomplex}, is that Grassmannian complexes have an alternative notion of sparsity, namely sparsity within the ambient vector space, which corresponds to degree only upon passing to a Cayley complex.

\subsection{Chain complexes and homology}
\label{sec:topprelim}
In this section, we describe the notions of chain complexes and homology groups. For the purpose of this paper, we restrict attention to chain complexes over $\bF_2$ that are associated to simplicial complexes.

\begin{definition}
  Let $X$ be a rank-$r$ simplicial complex. The \textbf{chain complex $\cC_*(X)$ over $\bF_2$ associated to $X$} is the sequence of $\bF_2$ vector spaces and boundary maps
  \begin{equation*}
    C_r(X) \xrightarrow{\partial_r} C_{r-1}(X) \xrightarrow{\partial_{r-1}} \cdots \xrightarrow{\partial_0} C_{-1}(X),
  \end{equation*}
  where $C_i(X)=\bF_2^{X(i)}$ and $\partial_i:C_i(X)\rightarrow C_{i-1}(X)$ is defined by its action on basis vectors $\1_{x_i}\in C_i$ for $x_i\in X(i)$ by the formula $\partial_i\1_{x_i}=\sum_{x_{i-1}\in X(i-1):x_{i-1}\prec x_i}\1_{x_{i-1}}$. In particular for all $i$ it holds that $\partial_{i-1}\partial_i=0$. We furthermore define the following notation.
  \begin{itemize}
  \item Elements of $C_i(X)$ are called \textbf{$i$-chains}.
  \item Elements of $Z_i(X)=\ker\partial_i$ are called \textbf{$i$-cycles}.
  \item Elements of $B_i(X)=\im\partial_{i+1}$ are called \textbf{$i$-boundaries}.
  \item Because $\partial_{i-1}\partial_i=0$, it holds that $B_i(X)\subseteq Z_i(X)$. Therefore there is a well defined group $H_i(X)=Z_i(X)/B_i(X)$, which is called the \textbf{$i$-homology group}.
  \end{itemize}
\end{definition}

\section{Cayley simplicial complexes}
\label{sec:caycomplex}
In this section, we generalize Cayley graphs to simplicial complexes. As with Cayley graphs, these Cayley complexes have a transitive group action on the vertices, so that in particular the links of all the vertices are isomorphic. Various different types of Cayley simplicial complexes have been constructed in prior works, e.g.~\cite{lubotzky_ramanujan_2005,lubotzky_explicit_2005,vadhan_construction_2018,conlon_hypergraph_2019,conlon_hypergraph_2020}. However, our presentation is slightly different, as in Section~\ref{sec:cayleyF2k} we show how to use Grassmannian HDXs to generate Cayley simplicial complexes.

\begin{definition}
  \label{def:cayleycomplex}
  Let $G$ be a group and $r\geq 0$ be an integer. Let $S\subseteq 2^{G\setminus\{1_G\}}$ be a rank-$r$ simplicial complex on vertex set $G\setminus\{1_G\}$ that satisfies the following \textbf{Cayley symmetry condition}: for every face $s\in S$ and every vertex $g\in s$, then $g^{-1}(s\cup\{1_G\})\setminus \{1_G\}\in S$ is a face of weight $m_S(g^{-1}(s\cup\{1_G\})\setminus \{1_G\})=m_S(s)$. Then define the \textbf{Cayley simplicial complex over $G$ generated by $S$} to be the rank-$(r+1)$ simplicial complex with vertex set $G$ given by
  \begin{equation*}
    \Cay(G,S) = \{g(s\cup\{1_G\}):g\in G,s\in S\}.
  \end{equation*}
  with weight function
  \begin{equation*}
    m_{\Cay(G,S)}(g(s\cup\{1_G\})) = \frac{|s|+1}{|G|}\cdot m_S(s).
  \end{equation*}
\end{definition}

% \begin{definition}
%   \label{def:cayleycomplex}
%   Fix a group $G$ and an integer $d\geq 1$. A \textbf{$d$-dimensional Cayley simplicial complex} $X\subseteq 2^G$ is a simplicial complex on vertex set $G$ such that there exists a collection $\cS$ of subsets of $G$ satisfying:
%   \begin{enumerate}
%   \item For every $S\in \cS$, then $1_G\in S$ and $|S|=d+1$
%   \item \label{it:cayleycond} For every $S\in \cS$ and $g\in S$, then $g^{-1}S\in\cS$,
%   \end{enumerate}
%   and such that
%   \begin{equation*}
%     X(d) = \{gS:g\in G,S\in\cS\}.
%   \end{equation*}
%   If the above conditions are satisfied, we write $X=\Cay(G,\cS)$.
% \end{definition}

The following lemma verifies that the weight function in Definition~\ref{def:cayleycomplex} is well defined and standard (see Definition~\ref{def:poset}).

\begin{lemma}
  Let $X=\Cay(G,S)$ be a Cayley simplicial complex as in Definition~\ref{def:cayleycomplex}. Then $m_X$ is a well defined weight function, and if $m_S$ is standard then $m_X$ is standard.
\end{lemma}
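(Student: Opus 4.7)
The plan is to verify in sequence: well-definedness of $m_X$ on each face regardless of its representation, that each restriction $m_{X(i)}$ is a probability distribution, and that standardness of $m_S$ passes to $m_X$. The key ingredient for the first step is the Cayley symmetry condition; the remaining steps are then double counting and direct substitution.

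First, for well-definedness, suppose a face $f \in X$ has two representations $f = g(s \cup \{1_G\}) = g'(s' \cup \{1_G\})$ with $g,g' \in G$ and $s,s' \in S$. Since $1_G$ lies in both $s \cup \{1_G\}$ and $s' \cup \{1_G\}$, both $g$ and $g'$ lie in $f$. In particular $|s| = |f|-1 = |s'|$. Setting $h = g^{-1}g'$, we have $h(s' \cup \{1_G\}) = s \cup \{1_G\}$, which forces $h \in s \cup \{1_G\}$. If $h = 1_G$ then $s = s'$; otherwise $h$ is a vertex of $s$, and the Cayley symmetry condition applied to $h \in s$ yields that $h^{-1}(s \cup \{1_G\}) \setminus \{1_G\} \in S$ has the same $m_S$-weight as $s$. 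But this face is exactly $s'$, so $m_S(s) = m_S(s')$, and therefore $\frac{|s|+1}{|G|} m_S(s)$ depends only on $f$.

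Second, for the probability distribution property at level $i$, I would double count pairs $(g, s) \in G \times S(i-1)$. Each such pair produces a face $g(s \cup \{1_G\}) \in X(i)$, and by the preceding argument each face $f \in X(i)$ arises from exactly $|f| = i+1$ such pairs (one for each $g \in f$, with the corresponding $s$ all having the same $m_S$-weight). Writing $s_f$ for any valid choice, this gives
\begin{equation*}
  (i+1) \sum_{f \in X(i)} m_S(s_f) \;=\; \sum_{g \in G} \sum_{s \in S(i-1)} m_S(s) \;=\; |G|,
\end{equation*}
so $\sum_{f \in X(i)} m_X(f) = 1$, as required.

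Third, assuming $m_S$ is standard, I would verify the sampling identity $m_X(x_i) = \sum_{x_{i+1} \succ x_i} m_X(x_{i+1})/(i+2)$ for each $x_i \in X(i)$. Fix a representation $x_i = g(s \cup \{1_G\})$ with $s \in S(i-1)$. Every $x_{i+1} \in X(i+1)$ dominating $x_i$ contains $g$, so by the Cayley symmetry argument it can be written uniquely as $x_{i+1} = g(s' \cup \{1_G\})$ for some $s' \in S(i)$ with $s' \supset s$; and conversely every such $s'$ produces a distinct $x_{i+1}$. Substituting the formula for $m_X(x_{i+1})$ and invoking $\sum_{s' \in S(i),\, s' \supset s} m_S(s') = (i+1) m_S(s)$ (the standardness of $m_S$ at level $i-1$) gives the desired identity after cancellation. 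The only nontrivial ingredient throughout is the well-definedness step, where the Cayley symmetry condition does the essential work; everything else is bookkeeping.
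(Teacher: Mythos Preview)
Your proof is correct and follows essentially the same double-counting and substitution argument as the paper. The one notable difference is that you explicitly verify well-definedness of the formula $m_X(g(s\cup\{1_G\}))=\frac{|s|+1}{|G|}m_S(s)$ across different representations of the same face using the Cayley symmetry condition, whereas the paper's proof takes this for granted and proceeds directly to the probability-distribution and standardness computations; your extra step makes the argument more self-contained without changing the overall approach.
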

\begin{proof}
  To see that $m_X$ is a well defined weight function, it suffices to show that for each rank $i$, the restriction $m_{X(i)}=m_X|_{X(i)}$ is a probability distribution. This fact holds as
  \begin{align*}
    \sum_{x_i\in X(i)}m_X(x_i)
    &= \frac{1}{i+1}\sum_{g\in G}\sum_{s_{i-1}\in S(i-1)}m_X(g(s_{i-1}\cup\{1_G\})) \\
    &= \frac{1}{i+1}\sum_{g\in G}\sum_{s_{i-1}\in S(i-1)}\frac{i+1}{|G|}\cdot m_S(s_{i-1}) \\
    &= \sum_{s_{i-1}\in S(i-1)}m_S(s_{i-1}) \\
    &= 1,
  \end{align*}
  where the $1/(i+1)$ factor in the first expression on the right above arises because we count each face $x_i$ a total of $i+1$ times, once for each vertex $g\in x_i$.

  Now assume that $m_S$ is standard. The faces $x_{i+1}\in X(i+1)$ containing a given $x_i=g(s_{i-1}\cup\{1_G\})\in X(i)$ are by definition those faces of the form $x_i=g(s_i\cup\{1_G\})$ for $s_i\in S(i)$ that contain $s_{i-1}$. Thus if we sample $x_{i+1}\sim m_{X(i+1)}$ and then sample $x_i\sim\Unif\{x_i'\in X(i):x_i'\prec x_{i+1}\}$, the probability of obtaining a given $x_i=g(s_{i-1}\cup\{1_G\})\in X(i)$ is
  \begin{align*}
    \Pr[x_i]
    &= \sum_{x_{i+1}\in X(i+1):x_{i+1}\succ x_i}\frac{1}{i+2}\cdot m_X(x_{i+1}) \\
    &= \sum_{s_i\in S(i):s_i\succ s_{i-1}}\frac{1}{i+2}\cdot\frac{i+2}{|G|}\cdot m_S(s_i) \\
    &= \frac{i+1}{|G|}\cdot m_S(s_{i-1}) \\
    &= m_X(x_i),
  \end{align*}
  where the third equality above holds by the assumption that $m_S$ is standard. Thus $m_X$ is standard.
\end{proof}

Definition~\ref{def:cayleycomplex} directly generalizes undirected Cayley graphs to simplicial complexes, as the $r=0$ case recovers undirected Cayley graphs. In the $r=0$ case, the condition that $g\in s\in S\implies g^{-1}(s\cup\{1_G\})\setminus \{1_G\}\in S$ simply requires that the generating set be symmetric. For $r\geq 1$, this condition provides the high-dimensional generalization: for every $s\in S$ and $g\in s$, then the face $g^{-1}(s\cup\{1_G\})$ must belong to $\Cay(G,S)$ and must contain $1_G$, so this face must belong to the generating set $S$.

The following fact is a direct consequence of Definition~\ref{def:cayleycomplex}:

\begin{lemma}
  \label{lem:cayleylinks}
  Let $X=\Cay(G,S)$. Then every vertex $g\in X(0)=G$ has link $X_g\cong S$.
\end{lemma}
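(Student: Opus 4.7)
The plan is to construct an explicit poset isomorphism $\phi: X_g \to S$ given by $\phi(x) = g^{-1} x \setminus \{1_G\}$, with inverse $\phi^{-1}(s) = g(s \cup \{1_G\})$. Since the definition of $X = \Cay(G,S)$ is symmetric in the group action, we will use left multiplication by $g^{-1}$ to transport faces containing $g$ to faces of $S$ together with $1_G$; the Cayley symmetry condition is exactly what ensures this transport lands in $S$.

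First I would check that $\phi$ is well defined, that is, $\phi(x) \in S$ whenever $x \in X_g$. Given such an $x$, by definition of $X$ there exist $h \in G$ and $s \in S$ with $x = h(s \cup \{1_G\})$. Since $g \in x$, we have $h^{-1} g \in s \cup \{1_G\}$. If $h^{-1}g = 1_G$ then $g = h$ and $\phi(x) = s \in S$. Otherwise $h^{-1}g \in s$, in which case the Cayley symmetry condition applied with the face $s$ and the vertex $h^{-1}g$ gives $(h^{-1}g)^{-1}(s \cup \{1_G\}) \setminus \{1_G\} \in S$, and this set equals $g^{-1} x \setminus \{1_G\} = \phi(x)$. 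In either case $\phi(x) \in S$, and $|\phi(x)| = |x| - 1$, so $\rank_S(\phi(x)) = |x| - 2 = \rank_X(x) - 1 = \rank_{X_g}(x)$ as required.

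Next I would verify that $\phi$ is a bijection that preserves the partial order. Bijectivity is immediate from the formula for $\phi^{-1}$, noting that $\phi^{-1}(s) = g(s \cup \{1_G\})$ is a face of $X$ containing $g$ by the definition of $\Cay(G,S)$. The order-preserving property follows because left multiplication by $g^{-1}$ is a bijection on $G$, so $x \subseteq x'$ if and only if $g^{-1}x \subseteq g^{-1}x'$, and removing the common element $1_G$ preserves this inclusion.

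Finally I would check that $\phi$ preserves the weight function, which is the only mildly calculational step. For $x \in X_g$ with $\phi(x) = s$ we have, by the weight formula in Definition~\ref{def:cayleycomplex} (and using that $m_S$ is well-defined on $s$ by the Cayley symmetry condition guaranteeing $m_S(s)$ is independent of which vertex of $x$ we base at), $m_X(x) = \frac{|s|+1}{|G|} m_S(s)$. Summing over the rank-$i$ faces of $X_g$, which are in bijection under $\phi$ with $S(i)$,
\begin{equation*}
\sum_{x'' \in X_g(i)} m_X(x'') = \sum_{s'' \in S(i)} \frac{i+2}{|G|} m_S(s'') = \frac{i+2}{|G|}.
\end{equation*}
Dividing, $m_{X_g}(x) = m_S(\phi(x))$, so $\phi$ is an isomorphism of weighted posets. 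The only step requiring genuine thought is the first one, where the Cayley symmetry condition must be invoked to keep the image inside $S$; all other steps are formal consequences of the definitions.
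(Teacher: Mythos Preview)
Your proof is correct and takes essentially the same approach as the paper: both use the bijection $s \leftrightarrow g(s\cup\{1_G\})$ between $S$ and the faces of $X$ containing $g$. The paper's version is a two-line sketch, while you have carefully verified (in particular invoking the Cayley symmetry condition in the case $h\neq g$) that every face containing $g$ can indeed be rewritten in the form $g(s\cup\{1_G\})$ for some $s\in S$, and computed the weight normalization explicitly.
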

\begin{proof}
  The faces containing $g\in X(0)$ are exactly those of the form $g(s\cup\{1_G\})=gs\cup\{g\}$ for $s\in S$. Furthermore, by definition $m_{X_g}(g(s\cup\{1_G\}))=m_S(s)$. Thus $X_g=gS\cong S$.
\end{proof}

\subsection{Cayley simplicial HDXs from Grassmannian HDXs}
\label{sec:cayleyF2k}
In this section, we consider the Cayley complexes of Definition~\ref{def:cayleycomplex} for the special case where $G=\bF_2^k$. In this case, we show below that if $S$ is the basisification of a $\bF_2$-Grassmannian complex, then $S$ must satisfy the symmetry condition in Definition~\ref{def:cayleycomplex}. % TODO: does the opposite direction (symmetry condition implies basisification) hold?

\begin{lemma}
  \label{lem:cayleyF2k}
  Let $S=\beta(X)$ be the basisification of a rank-$r$ $\bF_2$-Grassmannian complex $X$ in ambient vector space $\bF_2^k$. Then $S$ satisfies the Cayley symmetry condition in Definition~\ref{def:cayleycomplex}. Furthermore, for all $0\leq i\leq r+1$,
  \begin{equation*}
    \lambda^{(i)}(\Cay(\bF_2^k,S)) = \lambda^{(i-1)}(X),
  \end{equation*}
  so that in particular if $\spn X(0)=\bF_2^k$, then
  \begin{equation*}
    \lambda(\Cay(\bF_2^k,S)) \geq \frac{\lambda(X)}{1-\lambda(X)}.
  \end{equation*}
  % For $k\geq 1$, let $S\subseteq 2^{\bF_2^k\setminus\{0\}}$ be a rank-$r$ simplicial complex on vertex set $\bF_q^k\setminus\{0\}$. Then $S$ satisfies the Cayley symmetry condition in Definition~\ref{def:cayleycomplex} if and only if $S$ is the basisification of some rank-$r$ Grassmannian complex in ambient vector space $\bF_2^k$.
\end{lemma}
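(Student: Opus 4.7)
The plan is to prove the three conclusions in order: the Cayley symmetry condition for $S=\beta(X)$, the rank-by-rank identification of local expansions $\lambda^{(i)}(Y)=\lambda^{(i-1)}(X)$, and the final comparison between $\lambda(\Cay(\bF_2^k,S))$ and $\lambda(X)/(1-\lambda(X))$.

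First I would verify the Cayley symmetry condition for $S=\beta(X)$. A face $s\in S(i)$ is by definition a basis of a rank-$i$ subspace $x=\spn s\in X(i)$, and any $g\in s$ lies in $x$. Because $\bF_2^k$ has characteristic $2$, the translated set $g^{-1}(s\cup\{0\})\setminus\{0\} = \{g+h : h\in s,\; h\neq g\}$ consists of $i+1$ nonzero vectors whose span is still $x$, so it is another basis of $x$ and therefore lies in $\beta(X)=S$. The weight condition $m_S(g^{-1}(s\cup\{0\})\setminus\{0\})=m_S(s)$ is immediate, since by the definition of basisification $m_S(b)$ depends only on $\spn b\in X$ and on $|b|$.

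Second, for the equality $\lambda^{(i)}(Y)=\lambda^{(i-1)}(X)$ with $Y=\Cay(\bF_2^k,S)$, I would exploit the vertex-transitive translation action of $\bF_2^k$. For any face $x_i\in Y(i)$ and any $v\in x_i$, translation by $v$ sends $x_i$ to a face containing $0$ and induces a weight-preserving isomorphism of links. So without loss of generality $0\in x_i$, in which case $x_i=\{0\}\cup t$ for some $t\in S(i-1)$. Every face of $Y$ dominating $x_i$ contains $0$ and thus has the form $\{0\}\cup t'$ with $t'\supseteq t$ in $S$. Sending $\{0\}\cup t'\mapsto t'$ then gives a rank- and weight-preserving poset isomorphism $Y_{x_i}\cong S_t$, and hence an isomorphism of 1-skeleton graphs with equal spectral expansions. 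Maximizing over $x_i\in Y(i)$ yields $\lambda^{(i)}(Y)=\lambda^{(i-1)}(S)$, and Corollary~\ref{cor:basexp} then upgrades this to $\lambda^{(i-1)}(S)=\lambda^{(i-1)}(X)$.

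Third, for the displayed bound I would apply the trickle-down Theorem~\ref{thm:tricklesimp} to $Y$. The hypothesis $\spn X(0)=\bF_2^k$ ensures that the 1-skeleton of $Y$, which is the Cayley graph on $\bF_2^k$ generated by $X(0)$, is connected; connectedness of higher-rank link 1-skeletons in $Y$ follows from the identification $Y_{x_i}\cong S_t$ once the corresponding links of $X$ are connected (which is already implicit in $\lambda(X)<1$). Trickle-down then gives $\lambda^{(-1)}(Y)\leq \lambda^{(0)}(Y)/(1-\lambda^{(0)}(Y))$. Substituting $\lambda^{(0)}(Y)=\lambda^{(-1)}(X)$ and using monotonicity of $t\mapsto t/(1-t)$ on $[0,1)$ together with $\lambda^{(-1)}(X)\leq\lambda(X)$ turns this into $\lambda^{(-1)}(Y)\leq\lambda(X)/(1-\lambda(X))$. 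Combined with the equalities $\lambda^{(i)}(Y)=\lambda^{(i-1)}(X)\leq\lambda(X)\leq\lambda(X)/(1-\lambda(X))$ for $i\geq 0$ from the previous step, the maximum defining $\lambda(Y)$ is bounded by $\lambda(X)/(1-\lambda(X))$, giving the displayed comparison (in the form $\lambda(Y)\leq\lambda(X)/(1-\lambda(X))$ that trickle-down naturally produces).

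The only obstacle worth flagging is verifying that the poset isomorphism $Y_{x_i}\cong S_t$ preserves weights, which reduces to reconciling the factor $(i+1)/|\bF_2^k|$ from the definition of $m_Y$ with the basisification normalization $N_i$ defining $m_S$; this is routine bookkeeping once the bijection is in hand. The remainder of the argument is a direct invocation of trickle-down.
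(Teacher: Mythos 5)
Your proof is correct and follows the same route as the paper's: verify the Cayley symmetry of $\beta(X)$, identify links of $\Cay(\bF_2^k,S)$ with links of $S$ (the paper cites Lemma~\ref{lem:cayleylinks}), apply Corollary~\ref{cor:basexp}, and then invoke trickle-down. You are also right that trickle-down yields $\lambda(\Cay(\bF_2^k,S))\leq\lambda(X)/(1-\lambda(X))$, and indeed this $\leq$ direction is what is used downstream in Theorem~\ref{thm:maincayley}; the $\geq$ in the lemma statement appears to be a typo.
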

\begin{proof}
  For every face $s=\{s_0^{(0)},\dots,s_0^{(i)}\}\in S$, and for every $s_0^{(j)}\in s$, then the set
  \begin{equation*}
    s_0^{(j)}+(s\cup\{0\})\setminus\{0\} = \{s_0^{(j)}+s_0^{(j')}:j'\neq j\}\cup\{s_0^{(j)}\}
  \end{equation*}
  has the same span as $s$, so it is also a basis for $\spn s$, and thus belongs to $S=\beta(X)$ with the same weight as $s$. Therefore $S$ satisfies the Cayley symmetry condition.
  % Now assume that $S$ satisfies the Cayley symmetry condition. Let $X$ be the rank-$r$ Grassmannian complex in ambient vector space $\bF_2^k$ given by the downward closure of $X(r)=\{\spn s_r:s_r\in S(r)\}$, with weight function $m_X(x_r)=\sum_{s_r\in S(r):x_r=\spn s_r}m_S(s_r)$. We want to show that $S$ is the basisification of $X$. For every $x_r\in X(r)$, then by definition $x_r=\spn s_r$ for some $s_r\in S(r)$. Furthermore, by the Cayley symmetry condition, $S(r)$ also contains all faces $s_r'$ obtained by choosing some $s_0\in s_r$ and adding $s_0$ to all $s_0'\in s_r\setminus\{s_0\}$; all such faces $s_r'$ by definition have the same weight as $s_r$. Repeating this procedure allows us to obtain any basis of $x_r$ from $s_r$ (TODO: problem is it's not clear that this is true), so all bases of $x_r$ lie in $S(r)$ and have the same weight as $s_r$. Thus $S(r)=\beta(X)(r)$, and then taking the downward closure, it follows that $S=\beta(X)$ as desired.

  The first expansion statement follow directly from Lemma~\ref{lem:cayleylinks}, which shows that the links of rank-$i$ faces in the $\Cay(\bF_2^k,S)$ are links of rank-$(i-1)$ faces in $S$, and from Corollary~\ref{cor:basexp} which shows that $X$ and its basisification $S$ have the same local expansion. The second expansion statement then follows from the trickle-down Theorem~\ref{thm:tricklesimp}, where the condition $\spn X(0)=\bF_2^k$ implies that the 1-skeleton graph of $\Cay(\bF_2^k,S)$ is connected.
\end{proof}

The degree of $\Cay(\bF_2^k,\beta(X))$ is dictated by the sparsity of $X$ within its ambient vector space. Specifically, $\Cay(\bF_2^k,\beta(X))$ is a $2^k$-vertex simplicial complex whose 1-skeleton graph has degree equal to the number $|X(0)|$ of vertices in $X$. Thus assuming $\spn X(0)=\bF_2^k$ so that the Cayley complex is connected, then to construct low-degree rank-$(r+1)$ Cayley simplicial HDXs, it suffices to construct rank-$r$ Grassmannian HDXs that are sparse within their ambient vector space. An optimally sparse Grassmannian HDX $X$ would have $|X(0)|=\Theta(k)$, which would yield a simplicial HDX $\Cay(\bF_2^k,\beta(X))$ of degree logarithmic in the number of vertices. In this paper we only obtain Grassmannian HDXs $X$ with $|X(0)|=2^{\Theta(\sqrt{k})}$, but even this level of sparsity yields simplicial HDXs of degree subpolynomial in the number of vertices.

% \subsection{2-dimensional Cayley HDXs from rank-1 Grassmannian expanders}
% In this section, we consider the Cayley complexes of Definition~\ref{def:cayleycomplex} for the special case where $d=2$ and $G=\bF_2^n$. In this case, given a generating set $S$ with $s=\{g_1,g_2\}\in S$, then the symmetry condition of Definition~\ref{def:cayleycomplex} requires that $\{g_1,g_1+g_2\}\in S$ and $\{g_2,g_1+g_2\}\in S$. That is, the generating set $S$ is the union of any collection of triangles $\{g_1,g_2,g_1+g_2\}$ in $\bF_2^n\setminus\{0\}$. But the union of these triangles is exactly the graph given by a rank-1 Grassmannian poset, where each triangle corresponds to a face of rank $1$, that is, a subspace of $\bF_2^n$ of dimension $2$. Thus we have the following result.

% \begin{proposition}
%   \label{prop:cayleyF2n}
%   Let $X$ be a rank-1 Grassmannian poset on $\bF_2^n$. Viewing $X$ as a graph (that is, as the union of triangles given by faces in $X(1)$), then $\Cay(\bF_2^n,X)$ is a Cayley complex whose local expansion equals the global expansion of $X$.
% \end{proposition}
% \begin{proof}
%   The result is a direct consequence of Lemma~\ref{lem:cayleylinks}.
% \end{proof}

\section{Grassmannian HDXs from low-rank matrices}
\label{sec:grassHDX}
In this section, we present our main construction of Grassmannian high-dimensional expanders, and analyze their sparsity and expansion properties. Although the Grassmannian complexes we construct are not low-degree, as their 1-skeleton graph is dense, they are sufficiently sparse within their ambient vector space to generate subpolynomial-degree Cayley simplicial HDXs via Lemma~\ref{lem:cayleyF2k}.
% That is, the $N$-vertex complexes we construct have 1-skeleton graph of degree roughly $\sqrt{N}$, but span an ambient vector space of dimension $\Theta(\log N)^2$. In contrast, the $N$-vertex complete Grassmannian complex has 1-skeleton graph of degree $N-1$, and spans a vector space of dimension $\log(N+1)$. This advantage in sparsity for our construction implies that when we take the Cayley complex it generates using Lemma~\ref{lem:cayleyF2k}, we obtain a \textit{simplicial} high-dimensional expander of subpolynomial degree.

\subsection{Construction}
\label{sec:constructmain}
Below we present our main construction of Grassmannian HDXs from low-rank matrices. The Grassmannian posets are parametrized by parameters $r,b,n$, where we typically think of $r,b$ as fixed constants while $n$ grows large. Throughout this section and the subsequent sections, we always let $q=2^b$.

\begin{definition}
  \label{def:grassconstruct}
  Given integers $r\geq 1$, $b\geq 1$, $n\geq 2^{r+1}$, let $X=X^{r,b,n}$ be the rank-$r$ $\bF_2$-Grassmannian poset defined as follows. Let $q=2^b$. The ambient vector space for $X$ is $\bF_q^n\otimes_{\bF_q}\bF_q^n\cong\bF_2^{bn^2}$, where we use the fact that $(\bF_q^n)^{\otimes 2}$ may be viewed as a vector space over $\bF_2\subseteq\bF_q$. Let $G_{\text{Had}}^{(r)}\in\bF_2^{(2^{r+1}-1)\times(r+1)}$ denote the generator matrix for the length-$(2^{r+1}-1)$ Hadamard code. That is, for $1\leq k\leq 2^{r+1}-1$, the $k$th row of $G_{\text{Had}}^{(r)}$ contains the binary representation of the integer $2^{r+1}-k$, with the most significant bit first. Then the rank-$r$ faces $x_r\in X(r)$ are those $(r+1)$-dimensional $\bF_2$-subspaces of $(\bF_q^n)^{\otimes 2}$ that can be expressed in the form 
\begin{align*}
  x_r
  &= \im(EG_{\text{Had}}^{(r)})
\end{align*}
for any matrix $E\in\bF_2^{bn^2\times(2^{r+1}-1)}$ of the form
\begin{align*}
  E
  &= \begin{pmatrix}
    e_1^{(1)}\otimes e_1^{(2)}&\cdots&e_{2^{r+1}-1}^{(1)}\otimes e_{2^{r+1}-1}^{(2)}
  \end{pmatrix},
\end{align*}
where the vectors $e_k^{(\ell)}\in\bF_q^n$ for $i\in[2^{r+1}-1]$, $j\in[2]$ are any vectors such that for each $j\in[2]$, all vectors in the set $\{e_1^{(j)},\dots,a_{2^{r+1}-1}^{(j)}\}$ are linearly independent.
\end{definition}

Recall that a rank-$0$ face of a $\bF_2$-Grassmannian complex is a 1-dimensional subspace $\{0,x_0\}$, which by abuse of notation may be viewed as a vertex given by the vector $x_0$. Then by definition, the complex $X=X^{r,b,n}$ in Definition~\ref{def:grassconstruct} has vertex set $X(0)$ containing those $x_0\in(\bF_q^n)^{\otimes 2}$ that can be expressed in the form
\begin{align*}
  x_0
  &= e_1^{(1)}\otimes e_1^{(2)}+\cdots+e_{2^r}^{(1)}\otimes e_{2^r}^{(2)},
\end{align*}
where the vectors $e_k^{(\ell)}\in\bF_q^n$ for $i\in[2^r]$, $j\in[2]$ are any vectors such that for each for each $j\in[2]$, all vectors in the set $\{e_1^{(j)},\dots,e_{2^r}^{(j)}\}$ are linearly independent.

More generally, let $G_{\text{Had}}^{(r,i)}\in\bF_2^{(2^{r+1}-1)\times(i+1)}$ denote the first $i+1$ columns of $G_{\text{Had}}^{(r)}$. Then we have the following characterization of $X(i)$.

\begin{lemma}
  \label{lem:lowerfaces}
  For $0\leq i\leq r$, the rank-$i$ faces $X(i)$ are all those $(i+1)$-dimensional $\bF_2$-subspaces of $(\bF_q^n)^{\otimes 2}$ that can be expressed in the form 
  \begin{align*}
    x_i
    &= \im(EG_{\text{Had}}^{(r,i)})
  \end{align*}
  for some matrix $E\in\bF_2^{bn^2\times(2^{r+1}-1)}$ of the form
  \begin{align*}
    E
    &= \begin{pmatrix}
      e_1^{(1)}\otimes e_1^{(2)}&\cdots&e_{2^{r+1}-1}^{(1)}\otimes e_{2^{r+1}-1}^{(2)}
    \end{pmatrix},
  \end{align*}
  where the vectors $e_k^{(\ell)}\in\bF_q^n$ for $i\in[2^{r+1}-1]$, $j\in[2]$ are any vectors such that for each $j\in[2]$, all vectors in the set $\{e_1^{(j)},\dots,a_{2^{r+1}-1}^{(j)}\}$ are linearly independent.
\end{lemma}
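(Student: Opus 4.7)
I would establish the claimed equality of sets by two inclusions. Because Definition~\ref{def:grassconstruct} specifies $X$ via its top-rank faces together with downward closure (as a Grassmannian complex), $X(i)$ for $i < r$ is precisely the set of $(i+1)$-dimensional $\bF_2$-subspaces of elements of $X(r)$.

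For the ``$\supseteq$'' direction, I would take any $E$ of the prescribed form and show that $\im(EG_{\text{Had}}^{(r,i)}) \in X(i)$. Setting $x_r := \im(EG_{\text{Had}}^{(r)}) \in X(r)$, the columns of $EG_{\text{Had}}^{(r,i)}$ are simply the first $i+1$ columns of $EG_{\text{Had}}^{(r)}$, which are $\bF_2$-linearly independent because $\dim_{\bF_2} x_r = r+1$. Hence $\im(EG_{\text{Had}}^{(r,i)})$ is an $(i+1)$-dimensional subspace of $x_r$, and therefore lies in $X(i)$ by downward closure.

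For the ``$\subseteq$'' direction, given $x_i \in X(i)$, I would first pick some $x_r \in X(r)$ with $x_i \subseteq x_r$, and write $x_r = \im(EG_{\text{Had}}^{(r)})$. Choosing a basis of $x_i$ then expresses $x_i = \im(EG_{\text{Had}}^{(r)} U)$ for some $U \in \bF_2^{(r+1)\times(i+1)}$ of column rank $i+1$. I would extend $U$ to an invertible matrix $\tilde U \in \bF_2^{(r+1)\times(r+1)}$ whose first $i+1$ columns form $U$, and then exploit the key observation that the rows of $G_{\text{Had}}^{(r)}$ are exactly the $2^{r+1}-1$ nonzero vectors of $\bF_2^{r+1}$. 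Right-multiplication by $\tilde U$ applies an invertible $\bF_2$-linear map to each row, which permutes the set of nonzero vectors; hence $G_{\text{Had}}^{(r)}\tilde U = P G_{\text{Had}}^{(r)}$ for some $(2^{r+1}-1)\times(2^{r+1}-1)$ permutation matrix $P$. Restricting to the first $i+1$ columns yields $G_{\text{Had}}^{(r)} U = P G_{\text{Had}}^{(r,i)}$, so
\begin{equation*}
x_i = \im(EG_{\text{Had}}^{(r)} U) = \im\bigl((EP)G_{\text{Had}}^{(r,i)}\bigr).
\end{equation*}
Setting $E' := EP$ just permutes the columns of $E$, so $E'$ still has the rank-1 tensor form with the required linear independence of $\{e_k^{(j)}\}_k$ for each $j\in[2]$ (since a permutation of an independent set is independent).

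The main conceptual step is the row-permutation identity $G_{\text{Had}}^{(r)}\tilde U = P G_{\text{Had}}^{(r)}$, which hinges on recognizing the rows of this Hadamard generator matrix as exhausting the nonzero vectors of $\bF_2^{r+1}$. Everything else is routine linear-algebra bookkeeping that propagates the prescribed tensor-product form through column permutations.
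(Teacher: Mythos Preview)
Your proposal is correct and follows essentially the same approach as the paper: both arguments hinge on the symmetry of the Hadamard code, namely that right-multiplying $G_{\text{Had}}^{(r)}$ by an invertible $\tilde U\in\GL_{r+1}(\bF_2)$ permutes its rows (since the rows exhaust $\bF_2^{r+1}\setminus\{0\}$), so any $(i+1)$-dimensional subspace of $\im(EG_{\text{Had}}^{(r)})$ equals $\im((EP)G_{\text{Had}}^{(r,i)})$ for some permutation $P$. Your write-up is in fact more explicit about the two inclusions and the bookkeeping than the paper's terse proof.
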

\begin{proof}
  The result follows from Definition~\ref{def:grassconstruct} because by the symmetry of the Hadamard code, any $(i+1)$-dimensional subspace $C\subseteq\im(G_{\text{Had}}^{(r)})$ of the Hadamard code can be obtained by permutating the coordinates of the subspace $\im(G_{\text{Had}}^{(r,i)})$. That is, there exists a permutation $\pi:[2^{r+1}-1]\rightarrow[2^{r+1}-1]$ with associated permutation matrix $\pi\in\bR^{(2^{r+1}-1)\times(2^{r+1}-1)}$ such that $C=\im(\pi G_{\text{Had}}^{(r,i)})$.

  To see that such a permutation $\pi$ always exists, we may associate $[2^{r+1}-1]$ with the set of nonzero points in $\bF_2^{r+1}$, and then the Hadamard code $\im G_{\text{Had}}^{(r)}\subseteq\bF_2^{\bF_2^{r+1}\setminus\{0\}}$ is the space of linear functionals $f:\bF_2^{r+1}\rightarrow\bF_2$, where a codeword consists of the evaluations of $f$ at all nonzero points. Thus any two $(i+1)$-dimensional subspaces of the Hadamard code are isomorphic, with isomorphism given by a basis change on $\bF_2^{r+1}$, which in particular induces a permutation on $\bF_2^{r+1}\setminus\{0\}\cong[2^{r+1}-1]$.
\end{proof}

To understand Definition~\ref{def:grassconstruct}, we may associate $(\bF_q^n)^{\otimes 2}=\bF_q^{n\times n}$ with the space of $n\times n$ matrices over $\bF_q$. Then $X(0)$ is simply the set of all rank-$2^r$ matrices in $\bF_q^{n\times n}$. More generally, $X(i)$ is an $(i+1)$-dimensional subspace of $\bF_q^{n\times n}$ such that all matrices in this subspace have rank $2^r$. Further intuition for this construction is provided in Section~\ref{sec:grassconstructinf}, where we describe a relationship to the tensor product of two Hadamard codes, which in turn is similar to a degree-2 Reed-Muller code. See Example~\ref{ex:rank1} for an explicit description of $X$ in the special case of rank $r=1$.

\subsection{Sparsity}
\label{sec:sparsity}
In this section, we analyze the sparsity within the ambient vector space for the Grassmannian poset $X=X^{r,b,n}$ defined in Definition~\ref{def:grassconstruct}. Recall below that we let $q=2^b$.

\begin{lemma}
  \label{lem:spanX0}
  Let $X=X^{r,b,n}$ be the Grassmannian poset in Definition~\ref{def:grassconstruct}. Then $\spn_{\bF_2}X(0)=(\bF_q^n)^{\otimes 2}$.
\end{lemma}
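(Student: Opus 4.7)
The plan is to reduce the statement to showing that every simple tensor $v \otimes w \in (\bF_q^n)^{\otimes 2}$, with $v, w \in \bF_q^n$, lies in $\spn_{\bF_2} X(0)$. This reduction is clean because the simple tensors $\bF_q$-span $(\bF_q^n)^{\otimes 2}$, and any $\bF_q$-scalar multiple of a simple tensor is itself simple via $\alpha(v \otimes w) = (\alpha v) \otimes w$. Hence the $\bF_2$-span of the simple tensors coincides with their $\bF_q$-span, which is all of $(\bF_q^n)^{\otimes 2}$.

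To realize a nonzero simple tensor $v \otimes w$ as an $\bF_2$-combination of elements of $X(0)$, I would first split $w = w_1 + w_1'$ into two $\bF_q$-linearly independent summands (take any $w_1 \notin \spn_{\bF_q}\{w\}$ and set $w_1' := w + w_1$, which is possible since $n \geq 2$). Next, I would pick auxiliary vectors $v_2, \ldots, v_{2^r} \in \bF_q^n$ so that $\{v, v_2, \ldots, v_{2^r}\}$ is $\bF_q$-linearly independent, and $w_2, \ldots, w_{2^r} \in \bF_q^n$ so that the enlarged set $\{w_1, w_1', w_2, \ldots, w_{2^r}\}$ is $\bF_q$-linearly independent. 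Both choices are possible by the hypothesis $n \geq 2^{r+1} \geq 2^r + 1$. I then define
\[
  M_1 := v \otimes w_1 + \sum_{i=2}^{2^r} v_i \otimes w_i, \qquad M_2 := v \otimes w_1' + \sum_{i=2}^{2^r} v_i \otimes w_i.
\]
By the vertex characterization immediately following Definition~\ref{def:grassconstruct}, each $M_j$ lies in $X(0)$: the first-factor vectors $\{v, v_2, \ldots, v_{2^r}\}$ are linearly independent for both $j$, while the second-factor vectors are the linearly independent subsets $\{w_1, w_2, \ldots, w_{2^r}\}$ and $\{w_1', w_2, \ldots, w_{2^r}\}$ of our enlarged set. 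Because the ground characteristic is $2$, the common terms cancel in $M_1 + M_2$, yielding $M_1 + M_2 = v \otimes w_1 + v \otimes w_1' = v \otimes (w_1 + w_1') = v \otimes w$, so $v \otimes w \in \spn_{\bF_2} X(0)$, completing the reduction.

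I do not anticipate any serious obstacle. The entire argument is elementary linear algebra, and the only place it could fail — namely, running out of room to impose the various linear independence conditions simultaneously — is comfortably handled by the hypothesis $n \geq 2^{r+1}$. The essential ingredient is that the ground characteristic is $2$: it allows the ``bulk'' terms of $M_1$ and $M_2$ to annihilate, leaving the single target rank-$1$ tensor. This mirrors the role played by $\bF_2$-coefficients throughout the rest of the construction.
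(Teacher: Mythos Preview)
Your proof is correct and follows essentially the same approach as the paper. Both arguments write an arbitrary rank-$1$ tensor as a sum of two elements of $X(0)$ that share $2^r-1$ common rank-$1$ summands which cancel in characteristic $2$; the only cosmetic difference is that you split the second tensor factor ($w = w_1 + w_1'$) whereas the paper splits the first ($e_1^{(1)} = (e_1^{(1)}+v) + v$).
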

\begin{proof}
  Viewing $(\bF_q^n)^{\otimes 2}\cong\bF_q^{n\times n}$ as the space of $n\times n$ matrices over $\bF_q$, it suffices to show that every rank-1 matrix can be expressed as a sum of matrices in $X(0)$, that is of rank-$2^r$ matrices, as every matrix in $\bF_q^{n\times n}$ can then in turn be expressed as a sum of rank-1 matrices. Given any rank-1 matrix $e_1^{(1)}\otimes e_1^{(2)}$, for $2\leq i\leq 2^r$ and $j\in\{1,2\}$, choose vectors $e_k^{(\ell)}\in\bF_q^n$ such that for each $j$, the vectors $e_1^{(j)},\dots,e_{2^r}^{(j)}$ are linearly independent. Also choose some $v\in\bF_q^n$ linearly independent to all $e_k^{(\ell)}$. Such linearly independent vectors exist because $n\geq 2^{r+1}$ by assumption. Then
  \begin{align*}
    e_1^{(1)}\otimes e_1^{(2)}
    &= \left((e_1^{(1)}+v)\otimes e_1^{(2)}+e_2^{(1)}\otimes e_2^{(2)}+\cdots+e_{2^r}^{(1)}\otimes e_{2^r}^{(2)}\right) \\
    &\hspace{.5cm}+\left(v\otimes e_1^{(2)}+e_2^{(1)}\otimes e_2^{(2)}+\cdots+e_{2^r}^{(1)}\otimes e_{2^r}^{(2)}\right),
  \end{align*}
  so $e_1^{(1)}\otimes e_1^{(2)}$ is a sum of rank-$2^r$ matrices, as desired.
\end{proof}

Lemma~\ref{lem:spanX0} shows that the smallest vector space containing all of $X$ is the whole space $(\bF_q^n)^{\otimes 2}$, which has $\bF_2$-dimension $bn^2=\Theta(n^2)$, where we assume for this discussion that $r,b=O(1)$. By definition $X(0)$ contains at most $|X(0)|\leq|\bF_q^n|^{2r}=2^{2rbn}=2^{O(n)}$ points out of this vector space of size $|(\bF_q^n)^{\otimes 2}|=2^{\Theta(n^2)}$. This sparsity of $X(0)$ within its ambient vector space $\spn X(0)=(\bF_q^n)^{\otimes 2}$ is what makes our construction interesting in comparison to a complete complex. In particular, this sparsity determines the degree of the resulting Cayley simplicial complex we will construct using $X$.

\subsection{Expansion}
\label{sec:expansion}
In this section, we analyze the expansion of the Grassmannian poset $X=X^{r,b,n}$ defined in Definition~\ref{def:grassconstruct}. Specifically, we show that the top level links of $X$ have good local expansion, and all the links are connected. The local-to-global theorems of \cite{kaufman_garlands_2022} imply good local expansion at all levels, and good expansion of the up-down walks.

Our main two results of this section are stated below.

\begin{theorem}
  \label{thm:localexp}
  For integers $r\geq 1$, $b\geq 1$, $n\geq 2^{r+1}$, let $q=2^b$, and let $X=X^{r,b,n}$ be the Grassmannian poset in Definition~\ref{def:grassconstruct}. Then
  \begin{align*}
    \lambda^{(r-2)}(X) &\leq \frac{11}{q}.
  \end{align*}
\end{theorem}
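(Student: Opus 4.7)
The plan is to analyze the 1-skeleton graph $\bfG_{X_{x_{r-2}}}$ of each top-level link for $x_{r-2}\in X(r-2)$ and show it has spectral expansion at most $11/q$. By Lemma~\ref{lem:lowerfaces}, every such $x_{r-2}$ can be written as $\im(EG_{\text{Had}}^{(r,r-2)})$ for a matrix $E$ of rank-1 tensors $e_k^{(1)}\otimes e_k^{(2)}$, and vertices of the link correspond to ways of extending $x_{r-2}$ to a rank-$(r-1)$ face of $X$. Combinatorially, such an extension introduces a new Hadamard ``column,'' which modifies each of the $2^{r-1}-1$ preexisting rank-1 summands in a coordinated way while also introducing one fresh rank-1 tensor.

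Following the outline in Section~\ref{sec:grassconstructinf}, the first step is to convert this combinatorial structure into a tensor factorization
\begin{equation*}
  \bfG_{X_{x_{r-2}}} \;\cong\; \bfG_1^{\otimes(2^{r-1}-1)}\otimes \bfG_2,
\end{equation*}
where $\bfG_1$ governs how a single preexisting rank-1 summand is altered (a walk on rank-1 matrices in $\bF_q^{n\times n}$, naturally phrased as a walk on the matrix poset $\cM_q^n$), and $\bfG_2$ governs the choice of the fresh rank-1 tensor together with the linear-independence and rank conditions that keep the extension a legitimate face. Since the spectral expansion of a tensor product of reversible random walks is the maximum of the factors' expansions, it then suffices to bound $\lambda(\bfG_1)$ and $\lambda(\bfG_2)$ separately.

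Bounding $\lambda(\bfG_2)$ should be a direct adaptation of the $r=1$ analysis sketched in Section~\ref{sec:proofoverview}. I would couple a step of $\bfG_2$ to a step of the down-up walk $\bfW_{\cM(2)}^{\du}$ on the matrix poset in the spirit of Lemma~\ref{lem:G2expinf}, losing only $1/q^{\Omega(n)}$ from rank-deficiency events; then use the equality of nonzero spectra of $\bfW_{\cM(2)}^{\du}$ and $\bfW_{\cM(1)}^{\ud}$ together with the near-complete-graph coupling of Lemma~\ref{lem:mat1udinf} to conclude $\lambda(\bfG_2)\leq O(1/q)$.

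The main obstacle is bounding $\lambda(\bfG_1)$, which requires analyzing a high-dimensional walk on the matrix poset. The difficulty is that $\cM_q^n$ fails the regularity conditions required by the standard localization framework of \cite{kaufman_garlands_2022}, as shown by the rank-3 example in the excerpt. The plan is to design a carefully modified walk on $\cM_q^n$ that genuinely captures $\bfG_1$ yet does satisfy the analogue of $\wedge\to\vee$-regularity, and then adapt the localization template of \cite{alev_improved_2020}: prove rank-by-rank local expansion on constant-rank ``matrix-poset links'' and trickle those bounds up to a global bound $\lambda(\bfG_1)\leq O(1/q)$. The delicate points are (a) specifying exactly which modified walk to use so that regularity holds without distorting $\bfG_1$, and (b) extracting an $O(1/q)$ savings (rather than merely $o_n(1)$) from the structure of rank-1 matrices over $\bF_q$. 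Combining the $\bfG_1$ and $\bfG_2$ bounds and carefully tracking the additive error terms coming from the couplings and from the localization trickle-down should yield the stated constant $11$.
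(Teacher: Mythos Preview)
Your high-level plan matches the paper's (Proposition~\ref{prop:tensornice} for the tensor factorization, Proposition~\ref{prop:G2exp} for the coupling bound on $\bfG_2$, Proposition~\ref{prop:G1exp} for the localization bound on $\bfG_1$), but you have misidentified $\bfG_1$, and correcting this makes its analysis much simpler than you anticipate.

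At level $i=r-2$ the minimal matrices of $x_{r-2}$ have rank $2^{r-i}=4$, not $1$; the vertices of $\bfG_1$ and $\bfG_2$ are rank-$2$ matrices, not rank-$1$; and the tensor factorization is only a graph \emph{projection}, not an isomorphism (Lemma~\ref{lem:projexp} then yields $\lambda(\bfG_{X_{x_{r-2}}})\le\max\{\lambda(\bfG_1),\lambda(\bfG_2)\}$). Most importantly, $\bfG_1$ is \emph{not} a walk on $\cM_q^n$: each tensor factor tracks a rank-$2$ matrix dominated by a fixed rank-$4$ minimal matrix, and after a basis change this is the walk on $\{M\in\cM_q^4(2):M\prec I_4\}$, a constant-size poset depending only on $q$. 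So there is no need to design a modified walk on the big poset or to confront its irregularity in any multi-level way. The paper's localization is a single step: write the Laplacian of $\bfG_1$ as $\bE_{L_1}[\bfL_{\text{local}}^{\prec I_4-L_1}]$, bound each local graph (isomorphic to the graph of Lemma~\ref{lem:localizedexp} with $m=3$) by $3/q$ via a direct coupling, replace each local Laplacian by the complete-graph Laplacian at cost $(1-3/q)$, recognize the resulting global walk as the down-up walk $\bfW_{\cM'(2)}^{\du}$ on $\cM'=\{M\in\cM_q^4:M\preceq I_4\}$ with expansion $\le 8/q$ (Lemma~\ref{lem:mat1uddomI}), and conclude $1-\lambda(\bfG_1)\ge(1-3/q)(1-8/q)$, hence $\lambda(\bfG_1)\le 11/q$. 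The constant $11=3+8$ comes from these two couplings, not from an accumulated rank-by-rank trickle-down.
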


\begin{proposition}
  \label{prop:localconn}
  For integers $r\geq 1$, $b\geq 2$, $n\geq 2^{r+1}$, let $X=X^{r,b,n}$ be the Grassmannian poset in Definition~\ref{def:grassconstruct}. Then for every $-1\leq i\leq r-2$, the link $X_{x_i}$ of every $x_i\in X(i)$ has a connected 1-skeleton graph $\bfG_{X_{x_i}}$.
\end{proposition}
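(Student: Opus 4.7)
The plan is to establish that $\bfG_{X_{x_i}}$ is connected by reducing the question to two statements about the rank-$r$ faces of $X$ containing $x_i$: (A) within any single rank-$r$ face $x_r\succeq x_i$, all rank-$(i+1)$ sub-faces of $x_r$ containing $x_i$ are pairwise adjacent in $\bfG_{X_{x_i}}$; and (B) any two rank-$r$ faces of $X$ that contain $x_i$ can be linked by a chain of rank-$r$ faces containing $x_i$ in which each consecutive pair shares a rank-$(r-1)$ sub-face containing $x_i$. Purity lets us extend any rank-$(i+1)$ face in the link to a rank-$r$ face containing $x_i$, and (A) together with (B) then produces a path in $\bfG_{X_{x_i}}$ between any two vertices.

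Claim (A) is a direct consequence of the argument in the proof of Lemma~\ref{lem:lowerfaces}. Given distinct rank-$(i+1)$ sub-faces $y,y'\subseteq x_r$ both containing $x_i$, the sum $y+y'$ is an $(i+3)$-dimensional $\bF_2$-subspace of $x_r$ containing $x_i$. Because $\GL_{r+1}(\bF_2)$ is the automorphism group of the Hadamard code, every $\bF_2$-subspace of a rank-$r$ face of $X$ is itself a face of $X$, so $y+y'\in X(i+2)$ supplies the required edge in $\bfG_{X_{x_i}}$.

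For claim (B), I parametrize rank-$r$ faces using Definition~\ref{def:grassconstruct}. Identify $[2^{r+1}-1]$ with $\bF_2^{r+1}\setminus\{0\}$ via the prescribed binary representation, so that a rank-$r$ face corresponds to a tuple $(L_m)_{m\in\bF_2^{r+1}\setminus\{0\}}$ of rank-$1$ matrices $L_m=e_m^{(1)}\otimes e_m^{(2)}$ with slot-vectors linearly independent, and its nonzero elements are $\phi_L(u)=\sum_{m:\langle m,u\rangle=1}L_m$ for $u\in\bF_2^{r+1}\setminus\{0\}$. Under this picture $x_i$ corresponds to an $(i+1)$-dimensional subspace $V\subseteq\bF_2^{r+1}$, where a Hadamard-code automorphism from $\GL_{r+1}(\bF_2)$ has been applied to bring the two representations of $x_r$ and $x_r'$ into a common form on $V$. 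The elementary move is a \emph{coordinated swap}: pick a subset $S\subseteq\bF_2^{r+1}\setminus\{0\}$ with $\sum_{m\in S}m\in V^\perp\setminus\{0\}$, and replace the $L_m$'s for $m\in S$ by new rank-$1$ matrices whose componentwise changes $\Delta_m$ satisfy $\sum_{m\in S}\Delta_m\langle m,u\rangle=0$ for $u\in V$. Such a swap preserves $x_i$, and the new rank-$r$ face shares with the old a rank-$(r-1)$ sub-face (the elements at positions $u\in(\sum_{m\in S}m)^\perp\supseteq V$) that contains $x_i$. Iterating such swaps produces the required chain.

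The main obstacle is to prove that these coordinated swaps suffice to connect any two admissible $L$-tuples realizing the same $x_i$, while keeping each intermediate tuple valid. For the first part, after the alignment so that $\phi_L|_V=\phi_{L'}|_V$, the difference tensor $\sum_m(L_m+L_m')\otimes m$ lies in $\bF_q^{n\times n}\otimes V^\perp$, which is exactly the $\bF_2$-span of the rank-$1$ tensors $c\otimes u_0$ with $u_0\in V^\perp\setminus\{0\}$ that arise from the coordinated swaps, so finitely many such swaps relate any two aligned tuples. For the second part, the slack $n\geq 2^{r+1}$ guarantees that at each step the subspace of $\bF_q^n$ forbidden by the linear-independence constraint has dimension at most $2^{r+1}-1<n$; hence whenever a direct swap would violate independence we can pass through an auxiliary intermediate rank-$1$ matrix. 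Combining the within-face clique structure from (A) with the chain from (B) then yields a path in $\bfG_{X_{x_i}}$ between any two rank-$(i+1)$ faces containing $x_i$, establishing connectivity.
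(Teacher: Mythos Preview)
Your part (A) is fine: any two rank-$(i+1)$ faces inside a common $x_r$ containing $x_i$ span an $(i+3)$-dimensional subspace of $x_r$, which is a face by Lemma~\ref{lem:lowerfaces}, so they are adjacent in $\bfG_{X_{x_i}}$.

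The gap is in (B), and it is the heart of the matter. Your ``coordinated swap'' replaces $L_m\mapsto L_m+\Delta_m$ for $m\in S$ subject to $\sum_{m\in S}\Delta_m\langle m,u\rangle=0$ on $V$, and you claim this preserves the positions $u\in(\sum_{m\in S}m)^\perp$. First, that containment $(\sum_{m\in S}m)^\perp\supseteq V$ is strict in general, and your $V$-constraint does not force the stronger vanishing on the whole hyperplane unless $|S|\leq 2$ with the $\Delta_m$'s forced equal. Second, and more seriously, even in the $|S|=2$ case where $\Delta_{m_a}=\Delta_{m_b}=\Delta$, you need \emph{both} $L_{m_a}+\Delta$ and $L_{m_b}+\Delta$ to be rank-$1$ matrices. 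The set of rank-$1$ matrices is not a linear subspace, so your span argument (``the difference tensor lies in $\bF_q^{n\times n}\otimes V^\perp$, which is spanned by the swaps'') does not transfer: it shows the difference of the \emph{linear maps} $\phi_L,\phi_{L'}$ decomposes, but says nothing about realizing the intermediate steps by tuples of rank-$1$ matrices. Concretely, for $L_{m_a}=e_af_a^\top$ and $L_{m_b}=e_bf_b^\top$ with $e_a,e_b$ and $f_a,f_b$ independent, the set of $\Delta$ making both sums rank-$1$ is a low-dimensional variety, and you have given no argument that iterating such highly constrained moves reaches an arbitrary target tuple. Your ``slack $n\geq 2^{r+1}$'' remark only addresses the linear-independence side condition, not this rank-$1$ rigidity.

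The paper proceeds quite differently. Proposition~\ref{prop:tensornice} shows $\bfG_{X_{x_i}}$ is a projection of $\bfG_1^{\otimes(2^{i+1}-1)}\otimes\bfG_2$, where $\bfG_1,\bfG_2$ are specific walks on the matrix poset encoding, respectively, the decompositions of the minimal matrices of $x_i$ and the ``external'' rank-$1$ piece. Connectedness (and nonbipartiteness, needed for the tensor product) of $\bfG_1,\bfG_2$ is then proved by a careful double induction on two rank parameters (Lemmas~\ref{lem:G1conn} and~\ref{lem:G2conn}), using the base case from Lemma~\ref{lem:localizedexp} (this is where $b\geq 2$ enters). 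The step you are missing---moving between different rank-$1$ decompositions of a fixed low-rank matrix while staying inside the admissible set---is exactly what that induction establishes, and it is not a one-line dimension count.
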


The trickle-down theorem of Kaufman and Tessler~\cite{kaufman_garlands_2022} for Grassmannian complexes (Theorem~\ref{thm:tricklegrass}) then implies that $X^{r,b,n}$ has good local expansion at all levels, as stated below.

\begin{corollary}
  \label{cor:applytrickle}
  For integers $r\geq 1$, $b\geq 5$, $n\geq 2^{r+1}$, let $q=2^b$, and let $X=X^{r,b,n}$ be the Grassmannian poset in Definition~\ref{def:grassconstruct}. Then
  \begin{align*}
    \lambda(X) &\leq \frac{11}{q}.
  \end{align*}
\end{corollary}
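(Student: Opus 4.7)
The plan is a straightforward downward induction on rank, using the Grassmannian trickle-down theorem to propagate the top-level bound of Theorem~\ref{thm:localexp} down to every lower rank. Since $\lambda(X) = \max_{-1 \leq i \leq r-2} \lambda^{(i)}(X)$ by definition, it suffices to show that $\lambda^{(i)}(X) \leq 11/q$ for each $-1 \leq i \leq r-2$, at which point the corollary follows by taking the maximum.

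The base case $i = r-2$ is exactly Theorem~\ref{thm:localexp}, whose hypotheses ($r \geq 1$, $b \geq 1$, $n \geq 2^{r+1}$) are weaker than those assumed here. For the inductive step, I first invoke Proposition~\ref{prop:localconn}, which under $b \geq 2$ (in particular under $b \geq 5$) guarantees that every link $X_{x_i}$ with $-1 \leq \rank(x_i) \leq r-2$ has connected $1$-skeleton; this verifies the hypothesis of the Grassmannian trickle-down Theorem~\ref{thm:tricklegrass}. Assuming inductively that $\lambda^{(i+1)}(X) \leq 11/q$ for some $-1 \leq i \leq r-3$, Theorem~\ref{thm:tricklegrass} then yields
\begin{equation*}
  \lambda^{(i)}(X) \;\leq\; \frac{\lambda^{(i+1)}(X)}{q\bigl(1 - \lambda^{(i+1)}(X)\bigr)} \;\leq\; \frac{11/q}{q\bigl(1 - 11/q\bigr)} \;=\; \frac{11}{q(q-11)},
\end{equation*}
where the second inequality uses that $t \mapsto t/(1-t)$ is increasing on $[0,1)$.

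The only numerical check is that the resulting bound $11/(q(q-11))$ is itself at most $11/q$, which reduces to $q - 11 \geq 1$, i.e., $q \geq 12$. The hypothesis $b \geq 5$ gives $q = 2^b \geq 32$, with plenty of slack, so the induction closes and $\lambda^{(i)}(X) \leq 11/q$ for every $-1 \leq i \leq r-2$. The case $r = 1$ is trivially covered by the base case alone, since then the induction range is empty. I do not anticipate any real obstacle here; the only delicate point is ensuring that the trickle-down ratio $1/(q(1-11/q))$ is contracting rather than expanding, and the hypothesis $b \geq 5$ is a convenient threshold for this. (A tighter analysis would suffice with $b \geq 4$, but the present threshold is presumably chosen to line up with other results in the paper.)
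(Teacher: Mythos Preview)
Your approach is exactly the paper's: invoke Theorem~\ref{thm:localexp} for the top level, Proposition~\ref{prop:localconn} for connectedness, and trickle down via Theorem~\ref{thm:tricklegrass}. However, there is a slip in how you apply the trickle-down: $X = X^{r,b,n}$ is an $\bF_2$-Grassmannian complex (Definition~\ref{def:grassconstruct}), not an $\bF_q$-Grassmannian complex with $q = 2^b$. The field parameter appearing in Theorem~\ref{thm:tricklegrass} is therefore $2$, not $q$, and the correct inductive bound reads
\[
\lambda^{(i)}(X) \;\leq\; \frac{\lambda^{(i+1)}(X)}{2\bigl(1-\lambda^{(i+1)}(X)\bigr)}.
\]
The induction still closes with this weaker estimate: the map $t \mapsto t/(2(1-t))$ is contracting exactly when $t < 1/2$, and $11/q < 1/2$ holds once $q > 22$, which $b \geq 5$ (so $q \geq 32$) guarantees. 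Note that this also explains the threshold: with $b = 4$ one has $11/q = 11/16 > 1/2$ and the trickle-down would not contract, so your parenthetical speculation that $b \geq 4$ suffices is incorrect once the right field parameter is used.
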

\begin{proof}
  The result follows directly by Theorem~\ref{thm:tricklegrass} with Theorem~\ref{thm:localexp} and Proposition~\ref{prop:localconn}.
  % TODO: can make bound tigher, as get improving expansion with degree; probably should formally state the Kaufman-Tessler result then give a full proof of this corollary
\end{proof}

The remainder of this section is dedicated to proving Theorem~\ref{thm:localexp} and Proposition~\ref{prop:localconn}. We begin in Section~\ref{sec:linkdecomp} below by showing that that the 1-skeleton graph $\bfG_{X_{x_i}}$ of the link $X_{x_i}$ of a face $x_i\in X(i)$ can be decomposed as a tensor product of simpler graphs. Section~\ref{sec:matrixwalkexp} then bounds the expansion of these simpler graphs for top-level links, thereby proving Theorem~\ref{thm:localexp}. Section~\ref{sec:linksconn} shows that these simpler graphs are connected for all ranks $i$, thereby proving Proposition~\ref{prop:localconn}.

\subsubsection{Decomposing the links using the matrix poset}
\label{sec:linkdecomp}
In this section, we show how the 1-skeleton graph of each link in $X=X^{r,b,n}$ can be expressed as a tensor product of simpler graphs. These simpler graphs can be viewed as walks in the matrix poset defined below, which we use throughout our analysis.

\begin{definition}
  \label{def:matrixposet}
  Given a vector space $\bF_q^m$, the \textbf{matrix poset $\cM_q^m$} is the set of all matrices in $\bF_q^{m\times m}$, where rank is given by matrix rank, that is $\cM_q^m(r)$ is the set of rank-$r$ matrices in $\bF_q^{m\times m}$, and $M_1\preceq M_2$ if and only if $\rank(M_2-M_1)=\rank(M_2)-\rank(M_1)$.
\end{definition}

\begin{lemma}
  \label{lem:matrixposetok}
  The set $\cM_q^m$ with binary relation $\prec$ given in Definition~\ref{def:matrixposet} forms a well defined pure graded poset.
\end{lemma}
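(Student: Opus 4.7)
The plan is to verify, in order, each condition from Definition~\ref{def:poset}: (i) $\prec$ is a strict partial order on $\cM_q^m$; (ii) matrix rank is a graded rank function; and (iii) every matrix is dominated by a full-rank matrix (purity). The single rank-$(-1)$ element required by Definition~\ref{def:poset} will be handled by adjoining a formal bottom $\hat 0$ below the zero matrix (or equivalently by shifting indices so the zero matrix plays this role, since $\cM_q^m(0) = \{0\}$ is already a unique minimum dominated by every other matrix). This is a bookkeeping convention, not the substance of the proof.

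For (i), the only nontrivial step is transitivity. Given $M_1 \prec M_2 \prec M_3$, subadditivity of rank gives
\[
\rank(M_3 - M_1) \leq \rank(M_3 - M_2) + \rank(M_2 - M_1) = \rank(M_3) - \rank(M_1),
\]
while the standard inequality $\rank(A+B) \geq \rank(A) - \rank(B)$, applied with $A = M_3$ and $B = -M_1$, yields the matching lower bound. Hence $M_1 \prec M_3$. Irreflexivity and antisymmetry are immediate from the observation that $M_1 \prec M_2$ forces $\rank(M_2 - M_1) \geq 1$ and therefore $\rank(M_2) > \rank(M_1)$.

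For (ii), I take the rank function to be the matrix rank and show that a cover $M_1 \prec M_2$ with no intermediate element occurs if and only if $\rank(M_2) - \rank(M_1) = 1$. One direction is automatic because ranks are integers. For the other direction, whenever $k := \rank(M_2) - \rank(M_1) \geq 2$, I exhibit an intermediate as follows: factor $M_1 = L_1 R_1$ with $L_1, R_1$ of full column/row rank $\rank(M_1)$, and $M_2 - M_1 = L' R'$ with $L', R'$ of full column/row rank $k$. Then
\[
M_2 = [\,L_1 \mid L'\,] \begin{pmatrix} R_1 \\ R' \end{pmatrix},
\]
and the identity $\rank(M_2) = \rank(M_1) + k$ forces $[L_1 \mid L']$ to have full column rank and $\begin{pmatrix} R_1 \\ R' \end{pmatrix}$ full row rank. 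Writing $l_1$ for the first column of $L'$ and $r_1^\top$ for the first row of $R'$, set $M := M_1 + l_1 r_1^\top$. Then $[L_1 \mid l_1]$ and $[l_2, \ldots, l_k]$ both inherit full column rank as sub-blocks of $[L_1 \mid L']$ (analogously for the row factors), from which $\rank(M) = \rank(M_1) + 1$ and $\rank(M_2 - M) = k - 1 = \rank(M_2) - \rank(M)$ follow, giving $M_1 \prec M \prec M_2$.

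For (iii), purity follows by the same factorization device: given any $M = LR$ with $L, R$ of full rank $\rank(M)$, choose complementary blocks $L'', R''$ of full column/row rank $m - \rank(M)$ so that $[L \mid L'']$ and $\begin{pmatrix} R \\ R'' \end{pmatrix}$ are invertible. Then $M' := M + L'' R''$ satisfies $\rank(M') = m$ and $\rank(M' - M) = m - \rank(M) = \rank(M') - \rank(M)$, so $M \preceq M'$ with $M'$ top-rank. The one step that needs genuine care is the intermediate-element construction in (ii): one must verify that splitting off a single column and row preserves both $\prec$-relations simultaneously, and this reduces entirely to the sub-block principle for full-column-rank matrices. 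Everything else is a direct application of standard rank inequalities.
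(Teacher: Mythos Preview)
Your proof is correct and follows essentially the same approach as the paper's. Both arguments sandwich $\rank(M_3-M_1)$ between the two subadditivity bounds for transitivity, both produce an intermediate element by peeling off a rank-1 piece of $M_2-M_1$ when the gap is at least $2$, and both extend $M$ to a full-rank matrix for purity. Your presentation is slightly more explicit (working with full-rank factorizations $M_1=L_1R_1$, $M_2-M_1=L'R'$ and verifying the sub-block ranks directly), whereas the paper simply says ``choose a rank-1 matrix $L\prec M_2-M_1$'' and, for purity, changes basis to diagonal form; these are cosmetic differences rather than different ideas.
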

\begin{proof}
  To see that $\cM_q^m$ is a poset, consider matrices $M_1\preceq M_2$ and $M_2\preceq M_3$. We want to show that $M_1\preceq M_3$. Writing $M_3=M_1+(M_2-M_1)+(M_3-M_2)$, then by assumption
  \begin{equation*}
    \rank(M_1)+\rank(M_2-M_1)+\rank(M_3-M_2) = \rank(M_2)+\rank(M_3-M_2) = \rank(M_3).
  \end{equation*}
  Therefore
  \begin{align*}
    \rank(M_2-M_1)+\rank(M_3-M_2)
    &= \rank(M_3)-\rank(M_1) \\
    &\leq \rank(M_3-M_1) \\
    &\leq \rank(M_2-M_1)+\rank(M_3-M_2),
  \end{align*}
  where the two inequalities above hold by the subadditivity of matrix rank. The above inequalities imply that $\rank(M_3)-\rank(M_1)=\rank(M_3-M_1)$, so $M_1\preceq M_3$, as desired.

  To see that $\cM_q^m$ is graded with rank function given by matrix rank, observe that if $M_1\prec M_2$ with no $M'$ such that $M_1\prec M'\prec M_2$, then we must have $\rank(M_1)=\rank(M_2)-1$, as if $\rank(M_1)\leq\rank(M_2)-2$, then we can choose a rank-1 matrix $L\prec M_2-M_1$, and setting $M'=M_1+L$ gives $M_1\prec M'\prec M_2$, a contradiction.

  The graded poset $\cM_q^m$ is pure because every matrix $M\in\cM_q^m$ is dominated by a full-rank matrix $M'\in\cM_q^m(m)$, as can be seen by changing the basis of the rows and columns so that $M$ is diagonal; then $M'$ can be obtained by filling in the zero diagonal entries of $M$ with nonzero values.
\end{proof}

The goal of this section is to prove the following proposition, which decomposes the 1-skeleton graph $\bfG_{X_{x_i}}$ of any link $X_{x_i}$ into a tensor product of walks on the matrix poset.

\begin{proposition}
  \label{prop:tensornice}
  Let $X=X^{r,b,n}$ be the Grassmannian poset in Definition~\ref{def:grassconstruct}, and let $q=2^b$. For every $-1\leq i\leq r-2$ and every $x_i\in X(i)$, there exists a graph projection (see Definition~\ref{def:graphproj})
  \begin{equation*}
    \Pi:{\bfG_1}^{\otimes 2^{i+1}-1}\otimes{\bfG_2}\rightarrow\bfG_{X_{x_i}},
  \end{equation*}
  for graphs $\bfG_1,\bfG_2$ defined as follows:
  \begin{itemize}
  \item $\bfG_1$ is the uniformly weighted graph with
    \begin{align*}
      V(\bfG_1) = \{M\in\cM_q^{2^{r-i}}(2^{r-i-1}):\;&M\prec I_{2^{r-i}}\} \\
      E(\bfG_1) = \Big\{\{L_1+L_2,L_1+L_3\}:\;&L_1,L_2,L_3\in\cM_q^{2^{r-i}}(2^{r-i-2}),\\
                                                     &L_1+L_2+L_3\in\cM_q^{2^{r-i}}(3\cdot 2^{r-i-2}),\\
                                                     &L_1+L_2+L_3\prec I_{2^{r-i}}\Big\}.
    \end{align*}
  \item Let $R=C=\bF_q^{2^{r+1}-2^{r-i}}\times\{0\}^{n-(2^{r+1}-2^{r-i})}\subseteq\bF_q^n$ (here any $(2^{r+1}-2^{r-i})$-dimensional subspaces $R,C\subseteq\bF_q^n$ would suffice). Then $\bfG_2$ is the uniformly weighted graph with
    \begin{align*}
      V(\bfG_2) = \Big\{M\in\cM_q^n(2^{r-i-1}):\;&\text{rowspan}(M)\cap R=\{0\},\\
                                                 &\text{colspan}(M)\cap C=\{0\}\Big\} \\
      E(\bfG_2) = \Big\{\{L_1+L_2,L_1+L_3\}:\;&L_1,L_2,L_3\in\cM_q^n(2^{r-i-2}),\\
                                                 &L_1+L_2+L_3\in\cM_q^{2^{r-i}}(3\cdot 2^{r-i-2}),\\
                                                 &\text{rowspan}(L_1+L_2+L_3)\cap R=\{0\},\\
                                                 &\text{colspan}(L_1+L_2+L_3)\cap C=\{0\}\Big\}.
    \end{align*}
  \end{itemize}
\end{proposition}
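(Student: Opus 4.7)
The plan is to construct the graph projection $\Pi$ explicitly by matching each of the $2^{i+1}$ tensor slots to a specific piece of data that together determine a vertex $x_{i+1}\succ x_i$ of the link, and analogously to match edges. First I would use the $\GL_{r+1}(\bF_2)$ symmetry of the Hadamard code together with Lemma~\ref{lem:lowerfaces} to place $x_i=\im(EG_{\text{Had}}^{(r,i)})$ in a canonical form: partition the $2^{r+1}-1$ rank-$1$ columns $M_k=e_k^{(1)}\otimes e_k^{(2)}$ of $E$ into $2^{i+1}$ classes $\cP_p$ indexed by $p\in\bF_2^{i+1}$ according to the first $i+1$ bits of $2^{r+1}-k$ in binary, so that $\cP_0$ consists of the $2^{r-i}-1$ auxiliary indices (those whose row in $G_{\text{Had}}^{(r,i)}$ vanishes) while each $\cP_p$ with $p\ne 0$ contains exactly $2^{r-i}$ indices whose $M_k$'s sum, after grouping by the appropriate Hadamard codeword, to the old vertex $v_p\in x_i$. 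Reorder coordinates of $\bF_q^n$ so that the vectors $e_k^{(j)}$ with $p_k\ne 0$ span the distinguished subspaces $R=C$ of dimension $2^{r+1}-2^{r-i}$ appearing in $\bfG_2$, and so that each $p$-class fits into a $2^{r-i}$-dimensional coordinate block, identified with the ambient space of $\cM_q^{2^{r-i}}$.

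Next, I would parametrize the link vertices. A vertex $x_{i+1}\supset x_i$ in $X(i+1)$ arises, by Lemma~\ref{lem:lowerfaces}, by appending one new Hadamard column (the $(i+2)$-th coordinate), which activates $2^{r-i-1}$ new rank-$1$ matrices in each class $\cP_p$ (namely those $k\in\cP_p$ whose $(i+2)$-th bit equals $1$). Within each $p\ne 0$ class, the sum of these $2^{r-i-1}$ new rank-$1$ matrices has rank $2^{r-i-1}$ and, together with the analogous sum of the $2^{r-i-1}$ matching "old" matrices in $\cP_p$ (those with $(i+2)$-th bit $0$), completes to the full $I_{2^{r-i}}$ inside the $p$-class coordinate block; hence the new-slot sum is dominated by $I_{2^{r-i}}$, giving an element of $V(\bfG_1)$. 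Within $\cP_0$, the sum of the $2^{r-i-1}$ new matrices lies in $\cM_q^n(2^{r-i-1})$, and its rowspan and colspan avoid $R$ and $C$ respectively by the linear independence requirement of Definition~\ref{def:grassconstruct}, giving an element of $V(\bfG_2)$. This defines $\Pi$ on vertices. An edge of $\bfG_{X_{x_i}}$ is given by a face $x_{i+2}\succ x_i$, which by Lemma~\ref{lem:lowerfaces} corresponds to appending two new Hadamard columns; these produce three rank-$2^{r-i-2}$ groups $L_1^{(p)},L_2^{(p)},L_3^{(p)}$ per slot whose sum has rank $3\cdot 2^{r-i-2}$ and satisfies the slot constraint, which is exactly the edge specification of $\bfG_1$ (for $p\ne 0$) and $\bfG_2$ (for $p=0$). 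Weight-matching follows because all three graphs are uniformly weighted and a direct count over the gauge freedom of the canonical form shows that the fibers of $\Pi$ over any fixed edge have uniform size, yielding the condition of Definition~\ref{def:graphproj}.

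The main obstacle is the parametrization step: verifying that the extension data decomposes cleanly across the $2^{i+1}$ slots and that the constraints $\prec I_{2^{r-i}}$ and $R,C$-avoidance are exactly the validity conditions for $x_{i+1}\in X(i+1)$. This requires carefully tracking how the rank conditions in the full ambient $\bF_q^{n\times n}$ interact with the block decomposition induced by the $\cP_p$ partition. The domination-by-$I_{2^{r-i}}$ condition hinges on recognizing each $p\ne 0$ block as summing to the rank-$2^{r-i}$ identity across its $2^{r-i}$ rank-$1$ contributions, so that "new" and "old" are complementary inside $I_{2^{r-i}}$; the span-avoidance condition on $\bfG_2$ then emerges from the linear independence demanded by Definition~\ref{def:grassconstruct} applied to the $\cP_0$ indices activated for the first time by $x_{i+1}$. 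Once this block-level picture is set up, the edge analysis and weight count are essentially mechanical extensions of the vertex analysis.
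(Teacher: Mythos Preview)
Your proposal is correct and follows essentially the same route as the paper: your classes $\cP_p$ are exactly the paper's ``minimal matrices'' $M_j'=\sum_{k\in\cP_p}e_k^{(1)}\otimes e_k^{(2)}$ of Lemma~\ref{lem:minmatrices}, your per-slot new/old splitting is the decomposition $M_{i+1}=\sum_j M_j''$ of Lemma~\ref{lem:link0}, your per-slot edge conditions are the three items of Lemma~\ref{lem:tensorraw}, and the paper's projection is $\Pi(M_1'',\dots,M_{2^{i+1}}'')=\spn\{x_i,\sum_j M_j''\}$. The one point the paper makes explicit that you gloss over is that the block sums $M_j'$ are canonically determined by $x_i$ independently of the choice of rank-$1$ decomposition (proved via the maximal-common-predecessor Lemma~\ref{lem:commonprechigh}); this is precisely what resolves the ``main obstacle'' you identify, since it guarantees the per-slot data is intrinsic and not an artifact of the gauge choice.
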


The following corollary shows that Proposition~\ref{prop:tensornice} reduces the problem of bounding the local expansion (or connectedness) of $X$ to the problem of bounding the local expansion (or connectedness) of the graphs $\bfG_1,\bfG_2$ above. We later bound the expansion (and connectedness) of these graphs, which can be viewed as certain walks on the matrix poset, in Section~\ref{sec:matrixwalkexp} below.

\begin{corollary}
  \label{cor:linkexpreduce}
  Let $X=X^{r,b,n}$ be the Grassmannian poset in Definition~\ref{def:grassconstruct}. For every $-1\leq i\leq r-2$ and every $x_i\in X(i)$, we have
  \begin{equation*}
    \lambda(\bfG_{X_{x_i}}) \leq \max\{\lambda(\bfG_1),\lambda(\bfG_2)\}.
  \end{equation*}
\end{corollary}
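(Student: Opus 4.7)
The plan is to derive this corollary as an immediate consequence of Proposition~\ref{prop:tensornice} together with two standard facts: that graph projections can only improve spectral expansion, and that the nontrivial spectrum of a tensor product of random walk matrices is the product of the spectra of the factors.

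Concretely, first I would apply Proposition~\ref{prop:tensornice} to fix the graph projection $\Pi:\bfG_1^{\otimes 2^{i+1}-1}\otimes\bfG_2\to\bfG_{X_{x_i}}$. Invoking Lemma~\ref{lem:projexp} immediately gives
\begin{equation*}
  \lambda(\bfG_{X_{x_i}})\leq\lambda\bigl(\bfG_1^{\otimes 2^{i+1}-1}\otimes\bfG_2\bigr).
\end{equation*}
Next I would compute the right-hand side using the tensor-product structure. The random walk matrix of $\bfG_1^{\otimes 2^{i+1}-1}\otimes\bfG_2$ is by definition the Kronecker product $\bfW_1^{\otimes(2^{i+1}-1)}\otimes\bfW_2$, where $\bfW_1,\bfW_2$ are the random walk matrices of $\bfG_1,\bfG_2$. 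Its eigenvalues are exactly the products $\mu_{j_1}\cdots\mu_{j_{2^{i+1}-1}}\nu_{k}$ of eigenvalues of the factors. Since every random walk matrix has spectral radius $1$ with $1$ as a top eigenvalue, excluding the unique product equal to $1$ yields a set whose largest absolute value is precisely $\max\{\lambda(\bfG_1),\lambda(\bfG_2)\}$: any product that differs from the top one in at least one factor has absolute value bounded by $\max\{\lambda(\bfG_1),\lambda(\bfG_2)\}$, and this maximum is actually attained by choosing all but one factor to be the trivial eigenvalue $1$. Combining the two inequalities gives exactly the stated bound.

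There is essentially no real obstacle in this argument; it is bookkeeping on top of Proposition~\ref{prop:tensornice} and Lemma~\ref{lem:projexp}. The only mild subtlety is the edge case $i=-1$, where $2^{i+1}-1=0$; here the tensor power $\bfG_1^{\otimes 0}$ should be interpreted as the trivial one-vertex graph with eigenvalue $1$, so that the statement reduces to $\lambda(\bfG_{X_{x_{-1}}})\leq\lambda(\bfG_2)\leq\max\{\lambda(\bfG_1),\lambda(\bfG_2)\}$, consistent with the bound. All the substantive content, namely constructing the projection $\Pi$ and identifying $\bfG_1,\bfG_2$, has already been done in Proposition~\ref{prop:tensornice}, so the corollary follows in a few lines.
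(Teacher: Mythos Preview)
Your proposal is correct and follows exactly the approach the paper takes: the paper's proof is the single sentence ``The corollary follows directly by Proposition~\ref{prop:tensornice} and Lemma~\ref{lem:projexp},'' and you have simply spelled out the implicit tensor-product spectrum computation that the paper leaves to the reader. Your handling of the $i=-1$ edge case is also fine.
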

\begin{proof}
  The corollary follows directly by Proposition~\ref{prop:tensornice} and Lemma~\ref{lem:projexp}.
\end{proof}

We now turn to proving Proposition~\ref{prop:tensornice}. To analyze the structure of links in $X$, we begin by characterizing the structure of faces in $X$ with Lemma~\ref{lem:commonprec} below and its subsequent generalization to higher ranks in Lemma~\ref{lem:commonprechigh}.

\begin{lemma}
  \label{lem:commonprec}
  Fix a face $\spn\{M_0,M_1\}\in X^{r,b,n}(1)$, so that $M_0,M_1\in\cM_q^n(2^r)$. Then there exists a unique matrix $M'\in\cM_q^n(2^{r-1})$ such that for every $M''\in\cM_q^n$ with $M''\prec M_0,M_1$, it holds that $M''\preceq M'$. That is, there is a unique maximal element $M'$ among the matrices dominated by $M_0,M_1$, and $\rank(M')=2^{r-1}$.
\end{lemma}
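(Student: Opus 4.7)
The plan is to use the Hadamard description of $X(1)$ from Lemma~\ref{lem:lowerfaces} to reduce the claim to a block-matrix computation in $\cM_q^{2^{r-1}}$, where uniqueness becomes trivial.

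First I would realize $\spn\{M_0, M_1\}$ in a convenient standard form. By Lemma~\ref{lem:lowerfaces} together with the row-permutation symmetry of the Hadamard code used in its proof, after relabelling we may assume $M_0, M_1$ are the first two columns of $E G_{\text{Had}}^{(r,1)}$ for some matrix $E$ as in Definition~\ref{def:grassconstruct}. Writing $L_k = e_k^{(1)}(e_k^{(2)})^\top$ for $k \in [2^{r+1}-1]$ and letting $S_0, S_1 \subseteq [2^{r+1}-1]$ denote the supports of the first two columns of $G_{\text{Had}}^{(r)}$, this gives $M_0 = \sum_{k \in S_0} L_k$ and $M_1 = \sum_{k \in S_1} L_k$. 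Every nonzero Hadamard codeword has weight $2^r$, and $M_0 + M_1$ corresponds to a third nonzero codeword, so $|S_0| = |S_1| = |S_0 \triangle S_1| = 2^r$, whence $|S_0 \cap S_1| = 2^{r-1}$. I would then set $M' := \sum_{k \in S_0 \cap S_1} L_k$ and verify $\rank(M') = 2^{r-1}$ together with $\rank(M_\varepsilon - M') = |S_\varepsilon \setminus S_{1-\varepsilon}| = 2^{r-1}$ for $\varepsilon \in \{0,1\}$, using linear independence of the $e_k^{(j)}$ in $k$ for each fixed $j$; this shows $M' \prec M_0, M_1$.

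For the maximality claim, I would first record the standard rank fact that $A \prec B$ implies $\text{colspan}(A) \subseteq \text{colspan}(B)$ and $\text{rowspan}(A) \subseteq \text{rowspan}(B)$: expanding $B = A + (B-A)$ gives $\text{colspan}(B) \subseteq \text{colspan}(A) + \text{colspan}(B-A)$, and the rank identity $\rank(B) = \rank(A) + \rank(B-A)$ forces the sum to be direct and equal to $\text{colspan}(B)$. Applying this to $M'' \prec M_0$ and $M'' \prec M_1$ yields $\text{colspan}(M'') \subseteq \spn\{e_k^{(1)} : k \in S_0 \cap S_1\}$ and the analogous containment for row spans. After changing basis so that $\{e_k^{(j)}\}_k$ become standard basis vectors, each $L_k = f_k g_k^\top$ is a single-entry matrix, the matrices $M_0, M_1, M'$ become diagonal with $0/1$ entries, and $M''$ is supported entirely in the $(S_0 \cap S_1) \times (S_0 \cap S_1)$ block. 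Within this block, $M_0, M_1, M'$ all restrict to $I_{2^{r-1}}$, while $M''$ restricts to some $N \in \cM_q^{2^{r-1}}$, and the condition $\rank(M_\varepsilon - M'') = \rank(M_\varepsilon) - \rank(M'')$ decouples across the block and its complement and collapses to $\rank(I_{2^{r-1}} - N) = 2^{r-1} - \rank(N)$. This is exactly the condition needed for $M'' \prec M'$ (since $M'$ restricts to $I_{2^{r-1}}$ on the block and vanishes off it), so $M'' \preceq M'$. Uniqueness then follows trivially: any rank-$2^{r-1}$ candidate $\tilde M'$ with the same dominating property must satisfy $M' \preceq \tilde M'$ by taking $M'' = M'$, and equality of ranks forces $\tilde M' = M'$.

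The main obstacle is the span-containment step and the subsequent change of basis that confines $M''$ to the $(S_0 \cap S_1) \times (S_0 \cap S_1)$ block; once that is set up, the Hadamard structure is used in a single clean calculation and everything else is routine rank bookkeeping.
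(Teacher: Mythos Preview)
Your proposal is correct and follows essentially the same approach as the paper: both identify $M'$ as the sum of the rank-one terms indexed by the intersection of the two Hadamard supports, use the span-containment fact to confine $M''$ to the corresponding block, change basis to make everything diagonal, and then read off $M''\preceq M'$ from a block-diagonal rank computation. Your write-up is slightly more explicit than the paper's in justifying the span-containment step and the uniqueness conclusion, but the argument is the same.
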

\begin{proof}
  By the definition of $X^{r,b,n}$ in Section~\ref{sec:constructmain}, there exist vectors $e_k^{(\ell)}\in\bF_q^n$ for $k\in[2^{r+1}-1]$, $\ell\in[2]$ such that
  \begin{equation*}
    \begin{pmatrix}M_0&M_1\end{pmatrix} = \begin{pmatrix}e_1^{(1)}\otimes e_1^{(2)}&\cdots&e_{2^{r+1}-1}^{(1)}\otimes e_{2^{r+1}-1}^{(2)}\end{pmatrix}\cdot G_{\text{Had}}^{(r,1)},
  \end{equation*}
  where we view the left hand side above as a matrix in $\bF_q^{n^2\times 2}$. Thus the matrix $M'=\sum_{k=1}^{2^{r-1}}e_k^{(1)}\otimes e_k^{(2)}$ has rank $2^{r-1}$ and is dominated by $M_0,M_1$ in the matrix poset, so it remains to be shown that $M'$ dominates all matrices that are dominated by $M_0,M_1$.
  
  Let
  \begin{align*}
    R &= \text{rowspan}(M_0)\cap\text{rowspan}(M_1)=\spn\{e_1^{(1)},\dots,e_{2^{r-1}}^{(1)}\} \\
    C &= \text{colspan}(M_0)\cap\text{colspan}(M_1)=\spn\{e_1^{(2)},\dots,e_{2^{r-1}}^{(2)}\}
  \end{align*}
  By definition, any $M''\prec M_0,M_1$ must have its row and column spans contained inside $R$ and $C$ respectively. Therefore it in particular suffices to show that every $M''\preceq M_0$ such that $\text{rowspan}(M'')\subseteq R$ and $\text{colspan}(M'')\subseteq C$ satisfies $M''\preceq M'$. Changing the basis of the rows and columns to $\{e_k^{(1)}\}$ and $\{e_k^{(2)}\}$ respectively, it then suffices to show that every $M''\preceq I_{2^r}$ that is supported inside the top left $2^{r-1}\times 2^{r-1}$ quadrant, so that the row and column spans are supported inside the span of the first $2^{r-1}$ basis vectors, must satisfy $M''\preceq M'$ where
  \begin{equation*}
    M' = \begin{pmatrix}
      I_r&0_{r\times r}\\
      0_{r\times r}&0_{r\times r}
    \end{pmatrix}.
  \end{equation*}
  By definition $I_{2^r}-M''$ is block diagonal with top left quadrant the same as $M'-M''$ and bottom right quadrant equal to $I_r$, so
  \begin{equation*}
    \rank(I_{2^r}-M'') = \rank(M'-M'')+2^{r-1}.
  \end{equation*}
  Because $M''\preceq I_{2^r}$ by assumption, the left hand side above equals $2^r-\rank(M'')$, so after rearranging terms we get that
  \begin{equation*}
    \rank(M'-M'') = 2^{r-1}-\rank(M'') = \rank(M')-\rank(M''),
  \end{equation*}
  so indeed $M''\preceq M'$ as desired.
  % By the definition of $X^{r,b,n}$, the spaces $R=\text{rowspan}(M_0)\cap\text{rowspan}(M_1)$ and $C=\text{colspan}(M_0)\cap\text{colspan}(M_1)$ have dimension $2^{r-1}$, and any $M'\prec M_0,M_1$ must have its row and column spans contained inside $R$ and $C$ respectively. Thus such an $M'$ of rank $2^{r-1}$ must have its row and column spans equal to $R$ and $C$ respectively. Therefore it in particular suffices to show that there is a unique $M'\prec M_0$ of rank $2^{r-1}$ with row and column spans $R$ and $C$ respectively. By a change of basis, it then suffices to show that there is a unique $M'\prec I_{2^r}$ of rank $2^{r-1}$ that is supported inside the top left $2^{r-1}\times 2^{r-1}$ quadrant, so that the row and column spans are supported inside the span of the first $2^{r-1}$ standard basis vectors. Let $M'_0\in\bF_q^{2^{r-1}\times 2^{r-1}}$ denote the top left quadrant of $M'$. Then because $M'$ is zero outside this quadrant, $I_{2^r}-M'$ is block diagonal with $I_{2^{r-1}}-M'_0$ in the first block and $I_{2^{r-1}}$ in the second block, so
  % \begin{equation*}
  %   \rank(I_{2^r}-M') = \rank(I_{2^{r-1}}-M'_0)+\rank(I_{2^{r-1}}) = \rank(I_{2^{r-1}}-M'_0)+2^{r-1}.
  % \end{equation*}
  % Therefore if $\rank(M')=2^{r-1}$ and $M'\preceq I_{2^r}$, then $\rank(I_{2^r}-M')=2^{r-1}$, so the above equation implies that $\rank(I_{2^{r-1}}-M'_0)=0$ and thus $M'_0=I_{2^{r-1}}$. Therefore the matrix with top left quadrant $I_{2^{r-1}}$ and zeros elsewhere is the unique rank-$2^{r-1}$ matrix $M'\prec I_{2^r}$ supported in the top left quadrant, as desired.
\end{proof}

\begin{lemma}
  \label{lem:commonprechigh}
  For $0\leq i\leq r$, fix a face $\spn\{M_0,\dots,M_i\}\in X^{r,b,n}(i)$, so that $M_0,\dots,M_i\in\cM_q^n(2^r)$. Then there exists a unique matrix $M'\in\cM_q^n(2^{r-i})$ such that for every $M''\in\cM_q^n$ with $M''\preceq M_0,\dots,M_i$, it holds that $M''\preceq M'$. That is, there is a unique maximal element $M'$ among the matrices dominated by $M_0,\dots,M_i$, and $\rank(M')=2^{r-i}$.
\end{lemma}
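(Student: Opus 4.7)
The plan is to follow the structure of the proof of Lemma~\ref{lem:commonprec}, generalizing the argument from rank-$1$ faces to rank-$i$ faces by leveraging the Hadamard symmetry at higher rank. By Lemma~\ref{lem:lowerfaces}, I may write
\[
\begin{pmatrix}M_0 & M_1 & \cdots & M_i\end{pmatrix} = \begin{pmatrix}e_1^{(1)}\otimes e_1^{(2)} & \cdots & e_{2^{r+1}-1}^{(1)}\otimes e_{2^{r+1}-1}^{(2)}\end{pmatrix}\cdot G_{\text{Had}}^{(r,i)}
\]
for collections $\{e_k^{(1)}\}_k$ and $\{e_k^{(2)}\}_k$ in $\bF_q^n$ that are each linearly independent. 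Since the $k$th row of $G_{\text{Had}}^{(r)}$ is the binary expansion of $2^{r+1}-k$, the set $K := \{k \in [2^{r+1}-1] : \text{the top } i+1 \text{ bits of } 2^{r+1}-k \text{ are all } 1\}$ is exactly $\{1,\ldots,2^{r-i}\}$. I would then define the candidate
\[
M' := \sum_{k\in K} e_k^{(1)}\otimes e_k^{(2)},
\]
which has rank $2^{r-i}$ by the linear independence of either family.

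Next, I would verify $M' \preceq M_j$ for each $j\in\{0,\ldots,i\}$. Writing $K_j := \{k : (G_{\text{Had}}^{(r)})_{k,j+1}=1\}$, we have $|K_j|=2^r$ (Hadamard rows are balanced), $K\subseteq K_j$, and $M_j - M' = \sum_{k\in K_j\setminus K} e_k^{(1)}\otimes e_k^{(2)}$ is again a sum of rank-$1$ tensors in disjoint independent directions, so $\rank(M_j - M') = 2^r - 2^{r-i} = \rank(M_j)-\rank(M')$, confirming the additivity condition for $\preceq$ in $\cM_q^n$.

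For maximality, given any $M''\preceq M_0,\ldots,M_i$, I would first observe that each $M''\preceq M_j$ forces $\text{rowspan}(M'')\subseteq\text{rowspan}(M_j)$ and $\text{colspan}(M'')\subseteq\text{colspan}(M_j)$ (since the additivity of rank requires the row/column spaces of $M''$ and $M_j-M''$ to intersect trivially). Intersecting over $j$ and using the linear independence of each $\{e_k^{(\ell)}\}_k$ yields
\[
\text{rowspan}(M'')\subseteq\spn\{e_k^{(1)}:k\in K\},\qquad\text{colspan}(M'')\subseteq\spn\{e_k^{(2)}:k\in K\}.
\]
Changing basis so that each $e_k^{(\ell)}$ is a standard basis vector and permuting indices so that $K_0=\{1,\ldots,2^r\}$, the matrix $M_0$ becomes $I_{2^r}\oplus 0$, the candidate $M'$ becomes $I_{2^{r-i}}\oplus 0$, and $M''$ is supported inside the top-left $2^{r-i}\times 2^{r-i}$ subblock. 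Then $I_{2^r}-M''$ splits block-diagonally into $M'-M''$ and $I_{2^r-2^{r-i}}$, so $\rank(I_{2^r}-M'') = \rank(M'-M'') + (2^r-2^{r-i})$. Combining with $\rank(I_{2^r}-M'') = 2^r - \rank(M'')$ (from $M''\preceq M_0$) yields $\rank(M'-M'') = \rank(M')-\rank(M'')$, i.e., $M''\preceq M'$. Uniqueness of the maximum is then automatic, since two distinct maximal elements would each have to dominate the other.

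The main obstacle will be cleanly verifying that the intersection of the row spans (and column spans) of the $M_j$'s is exactly the span of the "all-ones rows" of $G_{\text{Had}}^{(r,i)}$; this is precisely where the Hadamard code structure enters, and relies on the linear independence of each family $\{e_k^{(\ell)}\}_k$ to turn an intersection of row spans into the row span of an intersection of index sets. Once the Hadamard bookkeeping is settled, the block-matrix rank calculation is essentially the same as in Lemma~\ref{lem:commonprec}.
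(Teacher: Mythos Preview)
Your proof is correct but takes a different route from the paper's. The paper proves Lemma~\ref{lem:commonprechigh} by induction on $i$: assuming the result for $i-1$, it identifies the unique maximal $M_0'\preceq M_0,\dots,M_{i-1}$ and the unique maximal $M_1'\preceq M_0,\dots,M_{i-2},M_i$ (both of rank $2^{r-i+1}$), then applies Lemma~\ref{lem:commonprec} to the pair $M_0',M_1'$ to obtain $M'$. Any $M''\preceq M_0,\dots,M_i$ is dominated by both $M_0'$ and $M_1'$ by the inductive hypothesis, hence by $M'$.

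You instead directly generalize the proof of Lemma~\ref{lem:commonprec} to arbitrary $i$: you compute $\bigcap_j\text{rowspan}(M_j)$ and $\bigcap_j\text{colspan}(M_j)$ explicitly using the Hadamard structure (namely $\bigcap_j K_j = K$, which the linear independence of the $e_k^{(\ell)}$ turns into an identity of spans), and then rerun the block-diagonal rank computation verbatim. This avoids induction entirely and makes the role of the Hadamard code more transparent---the key combinatorial fact is simply that the all-ones row of $G_{\text{Had}}^{(r,i)}$ sits in exactly $2^{r-i}$ positions. The paper's inductive approach, by contrast, reuses Lemma~\ref{lem:commonprec} as a black box and never needs to revisit the row/column-span intersection argument. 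Both are short; yours is slightly more self-contained, while the paper's better reflects the recursive structure of the Hadamard code that is used elsewhere in the analysis.
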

\begin{proof}
  The $i=0$ case is immediate, and the $i=1$ case is shown in Lemma~\ref{lem:commonprec}, so assume $i\geq 2$. By the definition of $X^{r,b,n}$ in Section~\ref{sec:constructmain}, there exist vectors $e_k^{(\ell)}\in\bF_q^n$ for $k\in[2^{r+1}-1]$, $\ell\in[2]$ such that
  \begin{equation*}
    \begin{pmatrix}M_0&\cdots&M_i\end{pmatrix} = \begin{pmatrix}e_1^{(1)}\otimes e_1^{(2)}&\cdots&e_{2^{r+1}-1}^{(1)}\otimes e_{2^{r+1}-1}^{(2)}\end{pmatrix}\cdot G_{\text{Had}}^{(r,i)},
  \end{equation*}
  where we view the left hand side above as a matrix in $\bF_q^{n^2\times i}$. Thus the matrix $M'=\sum_{k=1}^{2^{r-i}}e_k^{(1)}\otimes e_k^{(2)}$ has rank $2^{r-i}$ and is dominated by $M_0,\dots,M_i$ in the matrix poset, so it remains to be shown that $M'$ dominates all matrices $M''$ that are dominated by $M_0,\dots,M_i$. For this purpose we proceed by induction.

  Assume the desired result holds for $i-1$. Then by the inductive hypothesis, $M'_0=\sum_{k=1}^{2^{r-i+1}}e_k^{(1)}\otimes e_k^{(2)}$ is the maximal element among the matrices dominated by $M_0,\dots,M_{i-1}$ in the matrix poset. Similarly, $M'_1=\sum_{k=1}^{2^{r-i}}e_k^{(1)}\otimes e_k^{(2)}+\sum_{k=2^{r-i+1}+1}^{2^{r-i+1}+2^{r-i}}e_k^{(1)}\otimes e_k^{(2)}$ is the unique maximal element among the matrices dominated by $M_0,\dots,M_{i-2},M_i$ in the matrix poset. By Lemma~\ref{lem:commonprec}, $M'=\sum_{k=1}^{2^{r-i}}e_k^{(1)}\otimes e_k^{(2)}$ is the unique maximal element among the matrices dominated by $M'_0,M'_1$ in the matrix poset. Therefore we have shown that any $M''\preceq M_0,\dots,M_i$ must satisfy $M''\preceq M'_0,M'_1$, and therefore must satisfy $M''\preceq M'$, as desired.
\end{proof}

The following lemma shows that we can characterize a face $x_i\in X(i)$ by a unique set of rank-$2^{r-i}$ matrices, which we call the \textbf{minimal matrices} of $x_i$.

\begin{lemma}
  \label{lem:minmatrices}
  Let $X=X^{r,b,n}$ be the Grassmannian poset in Definition~\ref{def:grassconstruct}.
  \begin{enumerate}
  \item\label{it:mm1} For every $0\leq i\leq r-1$ and every $(i+1)$-dimensional subspace $x_i\subseteq(\bF_q^n)^{\otimes 2}$, then $x_i\in X(i)$ if and only if there exist matrices $M_1',\dots,M_{2^{i+1}-1}'\in\cM_q^n(2^{r-i})$ with linearly independent row spans and linearly independent column spans such that
    \begin{equation}
      \label{eq:hadcompress}
      x_i = \im\left(\begin{pmatrix}M_1'&\cdots&M_{2^{i+1}-1}'\end{pmatrix}\cdot G_{\text{Had}}^{(i)}\right),
    \end{equation}
    where above we view $\begin{pmatrix}M_1'&\cdots&M_{2^{i+1}-1}\end{pmatrix}$ as a matrix in $\bF_q^{n^2\times 2^{i+1}-1}$.
  \item\label{it:mm2} Letting $e_k^{(\ell)}\in\bF_q^n$ for $k\in[2^{r+1}-1]$, $\ell\in[2]$ be vectors such that
    \begin{equation}
      \label{eq:facedecomplink}
      x_i = \im\left(\begin{pmatrix}e_1^{(1)}\otimes e_1^{(2)}&\cdots&e_{2^{r+1}-1}^{(1)}\otimes e_{2^{r+1}-1}^{(2)}\end{pmatrix}\cdot G_{\text{Had}}^{(r,i)}\right)
    \end{equation}
    as given by the definition of $X^{r,b,n}$ in Section~\ref{sec:constructmain}, then for $1\leq j\leq 2^{i+1}-1$ the matrices
    \begin{align}
      \label{eq:rank1expbiglink}
      M_j' &= \sum_{k=2^{r-i}(j-1)+1}^{2^{r-i}j}e_k^{(1)}\otimes e_k^{(2)}.
    \end{align}
    satisfy~(\ref{eq:hadcompress}).
  \item\label{it:mm3} For $x_i\in X(i)$, the set $\{M_1',\dots,M_{2^{i+1}-1}'\}\subseteq\cM_q^n(2^{r-i})$ of matrices satisfying~(\ref{eq:hadcompress}) is unique. In particular, (\ref{eq:rank1expbiglink}) gives the same matrices $M_j'$ (up to permuting the labels $j\in[2^{i+1}-1]$) for any choice of the vectors $e_k^{(\ell)}$ that satisfies~(\ref{eq:facedecomplink}).
  \end{enumerate}
\end{lemma}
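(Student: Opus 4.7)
The plan is to establish part~(\ref{it:mm2}) by a direct computation, derive part~(\ref{it:mm1}) from it in both directions, and then prove uniqueness in part~(\ref{it:mm3}) by invoking Lemma~\ref{lem:commonprechigh}. The main obstacle is part~(\ref{it:mm3}), which requires carefully relating bases of $x_i$ to the linear-algebraic structure of the Hadamard code acting on the block decomposition of the $M_j'$.

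For part~(\ref{it:mm2}), the key is a block decomposition of $G_{\text{Had}}^{(r,i)}$. Its $k$th row consists of the top $i+1$ bits of the $(r+1)$-bit representation of $2^{r+1}-k$; one checks that for $k = 2^{r-i}(j-1)+t$ with $t \in [2^{r-i}]$ and $j \in [2^{i+1}-1]$, this row equals the $(i+1)$-bit representation of $2^{i+1}-j$ (the $j$th row of $G_{\text{Had}}^{(i)}$), while for $k > 2^{r+1}-2^{r-i}$ the row is identically zero. Hence the rows of $G_{\text{Had}}^{(r,i)}$ are constant on each length-$2^{r-i}$ block, so summing the $e_k^{(1)} \otimes e_k^{(2)}$ within each block collapses $E \cdot G_{\text{Had}}^{(r,i)}$ to $(M_1'\,|\,\cdots\,|\,M_{2^{i+1}-1}')\cdot G_{\text{Had}}^{(i)}$, establishing (\ref{eq:hadcompress}). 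Each $M_j'$ is then a sum of $2^{r-i}$ rank-1 tensors whose row and column vectors are disjoint subsets of the linearly independent $\{e_k^{(\ell)}\}$, so $\rank M_j' = 2^{r-i}$ and distinct $M_j'$ have mutually linearly independent row and column spans.

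Part~(\ref{it:mm1}) follows as a corollary. The forward direction is immediate from Lemma~\ref{lem:lowerfaces} combined with part~(\ref{it:mm2}). For the converse, given a factorization as in (\ref{eq:hadcompress}), expand each $M_j'$ into $2^{r-i}$ rank-1 tensors by choosing bases of its row and column spans, and pad with $2^{r-i}-1$ linearly independent extensions on each side (possible since $n \geq 2^{r+1}$); these extensions multiply the zero rows of $G_{\text{Had}}^{(r,i)}$ and contribute nothing to the image, allowing Lemma~\ref{lem:lowerfaces} to produce the required expression and conclude $x_i \in X(i)$.

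For part~(\ref{it:mm3}), I characterize $\{M_1',\dots,M_{2^{i+1}-1}'\}$ intrinsically as the collection of rank-$2^{r-i}$ matrices that arise, via Lemma~\ref{lem:commonprechigh}, as the unique maximal matrix dominated (in the matrix poset) by some basis of $x_i$. Writing any basis $\{B_0,\dots,B_i\}$ of $x_i$ as $B_k = \sum_j \beta_{k,j} M_j'$ for a basis $\{\beta_k\}$ of the Hadamard code $C = \im G_{\text{Had}}^{(i)}$, a rank computation using mutual span-independence of the $M_j'$ shows that any rank-$2^{r-i}$ matrix $M^*$ with $M^* \preceq B_k$ for all $k$ must have its row and column spans contained respectively in $\bigoplus_{j \in J}(\text{rowspan}\,M_j')$ and $\bigoplus_{j \in J}(\text{colspan}\,M_j')$ for $J = \bigcap_k \supp(\beta_k)$; this forces $|J|=1$ by the rank constraint on $M^*$, and then forces $M^* = M_{j_0}'$ for $\{j_0\}=J$ by a further rank computation on $B_k - M^*$. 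Conversely, for any $j$, choosing a basis of $C$ entirely inside the affine hyperplane $\{\alpha \in C : \alpha_j=1\}$ — which linearly spans $C$ since it is a non-trivial affine coset — yields a basis of $x_i$ realizing $M_j'$ as its unique maximal dominated rank-$2^{r-i}$ matrix. Hence $\{M_j'\}$ is intrinsically determined by $x_i$, completing uniqueness.
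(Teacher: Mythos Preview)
Your proof is correct and follows essentially the same strategy as the paper. Both arguments handle items~(\ref{it:mm1}) and~(\ref{it:mm2}) by unwinding the block structure of $G_{\text{Had}}^{(r,i)}$, and both prove uniqueness in item~(\ref{it:mm3}) by invoking Lemma~\ref{lem:commonprechigh} to pin down each $M_j'$ as the unique maximal matrix dominated by a suitable basis of $x_i$.

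The only organizational difference is in item~(\ref{it:mm3}): the paper fixes the ordered basis $(M_0,\dots,M_i)$ coming from~(\ref{eq:facedecomplinkhere}), observes that the auxiliary basis $\{M_{i''}^{(j)}\}$ it constructs is determined purely by the Hadamard matrix entries (hence independent of the choice of $e_k^{(\ell)}$), and then notes that basis changes act by permuting the $M_j'$. You instead give a direct intrinsic characterization of the set $\{M_j'\}$ as exactly the maximal dominated matrices ranging over \emph{all} bases of $x_i$, which is a slightly cleaner packaging of the same content. One small point: your phrase ``forces $|J|=1$ by the rank constraint on $M^*$'' is terse --- the cleanest way to see $|J|\le 1$ is either that $\sum_{j\in J}M_j'\preceq M^*$ by maximality (giving $|J|\cdot 2^{r-i}\le 2^{r-i}$), or simply that for the Hadamard code any basis of $\bF_2^{i+1}$ determines a unique coordinate on which all basis codewords equal $1$.
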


\begin{definition}
  For $x_i\in X^{r,b,n}(i)$, we let the \textbf{minimal matrix set} of $x_i$ be the unique set $\{M_1',\dots,M_{2^{i+1}-1}\}\subseteq\cM_q^n(2^{r-i})$ satisfying~(\ref{eq:hadcompress}) as given by Lemma~\ref{lem:minmatrices}.
\end{definition}

\begin{proof}[Proof of Lemma~\ref{lem:minmatrices}]
  Item~\ref{it:mm2} is immediate from the definiton of $X=X^{r,b,n}$. That is, if $x_i\in X(i)$, then the existence of the minimal matrices satisfying~(\ref{eq:hadcompress}) is immediate. For the opposite direction of item~\ref{it:mm1}, if there exist matrices $M_j'$ satisfying~(\ref{eq:hadcompress}), then for each $M_j'$ we may simply choose vectors $e_k^{(\ell)}\in\bF_q^n$ that give a decomposition of $M_j'$ into rank-$1$ matrices $e_k^{(1)}\otimes e_k^{(2)}$, so that~(\ref{eq:rank1expbiglink}) is satisfied. Then $x_i$ satisfies the definition for a face in $X(i)$ as given in section~\ref{sec:constructmain}.

  It remains to prove item~\ref{it:mm3}. Fix an arbitrary basis $M_0,\dots,M_i$ of the $(i+1)$-dimensional space $x_i$. By definition, there exist vectors $e_k^{(\ell)}\in\bF_q^n$ for $k\in[2^{r+1}-1]$, $\ell\in[2]$ such that
  \begin{align}
    \label{eq:facedecomplinkhere}
    \begin{split}
      \begin{pmatrix}M_0&\cdots&M_i\end{pmatrix}
      &= \begin{pmatrix}e_1^{(1)}\otimes e_1^{(2)}&\cdots&e_{2^{r+1}-1}^{(1)}\otimes e_{2^{r+1}-1}^{(2)}\end{pmatrix}\cdot G_{\text{Had}}^{(r,i)} \\
      &= \begin{pmatrix}M_1'&\cdots&M_{2^{i+1}-1}'\end{pmatrix}\cdot G_{\text{Had}}^{(i)},
    \end{split}
  \end{align}
  where we view the left hand side above as a matrix in $\bF_q^{n^2\times i}$. We will first show that the $M_j'$ as defined in~(\ref{eq:rank1expbiglink}) only depend on the choice of basis $M_0,\dots,M_i$, and not on the choice of vectors $e_k^{(\ell)}$ satisfying~(\ref{eq:facedecomplinkhere}). As changing the basis for $x_i$ corresponds to multiplying the LHS and RHS of~(\ref{eq:facedecomplinkhere}) by a matrix in $\text{GL}(i+1;\bF_q)$, which on the RHS of~(\ref{eq:facedecomplinkhere}) is equivalent to permuting the $M_j'$s, the uniqueness of the set $\{M_1',\dots,M_{2^{i+1}-1}'\}$ then follows.
  
  To see that the matrices $M_j'$ only depend on the choice of basis $M_0,\dots,M_i$ for $x_i$, and not on the choice of vectors $e_k^{(\ell)}$, consider that for every $j$, by definition there exists some $0\leq i'\leq i$ such that $M_j'\preceq M_{i'}$, and then for every $0\leq i''\leq i$ it either holds that $M_j'\preceq M_{i''}$ or $M_j'\preceq M_{i''}+M_{i'}$. Thus let
  \begin{equation*}
    M_{i''}^{(j)} = \begin{cases}
      M_{i''},&M_j'\preceq M_{i''}\\
      M_{i''}+M_{i'},&M_j'\preceq M_{i''}+M_{i'}.
    \end{cases}
  \end{equation*}
  Observe that by construction $M_{i''}^{(j)}$ is determined as a function of $j$ and $\{M_0,\dots,M_i\}$, so that in particular $M_{i''}^{(j)}$ does not depend on the choice of vectors $e_k^{(\ell)}$ satisfying~(\ref{eq:facedecomplinkhere}). Then by definition $x_i=\spn\{M_0^{(j)},\dots,M_i^{(j)}\}$, so by Lemma~\ref{lem:commonprechigh}, $M_j'$ is the unique maximal element among the matrices dominated by $M_0^{(j)},\dots,M_i^{(j)}$. It in particular follows that $M_j'$ is well defined independent of the choice of vectors $e_k^{(\ell)}$.
\end{proof}

Below, we apply the above lemmas to characterize elements of the link $X_{x_i}$ of a rank-$i$ face $x_i\in X(i)$. Specifically, we show that matrices $M_{i+1}$ such that $\spn\{x_i,M_{i+1}\}\in X_{x_i}(0)$ (or equivalently, such that $\spn\{x_i,M_{i+1}\}\in X(i+1)$) can always be expressed as a sum of linearly independent terms of a certain form.

\begin{lemma}
  \label{lem:link0}
  Let $X=X^{r,b,n}$ be the Grassmannian poset in Definition~\ref{def:grassconstruct}. For every $-1\leq i\leq r-1$ and every $x_i\in X(i)$ with minimal matrices $M_1',\dots,M_{2^{i+1}-1}'\in\cM_q^n(2^{r-i})$, then a matrix $M_{i+1}\in\cM_q^n(2^r)$ satisfies $\spn\{x_i,M_{i+1}\}\in X(i+1)$ if and only if $M_{i+1}$ can be expressed in the form
  \begin{equation}
    \label{eq:link0decomp}
    M_{i+1}=M_1''+\cdots+M_{2^{i+1}}''
  \end{equation}
  for some matrices $M_1'',\dots,M_{2^{i+1}}''\in\cM_q^n(2^{r-i-1})$ such that $M_j'\prec M_j$ for all $1\leq j\leq 2^{i+1}-1$, and such that $M_{2^{i+1}}''$ has row and column spans disjoint from those of the matrix $\sum_{j=1}^{2^{i+1}-1}M_j'$.
\end{lemma}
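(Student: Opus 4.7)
The plan is to prove both directions by invoking the minimal matrix characterization of Lemma~\ref{lem:minmatrices}, using the fact that the Hadamard code structure forces a canonical correspondence between the minimal matrix set of $x_i$ and pairs within the minimal matrix set of $x_{i+1}$.

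For the forward direction ($\Rightarrow$), I apply Lemma~\ref{lem:minmatrices}(1) to $x_{i+1} := \spn\{x_i, M_{i+1}\} \in X(i+1)$ to obtain its unique minimal matrix set $\{N_1', \ldots, N_{2^{i+2}-1}'\} \subseteq \cM_q^n(2^{r-i-1})$ with linearly independent row and column spans. I identify the indices with $\bF_2^{i+2} \setminus \{0\}$ via the standard description of the dimension-$(i+2)$ Hadamard code as evaluations of linear functionals at nonzero points. The subspace $x_i \subseteq x_{i+1}$ then corresponds to a codimension-one subspace $Y = V^\perp \subseteq \bF_2^{i+2}$ where $V = \spn\{v_0\}$ for a unique nonzero $v_0$. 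Under the quotient $\pi: \bF_2^{i+2} \to \bF_2^{i+2}/V$, the index set partitions into $2^{i+1}-1$ two-element fibers $\{x, x+v_0\}$ (one per nonzero class in the quotient) and the singleton $\{v_0\}$. I form the candidate matrices $\widehat{M}_{\bar{x}} := N_x' + N_{x+v_0}'$, each of rank $2^{r-i}$ with linearly independent row and column spans, and verify that they collectively form a valid Hadamard encoding of $x_i$. Uniqueness (Lemma~\ref{lem:minmatrices}(3)) then identifies $\{\widehat{M}_{\bar{x}}\}$ with $\{M_1', \ldots, M_{2^{i+1}-1}'\}$ up to relabeling.

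To extract the decomposition of $M_{i+1}$, I observe that $M_{i+1} \in x_{i+1} \setminus x_i$ corresponds to some $y^* \in \bF_2^{i+2}$ with $\langle y^*, v_0\rangle = 1$. Expanding $M_{i+1} = \sum_x \langle y^*, x\rangle N_x'$ and grouping by fibers, each pair $\{x, x+v_0\}$ contributes exactly one of $N_x'$ or $N_{x+v_0}'$ (since $\langle y^*, x\rangle$ and $\langle y^*, x+v_0\rangle$ differ by $\langle y^*, v_0\rangle = 1$), and the singleton contributes $N_{v_0}'$. Setting $M_j''$ to be the chosen $N_x'$ from the fiber paired with $M_j'$, and $M_{2^{i+1}}'' := N_{v_0}'$, yields the claimed decomposition. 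The conditions $M_j'' \prec M_j'$ (since $M_j'' + (M_j' - M_j'') = M_j'$ is a rank-additive split) and disjointness of $M_{2^{i+1}}''$ from $\sum_j M_j'$ follow immediately from the linearly independent row and column spans of $\{N_k'\}$.

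For the reverse direction ($\Leftarrow$), I reverse the construction: given the hypothesized decomposition, I define the candidate set
\begin{equation*}
\{N_k'\}_{k=1}^{2^{i+2}-1} := \{M_j'',\; M_j' - M_j'' : 1 \leq j \leq 2^{i+1}-1\} \cup \{M_{2^{i+1}}''\},
\end{equation*}
where each $M_j' - M_j''$ has rank $2^{r-i-1}$ by the condition $M_j'' \prec M_j'$. Linear independence of the row and column spans of these $N_k'$ follows from (i) the corresponding property of $\{M_j'\}$ (Lemma~\ref{lem:minmatrices}(1) applied to $x_i$), (ii) the rank-additive split of each $M_j'$ into $M_j''$ and $M_j' - M_j''$, and (iii) the hypothesized disjointness of $M_{2^{i+1}}''$ from $\sum_j M_j'$. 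Assigning these $N_k'$ to indices in $\bF_2^{i+2}\setminus\{0\}$ via the same pairing and distinguished $v_0$ as in the forward direction, I verify through the Hadamard correspondence that the resulting encoding equals $\spn\{x_i, M_{i+1}\}$: elements indexed by $y \in Y$ recover $x_i$ by inverting the computation above, and the element indexed by an appropriate $y^*$ with $\langle y^*, v_0\rangle = 1$ equals $M_{i+1}$. Lemma~\ref{lem:minmatrices}(1) then certifies $\spn\{x_i, M_{i+1}\} \in X(i+1)$. The main obstacle is orchestrating the combinatorial bookkeeping cleanly: translating between the abstract index set $\bF_2^{i+2}\setminus\{0\}$ and the concrete row ordering of $G_{\text{Had}}^{(i+1)}$, and ensuring the ``doubled coordinate'' structure of the dimension-$(i+2)$ Hadamard code restricted to an $(i+1)$-dim subcode correctly matches the pairing of new minimal matrices into the old ones. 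The algebraic verifications (rank counts, linear independence, Hadamard encoding equalities) are routine once this combinatorial structure is in place.
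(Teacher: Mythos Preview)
Your proof is correct and follows essentially the same approach as the paper: both directions invoke the minimal matrix characterization (Lemma~\ref{lem:minmatrices}) and exploit the ``doubling'' structure of the Hadamard code when passing from dimension $i+1$ to $i+2$. The only difference is presentational: the paper uses the concrete row ordering of $G_{\text{Had}}^{(i+1)}$ (so the pairing is simply $\{2j-1,2j\}$ and the extra singleton is index $2^{i+2}-1$, making the proof a few lines long), whereas you phrase the same pairing abstractly via the quotient $\bF_2^{i+2}\to\bF_2^{i+2}/V$.
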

\begin{proof}

  The case $i=-1$ is immediate, so assume that $i\geq 0$. Let $M_0,\dots,M_i\in\cM_q^n(2^r)$ be the basis for $x_i$ given by
  \begin{equation*}
    \begin{pmatrix}M_0&\cdots&M_i\end{pmatrix} = \begin{pmatrix}M_1'&\cdots&M_{2^{i+1}-1}'\end{pmatrix}\cdot G_{\text{Had}}^{(i)}.
  \end{equation*}
  
  If $x_{i+1}=\spn\{x_i,M_{i+1}\}\in X(i+1)$, then by Lemma~\ref{lem:minmatrices} there exist minimal matrices $K_1,\dots,K_{2^{i+2}-1}$ for the face $x_{i+1}$ such that $M_j'=K_{2j-1}+K_{2j}$ for all $j$, and such that
  \begin{equation}
    \label{eq:facedecomplink2}
    \begin{pmatrix}M_0&\cdots&M_{i+1}\end{pmatrix} = \begin{pmatrix}K_1&\cdots&K_{2^{i+2}-1}\end{pmatrix}\cdot G_{\text{Had}}^{(i+1)}.
  \end{equation}
  Then setting $M_j''=K_{2j-1}$ gives the desired decomposition of $M_{i+1}=\sum_{j=1}^{2^{i+1}}M_j''$.
  % by definition there exist vectors $e_k^{(\ell)}\in\bF_q^n$ for $k\in[2^{r+1}-1]$, $\ell\in[2]$ such that
  % \begin{equation}
  %   \label{eq:facedecomplink2}
  %   \begin{pmatrix}M_0&\cdots&M_{i+1}\end{pmatrix} = \begin{pmatrix}e_1^{(1)}\otimes e_1^{(2)}&\cdots&e_{2^{r+1}-1}^{(1)}\otimes e_{2^{r+1}-1}^{(2)}\end{pmatrix}\cdot G_{\text{Had}}^{(r,i+1)}.
  % \end{equation}
  % Then setting
  % \begin{align}
  %   \label{eq:rank1explink}
  %   M_j'' &= \sum_{k=2^{r-i}(j-1)+1}^{2^{r-i}(j-1)+2^{r-i-1}}e_k^{(1)}\otimes e_k^{(2)}.
  % \end{align}
  % gives the desired decomposition of $M_{i+1}=\sum_{j=1}^{2^{i+1}}M_j''$.

  For the converse, assume that $M_{i+1}\in\cM_q^n(2^r)$ has a decomposition $M_{i+1}=\sum_{j=1}^{2^{i+1}}M_j''$ for matrices $M_j''$ as described in the lemma statement. Then for each $1\leq j\leq 2^{i+1}$ define $K_{2j-1}=M_j''$, and for each $1\leq j\leq 2^{i+1}-1$ define $K_{2j}=M_j'+M_j''$. It follows that~(\ref{eq:facedecomplink2}) holds for these matrices $K_1,\dots,K_{2^{i+2}-1}$, and the row spans as well as the column spans of $K_1,\dots,K_{2^{i+2}-1}$ are by construction linearly independent. Thus $x_{i+1}=\spn\{x_i,M_{i+1}\}\in X(i+1)$ by Lemma~\ref{lem:minmatrices}.
  % Then for each $j$ we may expand $M_j''$ into rank-1 matrices, that is, we may choose vectors $e_{k}^{(\ell)}\in\bF_q^n$ for $2^{r-i}(j-1)+1\leq k\leq 2^{r-i}(j-1)+2^{r-i-1}$ and $\ell\in[2]$ such that~(\ref{eq:rank1explink}) holds. If $j\leq 2^{i+1}-1$, because $M_j''\preceq M_j'$, we may then extend this rank-1 decomposition for $M_j''$ into a rank-1 decomposition for $M_j'$ by choosing vectors $e_{k}^{(\ell)}\in\bF_q^n$ for $2^{r-i}(j-1)+2^{r-i-1}+1\leq k\leq 2^{r-i}j$ and $\ell\in[2]$ such that~(\ref{eq:rank1expbiglink}) holds. Then for these choices of $e_k^{(\ell)}$, by construction~(\ref{eq:facedecomplink2}) holds, so $\spn\{M_0,\dots,M_{i+1}\}\in X(i+1)$, as desired.
\end{proof}

We are now ready to express the 1-skeleton graph of the link $X_{x_i}$ for $x_i\in X(i)$ as a tensor product of simpler graphs.

\begin{lemma}
  \label{lem:tensorraw}
  Let $X=X^{r,b,n}$ be the Grassmannian poset in Definition~\ref{def:grassconstruct}. Fix any $-1\leq i\leq r-2$ and any $x_i\in X(i)$ with minimal matrices $M_1',\dots,M_{2^{i+1}-1}'\in\cM_q^n(2^{r-i})$. For any $K^{(1)},K^{(2)}\in\cM_q^n(2^r)$ such that $\spn\{x_i,K^{(1)}\},\spn\{x_i,K^{(2)}\}\in X(i+1)$, let $K^{(m)}=K_1^{(m)}+\cdots+K_{2^{i+1}}^{(m)}$ be the decomposition given by~(\ref{eq:link0decomp}) in Lemma~\ref{lem:link0} for $m\in[2]$. Then $\spn\{x_i,K^{(1)},K^{(2)}\}\in X(i+2)$ if and only if the following conditions hold:
  \begin{enumerate}
  \item\label{it:tr1} For all $j\in[2^{i+1}]$, we have $\spn\{K_j^{(1)},K_j^{(2)}\}\in X^{r-i-1,b,n}(1)$.
  \item\label{it:tr2} For all $j\in[2^{i+1}-1]$, the minimal matrices $L_1,L_2,L_3\in\cM_q^n(2^{r-i-2})$ of $\spn\{K_j^{(1)},K_j^{(2)}\}\in X^{r-i-1,b,n}(1)$ (so that $K_j^{(1)}=L_1+L_2$, $K_j^{(2)}=L_1+L_3$) satisfy
    \begin{equation*}
      L_1+L_2+L_3\preceq M_j'.
    \end{equation*}
  \item\label{it:tr3} For $j=2^{i+1}$, the minimal matrices $L_1,L_2,L_3\in\cM_q^n(2^{r-i-2})$ of $\spn\{K_{2^{i+1}}^{(1)},K_{2^{i+1}}^{(2)}\}\in X^{r-i-1,b,n}(1)$ satisfy
    \begin{align*}
      \text{rowspan}(L_1+L_2+L_3) \cap \text{rowspan}\left(\sum_{j=1}^{2^{i+1}-1}M_j'\right) &= \{0\} \\
      \text{colspan}(L_1+L_2+L_3) \cap \text{colspan}\left(\sum_{j=1}^{2^{i+1}-1}M_j'\right) &= \{0\}.
    \end{align*}
    % \begin{align*}
    %   \left(\text{rowspan}(K_{2^{i+1}})\cup\text{rowspan}(L_{2^{i+1}})\right) \cap \text{rowspan}\left(\sum_{j=1}^{2^{i+1}-1}M_j'\right) &= \{0\} \\
    %   \left(\text{colspan}(K_{2^{i+1}})\cup\text{colspan}(L_{2^{i+1}})\right) \cap \text{colspan}\left(\sum_{j=1}^{2^{i+1}-1}M_j'\right) &= \{0\}.
    % \end{align*}
  \end{enumerate}
\end{lemma}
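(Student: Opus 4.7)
The plan is to reduce both directions to the characterization of faces via minimal matrices given in Lemma~\ref{lem:minmatrices}, exploiting the fact that the generator $G_{\text{Had}}^{(i+2)}$ admits a natural ``$4+4+\cdots+4+3$'' block decomposition that matches the structure in the statement. Specifically, if we label the $2^{i+3}-1$ rows of $G_{\text{Had}}^{(i+2)}$ by the nonzero elements of $\bF_2^{i+3}=\bF_2^{i+1}\oplus\bF_2^2$, then for each $j\in[2^{i+1}-1]$ the four rows $(j,0,0),(j,0,1),(j,1,0),(j,1,1)$ form a block corresponding to the minimal matrix $M_j'$ of $x_i$, while the three rows $(0,0,1),(0,1,0),(0,1,1)$ form a block corresponding to the ``new'' directions introduced by $K^{(1)},K^{(2)}$. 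The count $4(2^{i+1}-1)+3 = 2^{i+3}-1$ matches exactly the number of minimal matrices for a rank-$(i+2)$ face.

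For the forward direction, assume $x_{i+2} = \spn\{x_i,K^{(1)},K^{(2)}\} \in X(i+2)$. By Lemma~\ref{lem:minmatrices} it has a unique minimal matrix set $\{N_1,\dots,N_{2^{i+3}-1}\} \subseteq \cM_q^n(2^{r-i-2})$ with linearly independent row and column spans. Applying uniqueness of minimal matrices to the subface $x_i \subseteq x_{i+2}$ (after an appropriate permutation of Hadamard coordinates, legal by Lemma~\ref{lem:lowerfaces}) forces the $M_j'$ to each be a sum of exactly 4 of the $N_k$'s, namely those indexed by the block $\{(j,0,0),(j,0,1),(j,1,0),(j,1,1)\}$. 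Applying the same uniqueness to each 2-dimensional subface $\spn\{K_j^{(1)},K_j^{(2)}\} \subseteq x_{i+2}$ identifies its minimal matrices $L_1,L_2,L_3$ as three specific $N_k$'s from the same block. Condition~\ref{it:tr1} is then immediate, condition~\ref{it:tr2} follows because $L_1+L_2+L_3$ is the sum of three of the four $N_k$'s that sum to $M_j'$ (hence $L_1+L_2+L_3\preceq M_j'$ by the rank-additivity defining $\preceq$ in $\cM_q^n$), and condition~\ref{it:tr3} follows because the three $N_k$'s corresponding to the $j=2^{i+1}$ block have row and column spans linearly independent from those of every other $N_k$, which together span the row and column spaces of $\sum_{j=1}^{2^{i+1}-1}M_j'$.

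For the converse, assume the three conditions hold. For each $j \in [2^{i+1}-1]$, condition~\ref{it:tr2} gives $L_1^{(j)}+L_2^{(j)}+L_3^{(j)} \preceq M_j'$, so I define
\[ L_4^{(j)} = M_j' - \bigl(L_1^{(j)}+L_2^{(j)}+L_3^{(j)}\bigr) \in \cM_q^n(2^{r-i-2}), \]
whose rank follows from rank-additivity of $\preceq$. For $j=2^{i+1}$, set $L_1^{(j)},L_2^{(j)},L_3^{(j)}$ to be the minimal matrices given by condition~\ref{it:tr3}. I would then verify directly that the tuple of rank-$2^{r-i-2}$ matrices, arranged in the block order sketched above, satisfies the Hadamard identity~(\ref{eq:hadcompress}) for $x_{i+2}$: within each block, the rows of $G_{\text{Had}}^{(i+2)}$ restrict to $G_{\text{Had}}^{(i)}$ in the first $i+1$ coordinates and to $G_{\text{Had}}^{(1)}$ in the last two, so the row-block contributions agree with the known minimal decompositions of $M_j'$ and of $\spn\{K_j^{(1)},K_j^{(2)}\}$. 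Linear independence of row and column spans across different $j$-blocks is inherited from the linear independence of the minimal matrices of $x_i$ (for $j\in[2^{i+1}-1]$) and from the disjointness in condition~\ref{it:tr3} (for $j=2^{i+1}$). Applying Lemma~\ref{lem:minmatrices} in the reverse direction concludes $x_{i+2}\in X(i+2)$.

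The main obstacle is the bookkeeping in the first step: identifying, inside the unique minimal matrix set of $x_{i+2}$, which four $N_k$'s sum to each $M_j'$ and which three form the minimal set of $\spn\{K_j^{(1)},K_j^{(2)}\}$. This is cleanly handled by choosing coordinates on $\bF_2^{i+3}$ that make the $x_i$ subface occupy the first $i+1$ coordinates and the $K^{(1)},K^{(2)}$ extensions occupy the last two, then invoking the uniqueness clause of Lemma~\ref{lem:minmatrices}(\ref{it:mm3}) on each subface; Lemma~\ref{lem:lowerfaces} guarantees this coordinate change is compatible with the Hadamard code structure. After this setup, conditions~\ref{it:tr1}--\ref{it:tr3} are just a direct translation of what the linear independence and rank constraints on the $N_k$'s say about each block.
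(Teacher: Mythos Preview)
Your proposal is correct and follows essentially the same approach as the paper: both directions hinge on passing to the minimal matrix set $\{N_k\}$ of $x_{i+2}$, grouping it into $2^{i+1}-1$ blocks of four plus one block of three, and reading off the conditions from the block structure (the paper uses the explicit indexing $M''_{4j-3},M''_{4j-2},M''_{4j-1},M''_{4j}$ where you use the coordinate splitting $\bF_2^{i+3}=\bF_2^{i+1}\oplus\bF_2^2$). One minor slip: $\spn\{K_j^{(1)},K_j^{(2)}\}$ is not literally a subspace of $x_{i+2}$, so the phrase ``subface $\spn\{K_j^{(1)},K_j^{(2)}\}\subseteq x_{i+2}$'' is off---what you actually need (and what the paper does) is to apply uniqueness to the genuine subfaces $\spn\{x_i,K^{(m)}\}\subseteq x_{i+2}$ to identify each $K_j^{(m)}$ as a sum of two specific $N_k$'s, after which condition~\ref{it:tr1} and the identification of $L_1,L_2,L_3$ follow directly.
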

\begin{proof}
  If $x_{i+2}=\spn\{x_i,K^{(1)},K^{(2)}\}\in X(i+2)$, then by definition there exist vectors $e_k^{(\ell)}\in\bF_q^n$ for $k\in[2^{r+1}-1]$, $\ell\in[2]$ such that
  \begin{equation*}
    \begin{pmatrix}M_0&\cdots&M_i&K^{(1)}&K^{(2)}\end{pmatrix} = \begin{pmatrix}e_1^{(1)}\otimes e_1^{(2)}&\cdots&e_{2^{r+1}-1}^{(1)}\otimes e_{2^{r+1}-1}^{(2)}\end{pmatrix}\cdot G_{\text{Had}}^{(r,i+2)},
  \end{equation*}
  where $M_0,\dots,M_i$ is a basis for $x_i$, and if for $j\in[2^{i+3}-1]$ we define
  \begin{equation*}
    M_j'' = \sum_{k=2^{r-(i+2)}(j-1)+1}^{2^{r-(i+2)}j}e_k^{(1)}\otimes e_k^{(2)} \\
  \end{equation*}
  to be the minimal matrices for $x_{i+2}$, then it holds for all $j\in[2^{i+1}-1]$ that
  \begin{align*}
     M_j' &= M_{4j-3}''+M_{4j-2}''+M_{4j-1}''+M_{4j}'' \\
     K_j^{(1)} &= M_{4j-3}''+M_{4j-2}'' \\
     K_j^{(2)} &= M_{4j-3}''+M_{4j-1}''.
  \end{align*}
  The latter two equations above also hold for $j=2^{i+1}$. Then for all $j\in[2^{i+1}]$ it by definition holds that $\spn\{K_j^{(1)},K_j^{(2)}\}$ is a face in $X^{r-i-1,b,n}(1)$ with minimal matrices $M_{4j-3}'',M_{4j-2}'',M_{4j-1}''$, so by the first equation above it holds for all $j\in[2^{i+1}-1]$ that $M_{4j-3}''+M_{4j-2}''+M_{4j-1}''\prec M_j'$. Thus items~\ref{it:tr1} and~\ref{it:tr2} hold. Item~\ref{it:tr3} also holds because the $M_j''$ by definition all have linearly independent row spans and linearly independent column spans.

  For the converse, assume that items~\ref{it:tr1},\ref{it:tr2},\ref{it:tr3} hold. We then define matrices $M_1'',\dots,M_{2^{i+3}-1}''$ such that for each $j\in[2^{i+1}-1]$, if $L_1,L_2,L_3$ are the minimal matrices of $\spn\{K_j^{(1)},K_j^{(2)}$ as described in item~\ref{it:tr2}, then we let
  \begin{align*}
    M_{4j-3}'' &= L_1 \\
    M_{4j-2}'' &= L_2 \\
    M_{4j-1}'' &= L_3 \\
    M_{4j}'' &= M_j'+L_1+L_2+L_3.
  \end{align*}
  For $j=2^{i+1}$, we define $M_{2^{i+3}-3},M_{2^{i+3}-2},M_{2^{i+3}-1}$ according to the first 3 equations above. Then by construction
  \begin{equation*}
    \begin{pmatrix}M_0&\cdots&M_i&K^{(1)}&K^{(2)}\end{pmatrix} = \begin{pmatrix}M_1''&\cdots&M_{2^{i+3}-1}''\end{pmatrix}\cdot G_{\text{Had}}^{(i+2)},
  \end{equation*}
  where $M_0,\dots,M_i$ is a basis for $x_i$. Thus by Lemma~\ref{lem:minmatrices}, $x_{i+2}=\spn\{x_i,K^{(1)},K^{(2)}\}$ is a face in $X(i+2)$, as desired.
  % If $x_{i+2}\spn\{x_i,K,L\}\in X(i+2)$, just use decomposition into $e_k^{(\ell)}$'s to show the conditions hold. For the opposite direction, simply construct the Lemma~\ref{lem:link0} decomposition for $x_{i+1}=\spn\{x_i,K\}$ as the collection of $K_j''$ and $M_j''+K_j''$ for all $j$. Then use the condition $\spn\{K_j'',L_j''\}\in X^{r-i-1,b,n}(1)$ to split $L_j''$ into the part dominated by $K_j''$ and the part dominated by $M_j''+K_j''$, ultimately obtaining a decomposition as in Lemma~\ref{lem:link0} showing $x_{i+2}\in X_{i+1}(0)$
\end{proof}

For each $j\in[2^{i+1}]$, the conditions imposed in Lemma~\ref{lem:tensorraw} on $K_j^{(1)},K_j^{(2)}$ do not depend on the values of $K_{j'}^{(1)},K_{j'}^{(2)}$ for any $j'\neq j$. This property implies that the 1-skeleton graph $\bfG_{X_{x_i}}$ of the link $X_{x_i}$ can be decomposed as a tensor product of $2^{i+1}$ smaller graphs, as formalized in Proposition~\ref{prop:tensornice}.

\begin{proof}[Proof of Proposition~\ref{prop:tensornice}]
  The result is a direct consequence of Lemma~\ref{lem:tensorraw}. By a change of basis, the graph ${\bfG_1}^{\otimes 2^{i+1}-1}\otimes{\bfG_2}$ is isomorphic to the graph $\bfG'$ with:
  \begin{itemize}
  \item Vertex set $V(\bfG')$ consisting of all tuples of matrices $(M_1'',\dots,M_{2^{i+1}}'')$ as given by Lemma~\ref{lem:link0} that decompose some $M_{i+1}=M_1''+\cdots+M_{2^{i+1}}''$ with $\spn\{x_i,M_{i+1}\}\in X(i+1)$. That is, letting $M_1',\dots,M_{2^{i+1}-1}'$ be the minimal matrices of $x_i$, then $(M_1'',\dots,M_{2^{i+1}}'')\in V(\bfG')$ for all matrices $M_1'',\dots,M_{2^{i+1}}''\in\cM_q^n(2^{r-i-1})$ such that $M_j'\prec M_j$ for all $1\leq j\leq 2^{i+1}-1$, and such that $M_{2^{i+1}}''$ has row and column spans disjoint from those of the matrix $\sum_{j=1}^{2^{i+1}-1}M_j'$.
  \item Edge set $E(\bfG')$ consisting of all pairs of tuples $\{(K_1^{(1)},\dots,K_{2^{i+1}}^{(1)}),(K_1^{(2)},\dots,K_{2^{i+1}}^{(2)})\}$ such that the three conditions in Lemma~\ref{lem:tensorraw} are satisfied.
  \end{itemize}
  The projection $\Pi:\bfG'\rightarrow\bfG_{X_{x_i}}$ is then given by mapping each vertex $(M_1'',\dots,M_{2^{i+1}}'')\in V(\bfG')$ to
  \begin{equation*}
    \Pi(M_1'',\dots,M_{2^{i+1}}'')=\spn\{x_i,M_1''+\cdots+M_{2^{i+1}}''\}.
  \end{equation*}
  % TODO: may want to give more detail/check more carefully in above proof
\end{proof}

\subsubsection{Expansion in the top-level links}
\label{sec:matrixwalkexp}
The goal of this section is to prove Theorem~\ref{thm:localexp}. Specifically, by Corollary~\ref{cor:linkexpreduce}, Theorem~\ref{thm:localexp} follows from the following bounds on the expansion of the graphs $\bfG_1,\bfG_2$ for the $i=r-2$ case of Proposition~\ref{prop:tensornice}.
% As the trickle-down theorem of \cite{kaufman_garlands_2022} for Grassmannian complexes shows that local expansion in high ranks implies local expansion in lower ranks, we can use expansion at level $i=r-2$ to deduce expansion at levels $i<r-2$, as shown in Corollary~\ref{cor:applytrickle}.

\begin{proposition}
  \label{prop:G1exp}
  For integers $r\geq 1,b\geq 1,n\geq 2^{r+1}$, let $q=2^b$, and let $\bfG$ be the graph $\bfG_1$ for $i=r-2$ as defined in Proposition~\ref{prop:tensornice}. Then
  \begin{equation*}
    \lambda(\bfG) \leq \frac{11}{q}.
  \end{equation*}
\end{proposition}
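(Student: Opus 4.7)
The plan is to adapt the proof of the $r=1$ case of Theorem~\ref{thm:applytrickleinf} outlined in Section~\ref{sec:proofoverview}, now working inside the sub-poset of $\cM_q^4$ consisting of matrices dominated by $I_4$. A short calculation (using that $\rank(M)+\rank(I_4-M)=4$ forces $M$ to be idempotent) shows that this sub-poset is exactly the set of idempotent matrices in $\cM_q^4$; denote it by $\cM$, and let $\cM(k)$ be the rank-$k$ idempotents. Then $V(\bfG)=\cM(2)$, and in every edge $\{L_1+L_2, L_1+L_3\}$ of $\bfG$ the sum $L_1+L_2+L_3$ lies in $\cM(3)$.

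\textbf{Step 1 (analog of Lemma~\ref{lem:G2expinf}).} Let $\bfW^{\uparrow}_{\cM(1)}:\cM(1)\to\cM(2)$ be the up-walk in $\cM$, $\bfW^{\downarrow}_{\cM(2)}$ the down-walk, and $\bfW^{\du}_{\cM(2)}=\bfW^{\downarrow}_{\cM(2)}\bfW^{\uparrow}_{\cM(1)}$ the down-up walk. A step of $\bfW^{\du}_{\cM(2)}$ starting at $M=L_1+L_2\in\cM(2)$ samples a uniform rank-1 idempotent $L_1\prec M$, then a uniform rank-1 idempotent $L_3$ with $L_1+L_3\in\cM(2)$, and outputs $L_1+L_3$. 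Conditioned on the event $E$ that $\rank(L_1+L_2+L_3)=3$, this is exactly a step of the random walk on $\bfG$; a dimension count of the bad $L_3$ (which must have row and column span lying in the two-dimensional row/column spans of $\{L_1,L_2\}$) gives $\Pr[\bar E]=O(1/q)$, and Lemma~\ref{lem:expcloseinf} yields
\begin{equation*}
\lambda(\bfG)\leq\lambda(\bfW^{\du}_{\cM(2)})+O(1/q).
\end{equation*}

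\textbf{Step 2.} By the standard fact that $\bfW^{\downarrow}\bfW^{\uparrow}$ and $\bfW^{\uparrow}\bfW^{\downarrow}$ share their nonzero spectrum, $\lambda(\bfW^{\du}_{\cM(2)})=\lambda(\bfW^{\ud}_{\cM(1)})$.

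\textbf{Step 3 (analog of Lemma~\ref{lem:mat1udinf}).} To bound $\lambda(\bfW^{\ud}_{\cM(1)})\leq O(1/q)$, parametrize each $L\in\cM(1)$ as $L=ab^T$ with $a,b\in\bF_q^4\setminus\{0\}$ and $b^T a=1$ (unique up to $(a,b)\sim(ca,c^{-1}b)$). An up-step extends $(a,b)$ by a second pair $(a',b')$ satisfying the complementarity needed for $L+a'(b')^T$ to be a rank-2 idempotent, and a subsequent down-step selects a random rank-1 idempotent summand parameterized by scalars $(\alpha_1,\beta_1),(\alpha_2,\beta_2)\in\bF_q^2$ with a compatibility relation analogous to the one in the proof of Lemma~\ref{lem:mat1udinf}. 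Conditioning on the non-degeneracy event $\alpha_1\neq 0,\alpha_2\neq 0$, the row and column data decouple, and the walk is coupled with the tensor product of two walks on nearly complete graphs on lines of $\bF_q^4$ transverse to a fixed hyperplane, which has expansion $O(1/q)$; the coupling error is also $O(1/q)$. Combining via Lemma~\ref{lem:expcloseinf} and carefully tracking the constants in each coupling step yields the explicit bound $11/q$.

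The main obstacle is Step 3. Unlike the analogous walk on the unrestricted matrix poset in Lemma~\ref{lem:mat1udinf}, the up-walk here must preserve idempotency, which imposes algebraic compatibility conditions between the new rank-1 summand and the kernel/image of the current matrix. Decoupling these so that the walk becomes (close to) a tensor product of expanding walks on lines in $\bF_q^4$ requires care, and it is here that the explicit constant $11$ in the bound is pinned down.
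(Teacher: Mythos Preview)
Your Step~1 contains a genuine gap. Conditioning the down-up step on $\rank(L_1+L_2+L_3)=3$ does \emph{not} give a step of $\bfG$: an edge of $\bfG$ also requires $L_1+L_2+L_3\prec I_4$, and this is not implied by the down-up constraints. Writing $L_j=a_jb_j^\top$, the down-up walk from $M=L_1+L_2$ through $L_1$ to $L_1+L_3$ enforces $b_i^\top a_j=\delta_{ij}$ for all pairs $(i,j)$ drawn from $\{1,2\}$ and from $\{1,3\}$, but imposes no relation between $L_2$ and $L_3$; by Lemma~\ref{lem:domI} the extra condition $L_1+L_2+L_3\prec I_4$ is exactly $b_2^\top a_3=b_3^\top a_2=0$, and each of these holds with probability only $\approx 1/q$. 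Thus the correct conditioning event has probability $\Theta(1/q^2)$ rather than $1-O(1/q)$, and Lemma~\ref{lem:expcloseinf} yields nothing. (A concrete failure: $L_1=e_1e_1^\top$, $L_2=e_2e_2^\top$, $L_3=(e_2+e_3)e_3^\top$ are rank-1 idempotents with $L_1+L_2$ and $L_1+L_3$ both in $\cM(2)$ and $\rank(L_1+L_2+L_3)=3$, yet $I_4-(L_1+L_2+L_3)$ has rank $2\neq 1$.)

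This is exactly the obstacle that forces the paper to use a localization argument in place of a direct coupling. The paper writes
\[
\langle f,\bfL f\rangle=\bE_{L_1}\bigl[\langle f_{L_1},\bfL^{\prec I-L_1}_{\mathrm{local}}f_{L_1}\rangle\bigr],
\]
where the local graph on $\{L:L\prec I_4-L_1\}$ has edges $\{L_2,L_3\}$ precisely when $L_2+L_3\prec I_4-L_1$ (equivalently $L_1+L_2+L_3\prec I_4$). Lemma~\ref{lem:localizedexp} bounds each local expansion by $3/q$, which lets one replace each local Laplacian by the complete-graph Laplacian at the cost of a factor $1-3/q$. Only \emph{after} this replacement does the recombined global walk become the down-up walk on $\{M\preceq I_4\}$, whose expansion is at most $8/q$ by Lemma~\ref{lem:mat1uddomI}; the final bound is $1-(1-3/q)(1-8/q)\le 11/q$. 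In short, the passage from $\bfG$ to the down-up walk is not a small perturbation and cannot be handled by Lemma~\ref{lem:expcloseinf}; it requires the spectral localization step to absorb the joint $L_2$--$L_3$ constraint.
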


\begin{proposition}
  \label{prop:G2exp}
  For integers $r\geq 1,b\geq 1,n\geq 2^{r+1}$, let $q=2^b$, and let $\bfG$ be the graph $\bfG_2$ for $i=r-2$ as defined in Proposition~\ref{prop:tensornice}. Then
  \begin{equation*}
    \lambda(\bfG)\leq \frac{11}{q}.
  \end{equation*}
\end{proposition}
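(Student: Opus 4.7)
The plan is to follow the template of the $r=1$ proof of Theorem~\ref{thm:applytrickleinf} from Section~\ref{sec:proofoverview}, adapting every walk on $\cM_q^n$ to the downward-closed subposet $\cM'\subseteq\cM_q^n$ of matrices $M$ with $\text{rowspan}(M)\cap R=\{0\}$ and $\text{colspan}(M)\cap C=\{0\}$. Since $\cM'$ is closed under taking dominated faces in the matrix poset, it inherits a well-defined graded poset structure with rank given by matrix rank; in particular $V(\bfG_2)=\cM'(2)$, and every edge of $\bfG_2$ is a triangle of rank-$1$ matrices whose sum lies in $\cM'(3)$. Let $\bfW_{\cM'(i)}^{\du}$ and $\bfW_{\cM'(i)}^{\ud}$ denote the associated down-up and up-down walks on $\cM'$.

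First I would show $\lambda(\bfG_2)\le\lambda(\bfW_{\cM'(2)}^{\du})+1/q^{\Omega(n)}$ by coupling. A step of $\bfW_{\cM'(2)}^{\du}$ starting at $M\in\cM'(2)$ picks $L_1\in\cM'(1)$ with $L_1\prec M$ uniformly, sets $L_2=M-L_1$, and then picks $L_3$ uniformly with $L_1+L_3\in\cM'(2)$, outputting $M'=L_1+L_3$. Conditioned on the event $E$ that $\rank(L_1+L_2+L_3)=3$ and $L_1+L_2+L_3\in\cM'$, this produces exactly one step of the walk on $\bfG_2$ from $M$. A direct count shows $\Pr[E^{\mathsf{c}}]\le 1/q^{\Omega(n)}$, using that $\dim R,\dim C=2^{r+1}-2^{r-i}=O_r(1)$ while $n\ge 2^{r+1}$, so the row/column-span constraints each exclude only a density $1/q^{\Omega(n)}$ of rank-$1$ matrices. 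Invoking Lemma~\ref{lem:expcloseinf} yields the claim. Since $\bfW_{\cM'(2)}^{\du}$ and $\bfW_{\cM'(1)}^{\ud}$ share the same nonzero spectrum, it now suffices to bound $\lambda(\bfW_{\cM'(1)}^{\ud})$.

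Next, I would show $\lambda(\bfW_{\cM'(1)}^{\ud})\le O(1/q)$, adapting the proof of Lemma~\ref{lem:mat1udinf}. Starting at $L=e_1 e_2^{\top}\in\cM'(1)$, sample an up-walk step by drawing $e_1^{(1)}\in\bF_q^n\setminus(C+\spn\{e_1\})$ and $e_2^{(1)}\in\bF_q^n\setminus(R+\spn\{e_2\})$ nearly uniformly (the added avoidance of $C,R$ distorts the uniform distribution by only $1/q^{\Omega(n)}$ since these excluded subspaces have dimension $O_r(1)$). Then, exactly as in the sketch of Lemma~\ref{lem:mat1udinf}, pick a uniformly random pair $(\alpha_1,\beta_1),(\alpha_2,\beta_2)\in\bF_q^2\setminus\{(0,0)\}$ with $\alpha_1\alpha_2+\beta_1\beta_2=1$ and output $L'=(\alpha_1 e_1+\beta_1 e_1^{(1)})\otimes(\alpha_2 e_2+\beta_2 e_2^{(1)})^{\top}$. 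Conditioning on $\alpha_1,\alpha_2\ne 0$---an event of probability $1-O(1/q)$---the two factors of $L'$ are (up to an additional $1/q^{\Omega(n)}$ perturbation) independently nearly uniform over $\bF_q^n\setminus(C+\spn\{e_1\})$ and $\bF_q^n\setminus(R+\spn\{e_2\})$ respectively, so $L'$ follows one step of the tensor product of two nearly complete graphs, whose expansion is $o(1)$. Lemma~\ref{lem:expcloseinf} then delivers the claimed $O(1/q)$ bound, and careful constant tracking as in the $r=1$ analysis produces the stated $11/q$.

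The main obstacle is verifying that each application of Lemma~\ref{lem:expcloseinf} incurs only a $1/q^{\Omega(n)}$ additive loss from the row/column-span constraints. This reduces to the fact that $\dim R,\dim C=O_r(1)$ while $n$ may be taken arbitrarily large, so for any fixed subspace $V\subseteq\bF_q^n$ of dimension $O_r(1)$ the number of rank-$1$ matrices with row or column span contained in $V$ is smaller by a factor $q^{-n+O_r(1)}$ than the total number of rank-$1$ matrices in $\bF_q^n\otimes\bF_q^n$. Beyond these counting estimates, the spectral identity $\lambda(\bfW_{\cM'(2)}^{\du})=\lambda(\bfW_{\cM'(1)}^{\ud})$ follows from the standard fact that operators $AB$ and $BA$ share the same nonzero spectrum, and the remainder is purely a specialization of the $r=1$ proof of Theorem~\ref{thm:applytrickleinf}.
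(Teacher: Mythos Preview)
Your approach is correct, but it takes an unnecessary detour compared to the paper. The paper does \emph{not} introduce the restricted subposet $\cM'$; instead it compares $\bfG_2$ directly to the down-up walk $\bfW_{\cM_q^n(2)}^{\du}$ on the \emph{full} matrix poset $\cM_q^n$, exploiting that Lemma~\ref{lem:expclose} allows $V(\bfG_2)\subsetneq\cM_q^n(2)$. Concretely, one samples $L_1\prec M$ uniformly, then $L_3\in\cM_q^n(1)$ uniformly subject only to $\rank(L_1+L_3)=2$ (no row/column-span restriction at all). The resulting $M''=L_1+L_3$ is exactly distributed as one step of $\bfW_{\cM_q^n(2)}^{\du}$ from $M$, and the failure event $E$ (that $L_1+L_2+L_3$ has rank $<3$ or has row/colspan meeting $R,C$) has probability $\le 2(q^{2^{r+1}-4}-1)/(q^n-1)$. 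Then Lemma~\ref{lem:mat1ud} gives $\lambda(\bfW_{\cM_q^n(1)}^{\ud})\le 10/q$ immediately, and one line of arithmetic finishes.

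What your route costs is precisely step~2: by restricting to $\cM'$ you have to re-derive an analogue of Lemma~\ref{lem:mat1ud} for the up-down walk on $\cM'(1)$, weaving the $R,C$-avoidance conditions through the entire coupling argument. This works (the extra distortions are indeed $1/q^{\Omega(n)}$ since $\dim R,\dim C=O_r(1)$), but it duplicates a lemma the paper has already proven once and for all on the unrestricted poset. Your instinct to localize to $\cM'$ is the right one for $\bfG_1$ in Proposition~\ref{prop:G1exp}, where the constraint ``$\prec I$'' genuinely cannot be removed; but for $\bfG_2$ the row/colspan constraints are so mild that the cleanest move is to ignore them in the comparison walk and absorb them entirely into the failure probability.
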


\begin{proof}[Proof of Theorem~\ref{thm:localexp}]
  The result follows directly from Corollary~\ref{cor:linkexpreduce}, Proposition~\ref{prop:G1exp}, and Proposition~\ref{prop:G2exp}.
\end{proof}

The remainder of this section is dedicated to proving Proposition~\ref{prop:G1exp}, and Proposition~\ref{prop:G2exp}. Our analysis consists of two main techniques: coupling-style arguments and localization. The first technique, which is sufficient to prove Proposition~\ref{prop:G2exp}, involves showing that a random walk of interest can be highly correlated with a random walk that is known to mix rapidly.

To prove Proposition~\ref{prop:G1exp}, we combine such coupling-style arguments with a localization argument, in which we decompose the Laplacian of $\bfG_2$ as a linear combination of Laplacians of local walks. After showing these local walks mix rapidly with coupling-style arguments, we may approximate them with walks on a complete graph. Combining these complete graph local walks back together yields a simpler global walk, which we can analyze with another coupling-style argument.

The use of localization to show rapid mixing of walks in posets has received much attention in recent years. A line of work \cite{kaufman_high_2017,dinur_high_2017,kaufman_high_2018,alev_improved_2020,gotlib_fine_2023} developed this machinery for simplicial high-dimensional expanders, which has also led to numerous applications in the sampling literature, beginning with \cite{anari_log-concave_2019}. Kaufman and Tessler \cite{kaufman_garlands_2022} extended the machinery to more general posets, including the Grassmannian poset. Our localization argument uses similar techniques to these prior works, though (to the best of our knowledge) is not encapsulated by any of the previously known results.

We use the following lemma to bound spectral expansion for our coupling-style arguments. We note that all walks we consider in this section are induced by undirected regular graphs, which have symmetric random walk matrices.

\begin{lemma}
  \label{lem:expclose}
  For vertex sets $V\subseteq V'$, let $\bfW\in\bR^{V\times V}$ and $\bfW'\in\bR^{V'\times V'}$ be symmetric random walk matrices and let $\epsilon>0$ be such that for every $v\in V$, it holds that
  \begin{equation*}
    \|\1_v^\top\bfW-\1_v^\top\bfW'\|_1 = 2\disTV(\1_v^\top\bfW,\1_v^\top\bfW') \leq \epsilon.
  \end{equation*}
  Then
  \begin{equation*}
    \lambda(\bfW)\leq\lambda(\bfW')+\epsilon.
  \end{equation*}
\end{lemma}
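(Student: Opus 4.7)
The plan is to use the variational characterization $\lambda(\bfW) = \sup_{f\perp\1_V,\, f\neq 0} |\langle f,\bfW f\rangle|/\|f\|_2^2$, which is valid because $\bfW$ is symmetric and stochastic, so its top eigenvector is $\1_V$ with eigenvalue $1$ under the standard Euclidean inner product. I will then show that every such quadratic form differs from an analogous one for $\bfW'$ by at most $\epsilon\|f\|_2^2$, which is enough to conclude $\lambda(\bfW) \leq \lambda(\bfW') + \epsilon$.

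Given $f:V\to\bR$ with $f\perp\1_V$, let $\iota(f):V'\to\bR$ be the extension of $f$ by zero on $V'\setminus V$, and let $\tilde{\bfW}\in\bR^{V'\times V'}$ denote the zero-padding of $\bfW$. Because $\sum_{v\in V'}\iota(f)(v) = \sum_{v\in V}f(v) = 0$, the extension is still mean-zero, i.e.\ $\iota(f)\perp\1_{V'}$, and $\|\iota(f)\|_2 = \|f\|_2$, while $\langle f,\bfW f\rangle = \langle \iota(f),\tilde{\bfW}\iota(f)\rangle$. Splitting $\tilde{\bfW}=\bfW'+(\tilde{\bfW}-\bfW')$ and applying the spectral bound for the symmetric matrix $\bfW'$ to the mean-zero vector $\iota(f)$ yields $|\langle \iota(f),\bfW'\iota(f)\rangle|\leq\lambda(\bfW')\|f\|_2^2$, so it remains only to control the error operator $\tilde{\bfW}-\bfW'$.

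Since $\iota(f)$ vanishes outside $V$, only the principal block $A:=(\tilde{\bfW}-\bfW')|_{V\times V}$ contributes to the error, giving $\langle \iota(f),(\tilde{\bfW}-\bfW')\iota(f)\rangle=\langle f,Af\rangle$. The matrix $A$ is symmetric, being the difference of two symmetric matrices, and the hypothesis implies $\sum_{u\in V}|A_{v,u}|\leq\sum_{u\in V'}|(\tilde{\bfW}-\bfW')_{v,u}| = \|\1_v^\top\bfW-\1_v^\top\bfW'\|_1 \leq\epsilon$ for each $v\in V$ (restricting the column index from $V'$ to $V$ only decreases the sum). Hence $\|A\|_{\infty\to\infty}\leq\epsilon$, and since $A$ is symmetric, $\|A\|_{2\to2}$ equals its spectral radius, which is bounded above by any operator norm and in particular by $\|A\|_{\infty\to\infty}\leq\epsilon$. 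Therefore $|\langle f,Af\rangle|\leq\epsilon\|f\|_2^2$, and combining with the previous bound gives $|\langle f,\bfW f\rangle|\leq(\lambda(\bfW')+\epsilon)\|f\|_2^2$ for every $f\perp\1_V$, proving the lemma.

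The only conceptually delicate point is that the zero extension $\iota$ preserves the mean-zero property, which is what allows the spectral bound for $\bfW'$ to be applied in the larger space. Everything else is routine operator-norm bookkeeping; I do not anticipate a serious obstacle.
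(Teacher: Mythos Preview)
Your proof is correct and follows essentially the same approach as the paper: define the error matrix $A=\bfW-\bfW'|_{V\times V}$, bound its operator norm by $\epsilon$ via the row-sum (Schur) bound, zero-extend test vectors so that the spectral bound for $\bfW'$ applies, and combine. The paper's write-up is terser but the steps are identical.
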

\begin{proof}
  Recall that symmetric random walk matrices have uniform stationary distribtuion $\Unif$. Let $E=W-W'|_{V\times V}\in\bR^{V\times V}$, so that $E$ is a symmetric matrix with every row and column of $\ell_1$-norm at most $\epsilon$. Therefore $\|E\|\leq\epsilon$, so for every $f\in\Unif_V^\perp\in\bR^V$, then the vector in $f'\in\bR^{V'}$ obtained by padding $f$ with $0$s lies in $\Unif_{V'}^\perp$, and thus it follows that
  \begin{equation*}
    f^\top Wf \leq {f'}^\top W'f'+\epsilon f^\top f \leq (\lambda(W')+\epsilon)f^\top f,
  \end{equation*}
  from which the desired result follows.
\end{proof}

We also need the following lemma, which provides a characterization of matrices $M\preceq I$ in the matrix poset.

\begin{lemma}
  \label{lem:domI}
  If $M\in\cM_q^m(r)$ can be expressed as $M=V_1V_2^\top$ for $V_1,V_2\in\bF_q^{m\times r}$, then $M\preceq I_m$ if and only if $V_2^\top V_1=I_r$.
\end{lemma}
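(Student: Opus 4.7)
The plan is to unpack the definition and reduce to a clean rank identity of Sylvester/Schur-complement type. By Definition~\ref{def:matrixposet}, writing $r=\rank(M)$, the relation $M\preceq I_m$ is equivalent to $\rank(I_m-M)=\rank(I_m)-\rank(M)=m-r$. So I only need to show
\begin{equation*}
  \rank(I_m-V_1V_2^\top)=m-r \iff V_2^\top V_1=I_r.
\end{equation*}
Note that since $M=V_1V_2^\top$ has rank $r$ and $V_1,V_2\in\bF_q^{m\times r}$ each have at most $r$ columns, both $V_1$ and $V_2$ automatically have full column rank $r$.

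The main step will be to establish the rank identity
\begin{equation*}
  \rank(I_m-V_1V_2^\top) \;=\; (m-r)+\rank(I_r-V_2^\top V_1).
\end{equation*}
I would prove this by considering the $(m+r)\times(m+r)$ block matrix
\begin{equation*}
  B \;=\; \begin{pmatrix} I_r & V_2^\top \\ V_1 & I_m \end{pmatrix}
\end{equation*}
and computing its rank two different ways via Schur complements. Block-eliminating the bottom-right corner using the invertibility of $I_m$ gives $\rank(B)=m+\rank(I_r-V_2^\top V_1)$; block-eliminating the top-left corner using the invertibility of $I_r$ gives $\rank(B)=r+\rank(I_m-V_1V_2^\top)$. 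Equating these yields the identity. (This works over any field, so in particular over $\bF_q=\bF_{2^b}$.)

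Once the identity is in hand, the lemma is immediate: $\rank(I_m-V_1V_2^\top)=m-r$ iff $\rank(I_r-V_2^\top V_1)=0$ iff $V_2^\top V_1=I_r$. The only thing to be slightly careful about is the Schur-complement computation itself, since we are over a field of characteristic $2$ — however, the row/column operations used (add a multiple of one block row to another) are purely linear and make no use of the characteristic, so the identity goes through unchanged. There is no real obstacle; the argument is essentially a one-line application of the Sylvester/Weinstein-Aronszajn rank formula, and the only nuance is translating the poset-theoretic statement $M\preceq I_m$ into the concrete rank equation at the outset.
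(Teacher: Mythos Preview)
Your proof is correct and takes a genuinely different route from the paper. The paper argues each direction separately: for the forward direction it writes $I_m-M=W_1W_2^\top$ with $W_1,W_2\in\bF_q^{m\times(m-r)}$, concatenates to get $(V_1|W_1)(V_2|W_2)^\top=I_m$, and reads off $V_2^\top V_1=I_r$ from the inverse relation; for the backward direction it runs an inductive construction, at each step finding non-orthogonal vectors $w_1\in\ker V_2^\top$ and $w_2\in\ker V_1^\top$ to extend $V_1,V_2$ column by column until $(V_1|W_1)(V_2|W_2)^\top=I_m$. Your Schur-complement rank identity $\rank(I_m-V_1V_2^\top)=(m-r)+\rank(I_r-V_2^\top V_1)$ handles both directions at once and is cleaner. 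The paper's approach has the minor advantage of being constructive---it actually exhibits the complementary factorization $I_m-M=W_1W_2^\top$---but that extra information is not used anywhere else in the paper, so your argument loses nothing.
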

\begin{proof}
  First assume that $M\preceq I_m$. Then $M-I$ has rank $m-r$, so we can write $M-I=W_1W_2^\top$ for some $W_1,W_2\in\bF_q^{m\times n-r}$. Then
  \begin{equation*}
    I_r=V_1V_2^\top+W_1W_2^\top=(V_1|W_1)(V_2|W_2)^\top,
  \end{equation*}
  where $(V_j|W_j)\in\bF_q^{m\times m}$ denotes the matrix obtained by concatenating $V_j$ and $W_j$ side-by-side. Thus $(V_2|W_2)^\top(V_1|W_1)=I_r$, which implies that $V_2^\top V_1=I_r$.

  For the opposite implication, assume that $V_2^\top V_1=I_r$. Then we may inductively construct $W_1,W_2\in\bF_q^{m\times n-r}$ such that $(V_2|W_2)^\top(V_1|W_1)=I_r$ by repeatedly adding a column to each of $W_1$ and $W_2$. For the induction to work, it suffices to show we can find $w_1,w_2\in\bF_2^m$ such that $V_2^\top w_1=0$, $w_2^\top V_1=0$, and $w_2^\top w_1=1$; we can then append $w_1,w_2$ to $V_1,V_2$ respectively and induct. As long as $w_2^\top w_1\neq 0$ we can always rescale one of the vectors to make their inner product $1$, so it suffices to show that there exist non-orthogonal vectors $w_1\in \ker V_2^\top$ and $w_2\in\ker V_1^\top$. If no such $w_1,w_2$ were to exist, we would have $\ker V_2^\top \subseteq (\ker V_1^\top)^\perp=\im V_1$, which in particular implies that some nonzero element of $\im V_1$ lies inside $\ker V_2^\top$. But this statement contradicts our assumption that $V_2^\top V_1=I_r$, so it must be that there exist non-orthogonal $w_1\in \ker V_2^\top$ and $w_2\in\ker V_1^\top$, as desired.
\end{proof}

We first turn to the task of bounding the expansion of $\bfG_2$ in Proposition~\ref{prop:G2exp}; we will subsequently bound the expansion of $\bfG_1$ in Proposition~\ref{prop:G1exp}, which is more involved. We will specifically show that $\bfG_2$ is close to the up-down walk on $\cM(1)$, and hence we first bound the expansion of this latter walk.

\begin{lemma}
  \label{lem:mat1ud}
  % Note: if require $m\geq 3$, in final inequality get improved bound so can improve lemma bound to $8/q$
  For $m\geq 2$, let $\cM=\cM_q^m$ be the matrix poset. Then
  \begin{equation*}
    \lambda(\bfW_{\cM(1)}^{\ud})\leq \frac{10}{q}.
  \end{equation*}
\end{lemma}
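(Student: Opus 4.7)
The plan is to realize a single step of $\bfW_{\cM(1)}^{\ud}$ from $L = e_1 e_2^\top$ via an explicit sampler, and then couple that sampler to a much simpler reference walk $\bfW'$ whose spectral expansion can be read off from a tensor-product computation on $\bF_q^m \setminus \{0\}$.

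The sampler would be: draw $e_i^{(1)} \in \bF_q^m \setminus \spn\{e_i\}$ uniformly and independently for $i=1,2$, set $M = e_1 e_2^\top + e_1^{(1)} {e_2^{(1)}}^\top$, draw $(\alpha_i,\beta_i) \in \bF_q^2 \setminus \{(0,0)\}$ uniformly subject to $\alpha_1\alpha_2 + \beta_1\beta_2 = 1$, and output $L' = e_1^{(2)} {e_2^{(2)}}^\top$ with $e_i^{(2)} = \alpha_i e_i + \beta_i e_i^{(1)}$. To see that this produces a genuine up-down step, I would first use that $(f_1,f_2) \mapsto f_1 f_2^\top$ is uniformly $(q-1)$-to-$1$ onto the admissible rank-$1$ increments of $L$, so the $M$ produced is uniform on $\{M \in \cM(2) : L \prec M\}$, i.e.\ a true up-step. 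For the down-step, writing $M = V_1 V_2^\top$ with $V_1 = (e_1 \mid e_1^{(1)})$ and $V_2 = (e_2 \mid e_2^{(1)})$, one has $M - L' = V_1(I_2 - uv^\top) V_2^\top$ for $u = (\alpha_1,\beta_1)^\top$ and $v = (\alpha_2,\beta_2)^\top$, and the constraint $v^\top u = \alpha_1\alpha_2 + \beta_1\beta_2 = 1$ is, via Lemma~\ref{lem:domI}, exactly what forces $\rank(M - L') = 1$, i.e.\ $L' \prec M$; a matching fibre count then shows $L'$ is uniform on $\{L' \in \cM(1) : L' \prec M\}$.

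The reference walk $\bfW'$ on $\cM(1)$ is as follows: from $L = e_1 e_2^\top$, draw $f_i \in \bF_q^m \setminus \spn\{e_i\}$ independently and uniformly, and output $f_1 f_2^\top$. This walk is a graph projection (in the sense of Definition~\ref{def:graphproj}) of the $\bF_q^*$-equivariant tensor product $\bfW_* \otimes \bfW_*$ on $(\bF_q^m \setminus \{0\})^2$, where $\bfW_*$ is the walk on $\bF_q^m \setminus \{0\}$ that moves to a uniformly random non-collinear vector. Diagonalizing $\bfW_* = (J - \bfP)/(q^m - q)$, where $\bfP$ is the ``same-line'' indicator matrix on $\bF_q^m \setminus \{0\}$, yields $\lambda(\bfW_*) = (q-1)/(q^m - q) \leq 1/q$ for $m \geq 2$, so $\lambda(\bfW_* \otimes \bfW_*) = \lambda(\bfW_*) \leq 1/q$, and Lemma~\ref{lem:projexp} gives $\lambda(\bfW') \leq 1/q$.

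Finally, I would couple the sampler to $\bfW'$ via the bad event $E = \{\beta_1 = 0\} \cup \{\beta_2 = 0\}$. On $E^c$, the affine map $v \mapsto \alpha_i e_i + \beta_i v$ is a bijection of $\bF_q^m \setminus \spn\{e_i\}$, so $e_1^{(2)}$ and $e_2^{(2)}$ are independent and uniform on $\bF_q^m \setminus \spn\{e_1\}$ and $\bF_q^m \setminus \spn\{e_2\}$ respectively, producing exactly a step of $\bfW'$ from $L$. A direct count of pairs $(u,v) \in (\bF_q^2 \setminus \{(0,0)\})^2$ with $v^\top u = 1$, and separately of those with $\beta_1 = 0$ or $\beta_2 = 0$, gives $\Pr[E] = (2q-1)/(q(q+1)) \leq 2/q$; then Lemma~\ref{lem:expclose} yields $\lambda(\bfW_{\cM(1)}^{\ud}) \leq \lambda(\bfW') + 2\Pr[E] \leq 1/q + 4/q \leq 10/q$. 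The main obstacle in this plan is verifying that the sampler in the second paragraph really produces the correct uniform distributions at both the up- and down-stages; Lemma~\ref{lem:domI} together with the constant $(q-1)$-fold redundancy of rank-$1$ parametrizations are the key ingredients for that verification.
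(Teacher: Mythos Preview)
Your argument is correct, and in fact delivers a slightly sharper bound ($5/q$) than the paper's $10/q$. The verification you flag as the ``main obstacle'' goes through exactly as you outline: the $(q-1)$-to-$1$ fibre count makes the up-step uniform on $\{M : L \prec M\}$, and Lemma~\ref{lem:domI} applied to $I_2 - uv^\top$ shows that drawing $(u,v)$ uniformly subject to $v^\top u = 1$ gives a uniform down-step (again via the $(q-1)$-to-$1$ redundancy).

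Your route differs from the paper's in two ways. First, the paper samples $(\alpha_i,\beta_i)$ independently and uniformly from $\bF_q^2\setminus\{0\}$ and then \emph{rejects} when $\alpha_1\alpha_2+\beta_1\beta_2=0$, whereas you sample directly on the constraint surface $\alpha_1\alpha_2+\beta_1\beta_2=1$; this makes your sampler exact rather than rejection-based. Second, and more substantively, the paper couples to the \emph{uniform} distribution on $\cM(1)$ (i.e.\ the complete-graph walk, with $\lambda=0$), which requires an additional Bernoulli correction to handle the event $e_i^{(2)}\in\spn\{e_i\}$; you instead couple to the projected tensor walk $\bfW'$, which already lives on $\bF_q^m\setminus\spn\{e_i\}$ and whose expansion $\leq 1/q$ you extract directly from the spectrum of $\bfW_*=(J-\bfP)/(q^m-q)$. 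Your reference walk is less trivial but your bad event ($\beta_1=0$ or $\beta_2=0$) is cleaner, and the net effect is a smaller constant. Both arguments are instances of the same coupling-via-Lemma~\ref{lem:expclose} template.
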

\begin{proof}
  % TODO: May be able to simplify proof and improve bound by sampling $e_1^{(1)}$ from outside of $\spn\{e_1\}$ and $e_2^{(1)}$ from outside of $\spn\{e_2\}$. May also be able to sample $\alpha$s and $\beta$s conditioned on $\alpha_1\alpha_2+\beta_1\beta_2=0$ to further improve bound, though that may make proof not quite as simple (due to independence issues around Bernoulli trick) but is probably ok.
  Let $\bfW=\bfW_{\cM(1)}^{\ud}$. Given $e_1\otimes e_2\in\cM(1)$, we may sample $e_1'\otimes e_2'\sim\1_{e_1\otimes e_2}^\top\bfW$ as follows. First sample a uniformly random element $e_1^{(1)}\otimes e_2^{(1)}\in\cM(1)$. Let $E_1$ be the event that $e_1^{(1)}\in\spn\{e_1\}$ or $e_2^{(1)}\in\spn\{e_2\}$. If $E_1$ does not occur we sample uniformly random nonzero elements $e_1^{(2)}=\alpha_1e_1+\beta_1e_1^{(1)}\in\spn\{e_1,e_1^{(1)}\}$ and $e_2^{(2)}=\alpha_2e_2+\beta_2e_2^{(1)}\in\spn\{e_2,e_2^{(1)}\}$, so that $(\alpha_1,\beta_1)$ and $(\alpha_2,\beta_2)$ are sampled independently and uniformly from $\bF_q^2\setminus\{(0,0)\}$. Let $E_2$ be the event that $\alpha_1\alpha_2+\beta_1\beta_2=0$. If $E_2$ does not occur let $e_1^{(3)}=(\alpha_1\alpha_2+\beta_1\beta_2)^{-1}e_1^{(2)}$ and $e_2^{(3)}=e_2^{(2)}$. Let $E=E_1\cup E_2$. If $E$ does not occur, we let $e_1'\otimes e_2'=e_1^{(3)}\otimes e_2^{(3)}$, while if $E$ occurs we simply sample $e_1'\otimes e_2'$ (with fresh randomness) according to the correct distribution $\1_{e_1\otimes e_2}^\top\bfW$. The resulting $e_1'\otimes e_2'$ is distributed as $\1_{e_1\otimes e_2}^\top\bfW$ because by construction, conditioned on $E^C$ then $e_1\otimes e_2+e_1^{(1)}\otimes e_2^{(1)}$ is distributed as $\1_{e_1\otimes e_2}^\top\bfW_{\cM(1)}^{\uparrow}$, and by Lemma~\ref{lem:domI}, conditioned on $E^C$ then $e_1^{(3)}\otimes e_2^{(3)}$ is distributed as $\1_{e_1\otimes e_2+e_1^{(1)}\otimes e_2^{(1)}}^\top\bfW_{\cM(2)}^{\downarrow}$.

  Sample independent Bernoullis $B_1,B_2\sim\Ber((q-1)/(q^{m-1}-1))$ and let $F$ be the event that [$e_1^{(2)}\in\spn\{e_1\}$ and $B_1=0$] or [$e_2^{(2)}\in\spn\{e_2\}$ and $B_2=0$]. Then conditioned on $E_1^C\cap F^C$, it follows that $e_1^{(3)}$ and $e_2^{(3)}$ are independent uniformly random nonzero vectors drawn from $\bF_q^m$; note that the Bernoullis above ensure these vectors lie in $\spn\{e_1\}$ and $\spn\{e_2\}$ with the correct probability $(q-1)/(q^m-1)$ for a uniform distribution. Therefore as conditioned on $E^C\cap F^C=(E\cup F)^C$ we by definition have $e_1'\otimes e_2'=e_1^{(3)}\otimes e_2^{(3)}$, it follows that the total variation distance between $\1_{e_1\otimes e_2}^\top\bfW$ and $\Unif(\cM(1))$ is at most
  \begin{align*}
    \Pr[E\cup F]
    &\leq \Pr[E_1]+\Pr[E_2]+\Pr[F] \\
    &\leq 2\cdot\frac{q-1}{q^m-1} + \frac{1}{q} + 2\cdot\frac{q-1}{q^2-1} \\
    &\leq \frac{5}{q}.
  \end{align*}
  Thus by Lemma~\ref{lem:expclose} it follows that $\lambda(\bfW)\leq 10/q$, as desired.
\end{proof}

\begin{proof}[Proof of Proposition~\ref{prop:G2exp}]
  Let $\bfW$ be the random walk matrix of $\bfG$.  Given $M\in V(\bfG)$, so that $M\in\cM_q^n(2)$, we may sample $M'\sim\1_M^\top\bfW$ as follows. First sample a uniformly random element $L_1\in\cM_q^n(1)$ such that $L_1\prec M$. Let $L_2=M+L_1$, and sample a uniformly random element $L_3\in\cM_q^n(1)$ such that $\rank(L_1+L_3)=2$ (or equivalently, such that $L_1$ and $L_3$ have distinct row spans and distinct column spans). Let $M''=L_1+L_3$. Define $(2^{r+1}-4)$-dimensional subspaces $R,C\subseteq\bF_q^n$ as in Proposition~\ref{prop:tensornice}, and let $E_1$ be the event that $\rank(L_1+L_2+L_3)<3$, $E_2$ be the event that $\text{rowspan}(L_1+L_2+L_3)\cap R\neq\{0\}$, $E_3$ be the event that $\text{colspan}(L_1+L_2+L_3)\cap C\neq\{0\}$, and $E=E_1\cup E_2\cup E_3$. If $E^C$ occurs we let $M'=M''$, otherwise if $E$ occurs we sample $M'$ (with fresh randomness) according to the correct distribution $\1_M^\top\bfW$. By definition $M''$ is distributed according to $\1_M^\top\bfW_{\cM_q^n(2)}^{\du}$, so the total variation distance between $\1_M^\top\bfW$ and $\1_M^\top\bfW_{\cM_q^n(2)}^{\du}$ is at most
  \begin{align*}
    \Pr[E]
    &\leq \Pr[\text{rowspan}(L_3)\subseteq(R+\text{rowspan(M)})]+\Pr[\text{colspan}(L_3)\subseteq(C+\text{colspan(M)})] \\
    &\leq 2\cdot\frac{q^{2^{r+1}-4}-1}{q^n-1}.
  \end{align*}
  Thus by Lemma~\ref{lem:expclose} and Lemma~\ref{lem:mat1ud}, it follows that
  \begin{align*}
    \lambda(\bfG)
    &\leq \lambda(\bfW_{\cM_q^n(2)}^{\du})+\Pr[E] \\
    &= \lambda(\bfW_{\cM_q^n(1)}^{\ud})+\Pr[E] \\
    &\leq \frac{10}{q}+\frac{2q^{2^{r+1}-4}-1}{q^n-1} \\
    &\leq \frac{11}{q}.
  \end{align*}
\end{proof}

We now turn to analyzing the expansion of the graph $\bfG_1$ in Proposition~\ref{prop:tensornice} in order to prove Proposition~\ref{prop:G1exp}. As described earlier in this section, to prove Proposition~\ref{prop:G1exp}, we use a localization argument to show that the random walk on $\bfG_1$ is close to a simpler walk. This simpler walk is the rank $2\rightarrow 1\rightarrow 2$ up-down walk on matrices dominated by $I$; its expansion is bounded in the lemma below with a coupling-style argument, where here we consider the rank $1\rightarrow 2\rightarrow 1$ walk (which has the same expansion).

\begin{lemma}
  \label{lem:mat1uddomI}
  For $m\geq 2$, let $\cM'=\{M\in\cM_q^m:M\preceq I_m\}$ be the subposet of the matrix poset that is dominated by $I_m$. Then
  \begin{equation*}
    \lambda(\bfW_{\cM'(1)}^{\ud}) \leq \frac{8}{q}.
  \end{equation*}
\end{lemma}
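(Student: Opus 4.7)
The plan is to follow the coupling strategy of Lemma~\ref{lem:mat1ud}, but adapted to the restricted state space $\cM'(1)$. Specifically, I will show that for every starting vertex $L\in\cM'(1)$ the distribution $\1_L^\top\bfW_{\cM'(1)}^{\ud}$ is within total variation distance $4/q$ of $\Unif(\cM'(1))$; Lemma~\ref{lem:expclose} applied with $\bfW'$ the uniform (complete graph) walk on $\cM'(1)$ (for which $\lambda(\bfW')=0$) then gives $\|\1_L^\top\bfW_{\cM'(1)}^{\ud}-\1_L^\top\bfW'\|_1=2\disTV\leq 8/q$ and hence $\lambda(\bfW_{\cM'(1)}^{\ud})\leq 8/q$.

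First I would parameterize using Lemma~\ref{lem:domI}. A rank-$1$ matrix $L\preceq I_m$ can be written as $L=e_1 e_2^\top$ with $e_2^\top e_1=1$, unique up to the scaling $(e_1,e_2)\sim(\alpha e_1,\alpha^{-1}e_2)$; a rank-$2$ $M\succ L$ with $M\preceq I_m$ has the form $M=L+e_1'(e_2')^\top$ with $e_1'\in e_2^\perp$, $e_2'\in e_1^\perp$, $(e_2')^\top e_1'=1$; and a rank-$1$ $L'''\prec M$ with $L'''\preceq I_m$ takes the form $L'''=e_1'''(e_2''')^\top$ where $e_1'''=u_1 e_1+u_2 e_1'$ and $e_2'''=v_1 e_2+v_2 e_2'$ for some $(u,v)\in(\bF_q^2)^2$ with $u^\top v=1$. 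Thus one step of $\bfW_{\cM'(1)}^{\ud}$ from $L=e_1 e_2^\top$ is realized by independently sampling $(u,v)$ uniformly from $S_2:=\{u^\top v=1\}$ and $(e_1',e_2')$ uniformly from $S_1:=\{e_1'\in e_2^\perp,\,e_2'\in e_1^\perp,\,(e_2')^\top e_1'=1\}$, then outputting $L'''=e_1'''(e_2''')^\top$.

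Let $F$ be the event $\{u_2\neq 0,\,v_2\neq 0\}$. A direct count gives $\Pr[F^c]=(2q-1)/(q(q+1))\leq 2/(q+1)\leq 2/q$. Conditioned on $F$, the map $(u,v,e_1',e_2')\mapsto(u,v,e_1''',e_2''')$ is a bijection, so the joint becomes uniform on its image; unwinding the constraints and marginalizing out $(u,v)$, one sees that $(e_1''',e_2''')$ is uniform on
\[
T=\{(\tilde e_1,\tilde e_2)\in(\bF_q^m)^2:\tilde e_2^\top\tilde e_1=1,\;(e_2^\top\tilde e_1)(\tilde e_2^\top e_1)\neq 1\},
\]
the extra inequality arising because $F$ forces $u_2v_2=1-u_1v_1\neq 0$, where $u_1=e_2^\top\tilde e_1$ and $v_1=\tilde e_2^\top e_1$ are determined by $(\tilde e_1,\tilde e_2)$. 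Letting $T_0:=\{\tilde e_2^\top\tilde e_1=1\}$, every fiber of $(\tilde e_1,\tilde e_2)\mapsto\tilde e_1\tilde e_2^\top$ has size $q-1$ and $T$ is preserved under the $\bF_q^*$ scaling, so uniform sampling on $T_0$ (resp.\ $T$) pushes forward to uniform sampling on $\cM'(1)$ (resp.\ its image), and it suffices to bound $|T_0\setminus T|/|T_0|$. Decomposing $\tilde e_1=\alpha e_1+f_1$ and $\tilde e_2=\beta e_2+f_2$ with $\alpha=e_2^\top\tilde e_1$, $\beta=\tilde e_2^\top e_1$, $f_1\in e_2^\perp$, $f_2\in e_1^\perp$, the constraint $\tilde e_2^\top\tilde e_1=1$ becomes $\alpha\beta+f_2^\top f_1=1$, and $T_0\setminus T$ is the locus where $\alpha\beta=1$ and $f_2^\top f_1=0$. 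There are $q-1$ choices of $(\alpha,\beta)$ with $\alpha\beta=1$; and since $f_1\in e_2^\perp\setminus\{0\}$ implies $f_1\notin\spn\{e_1\}$ (because $e_1\notin e_2^\perp$), the functional $f_2\mapsto f_2^\top f_1$ is nontrivial on $e_1^\perp$, yielding at most $q^{m-1}+(q^{m-1}-1)q^{m-2}\leq 2q^{2m-3}$ admissible $(f_1,f_2)$. Using $|T_0|=(q^m-1)q^{m-1}\geq(q-1)q^{2m-2}$ gives $|T_0\setminus T|/|T_0|\leq 2/q$, so $\disTV(\1_L^\top\bfW_{\cM'(1)}^{\ud},\Unif(\cM'(1)))\leq 2/q+2/q=4/q$, as required.

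The main technical point is the observation that the two individually natural exceptional cases $\tilde e_1\in\spn\{e_1\}$ and $\tilde e_2\in\spn\{e_2\}$ are both subsumed by the single condition $(e_2^\top\tilde e_1)(\tilde e_2^\top e_1)=1$ on $T_0$: indeed, $\tilde e_1=\alpha e_1$ together with $\tilde e_2^\top\tilde e_1=1$ forces $\tilde e_2^\top e_1=1/\alpha$, making the product equal $\alpha\cdot(1/\alpha)=1$. Without this consolidation, a union bound over the three apparent bad events would yield only $6/q$ and an eigenvalue bound of $12/q$, insufficient to reach the claimed $8/q$.
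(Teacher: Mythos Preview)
Your proof is correct and reaches the same bound by the same overall strategy as the paper: a coupling/TV argument comparing $\1_L^\top\bfW_{\cM'(1)}^{\ud}$ to $\Unif(\cM'(1))$, using the parameterization from Lemma~\ref{lem:domI}. The organization, however, differs in a useful way. The paper runs the up step and down step sequentially, tracking three bad events $E_1,E_2,F_1$ (the two ``inner product $=0$'' rejection events plus a Bernoulli-patched span event), and sums their probabilities to get $4/q$. You instead parameterize the full up-down step at once by $(e_1',e_2')\in S_1$ and $(u,v)\in S_2$, isolate the single event $F=\{u_2\neq 0,\,v_2\neq 0\}$, and show directly that conditioned on $F$ the output $(e_1''',e_2''')$ is uniform on the explicit set $T=\{(\tilde e_1,\tilde e_2):\tilde e_2^\top\tilde e_1=1,\ (e_2^\top\tilde e_1)(\tilde e_2^\top e_1)\neq 1\}$. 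Your observation that the two ``degenerate direction'' cases $\tilde e_1\in\spn\{e_1\}$ and $\tilde e_2\in\spn\{e_2\}$ are both absorbed by the single constraint $(e_2^\top\tilde e_1)(\tilde e_2^\top e_1)=1$ on $T_0$ is what lets you avoid the paper's auxiliary Bernoulli variables and three-way union bound. The two arguments yield the identical numerical bound $\disTV\le 4/q$, but yours has slightly less bookkeeping.
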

\begin{proof}
  Let $\bfW=\bfW_{\cM'(1)}^{\ud}$. Given $e_1\otimes e_2\in\cM(1)$, so that $e_1^\top e_2=1$ by Lemma~\ref{lem:domI}, we may sample $e_1'\otimes e_2'\sim\1_{e_1\otimes e_2}^\top\bfW$ as follows. First sample uniformly nonzero vectors $e_1^{(1)}\in e_2^\perp\subseteq\bF_q^m$, $e_2^{(1)}\in e_1^\perp\subseteq\bF_q^m$. If ${e_1^{(1)}}^\top e_2^{(1)}\neq 0$, let $e_1^{(2)}=e_1^{(1)}$ and $e_2^{(2)}=({e_1^{(1)}}^\top e_2^{(1)})^{-1}\cdot e_2^{(1)}$. Sample uniformly random nonzero vectors $e_1^{(3)}\in\spn\{e_1,e_1^{(2)}\}$ and $e_2^{(3)}\in\spn\{e_2,e_2^{(2)}\}$. If ${e_1^{(3)}}^\top e_2^{(3)}\neq 0$, let $e_1^{(4)}=e_1^{(3)}$ and $e_2^{(4)}=({e_1^{(3)}}^\top e_2^{(3)})^{-1}\cdot e_2^{(3)}$. Let $E_1$ be the event that ${e_1^{(1)}}^\top e_2^{(1)}=0$, $E_2$ be the event that ${e_1^{(3)}}^\top e_2^{(3)}=0$, and $E=E_1\cup E_2$. If $E^C$ occurs, we let $e_1'\otimes e_2'=e_1^{(4)}\otimes e_2^{(4)}$, and if $E$ occurs we simply sample $e_1'\otimes e_2'$ (with fresh randomness) according to the correct distribution $\1_{e_1\otimes e_2}^\top\bfW$.

  To see that the above procedure indeed draws $e_1'\otimes e_2'$ from $\1_{e_1\otimes e_2}^\top\bfW$, observe that conditioned on $E^C$ (that is, conditioned on $E$ not occuring), then by Lemma~\ref{lem:domI}, the distribution of $e_1\otimes e_2+e_1^{(2)}\otimes e_1^{(2)}$ is exactly $1_{e_1\otimes e_2}^\top \bfW_{\cM'(1)}^{\uparrow}$, as by construction $e_1\otimes e_2+e_1^{(2)}\otimes e_1^{(2)}$ is a rank-2 matrix that dominates $e_1\otimes e_2$, and by symmetry it is equally likely to be any such matrix. Similarly, conditioned on $E^C$, the distribution of $e_1^{(4)}\otimes e_2^{(4)}$ is exactly $\1_{e_1\otimes e_2+e_1^{(2)}\otimes e_1^{(2)}}^\top\bfW_{\cM'(2)}^{\downarrow}$, which over the choice of $e_1\otimes e_2+e_1^{(2)}\otimes e_1^{(2)}$ yields the desired distribution $\1_{e_1\otimes e_2}^\top\bfW$.

  Now observe that when we do not condition on $E^C$, the subspaces $\spn\{e_1,e_1^{(2)}\}$ and $\spn\{e_2,e_2^{(2)}\}$ are independent uniformly random 2-dimensional subspaces containing $e_1$ and $e_2$ respectively, as $e_1^\top e_2=1$ and $e_1^{(2)}\sim\Unif(e_2^\perp)$, $e_2^{(2)}\sim\Unif(e_1^\perp)$. Sample independent Bernoullis $B_1,B_2\sim\Ber((q-1)/(q^{m-1}-1))$, and let $F_1$ be the event that [$e_1^{(3)}\in\spn\{e_1\}$ and $B_1=0$] or [$e_2^{(3)}\in\spn\{e_2\}$ and $B_2=0$]. Then conditioned on $F_1^C$, it follows that $e_1^{(3)}$ and $e_2^{(3)}$ are independent uniformly random nonzero vectors drawn from $\bF_q^m$; note that the Bernoullis above ensure these vectors lie in $\spn\{e_1\}$ and $\spn\{e_2\}$ with the correct probability $(q-1)/(q^m-1)$ for a uniform distribution. Thus letting $F=F_1\cup E_2$, then conditioned on $F^C$, $e_1^{(4)}\otimes e_2^{(4)}$ is a uniformly random element of $\cM'(1)$. Therefore as conditioned on $E^C\cap F^C=(E\cup F)^C$ we by definition have $e_1'\otimes e_2'=e_1^{(4)}\otimes e_2^{(4)}$, it follows that the total variation distance between $\1_{e_1\otimes e_2}^\top\bfW$ and $\Unif(\cM'(1))$ is at most
  \begin{align*}
    \Pr[E\cup F]
    &\leq \Pr[E_1]+\Pr[E_2|E_1^C]+\Pr[F_1] \\
    &\leq \frac{1}{q} + \frac{1}{q} + 2\cdot\frac{q-1}{q^2-1} \\
    &\leq \frac{4}{q}.
  \end{align*}
  Thus by Lemma~\ref{lem:expclose} it follows that $\lambda(\bfW)\leq 8/q$.
\end{proof}

Our localization argument described above also requires rapid mixing of local walks in order to relate the walk on $\bfG_2$ to the down-up walk whose expansion was bounded above in Lemma~\ref{lem:mat1uddomI}. Lemma~\ref{lem:localizedexp} below uses a coupling-style argument to show that the appropriate local walk mixes rapidly. To prove Lemma~\ref{lem:localizedexp}, we first need the following basic lemma.

\begin{lemma}
  \label{lem:perpwalk}
  For some $q,m$, define $\bfG$ to be the graph with
  \begin{align*}
    V(\bfG) &= \bF_q^m\setminus\{0\} \\
    E(\bfG) &= \{\{v_1,v_2\}\subseteq V(\bfG):v_1^\top v_2=0\}.
  \end{align*}
  Then
  \begin{equation*}
    \lambda(\bfG) \leq \frac{1}{\sqrt{q^{m-1}-1}}.
  \end{equation*}
\end{lemma}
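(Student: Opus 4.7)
The approach is to exploit the $\bF_q^\ast$-scaling symmetry of the graph. The multiplicative group acts on $V = \bF_q^m\setminus\{0\}$ by scalar multiplication, and since $u\perp v\iff \lambda u\perp\mu v$ for $\lambda,\mu\neq 0$, the adjacency matrix $A$ respects this action. Correspondingly, $\bR^V$ decomposes as $\bR^V=U_1\oplus U_0$, where $U_1$ consists of functions constant on each line $\bF_q^\ast u$ through the origin and $U_0$ consists of functions summing to $0$ on each such line. For the spectral analysis it is convenient to momentarily include a self-loop at each isotropic vertex (so that $A_{u,v}=\mathbb{1}[u^\top v=0]$); this makes the graph regular with degree $q^{m-1}-1$, and I would argue at the end that removing the self-loop diagonal $D$ changes the spectrum negligibly.

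The first step is to show $A|_{U_0}=0$. For $f\in U_0$ and any $u\in V$, the value $(Af)(u)=\sum_{v\in u^\perp\setminus\{0\}}f(v)$ decomposes as a sum of $f$ over the projective lines contained in the hyperplane $u^\perp$; each such line-sum vanishes by definition of $U_0$. So the entire $U_0$ contributes only the eigenvalue $0$.

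The second step is to analyze $A|_{U_1}$ by computing $A^2$ using common-neighbor counts. For distinct $u,w\in V$, one has $|u^\perp\cap w^\perp\cap V|=q^{m-1}-1$ when $u,w$ lie on the same projective line (so $u^\perp=w^\perp$) and $q^{m-2}-1$ when $u,w$ span a $2$-dimensional subspace (so the intersection has codimension $2$). Combined with the diagonal $(A^2)_{u,u}=q^{m-1}-1$, this yields
\begin{equation*}
A^2 = (q^{m-1}-1)\,C + (q^{m-2}-1)(J-C),
\end{equation*}
where $C$ is the ``same line'' indicator matrix and $J$ is all-ones on $V$. Under the identification $U_1\cong\bR^{\bP(V)}$, $C$ acts as $(q-1)I_{\bP}$ and $J$ acts as $(q-1)J_{\bP}$, giving
\begin{equation*}
A^2|_{U_1} = (q-1)^2 q^{m-2}\,I_{\bP} + (q-1)(q^{m-2}-1)\,J_{\bP}.
\end{equation*}
On $\mathbb{1}_{\bP}$ this reproduces the squared top eigenvalue $(q^{m-1}-1)^2$, while on $\mathbb{1}_{\bP}^\perp$ every eigenvalue equals $(q-1)^2 q^{m-2}$. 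Taking square roots, the nontrivial eigenvalues of $A$ have magnitude $(q-1)q^{(m-2)/2}$, and dividing by the degree $q^{m-1}-1$ gives the random-walk bound $(q-1)q^{(m-2)/2}/(q^{m-1}-1)$. The claimed bound $1/\sqrt{q^{m-1}-1}$ then follows upon verifying the elementary inequality $(q-1)^2 q^{m-2}\leq q^{m-1}-1$ in the parameter regime of interest.

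The principal obstacle is the irregularity introduced by the absence of self-loops in the stated definition of $\bfG$, so that the true adjacency matrix is $A'=A-D$ where $D$ is the diagonal indicator of the isotropic cone $\{u:u^\top u=0\}$. I would resolve this by noting that $D$ also preserves the decomposition $U_0\oplus U_1$: on $U_0$ we simply get $A'|_{U_0}=-D|_{U_0}$, which has eigenvalues in $\{-1,0\}$ and hence contributes at most $1/(q^{m-1}-1)\ll 1/\sqrt{q^{m-1}-1}$ after degree normalization; on $U_1$ the operator $D|_{U_1}$ is a diagonal with entries in $\{0,1\}$ indexed by projective isotropic lines, so Weyl's inequality bounds the perturbation of eigenvalues by $\|D\|\leq 1$, which is again negligible against the leading term.
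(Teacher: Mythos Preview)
Your decomposition $\bR^V = U_0 \oplus U_1$ and the computation of $A^2|_{U_1}$ are correct, and in fact more careful than the paper's argument. The paper simply asserts $\bfG^2 = (q^{m-1}-1)\bfI + (q^{m-2}-1)\bfK$ (with $\bfK$ the loopless complete graph on $V(\bfG)$) and reads off the bound from the spectrum of $\bfK$. That identity tacitly assumes that any two distinct vertices span a $2$-dimensional subspace, i.e.\ that your same-line matrix $C$ coincides with $I$, which holds only when $q=2$. Your introduction of $C$ is exactly the right correction for general $q$.

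The gap is in the final step. The ``elementary inequality'' $(q-1)^2 q^{m-2}\leq q^{m-1}-1$ that you defer is false for every $q\geq 3$: once $q\geq 3$ one has $(q-1)^2\geq q$, hence $(q-1)^2 q^{m-2}\geq q^{m-1}>q^{m-1}-1$. Your own analysis therefore shows that (for the self-loop-augmented graph) the nontrivial random-walk eigenvalue is exactly $(q-1)q^{(m-2)/2}/(q^{m-1}-1)$, which for $q\geq 3$ strictly exceeds $1/\sqrt{q^{m-1}-1}$. The case $q=3,\ m=2$ already witnesses this sharply: the orthogonality graph is a disjoint union of two copies of $K_{2,2}$, so $\lambda(\bfG)=1$, while the claimed bound is $1/\sqrt{2}$. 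Thus the stated inequality cannot hold for general $q$, and no self-loop perturbation argument will close the gap; the paper's short proof recovers the stated bound only through the $q=2$ identification $C=I$.
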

\begin{proof}
  Let $\bfK$ be the complete graph without self-loops on the same vertex set $V(\bfG)$. By definition $\bfG^2=(q^{m-1}-1)\bfI+(q^{m-2}-1)\bfK$, as every nonzero $v\in\bF_q^m$ is orthogonal to $q^{m-1}-1$ other nonzero vectors, while every distinct nonzero $v_1,v_2$ share $q^{m-2}-1$ orthogonal nonzero vectors. Thus
  \begin{align*}
    \lambda(\bfG)^2
    &= \lambda(\bfG^2) \\
    &= \lambda((q^{m-1}-1)\bfI+(q^{m-2}-1)\bfK) \\
    &\leq \frac{1}{q^{m-1}-1}.
  \end{align*}
\end{proof}

\begin{lemma}
  \label{lem:localizedexp}
  For some prime power $q$ and some $m\geq 3$, define $\bfG$ to be the graph with
  \begin{align*}
    V(\bfG) &=  \{M\in\cM_q^m(1):M\prec I_m\} \\
    E(\bfG) &= \{\{L_1,L_2\}:L_1,L_2\in\cM_q^m(1),\; L_1+L_2\in\cM_q^m(2),\; L_1+L_2\prec I_m\}.
  \end{align*}
  Then
  \begin{equation*}
    \lambda(\bfG) \leq \frac{3}{q}.
  \end{equation*}
\end{lemma}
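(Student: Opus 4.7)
The plan is to bound $\lambda(\bfG)$ by comparing $\bfG$, via two reductions, with an auxiliary tensor-product-like graph whose expansion comes directly from Lemma~\ref{lem:perpwalk}.

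\emph{Parametrizing $\bfG$.} Using Lemma~\ref{lem:domI}, I would represent each $L \in V(\bfG)$ as an outer product $L = e_1 e_2^\top$ with $e_1, e_2 \in \bF_q^m \setminus \{0\}$ and $e_2^\top e_1 = 1$; two such representations give the same $L$ exactly when they differ by $(e_1, e_2) \mapsto (\alpha e_1, \alpha^{-1} e_2)$ for some $\alpha \in \bF_q \setminus \{0\}$. Writing $L_j = e_1^{(j)} (e_2^{(j)})^\top$ and applying Lemma~\ref{lem:domI} to the factorization $L_1 + L_2 = (e_1^{(1)} \mid e_1^{(2)})(e_2^{(1)} \mid e_2^{(2)})^\top$ shows that the edge condition ``$L_1 + L_2 \in \cM_q^m(2)$ and $L_1 + L_2 \prec I_m$'' is equivalent to the cross-perpendicularity conditions $(e_2^{(1)})^\top e_1^{(2)} = (e_2^{(2)})^\top e_1^{(1)} = 0$; the rank-$2$ requirement is automatic, since collinearity among the $e_1^{(j)}$'s would contradict cross-perpendicularity together with $(e_2^{(j)})^\top e_1^{(j)} = 1$.

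\emph{Auxiliary graphs.} Define the uniformly-weighted graphs $\hat\bfG$ and $\bfH$ on vertex sets
\begin{equation*}
  V(\hat\bfG) = \{(e_1, e_2) \in (\bF_q^m \setminus \{0\})^2 : e_2^\top e_1 \neq 0\}, \qquad V(\bfH) = (\bF_q^m \setminus \{0\})^2,
\end{equation*}
both with the edge relation $(e_1, e_2) \sim (e_1', e_2')$ iff $e_2^\top e_1' = e_1^\top e_2' = 0$. The map $(e_1, e_2) \mapsto e_1 e_2^\top / (e_2^\top e_1)$ is a $(q-1)^2$-to-one covering $V(\hat\bfG) \to V(\bfG)$; because the edge relation is invariant under independent rescaling of $e_1$ and $e_2$, each $\bfG$-edge is lifted by exactly $(q-1)^4$ $\hat\bfG$-edges, so that after matching the uniform edge weights the map is a graph projection in the sense of Definition~\ref{def:graphproj}, and Lemma~\ref{lem:projexp} gives $\lambda(\bfG) \leq \lambda(\hat\bfG)$. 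For the expansion of $\bfH$, I would observe that $\bfW_\bfH = (\bfW_\perp \otimes \bfW_\perp) \bfT$, where $\bfW_\perp$ is the random-walk matrix of the graph in Lemma~\ref{lem:perpwalk} and $\bfT$ is the coordinate-swap involution on $V(\bfH)$. Since $\bfT$ commutes with $\bfW_\perp \otimes \bfW_\perp$ (conjugation by the swap permutes the two identical tensor factors) and $\bfT^2 = I$, we have $\bfW_\bfH^2 = (\bfW_\perp \otimes \bfW_\perp)^2$, whence $\lambda(\bfH) = \lambda(\bfW_\perp) \leq 1/\sqrt{q^{m-1}-1}$ by Lemma~\ref{lem:perpwalk}.

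\emph{Coupling and conclusion.} Finally, I would couple $\bfW_{\hat\bfG}$ with $\bfW_\bfH$ using Lemma~\ref{lem:expclose}. From any $(e_1, e_2) \in V(\hat\bfG) \subseteq V(\bfH)$, the $\bfW_\bfH$-step samples $(e_1', e_2')$ uniformly from $(e_2^\perp \setminus \{0\}) \times (e_1^\perp \setminus \{0\})$, and the $\bfW_{\hat\bfG}$-step additionally conditions on $(e_2')^\top e_1' \neq 0$. A direct count (using that $e_1 \notin e_2^\perp$, so every $e_1' \in e_2^\perp \setminus \{0\}$ is linearly independent from $e_1$, making $e_2' \mapsto (e_2')^\top e_1'$ a surjection from $e_1^\perp$ onto $\bF_q$) shows this conditioning-failure probability equals $(q^{m-2}-1)/(q^{m-1}-1) < 1/q$, giving $L_1$-distance at most $2/q$ between the two step distributions. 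Lemma~\ref{lem:expclose} then yields
\begin{equation*}
  \lambda(\bfG) \leq \lambda(\hat\bfG) \leq \lambda(\bfH) + \frac{2}{q} \leq \frac{1}{\sqrt{q^{m-1}-1}} + \frac{2}{q},
\end{equation*}
and an elementary algebraic check, whose tightest case is $m = 3$ (reducing to $1/\sqrt{q^2-1} + 2/(q+1) \leq 3/q$), shows this is at most $3/q$ for all $m \geq 3$ and $q \geq 2$. The main obstacle is the bookkeeping in the graph projection $\hat\bfG \to \bfG$, which requires matching vertex and edge fiber multiplicities; the coupling and tensor-product arguments are then routine given the cited lemmas.
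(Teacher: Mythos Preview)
Your proof is correct and follows essentially the same route as the paper's own argument: your $\hat\bfG$ is the paper's auxiliary graph $\bfG_0$, your $\bfW_\bfH=(\bfW_\perp\otimes\bfW_\perp)\bfT$ is the paper's $(\bfW_\perp\otimes\bfW_\perp)\bfW_{\text{swap}}$, and both proofs use the same projection $\hat\bfG\to\bfG$ together with the same coupling against the perp-walk tensor product via Lemma~\ref{lem:expclose}. One minor remark: in your displayed chain you write the additive loss as $2/q$, but your final $m=3$ check correctly uses the sharper value $2/(q+1)=2(q^{m-2}-1)/(q^{m-1}-1)$; the loose $2/q$ bound would not by itself give $1/\sqrt{q^2-1}+2/q\le 3/q$, so it is worth stating explicitly that you are invoking the exact failure probability in the tight case.
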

\begin{proof}
  Let $\bfG_0$ be the graph defined by
  \begin{align*}
    V(\bfG_0) &= \{(v_1,v_2)\in(\bF_q^m\setminus\{0\})^2:v_1^\top v_2\neq 0\} \\
    E(\bfG_0) &= \{\{(v_1,v_2),(v_1',v_2')\}\subseteq V(\bfG_0):v_1^\top v_2'=0,\;v_2^\top v_1'=0\},
  \end{align*}
  and let $\bfW_0$ be the associated random walk matrix. Given $(e_1,e_2)\in V(\bfW_0)$, we may sample $(e_1',e_2')\sim\1_{(e_1,e_2)}\bfW_0$ as follows. First sample uniformly random nonzero vectors $e_1^{(1)}\in e_2^\perp\subseteq\bF_q^m$, $e_2^{(1)}\in e_1^\perp\subseteq\bF_q^m$. Let $E$ be the event that ${e_1^{(1)}}^\top e_2^{(1)}=0$. If $E$ does not occur, let $e_1'=e_1^{(1)}$ and $e_2'=e_2^{(1)}$, while if $E$ occurs then simply sample $(e_1',e_2')\sim\1_{(e_1,e_2)}\bfW_0$ (with fresh randomness). Let $\bfW_\perp$ be the random walk matrix for the graph on vertex set $\bF_q^m\setminus\{0\}$ given in Lemma~\ref{lem:perpwalk}, and let $\bfW_{\text{swap}}$ be the walk on vertex set $(\bF_q^m\setminus\{0\})^2$ that simply walks from $(v_1,v_2)$ to $(v_2,v_1)$ with probability $1$. Then by definition $(e_1^{(1)},e_2^{(1)})$ is distributed as $\1_{(e_1,e_2)}^\top(\bfW_\perp\otimes\bfW_\perp)\bfW_{\text{swap}}$, so because the total variation distance between $(e_1^{(1)},e_2^{(1)})$ and $(e_1',e_2')$ is at most
  \begin{align*}
    \Pr[E]
    &\leq \frac{1}{q},
  \end{align*}
  it follows by Lemma~\ref{lem:expclose} and Lemma~\ref{lem:perpwalk} that
  \begin{align*}
    \lambda(\bfG_0)
    &\leq \lambda((\bfW_\perp\otimes\bfW_\perp)\bfW_{\text{swap}})+2\Pr[E] \\
    &\leq \lambda(\bfW_\perp)+2\Pr[E] \\
    &\leq \frac{1}{\sqrt{q^{m-1}-1}}+\frac{2}{q}.
  \end{align*}

  Now observe that by Lemma~\ref{lem:domI}, the graph $\bfG$ described in the lemma statement is precisely the image of the graph projection $\Pi:\bfG_0\rightarrow\bfG$ induced by the vertex mapping $\Pi:V(\bfG_0)\rightarrow V(\bfG)$ given by $\Pi(v_1,v_2)=(v_1^\top v_2)^{-1}\cdot v_1\otimes v_2$. Thus by Lemma~\ref{lem:projexp}, it follows that $\lambda(\bfG)\leq\lambda(\bfG_0)\leq 3/q$, so the result follows.
\end{proof}

We are now ready to present our main localization argument that proves Proposition~\ref{prop:G1exp}. Specifically, we decompose the walk on $\bfG_2$ into local walks whose expansion is bounded by Lemma~\ref{lem:localizedexp}, which allows us to relate the expansion of $\bfG_2$ to the expansion of the walk in Lemma~\ref{lem:mat1uddomI}.

\begin{proof}[Proof of Proposition~\ref{prop:G1exp}]
  Let $\bfW$ be the random walk matrix of $\bfG$. Recall that $\bfG$ has vertex set given by matrices in $M\in\cM_q^4(2)$ such that $M\prec I_4$, and $\bfG$ has an edge $\{L_1+L_2,L_1+L_3\}$ for all $L_1,L_2,L_3\in\cM_q^4(1)$ such that $L_1+L_2+L_3$ is a rank-1 matrix dominated by $I_4$. Let $\bfL=\bfI-\bfW$ be the normalized Laplacian for $\bfG$; recall that $\bfG$ is regular so $\bfW$ is symmetric. 

  We decompose the walk $\bfW$ into simpler walks using localization, and show that these simpler walks have good expansion. Given $L_1\in\cM_q^4(1)$ such that $L_1\prec I_4$, define $\bfG_{\text{complete}}^{\prec I-L_1}$ to be the complete graph with self loops on vertex set
  \begin{equation*}
    V(\bfG_{\text{complete}}^{\prec I-L_1}) = \{L_2\in\cM_q^4(1):L_2\prec I_4-L_1\},
  \end{equation*}
  and define $\bfG_{\text{local}}^{\prec I-L_1}$ to be the graph with
  \begin{align*}
    V(\bfG_{\text{local}}^{\prec I-L_1}) &= V(\bfG_{\text{complete}}^{\prec I-L_1}) \\
    E(\bfG_{\text{local}}^{\prec I-L_1}) &= \{\{L_2,L_3\}:L_2,L_3\in\cM_q^4(1),\; L_2+L_3\in\cM_q^4(2),\; L_2+L_3\prec I_4-L_1\}
  \end{align*}
  Note that in the final line above $L_2+L_3\prec I_4-L_1$ if and only if $L_1+L_2+L_3\prec I_4$. Thus $\bfG_{\text{local}}^{\prec I-L_1}$ is the graph specifying the pairs $\{L_2,L_3\}$ for which $\{L_1+L_2,L_1+L_3\}\in E(\bfG)$, while $\bfG_{\text{complete}}^{\prec I-L_1}$ is the complete graph with self loops on the same vertex set. By definition for all $L_1\in V(\bfG)$, then $\bfG_{\text{local}}^{\prec I-L_1}$ is isomorphic to the graph defined in Lemma~\ref{lem:localizedexp}, so letting $\lambda_{\text{local}}=3/q$, then Lemma~\ref{lem:localizedexp} implies that $\lambda(\bfG_{\text{local}}^{\prec I-L_1})\leq\lambda_{\text{local}}$.

  We now set up the necessary notation for our localization. Let $\bfL_{\text{local}}^{\prec I-L_1}=\bfI-\bfW_{\text{local}}^{\prec I-L_1}$ and $\bfL_{\text{complete}}^{\prec I-L_1}=\bfI-\bfW_{\text{complete}}^{\prec I-L_1}$ be the respecive normalized Laplacians. As all the random walks above are induced by regular undirected graphs, the stationary distributions are all uniform. Given such a graph $\bfG'$ on a vertex set $V'$ with random walk matrix $\bfW'$ and Laplacian $\bfL'$, then we write $\{v_1,v_2\}\sim\bfW'$ to denote the distribution obtained by sampling $v_1\sim\Unif(V(\bfG'))$ and $v_2\sim\1_{v_1}\bfW'$. Furthermore, for $f\in\bR^{V'}$ we let $\langle f,\bfL' f\rangle=\bE_{v\sim\Unif(V')}[f(v)\bfL'f(v)]=f^\top\bfL'f/|V'|$. Also let $U_1$ denote the uniform distribution over matrices in $\{L\in\cM(1):L\prec I_4\}$.

  Now for every $f\in\bR^{V(\bfG)}$, we have
  \begin{align*}
    \langle f,\bfL f\rangle
    &= \bE_{\{M,M'\}\sim\bfW}\left[\frac{(f(M)-f(M'))^2}{2}\right] \\
    &= \bE_{L_1\sim U_1}\bE_{\{L_2,L_3\}\sim\bfW_{\text{local}}^{\prec I-L_1}}\left[\frac{(f(L_1+L_2)-f(L_1+L_3))^2}{2}\right].
  \end{align*}
  Defining $f_{L_1}\in\bR^{V(\bfG_{\text{local}}^{\prec I-L_1})}$ by $f_{L_1}(L_2)=f(L_1+L_2)$, the above becomes
  \begin{align*}
    \langle f,\bfL f\rangle
    &= \bE_{L_1\sim U_1}\bE_{\{L_2,L_3\}\sim\bfW_{\text{local}}^{\prec I-L_1}}\left[\frac{(f_{L_1}(L_2)-f_{L_1}(L_3))^2}{2}\right] \\
    &= \bE_{L_1\sim U_1}\left[\langle f_{L_1},\bfL_{\text{local}}^{\prec I-L_1}f_{L_1}\rangle\right] \\
    &\geq \bE_{L_1\sim U_1}\left[(1-\lambda_{\text{local}})\langle f_{L_1},\bfL_{\text{complete}}^{\prec I-L_1}f_{L_1}\rangle\right] \\
    &= (1-\lambda_{\text{local}})\bE_{L_1\sim U_1}\bE_{\{L_2,L_3\}\sim\bfW_{\text{complete}}^{\prec I-L_1}}\left[\frac{(f(L_1+L_2)-f(L_1+L_3))^2}{2}\right].
  \end{align*}
  Consider the poset $\cM'=\{M\in\cM_q^4:M\prec I_4\}$. Observe that for $L_1\sim U_1$, $\{L_2,L_3\}\sim\bfW_{\text{complete}}^{\prec I-L_1}$, the resulting distribution of pairs $\{M=\{L_1+L_2\},M'\{L_1+L_3\}\}$ is precisely the distribution $\{M,M'\}\sim\bfW_{\cM'(2)}^{\du}$ given by the rank $2\rightarrow 1\rightarrow 2$ down-up walk $\bfW_{\cM'(2)}^{\du}$ on $\cM'$. Thus letting $\bfG_{\text{complete}}$ denote the complete graph with self-loops on vertex set $V(\bfG)$, then
  \begin{align*}
    \langle f,\bfL f\rangle
    &\geq (1-\lambda_{\text{local}})\bE_{\{M,M'\}\sim\bfW_{\cM'(2)}^{\du}}\left[\frac{f(M)-f(M')}{2}\right] \\
    &= (1-\lambda_{\text{local}})\langle f,\bfL_{\cM'(2)}^{\du}f\rangle \\
    &\geq (1-\lambda_{\text{local}})(1-\lambda(\bfW_{\cM'(2)}^{\du}))\langle f,\bfL_{\text{complete}}f\rangle.
  \end{align*}
  Therefore
  \begin{align*}
    1-\lambda(\bfG)
    &\geq (1-\lambda_{\text{local}})(1-\lambda(\bfW_{\cM'(2)}^{\du})).
  \end{align*}
  Thus because $\lambda(\bfW_{\cM'(2)}^{\du})=\lambda(\bfW_{\cM'(1)}^{\ud})\leq 8/q$ by Lemma~\ref{lem:mat1uddomI}, we have
  \begin{align*}
    \lambda(\bfG)
    &\leq 1-(1-\lambda_{\text{local}})(1-\lambda(\bfW_{\cM'(2)}^{\du})) \\
    &\leq 1-\left(1-\frac{3}{q}\right)\left(1-\frac{8}{q}\right) \\
    &\leq \frac{11}{q}.
  \end{align*}
\end{proof}

\subsubsection{Connectedness of all links}
\label{sec:linksconn}
In this section, we show that for all $-1\leq i\leq r-3$, the graphs $\bfG_1,\bfG_2$ in Proposition~\ref{prop:tensornice} are connected, thereby proving Proposition~\ref{prop:localconn}.

\begin{lemma}
  \label{lem:G1conn}
  For integers $r\geq 1,b\geq 2,n\geq 2^{r+1}$, let $\bfG$ be the graph $\bfG_1$ for some $0\leq i\leq r-2$ as defined in Proposition~\ref{prop:tensornice}. Then $\bfG$ is connected and nonbipartite.
  % TODO: should be able to prove for $b=1$ as well, though right now won't really use the $b=1$ case. Also, if it helps can assume $i<r-2$, as we showed stronger result of good expansion for $i=r-2$; but seems proof here goes through for $i=r-2$ anyways.
\end{lemma}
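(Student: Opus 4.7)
Write $m=2^{r-i}$, so $m\ge 4$ since $i\le r-2$. The plan is to reinterpret $V(\bfG_1)$ geometrically, prove a clean characterization of the edges, and then deduce both nonbipartiteness and connectedness from this. By Lemma~\ref{lem:domI}, every $M\in V(\bfG_1)$ is an idempotent of rank $m/2$, which identifies $V(\bfG_1)$ with pairs $(P,Q)$ of complementary $m/2$-dimensional subspaces of $\bF_q^m$ via $P=\im M$ and $Q=\ker M$. The central claim will be the edge characterization: $\{M,M'\}\in E(\bfG_1)$ if and only if $\dim(\im M\cap \im M')=m/4$.

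To verify this characterization, for the reverse direction I would take pairs $(P,Q)$ and $(P',Q')$ with $\dim(P\cap P')=m/4$ and set $V_1=P\cap P'$, $V_2=P\cap Q'$, $V_3=Q\cap P'$, $V_4=Q\cap Q'$. A dimension count using $P\oplus Q=P'\oplus Q'=\bF_q^m$ shows that each $V_j$ has dimension $m/4$ and that $\bF_q^m=V_1\oplus V_2\oplus V_3\oplus V_4$; then the rank $m/4$ idempotents $L_j$ projecting onto $V_j$ along the sum of the other three satisfy $L_1+L_2=M$, $L_1+L_3=M'$, and $L_1+L_2+L_3$ is the rank $3m/4$ projection onto $V_1\oplus V_2\oplus V_3$ along $V_4$, hence dominated by $I_m$. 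For the forward direction, the rank condition on $L_1+L_2+L_3$ forces $\im L_1,\im L_2,\im L_3$ into direct sum, from which one extracts $\im M\cap\im M'=\im L_1$ of dimension $m/4$. Given this characterization, nonbipartiteness follows at once: partitioning the standard basis of $\bF_q^m$ into four blocks of size $m/4$ and letting $L_1,L_2,L_3$ be the diagonal projections onto the first three blocks, the three pairwise sums $L_1+L_2$, $L_1+L_3$, $L_2+L_3$ have images that pairwise intersect in exactly one block of dimension $m/4$, producing a triangle in $\bfG_1$.

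For connectedness, since the edge characterization constrains only the images $\im M,\im M'$, the problem reduces to connectedness of the subspace graph $\mathbf{H}$ whose vertices are the $m/2$-dimensional subspaces of $\bF_q^m$ and whose edges join pairs intersecting in dimension $m/4$. From a path $P=P_0,P_1,\dots,P_k=P^*$ in $\mathbf{H}$, I can lift to a path in $\bfG_1$ by choosing any complement $Q_j$ of $P_j$ at each step; since the edge relation is indifferent to the kernel, I can take $Q_k=Q^*$, so any target vertex $(P^*,Q^*)$ is reachable from any source $(P,Q)$. To show $\mathbf{H}$ is connected, given $P,P'$ with $d:=\dim(P\cap P')\ne m/4$ I construct an intermediate $m/2$-dimensional subspace $P''$ with $\dim(P\cap P'')=\dim(P'\cap P'')=m/4$. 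If $d\ge m/4$, I take $P''=U\oplus W$ where $U\subseteq P\cap P'$ is $m/4$-dimensional and $W$ is an $m/4$-dimensional subspace with $W\cap P=W\cap P'=0$; such $W$ exists by lifting any $m/4$-dimensional subspace of $\bF_q^m/(P+P')$, which has dimension $d\ge m/4$. If $d<m/4$, I take $P''=(P\cap P')\oplus A\oplus B\oplus C$ where $A\subseteq P$ and $B\subseteq P'$ each have dimension $m/4-d$ and trivial intersection with $P\cap P'$, and $C$ is a $d$-dimensional subspace lifting $\bF_q^m/(P+P')$. A direct computation then confirms $\dim(P\cap P'')=\dim(P'\cap P'')=m/4$ in each case.

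The main technical obstacle is the case analysis for constructing the intermediate subspace $P''$, which requires a small dimension count in each regime but is routine. The conceptual crux is the edge characterization via Lemma~\ref{lem:domI}: it reframes $\bfG_1$ as essentially a Grassmannian-type graph on $m/2$-dimensional subspaces of $\bF_q^m$ with free choice of kernel complement, from which both the triangle for nonbipartiteness and the path for connectedness follow readily.
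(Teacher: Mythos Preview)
Your edge characterization is false: the condition $\dim(\im M\cap\im M')=m/4$ is necessary but not sufficient. The ``dimension count'' you invoke---that $P\oplus Q=P'\oplus Q'=\bF_q^m$ together with $\dim(P\cap P')=m/4$ forces each $V_j$ to have dimension $m/4$---relies on the false principle that a subspace $P$ always decomposes as $(P\cap P')\oplus(P\cap Q')$ when $P'\oplus Q'$ is the whole space. For a concrete counterexample with $m=4$, take $M=\mathrm{diag}(1,1,0,0)$ and let $M'$ be the projection onto $\spn\{e_1,e_2+e_3\}$ along $\spn\{e_3,e_4\}$; both are rank-$2$ idempotents and $\im M\cap\im M'=\spn\{e_1\}$ has dimension $1=m/4$, yet $P\cap Q'=\spn\{e_1,e_2\}\cap\spn\{e_3,e_4\}=0$. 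One checks directly that $M+M'=e_3e_2^\top$ has rank $1$, whereas any valid edge decomposition would give $M+M'=L_2+L_3$ of rank $m/2=2$; hence $\{M,M'\}$ is not an edge. The correct characterization requires all four intersections $V_1,\ldots,V_4$ to have dimension $m/4$, which genuinely constrains the kernels as well as the images.

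This breaks your connectedness argument at its core: the claim that ``the edge relation is indifferent to the kernel'' is exactly what fails, so you cannot reduce to the subspace graph $\mathbf{H}$ and lift paths by freely choosing complements. Your triangle for nonbipartiteness is fine, since it can be verified directly from the edge definition without the characterization. The paper's proof proceeds quite differently: it sets up a two-parameter family of graphs $\bfG^{(k,\ell)}$ interpolating toward $\bfG_1=\bfG^{(2^{r-i-2},2^{r-i-2})}$, establishes the base case $\bfG^{(0,1)}$ via the spectral bound of Lemma~\ref{lem:localizedexp} (this is where $b\ge 2$ enters), and then inducts by threading paths through rank-$1$ ``pivot'' matrices $L_0^{(j)}$ and invoking the inductive hypothesis on the subposet below $I-L_0^{(j)}$.
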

\begin{proof}
  We prove the result by induction. For every $0\leq k\leq 2^{r-i-2}$ and $1\leq\ell\leq 2^{r-i-2}$, let $s=s(k,\ell)=k+2\ell+2^{r-i-2}$, and let $\bfG^{(k,\ell)}$ be the graph defined by
  \begin{align*}
    V(\bfG^{(k,\ell)}) = \{M\in\cM_q^s(k+\ell):\;&M\prec I_s\} \\
    E(\bfG^{(k,\ell)}) = \Big\{\{L_1+L_2,L_1+L_3\}:\;&L_1\in\cM_q^s(k),\; L_2,L_3\in\cM_q^s(\ell),\\
                       &L_1+L_2+L_3\in\cM_q^s(k+2\ell),\\
                       &L_1+L_2+L_3\prec I_s\Big\}.
  \end{align*}
  Let $\cM=\cM_q^s$, and define the subposet $\cM'=\{M\in\cM:M\preceq I_s\}$. By definition $\bfG=\bfG^{(2^{r-i-2},2^{r-i-2})}$. We will show by induction that $\bfG^{(k,\ell)}$ is connected and nonbipartite for every $0\leq k\leq 2^{r-i-2}$ and $1\leq\ell\leq 2^{r-i-2}$. For the base case, if $k=0,\ell=1$, then $\bfG^{(0,1)}$ is precisely the graph in Lemma~\ref{lem:localizedexp}, and thus because $b\geq 2$ so that $q=2^b\geq 4$, Lemma~\ref{lem:localizedexp} implies that $\lambda(\bfG^{(0,1)})<1$ and thus $\bfG^{(0,1)}$ is connected and nonbipartite.

  For the inductive step, first assume that $k\geq 1$, and assume that $\bfG^{(k-1,\ell)}$ is connected and nonbipartite. For any $M,M'\in V(\bfG^{(k,\ell)})$ we want to show there exists a path in $\bfG^{(k,\ell)}$ from $M$ to $M'$. Choosing some $L_0,L_0'\in\cM'(1)$ such that $L_0\prec M,L_0'\prec M'$, then by Lemma~\ref{lem:localizedexp} there exists a path $L_0=L_0^{(0)},L_0^{(1)},\dots,L_0^{(t)}=L_0'$ in $\cM'(1)$ from $L_0$ to $L_0'$ such that for every $j$ we have $L_0^{(j)}+L_0^{(j+1)}\in\cM'(2)$. Thus by the connectedness of $\bfG^{(k-1,\ell)}$, for each $L_0^{(j)}$ and for any $M_0,M_0'\in\cM'(k+\ell-1)$ such that $M_0,M_0'\prec I_s-L_0^{(j)}$, there exists a path from $L_0^{(j)}+M_0$ to $L_0^{(j)}+M_0'$ in $\bfG^{(k,\ell)}$ consisting of matrices in $V(\bfG^{(k,\ell)})$ that dominate $L_0^{(j)}$. Specifically, this statement follows from the fact that the subposet of $\cM_q^s$ dominated by $I_s-L_0^{(j)}$ is isomorphic to the subposet of $\cM_q^{s-1}$ dominated by $I_{s-1}$. Therefore if we choose $M_0^{(0)}=M-L_0^{(0)}$, then we may inductively choose $M_0^{(j+1)}\in\cM'(k+\ell-1)$ to be some matrix such that $L_0^{(j)}+M_0^{(j+1)}\in\cM'(k+\ell)$ dominates $L_0^{(j+1)}$, so we obtain a path in $\bfG^{(k,\ell)}$ from $M$ to $L_0'+M_0^{(t)}$. But we also showed above there exists a path in $\bfG^{(k,\ell)}$ from $L_0'+M_0^{(t)}$ to $L_0'+(M'-L_0')=M'$, so we obtain a path from $M$ to $M'$, as desired. Note that because $\bfG^{(k-1,\ell)}$ is nonbipartite, when invoking the inductive hypothesis we can choose any parity for the length of the paths, and thus we obtain paths from $M$ to $M'$ of any parity. Therefore $\bfG^{(k,\ell)}$ is connected and nonbipartite.

  Now assume that $k=0$, fix $\ell\geq 2$, and assume that $\bfG^{(0,\ell-1)}$ is connected and nonbipartite. Let $\cM''=\{M\in\cM_q^{s-1}:M\preceq I_{s-1}\}$, and define a bipartite graph $\bfG'$ by
  \begin{align*}
    V_1(\bfG') &= \cM''(\ell-1) \\
    V_2(\bfG') &= \cM''(\ell) \\
    V(\bfG') &= V_1(\bfG')\sqcup V_2(\bfG') \\
    E(\bfG') &= \{(M,M')\in V_1(\bfG')\times V_2(\bfG'):M+M'\in\cM''(2\ell-1)\}.
  \end{align*}
  We will first show that $\bfG'$ is connected using the assumption that $\bfG^{(0,\ell-1)}$ is connected, and then show that $\bfG^{(0,\ell)}$ is connected using the fact that $\bfG'$ is connected. Both proofs are similar to the proof above that $\bfG^{(k,\ell)}$ is connected for $k\geq 1$.

  For any $M,M'\in V_2(\bfG')$, we want to show that there exists a path in $\bfG'$ from $M$ to $M'$. For this purpose, again choose $L_0,L_0'\in\cM''(1)$ such that $L_0\prec M,L_0'\prec M'$, so that by Lemma~\ref{lem:localizedexp} there exists a path $L_0=L_0^{(0)},L_0^{(1)},\dots,L_0^{(t)}=L_0'$ in $\cM''(1)$ from $L_0$ to $L_0'$ such that for every $j$ we have $L_0^{(j)}+L_0^{(j+1)}\in\cM''(2)$. Letting $M^{(0)}=M$, for each $j$ we may choose $M^{(j)}\in\cM''(\ell)$ to be some matrix that dominates $L_0^{(j-1)}+L_0^{(j)}$. Then the inductive hypothesis implies that there is a path in $\bfG'$ from each $M^{(j)}$ to $M^{(j+1)}$, and from $M^{(t)}$ to $M'$. Specifically, as the subposet of $\cM_q^{s-1}$ dominated by $I_{s-1}-L_0^{(j)}$ is isomorphic to the subposet of $\cM_q^{s-2}$ dominated by $I_{s-2}$, we may construct the desired path in $\bfG'$ from $M^{(j)}$ to $M^{(j+1)}$ by taking a path of even length in $\bfG^{(0,\ell-1)}$ from $M^{(j)}-L_0^{(j)}$ to $M^{(j+1)}-L_0^{(j)}$ under the poset isomorphism above, and then adding $L_0^{(j)}$ to every other element in the path. Thus we have constructed a path in $\bfG'$ from any $M$ to any $M'$, so $\bfG'$ is connected.

  We now show that $\bfG^{(0,\ell)}$ is connected using a similar argument. For any $M,M'\in V(\bfG^{(0,\ell)})$, we want to show that there exists a path in $\bfG^{(0,\ell)}$ from $M$ to $M'$. For this purpose, again choose $L_0,L_0'\in\cM'(1)$ such that $L_0\prec M,L_0'\prec M'$, so that by Lemma~\ref{lem:localizedexp} there exists a path $L_0=L_0^{(0)},L_0^{(1)},\dots,L_0^{(t)}=L_0'$ in $\cM'(1)$ from $L_0$ to $L_0'$ such that for every $j$ we have $L_0^{(j)}+L_0^{(j+1)}\in\cM'(2)$. Letting $M^{(0)}=M$, for each $j$ we may choose $M^{(j)}\in\cM'(\ell)$ to be some matrix that dominates $L_0^{(j-1)}+L_0^{(j)}$. Then the inductive hypothesis implies that there is a path in $\bfG^{(0,\ell)}$ from each $M^{(j)}$ to $M^{(j+1)}$, and from $M^{(t)}$ to $M'$. Specifically, as the subposet of $\cM_q^s$ dominated by $I_s-L_0^{(j)}$ is isomorphic to the subposet of $\cM_q^{s-1}$ dominated by $I_{s-1}$, we may construct the desired path in $\bfG^{(0,\ell)}$ from $M^{(j)}$ to $M^{(j+1)}$ by taking a path in $\bfG'$ from $M^{(j)}-L_0^{(j)}$ to $M^{(j+1)}-L_0^{(j)}$ under the poset isomorphism above, and then adding $L_0^{(j)}$ to every other element in the path. Because $\bfG'$ is connected and bipartite, each of these paths in $\bfG'$ has even length, so we obtain a path from $M$ to $M'$ in $\bfG^{(0,\ell)}$ of even length. Therefore $\bfG^{(0,\ell)}$ has an even length path between every two vertices, so it is connected and nonbipartite.
\end{proof}

We will use the following basic lemma to show that $\bfG_2$ is connected.

\begin{lemma}
  \label{lem:G2connbase}
  Let $n\geq 3$, and let $R,C\subseteq\bF_q^n$ be subspaces of dimension $\leq n-3$. Let $\bfG$ be the graph defined by
  \begin{align*}
    V(\bfG) = \{M\in\cM_q^n(1):\;&\text{rowspan}(M)\cap R=\{0\},\\
                                 &\text{colspan}(M)\cap C=\{0\}\} \\
    E(\bfG) = \Big\{\{M,M'\}:\;&M,M'\in\cM_q^n(1),\\
                                 &M+M'\in\cM_q^n(2),\\
                                 &\text{rowspan}(M+M')\cap R=\{0\}\\
                                 &\text{colspan}(M+M')\cap C=\{0\}\Big\}.
  \end{align*}
  Then $\bfG$ is connected and nonbipartite.
\end{lemma}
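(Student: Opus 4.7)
The plan is to prove the stronger statement that $\bfG$ has diameter at most $2$ and contains a triangle, from which connectedness and non-bipartiteness follow immediately. Both facts will come from the same counting argument exploiting the hypothesis $\dim R, \dim C \leq n-3$.

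First I would recast the graph structure. Every vertex $M \in V(\bfG)$ can be written as $M = uv^\top$ for nonzero $u, v \in \bF_q^n$, and the conditions on row/column spans translate to $u \notin C$ and $v \notin R$. For two vertices $M_i = u_i v_i^\top$, a short computation shows that $\{M_1, M_2\} \in E(\bfG)$ if and only if $u_2 \notin C + \spn\{u_1\}$ and $v_2 \notin R + \spn\{v_1\}$: indeed each such condition implies the corresponding pair of vectors is linearly independent (since $\spn\{u_i\} \subseteq C + \spn\{u_i\}$), which gives $M_1 + M_2 \in \cM_q^n(2)$, and it also precisely encodes that $\spn\{u_1, u_2\} \cap C = \{0\}$ (and symmetrically for rowspans), since any nonzero element of $\spn\{u_1, u_2\} \cap C$ would force $u_2 \in C + \spn\{u_1\}$.

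For diameter $\leq 2$: given $M_1, M_2 \in V(\bfG)$, I would construct a common neighbor $M = uv^\top$ by choosing
\begin{equation*}
u \in \bF_q^n \setminus \bigl((C + \spn\{u_1\}) \cup (C + \spn\{u_2\})\bigr), \quad v \in \bF_q^n \setminus \bigl((R + \spn\{v_1\}) \cup (R + \spn\{v_2\})\bigr).
\end{equation*}
Each of the four excluded subspaces has dimension at most $\max(\dim C, \dim R) + 1 \leq n - 2$, so each excluded union has size at most $2q^{n-2}$. Since $q^n > 2q^{n-2}$ for $q \geq 2$, valid $u, v$ exist; the resulting $M$ is distinct from both $M_1, M_2$ (because $u \notin \spn\{u_i\}$) and adjacent to both.

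For non-bipartiteness I would exhibit a triangle by the same counting. Starting from any vertex $M_1$ (which exists since $C, R$ are proper subspaces for $n \geq 3$), pick a neighbor $M_2$ by the count, then select $M_3 = uv^\top$ adjacent to both $M_1$ and $M_2$ by a third application of the count; all three are mutually distinct and form a $3$-cycle. The main thing to be careful about is purely bookkeeping: checking that $u \notin C + \spn\{u_i\}$ does imply both $u \notin C$ (so $M$ is a valid vertex) and $u \notin \spn\{u_i\}$ (so $M$ is not a rescaling of $M_i$), and similarly for $v$. No substantive obstacle beyond this counting is expected.
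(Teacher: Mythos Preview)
Your proposal is correct and takes essentially the same approach as the paper: both arguments show that any two vertices have a common neighbor (hence $\bfG^2$ is complete), by choosing the row and column vectors of the intermediate rank-$1$ matrix to avoid the relevant forbidden subspaces. The only cosmetic differences are that the paper avoids the single subspace $R+\text{rowspan}(M)+\text{rowspan}(M')$ of dimension $\leq n-1$ rather than your union of two subspaces of dimension $\leq n-2$, and the paper deduces nonbipartiteness directly from $\bfG^2$ being complete rather than explicitly exhibiting a triangle.
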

\begin{proof}
  It suffices to show that $\bfG^2$ is a complete graph. For this purpose, consider any $M,M'\in V(\bfG)$. Then $R+\text{rowspan}(M)+\text{rowspan}(M')$ and $C+\text{colspan}(M)+\text{colspan}(M')$ are subspaces of $\bF_q^{n}$ of dimension $\leq n-1$, so there exist vectors $v_1$ and $v_2$ in $\bF_q^n$ that avoid these two subspaces respectively. Then $\{M,v_1\otimes v_2\}$ and $\{v_1\otimes v_2,M'\}$ are edges in $\bfG$, which form a length-2 path in $\bfG$ from $M$ to $M'$. Therefore indeed $\bfG^2$ is complete, so $\bfG$ is connected and nonbipartite.
\end{proof}

\begin{lemma}
  \label{lem:G2conn}
  For integers $r\geq 1,b\geq 2,n\geq 2^{r+1}$, let $\bfG$ be the graph $\bfG_2$ for some $-1\leq i\leq r-2$ as defined in Proposition~\ref{prop:tensornice}. Then $\bfG$ is connected and nonbipartite.
\end{lemma}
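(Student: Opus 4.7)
My plan is to mirror the double-induction argument of Lemma~\ref{lem:G1conn}, with the ``$\prec I_s$'' constraint replaced throughout by the ``rowspan disjoint from $R$, colspan disjoint from $C$'' constraints intrinsic to $\bfG_2$. I would introduce, for any pair of subspaces $R',C'\subseteq\bF_q^n$ and for integers $0\leq k\leq 2^{r-i-2}$, $1\leq\ell\leq 2^{r-i-2}$, the graph $\bfG^{(k,\ell)}_{R',C'}$ whose vertices are rank-$(k+\ell)$ matrices in $\cM_q^n$ with rowspan disjoint from $R'$ and colspan disjoint from $C'$, and whose edges are pairs $\{L_1+L_2,L_1+L_3\}$ with $L_1\in\cM_q^n(k)$, $L_2,L_3\in\cM_q^n(\ell)$, $L_1+L_2+L_3\in\cM_q^n(k+2\ell)$, and with the rowspan/colspan of $L_1+L_2+L_3$ also disjoint from $R',C'$. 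In particular $\bfG = \bfG^{(2^{r-i-2},2^{r-i-2})}_{R,C}$ for the specific $R,C$ fixed in Proposition~\ref{prop:tensornice}, so it suffices to prove, by induction on $(k,\ell)$ in the same lexicographic order used in Lemma~\ref{lem:G1conn}, that every such $\bfG^{(k,\ell)}_{R',C'}$ is connected and nonbipartite whenever $\dim R',\dim C'$ satisfy a suitable bound depending on $n$.

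The base case $(0,1)$ is exactly Lemma~\ref{lem:G2connbase}, valid when $\dim R',\dim C'\leq n-3$. For the inductive step with $k\geq 1$, I pick rank-1 summands $L_0\prec M$ and $L_0'\prec M'$ (whose row/colspans are automatically disjoint from $R',C'$), apply the base case to produce a path $L_0=L_0^{(0)},\dots,L_0^{(t)}=L_0'$ of rank-1 matrices with $L_0^{(j)}+L_0^{(j+1)}\in\cM_q^n(2)$ satisfying the disjointness conditions, and at each $j$ invoke the inductive hypothesis on $\bfG^{(k-1,\ell)}_{R'+\text{rowspan}(L_0^{(j)}),\,C'+\text{colspan}(L_0^{(j)})}$ to connect appropriate vertices of $\bfG^{(k,\ell)}_{R',C'}$ that dominate $L_0^{(j)}$. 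The justification is the poset isomorphism $M\mapsto M-L_0^{(j)}$ between matrices of rank $k+\ell$ dominating $L_0^{(j)}$ with row/colspan disjoint from $R',C'$ and matrices of rank $k+\ell-1$ with row/colspan disjoint from the enlarged subspaces $R'+\text{rowspan}(L_0^{(j)})$, $C'+\text{colspan}(L_0^{(j)})$. For the step from $(0,\ell-1)$ to $(0,\ell)$ with $\ell\geq 2$, I mimic the auxiliary bipartite graph argument of Lemma~\ref{lem:G1conn}: define $\bfG'$ between rank-$(\ell-1)$ and rank-$\ell$ matrices (with the usual disjointness) connected by rank-$(2\ell-1)$ sums, show $\bfG'$ is connected by the same peel-a-rank-1-matrix strategy applied to the inductive hypothesis, and deduce connectedness and non-bipartiteness of $\bfG^{(0,\ell)}_{R',C'}$ by the parity-of-path-length arguments identical to those in Lemma~\ref{lem:G1conn}.

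The main obstacle is bookkeeping the dimensional condition $\dim R',\dim C'\leq n-3$ required by Lemma~\ref{lem:G2connbase} at every call to the base case. Starting from $\dim R=\dim C=2^{r+1}-2^{r-i}$ and noting that each recursive peeling enlarges $R',C'$ by at most one dimension while reducing $k+\ell$ by one, the total number of peels before reaching rank $1$ is at most $2^{r-i-1}-1$; hence at any invocation of the base case, $\dim R',\dim C'\leq 2^{r+1}-2^{r-i}+2^{r-i-1}-1=2^{r+1}-2^{r-i-1}-1\leq 2^{r+1}-3\leq n-3$, using $i\leq r-2$ and $n\geq 2^{r+1}$. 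Together with the parity tracking inherited from Lemma~\ref{lem:G2connbase} and Lemma~\ref{lem:localizedexp}-style choices of path parities, this completes the argument.
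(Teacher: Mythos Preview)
Your proposal follows essentially the same double-induction strategy as the paper's proof, which replaces the ``$\prec I_s$'' constraint of Lemma~\ref{lem:G1conn} by rowspan/colspan disjointness and parameterizes the auxiliary graphs $\bfG^{(k,\ell)}$ by (the dimensions of) the avoided subspaces $R',C'$. One minor slip in your bookkeeping: in the step $(0,\ell)\to(0,\ell-1)$ the dimensions of $R',C'$ actually increase by $2$ (once when passing to the auxiliary bipartite graph $\bfG'$, once when peeling inside it to invoke $\bfG^{(0,\ell-1)}$) while $k+\ell$ drops by only $1$, so the total dimension increase from $(2^{r-i-2},2^{r-i-2})$ down to $(0,1)$ is $3\cdot 2^{r-i-2}-2$ rather than $2^{r-i-1}-1$; fortunately the needed inequality still holds, since $(2^{r+1}-2^{r-i})+(3\cdot 2^{r-i-2}-2)=2^{r+1}-2^{r-i-2}-2\leq 2^{r+1}-3\leq n-3$.
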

\begin{proof}[Proof sketch]
  The proof is similar to the proof of Lemma~\ref{lem:G1conn}, except that for places in the proof of Lemma~\ref{lem:G1conn} that required a matrix $M$ to be dominated by another matrix, we instead require that $\text{rowspan}(M)$ and $\text{colspan(M)}$ are disjoint from some appropriate vector spaces. We present the details below.

  We prove the result by induction. For every $0\leq k\leq 2^{r-i-2}$ and $1\leq\ell\leq 2^{r-i-2}$, let $C=C(k,\ell)=R=R(k,\ell)=\bF_q^{2^{r+1}-2^{r-i-2}-k-2\ell}\times\{0\}^{n-(2^{r+1}-2^{r-i-2}-k-2\ell)}$, and let $\bfG^{(k,\ell)}$ be the graph defined by
  \begin{align*}
    V(\bfG^{(k,\ell)}) = \{M\in\cM_q^n(k+\ell):\;&\text{rowspan}(M)\cap R=\{0\},\\
                                                 &\text{colspan}(M)\cap C=\{0\}\} \\
    E(\bfG^{(k,\ell)}) = \Big\{\{L_1+L_2,L_1+L_3\}:\;&L_1\in\cM_q^n(k),\; L_2,L_3\in\cM_q^n(\ell),\\
                                                 &L_1+L_2+L_3\in\cM_q^n(k+2\ell),\\
                                                 &\text{rowspan}(L_1+L_2+L_3)\cap R=\{0\}\\
                                                 &\text{colspan}(L_1+L_2+L_3)\cap C=\{0\}\Big\}.
  \end{align*}
  Let $\cM=\cM_q^n$, and define the subposet $\cM'=\{M\in\cM:\text{rowspan}(M)\cap R=\{0\},\;\text{colspan}(M)\cap C=\{0\}\}$. By definition $\bfG=\bfG^{(2^{r-i-2},2^{r-i-2})}$. We will show by induction that $\bfG^{(k,\ell)}$ is connected and nonbipartite for every $0\leq k\leq 2^{r-i-2}$ and $1\leq\ell\leq 2^{r-i-2}$. For the base case, if $k=0,\ell=1$, then $\bfG^{(0,1)}$ is precisely the graph in Lemma~\ref{lem:G2connbase}, and therefore $\bfG^{(0,1)}$ is connected and nonbipartite.

  % For the base case, if $k=0,\ell=1$, then $(\bfG^{(0,1)})^2$ is a complete graph. Specifically, for any $M,M'\in V(\bfG^{(0,1)})$, then $R+\text{rowspan}(M)+\text{rowspan}(M')$ and $C+\text{colspan}(M)+\text{colspan}(M')$ are subspaces of $\bF_q^{n}$ of dimension $\leq 2^{r+1}-1\leq n-1$, so there exist vectors $v_1$ and $v_2$ in $\bF_q^n$ that avoid these two subspaces respectively. Then $\{M,v_1\otimes v_2\}$ and $\{v_1\otimes v_2,M'\}$ are edges in $\bfG^{(0,1)}$, which form a length-2 path in $\bfG^{(0,1)}$ from $M$ to $M'$. Therefore indeed $(\bfG^{(0,1)})^2$ is complete, so $\bfG^{(0,1)}$ is connected and nonbipartite.

  For the inductive step, first assume that $k\geq 1$, and assume that $\bfG^{(k-1,\ell)}$ is connected and nonbipartite. For any $M,M'\in V(\bfG^{(k,\ell)})$ we want to show there exists a path in $\bfG^{(k,\ell)}$ from $M$ to $M'$. Choosing some $L_0,L_0'\in\cM'(1)$ such that $L_0\prec M,L_0'\prec M'$, then by Lemma~\ref{lem:G2connbase} there exists a path $L_0=L_0^{(0)},L_0^{(1)},\dots,L_0^{(t)}=L_0'$ in $\cM'(1)$ from $L_0$ to $L_0'$ such that for every $j$ we have $L_0^{(j)}+L_0^{(j+1)}\in\cM'(2)$. Now by the connectedness of $\bfG^{(k-1,\ell)}$, for each $L_0^{(j)}$ and for any $M_0,M_0'\in\cM'(k+\ell-1)$ such that the row and column spans of $M_0,M_0'$ avoid $R+\text{rowspan}(L_0^{(j)})$ and $C+\text{colspan}(L_0^{(j)})$ respectively, there exists a path from $L_0^{(j)}+M_0$ to $L_0^{(j)}+M_0'$ in $\bfG^{(k,\ell)}$ consisting of matrices in $V(\bfG^{(k,\ell)})$ that dominate $L_0^{(j)}$. Specifically, this statement follows from the fact that the subposet of $\cM_q^n$ consisting of matrices whose row and column spans avoid $R(k,\ell)+\text{rowspan}(L_0^{(j)})$ and $C(k,\ell)+\text{colspan}(L_0^{(j)})$ respectively is isomorphic to the subposet of $\cM_q^n$ whose row and column spans avoid $R(k-1,\ell)$ and $C(k-1,\ell)$ respectively. Therefore if we choose $M_0^{(0)}=M-L_0^{(0)}$, then we may inductively choose $M_0^{(j+1)}\in\cM'(k+\ell-1)$ to be some matrix such that $L_0^{(j)}+M_0^{(j+1)}\in\cM'(k+\ell)$ dominates $L_0^{(j+1)}$, so we obtain a path in $\bfG^{(k,\ell)}$ from $M$ to $L_0'+M_0^{(t)}$. But we also showed above there exists a path in $\bfG^{(k,\ell)}$ from $L_0'+M_0^{(t)}$ to $L_0'+(M'-L_0')=M'$, so we obtain a path from $M$ to $M'$, as desired. Note that because $\bfG^{(k-1,\ell)}$ is nonbipartite, when invoking the inductive hypothesis we can choose any parity for the length of the paths, and thus we obtain paths from $M$ to $M'$ of any parity. Therefore $\bfG^{(k,\ell)}$ is connected and nonbipartite.

  Now assume that $k=0$, fix $\ell\geq 2$, and assume that $\bfG^{(0,\ell-1)}$ is connected and nonbipartite. Let $R'=C'=\bF_q^{2^{r+1}-2^{r-i-2}-(2\ell-1)}\times\{0\}^{n-(2^{r+1}-2^{r-i-2}-(2\ell-1))}\subseteq\bF_q^n$ and $\cM''=\{M\in\cM:\text{rowspan}(M)\cap R'=\{0\},\;\text{colspan}(M)\cap C'=\{0\}\}$, and define a bipartite graph $\bfG'$ by
  \begin{align*}
    V_1(\bfG') &= \cM''(\ell-1) \\
    V_2(\bfG') &= \cM''(\ell) \\
    V(\bfG') &= V_1(\bfG')\sqcup V_2(\bfG') \\
    E(\bfG') &= \{(M,M')\in V_1(\bfG')\times V_2(\bfG'):M+M'\in\cM''(2\ell-1)\}.
  \end{align*}
  We will first show that $\bfG'$ is connected using the assumption that $\bfG^{(0,\ell-1)}$ is connected, and then show that $\bfG^{(0,\ell)}$ is connected using the fact that $\bfG'$ is connected. Both proofs are similar to the proof above that $\bfG^{(k,\ell)}$ is connected for $k\geq 1$.

  For any $M,M'\in V_2(\bfG')$, we want to show that there exists a path in $\bfG'$ from $M$ to $M'$. For this purpose, again choose $L_0,L_0'\in\cM''(1)$ such that $L_0\prec M,L_0'\prec M'$, so that by Lemma~\ref{lem:G2connbase} there exists a path $L_0=L_0^{(0)},L_0^{(1)},\dots,L_0^{(t)}=L_0'$ in $\cM''(1)$ from $L_0$ to $L_0'$ such that for every $j$ we have $L_0^{(j)}+L_0^{(j+1)}\in\cM''(2)$. Letting $M^{(0)}=M$, for each $j$ we may choose $M^{(j)}\in\cM''(\ell)$ to be some matrix that dominates $L_0^{(j-1)}+L_0^{(j)}$. Then the inductive hypothesis implies that there is a path in $\bfG'$ from each $M^{(j)}$ to $M^{(j+1)}$, and from $M^{(t)}$ to $M'$. Specifically, as the subposet of $\cM_q^n$ consisting of matrices whose row and column spans avoid $R'+\text{rowspan}(L_0^{(j)})$ and $C'+\text{colspan}(L_0^{(j)})$ respectively is isomorphic to the subposet of $\cM_q^n$ whose row and column spans avoid $R(k,\ell-1)$ and $C(k,\ell-1)$ respectively, we may construct the desired path in $\bfG'$ from $M^{(j)}$ to $M^{(j+1)}$ by taking a path of even length in $\bfG^{(0,\ell-1)}$ from $M^{(j)}-L_0^{(j)}$ to $M^{(j+1)}-L_0^{(j)}$ under the poset isomorphism above, and then adding $L_0^{(j)}$ to every other element in the path. Thus we have constructed a path in $\bfG'$ from any $M$ to any $M'$, so $\bfG'$ is connected.

  We now show that $\bfG^{(0,\ell)}$ is connected using a similar argument. For any $M,M'\in V(\bfG^{(0,\ell)})$, we want to show that there exists a path in $\bfG^{(0,\ell)}$ from $M$ to $M'$. For this purpose, again choose $L_0,L_0'\in\cM'(1)$ such that $L_0\prec M,L_0'\prec M'$, so that by Lemma~\ref{lem:G2connbase} there exists a path $L_0=L_0^{(0)},L_0^{(1)},\dots,L_0^{(t)}=L_0'$ in $\cM'(1)$ from $L_0$ to $L_0'$ such that for every $j$ we have $L_0^{(j)}+L_0^{(j+1)}\in\cM'(2)$. Letting $M^{(0)}=M$, for each $j$ we may choose $M^{(j)}\in\cM'(\ell)$ to be some matrix that dominates $L_0^{(j-1)}+L_0^{(j)}$. Then the inductive hypothesis implies that there is a path in $\bfG^{(0,\ell)}$ from each $M^{(j)}$ to $M^{(j+1)}$, and from $M^{(t)}$ to $M'$. Specifically, as the subposet of $\cM_q^n$ consisting of matrices whose row and column spans avoid $R+\text{rowspan}(L_0^{(j)})$ and $C+\text{colspan}(L_0^{(j)})$ is isomorphic to the subposet of $\cM_q^n$ whose row and column spans avoid $R'$ and $C'$ respectively, we may construct the desired path in $\bfG^{(0,\ell)}$ from $M^{(j)}$ to $M^{(j+1)}$ by taking a path in $\bfG'$ from $M^{(j)}-L_0^{(j)}$ to $M^{(j+1)}-L_0^{(j)}$ under the poset isomorphism above, and then adding $L_0^{(j)}$ to every other element in the path. Because $\bfG'$ is connected and bipartite, each of these paths in $\bfG'$ has even length, so we obtain a path from $M$ to $M'$ in $\bfG^{(0,\ell)}$ of even length. Therefore $\bfG^{(0,\ell)}$ has an even length path between every two vertices, so it is connected and nonbipartite.
  % The proof is similar to the proof of Lemma~\ref{lem:G1conn}, so to avoid redundancy we simply describe the main idea. The same inductive approach as in Lemma~\ref{lem:G1conn} works here, except that for places in the proof of Lemma~\ref{lem:G1conn} that required a matrix $M$ to be dominated by another matrix, we instead require that $\text{rowspan}(M)$ and $\text{colspan(M)}$ are disjoint from some appropriate vector spaces. With this change, the rest of the argument carries through, and we get that $\bfG$ is connected and nonbipartite.
  % First consider the graph $\bfG'$ defined like $\bfG_1$ in Proposition~\ref{prop:tensornice}, but without the condition that matrices are dominated by $I_{2^{r-i}}$. That is, let
  % \begin{align*}
  %   V(\bfG_1) = \cM_q^{2^{r-i}}(2^{r-i-1}) \\
  %   E(\bfG_1) = \Big\{\{L_1+L_2,L_1+L_3\}:\;&L_1,L_2,L_3\in\cM_q^{2^{r-i}}(2^{r-i-2}),\\
  %                                           &L_1+L_2+L_3\in\cM_q^{2^{r-i}}(3\cdot 2^{r-i-2})\Big\}.
  % \end{align*}
  % A proof similar to the proof of Lemma~\ref{lem:G1conn} implies that $\bfG'$ is connected; we omit the details to avoid redundancy.
\end{proof}

% Note: For connectedness, by localization just have to show that walk between $L_1,L_2\prec I$ such that $L_1,L_2\prec L_1+L_2\prec I$ is connected, where $\rank(L_1)=\rank(L_2)=\rank(L_1+L_2)/2=\rank(I)/3$. Can probably show connectedness by induction, specifically showing we can get to larger and larger sets?

\begin{proof}[Proof of Proposition~\ref{prop:localconn}]
  The result follows from Proposition~\ref{prop:tensornice}, Lemma~\ref{lem:G1conn}, and Lemma~\ref{lem:G2conn}.
\end{proof}

\section{Simplicial HDXs of subpolynomial degree}
\label{sec:cayconstruct}
In this section, we present our main construction of simplicial high-dimensional expanders, which are Cayley complexes as defined in Section~\ref{sec:caycomplex} generated by the Grassmannian HDXs of Section~\ref{sec:grassHDX}. Below, we think of the parameters $r,b$ as fixed constants while $n\rightarrow\infty$.

\begin{theorem}
  \label{thm:maincayley}
  For integers $r\geq 1$, $b\geq 5$, $n\geq 2^{r+1}$, let $q=2^b$ and $k=bn^2$, and let $X=X^{r,b,n}$ be the rank-$r$ $\bF_2$-Grassmannian complex in ambient vector space $\bF_2^k$ given by Definition~\ref{def:grassconstruct}. Then $Y=\Cay(\bF_2^k,\beta(X))$ is a rank-$(r+1)$ simplicial complex with
  \begin{equation*}
    \lambda(Y) \leq \frac{11}{q-11}.
  \end{equation*}
  Furthermore, $Y$ has $|Y(0)|=2^{bn^2}$ vertices, and each vertex is contained in at most $2^{2^{r+2}bn}$ faces.
\end{theorem}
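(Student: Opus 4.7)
The vertex count $|Y(0)| = 2^k = 2^{bn^2}$ and the fact that $Y$ has rank $r+1$ are immediate from the definition of $\Cay(\bF_2^k, \beta(X))$ together with $\rank(X) = r$. The substantive assertions are the expansion bound and the degree bound.

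For the expansion, the plan is to combine three ingredients. First, Lemma~\ref{lem:spanX0} gives $\spn_{\bF_2} X(0) = \bF_2^k$, which validates the hypothesis of Lemma~\ref{lem:cayleyF2k} and yields $\lambda^{(i)}(Y) = \lambda^{(i-1)}(X)$ for every $0 \leq i \leq r+1$. Second, Corollary~\ref{cor:applytrickle} (invoked since $b \geq 5$) gives $\lambda^{(j)}(X) \leq 11/q$ for $-1 \leq j \leq r-2$, and therefore $\lambda^{(i)}(Y) \leq 11/q$ for $0 \leq i \leq r-1$. Third, the remaining quantity $\lambda^{(-1)}(Y)$ is controlled via the simplicial trickle-down Theorem~\ref{thm:tricklesimp}: since $\lambda^{(0)}(Y) = \lambda^{(-1)}(X) \leq 11/q$ and $t \mapsto t/(1-t)$ is monotone on $[0,1)$, one obtains
\begin{equation*}
  \lambda^{(-1)}(Y) \leq \frac{\lambda^{(0)}(Y)}{1-\lambda^{(0)}(Y)} \leq \frac{11/q}{1-11/q} = \frac{11}{q-11}.
\end{equation*}
This quantity dominates $11/q$ and so gives the final bound. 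The trickle-down step requires that every link $Y_y$ with $\rank(y) \leq r-1$ have a connected 1-skeleton; for $y = \emptyset$ this follows from $\spn X(0) = \bF_2^k$, while for $\rank(y) \geq 0$ Lemma~\ref{lem:cayleylinks} identifies $Y_y$ with a link of $\beta(X)$, whose 1-skeleton is a tensor product of the corresponding link of $X$ with a complete graph (by the same argument that proves Corollary~\ref{cor:basexp}) and is thus connected by Proposition~\ref{prop:localconn}.

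For the degree bound, Lemma~\ref{lem:cayleylinks} identifies the link of each vertex $v \in Y(0)$ with $\beta(X)$, so it suffices to upper bound $|\beta(X)|$. Definition~\ref{def:grassconstruct} parameterizes each rank-$r$ face of $X$ by $2(2^{r+1}-1) = 2^{r+2}-2$ vectors in $\bF_q^n$, yielding $|X(r)| \leq q^{n(2^{r+2}-2)} = 2^{bn(2^{r+2}-2)}$. Each rank-$i$ face of $X$ for $i<r$ is an $(i+1)$-dimensional $\bF_2$-subspace of a top-rank face, contributing only a factor depending on $r$ alone, and each such subspace admits at most $|GL_{i+1}(\bF_2)| \leq 2^{(r+1)^2}$ unordered bases of nonzero vectors. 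Summing over ranks and absorbing the $r$-dependent overhead into the exponent (valid since $n \geq 2^{r+1}$ makes $2bn$ easily dominate $(r+1)^2 + O_r(1)$) yields $|\beta(X)| \leq 2^{2^{r+2}bn}$, as required.

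No step presents a genuine obstacle: the proof is essentially an assembly of Lemma~\ref{lem:cayleyF2k}, Corollary~\ref{cor:applytrickle}, and the simplicial trickle-down theorem, together with a routine counting argument for the degree. The one subtlety worth care is verifying the connectivity hypothesis of the trickle-down theorem at all relevant ranks of $Y$, which forces us to trace how basisification and the Cayley construction each preserve link connectivity.
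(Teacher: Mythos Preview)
Your proposal is correct and follows essentially the same route as the paper: the expansion bound is obtained by combining Lemma~\ref{lem:cayleyF2k} with Corollary~\ref{cor:applytrickle} and one application of simplicial trickle-down, and the degree bound is a direct count of faces in the link of a vertex. You are in fact slightly more careful than the paper on two points---you correctly identify $|\beta(X)|$ (rather than $|X|$) as the quantity to bound, and you explicitly verify the link-connectivity hypothesis of Theorem~\ref{thm:tricklesimp} at all relevant ranks---but these are refinements of the same argument, not a different approach.
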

\begin{proof}
  The local expansion bound follows directly from Lemma~\ref{lem:cayleyF2k} and Corollary~\ref{cor:applytrickle}. To see that every vertex in $Y$ is contained in at most $2^{2^{r+2}bn}$ faces, consider that by definition the number of faces containing a vertex in $Y$ equals the number $|X|$ of faces in $X$. Now by Lemma~\ref{lem:lowerfaces}, because the matrix $G_{\text{Had}}^{(r,i)}$ has $2^{r+1}-2^{r-i}$ nonzero rows, the number $|X(i)|$ of rank-$i$ faces in $X$ is at most the number of tuples $(e_1^{(1)}\otimes e_1^{(2)},\dots,e_{2^{r+1}-2^{r-i}}^{(1)}\otimes e_{2^{r+1}-2^{r-i}}^{(2)})$ of rank-1 matrices in $(\bF_q^n)^{\otimes 2}$, which is at most $(q^n)^{2\cdot(2^{r+1}-2^{r-i})}$. Thus
  \begin{align*}
    |X|
    &\leq \sum_{i=-1}^r(q^n)^{2\cdot(2^{r+1}-2^{r-i})} \leq 2^{2^{r+2}bn}.
  \end{align*}
\end{proof}

Theorem~\ref{thm:maincayley} gives simplicial HDXs of all dimensions of subpolynomial degree. To the best of our knowledge, there were essentially only two previously known constructions of such objects, namely Ramanujan complexes \cite{cartwright_ramanujan_2003,li_ramanujan_2004,lubotzky_ramanujan_2005,lubotzky_explicit_2005} and coset complexes \cite{kaufman_construction_2018} both of which in fact achieve constant degree. There have also been several modifications, extensions, and spinoffs that apply these constructions to obtain more constant-degree simplicial HDXs \cite{friedgut_hyper-regular_2020,odonnell_high-dimensional_2022-1,dikstein_new_2022}.

These prior constructions are all group theoretic in nature, and in particular arise from matrix groups over finite fields. The analysis of Ramanujan complexes is algebraic and highly involved, while the analysis of coset complex HDXs \cite{kaufman_construction_2018,odonnell_high-dimensional_2022-1} is more elementary; see in particular the analysis in \cite{harsha_note_2019} of the Kaufman-Oppenheim construction \cite{kaufman_construction_2018}.

Though our construction in Theorem~\ref{thm:maincayley} does not achieve optimal (constant) degree, it is based on fundamentally different techniques than the prior constructions. In subsequent sections, we discuss additional properties of our construction which may be of interest. Specifically, we present a coding theoretic interpretation of the construction, which we also use to obtain a characterization of its 1st $\bF_2$-homology group. In particular, we obtain $N$-vertex simplicial HDXs with 1st $\bF_2$-homology of dimension $\Omega(\log^2N)$, whereas to the best of our knowledge prior constructions only achieved dimension $\Omega(\log N)$.

\section{Coding theoretic view of $\bF_2$-Grassmannian complexes}
\label{sec:coding}
This section provides a useful coding theoretic view of rank-1 $\bF_2$-Grassmannian complexes, or equivalently, of the 1-skeleton of higher-rank complexes. As described in Section~\ref{sec:intro}, this view builds upon the work of \cite{vadhan_construction_2018}, and can be seen as motivation for the construction in Section~\ref{sec:grassHDX}. This coding theoretic view also facilitates the study in Section~\ref{sec:cayleyhom} of the homology of Cayley simplicial complexes generated by $\bF_2$-Grassmannian complexes.

At a high level, for a rank-1 Grassmannian complexe $X$, we construct an associated LDPC code with checks of degree 3. If $X$ has good sparsity within its ambient vector space, then the associated code has good rate. If $X$ has good expansion, then the associated code has good distance. The details are given below.

Below, $\bF_2$-Grassmannian complex $X$, we often refer to rank-1 faces $x_1=\{0,x_0,x_0',x_0+x_0'\}\in X(1)$ by their three nonzero elements $\{x_0,x_0',x_0+x_0'\}$, which we call a \textbf{triangle} in $X(1)$. In this section we focuse on rank-$1$ complexes $X$, for which the basisification $\beta(X)$ and the 1-skeleton graph $\bfG_X$ are the same graph, whose vertices are $X(0)$ and whose edges are all 2-element subsets $\{x_0,x_0'\}$ of triangles in $X(1)$. Thus in the rank-1 case $\beta(X)$ and therefore also $\bfG_X$ contain all the information about $X$, so we can view $X$, $\beta(X)$, and $\bfG_X$ as the same object. For instance, all three objects have the same (local) expansion $\lambda(X)=\lambda(\beta(X))=\lambda(\bfG_X)$, which we refer to as simply the expansion of $X$. We will therefore sometimes use the shorthand $\Cay(\bF_2^k,X)$ to denote $\Cay(\bF_2^k,\beta(X))$.

\begin{definition}
  Let $X$ be a Grassmannian complex in ambient vector space $\bF_2^k$. Define an associated generating matrix $G_X\in\bF_2^{X(0)\times k}$ whose rows are given by the elements of $X(0)$. Also define an associated parity check matrix $H_X\in\bF_2^{X(1)\times X(0)}$ whose rows are given by the indicator vectors of traingles in $X(1)$. That is, the row associated to each $x_1\in X(1)$ has exactly three $1$s corresponding to the three nonzero elemments of $x_1$.
\end{definition}

For every Grassmannian complex $X$ over $\bF_2$, we have two associated codes, namely $\im G_X=\{G_Xu:u\in\bF_2^k\}$ and $\ker H_X=\{w\in\bF_2^{X(0)}:H_Xw=0\}$. Importantly, these codes are not necessarily equal. Rather, the following lemma shows that the former is contained in the latter, that is $\im G_X\subseteq\ker H_X$

\begin{lemma}
  \label{lem:GH0}
  For a $\bF_2$-Grassmannian complex $X$, we have $H_XG_X=0$.
\end{lemma}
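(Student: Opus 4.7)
The proof plan is essentially a one-line computation, so my proposal is to unpack the definitions carefully and observe that the key identity follows from the fact that the three nonzero elements of a 2-dimensional $\bF_2$-subspace sum to zero.

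First I would write out the $(x_1, j)$ entry of the product $H_X G_X \in \bF_2^{X(1) \times k}$ for an arbitrary triangle $x_1 \in X(1)$ and coordinate $j \in [k]$. By definition of matrix multiplication,
\begin{equation*}
  (H_X G_X)_{x_1, j} = \sum_{x_0 \in X(0)} (H_X)_{x_1, x_0}\, (G_X)_{x_0, j}.
\end{equation*}
The row $(H_X)_{x_1, \cdot}$ is by construction the indicator $\1_{x_1 \setminus \{0\}}$, and the row $(G_X)_{x_0, \cdot}$ is the vector $x_0 \in \bF_2^k$ itself. Hence the sum collapses to
\begin{equation*}
  (H_X G_X)_{x_1, j} = \sum_{x_0 \in x_1 \setminus \{0\}} (x_0)_j.
\end{equation*}

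Next I would use the fact that $x_1$ is a 2-dimensional $\bF_2$-subspace, so its three nonzero elements are of the form $a, b, a+b$ for some basis $a, b$ of $x_1$. Therefore their sum in $\bF_2^k$ is $a + b + (a+b) = 0$, so each coordinate sum $\sum_{x_0 \in x_1 \setminus \{0\}} (x_0)_j$ vanishes. Since this holds for every choice of $x_1 \in X(1)$ and every coordinate $j$, we conclude $H_X G_X = 0$, as desired.

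There is no real obstacle here; the statement is essentially a tautology once the matrices are defined, and it is the $\bF_2$-analogue of the familiar observation that boundary-of-boundary is zero for the associated chain complex (with $G_X^\top$ playing the role of $\partial_1$ viewed on the column space and $H_X$ playing the role of the weight-3 parity checks). In writing up I would simply present the two-line calculation above without further decoration.
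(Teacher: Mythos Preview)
Your proposal is correct and is exactly the paper's argument, just written out more explicitly: the paper's proof is the one-line observation that each $x_1\in X(1)$ is a $2$-dimensional $\bF_2$-subspace, so its three nonzero elements sum to $0$.
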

\begin{proof}
  Each $x_1\in X(1)$ is by definition a 2-dimensional subspace of $\bF_2^k$, so the three nonzero elements of $x_1$ sum to $0$.
\end{proof}

Thus $\ker H_X$ and $\im G_X$ differ by the homology group $\ker H_X/\im G_X$ of the cochain complex
\begin{equation*}
  \bF_2^{X(1)} \xleftarrow{H_X} \bF_2^{X(0)} \xleftarrow{G_X} \bF_2^k.
\end{equation*}

We show in Section~\ref{sec:cayleyhom} that an appropriate quotient of $H_1(\Cay(\bF_2^k,X);\bF_2)$ is naturally isomorphic to $(\ker H_X/\im G_X)^*=\ker G_X^\top/\im H_X^\top$. This fact is perhaps unsurprising given the following result, which shows that an appropriate lift $\tilde{X}$ of $X$ has $\ker H_{\tilde{X}}/\im G_{\tilde{X}}=0$.

\begin{lemma}
  \label{lem:liftgrass}
  Let $X$ be a rank-1 Grassmannian complex, so that $\im G_X\subseteq\ker H_X$. Then there exists a ``universal cover'' $\tilde{X}$ of $X$, that is, a rank-1 Grassmannian complex $\tilde{X}$ such that $H_{\tilde{X}}=H_X$ and $\im G_{\tilde{X}}=\ker H_X$.
\end{lemma}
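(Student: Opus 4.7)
The plan is to build $\tilde X$ inside the ambient vector space $V := \bF_2^{X(0)}/\im H_X^\top$. Writing $\pi:\bF_2^{X(0)}\to V$ for the canonical quotient, I will set $\tilde x_0 := \pi(e_{x_0})$ and take $\tilde X$ to be the rank-$1$ $\bF_2$-Grassmannian complex in $V$ generated by the lifted triangles. A dimension count $\dim V = |X(0)|-\dim\im H_X^\top = \dim\ker H_X$ already suggests that $G_{\tilde X}$, once identified, will map isomorphically onto $\ker H_X$.

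The crucial step will be verifying that $x_0 \mapsto \tilde x_0$ is an injection into $V\setminus\{0\}$. Equivalently, for every pair of distinct $x_0,x_0'\in X(0)\cup\{0\}$ I need $e_{x_0}+e_{x_0'}\notin\im H_X^\top$, i.e.\ some $w\in(\im H_X^\top)^\perp=\ker H_X$ satisfies $w_{x_0}\neq w_{x_0'}$. Here the hypothesis $\im G_X\subseteq\ker H_X$ supplied by Lemma~\ref{lem:GH0} gives enough room: taking $u\in\bF_2^k$ with $(x_0+x_0')\cdot u=1$, which exists since $x_0+x_0'\neq 0$ in $\bF_2^k$, the codeword $w=G_Xu\in\im G_X\subseteq\ker H_X$ satisfies $w_{x_0}+w_{x_0'}=1$. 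This is the one step where the underlying vector-space structure of $\bF_2^k$ enters the argument, and it is the main (modest) obstacle of the proof.

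Once injectivity is in hand, I will define $\tilde X(1)$ to consist of the lifted $2$-dimensional subspaces $\{0,\tilde x_0,\tilde{x_0'},\tilde x_0+\tilde{x_0'}\}$ attached to each triangle $\{x_0,x_0',x_0+x_0'\}\in X(1)$; the relation $e_{x_0}+e_{x_0'}+e_{x_0+x_0'}\in\im H_X^\top$ descends to $\tilde x_0+\tilde{x_0'}+\widetilde{x_0+x_0'}=0$ in $V$, so the lift really sits inside a $2$-dimensional subspace of $V$. Vertex injectivity forces distinct triangles to lift to distinct subspaces (each is determined by its set of three nonzero points), and the purity of $X$ ensures that the downward closure of the lifted triangles recovers exactly $\{\tilde x_0:x_0\in X(0)\}$ on the vertex level. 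Under the resulting bijections $X(0)\leftrightarrow\tilde X(0)$ and $X(1)\leftrightarrow\tilde X(1)$ the matrices $H_X$ and $H_{\tilde X}$ agree entry by entry.

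It then remains to identify $\im G_{\tilde X}$. After choosing a basis of $V$ so that $\tilde X$ really sits in $\bF_2^{\dim V}$, the rows of $G_{\tilde X}$ are the $\tilde x_0=\pi(e_{x_0})$. Linear functionals on $V$ annihilate $\im H_X^\top$, so $V^*$ canonically identifies with $(\im H_X^\top)^\perp=\ker H_X$: a codeword $w\in\ker H_X$ corresponds to the functional $\pi(v)\mapsto w\cdot v$. Pairing this functional with the row $\tilde x_0=\pi(e_{x_0})$ returns exactly $w_{x_0}$, so every $w\in\ker H_X$ appears as an element of $\im G_{\tilde X}$; a dimension comparison then forces $\im G_{\tilde X}=\ker H_X$, completing the construction.
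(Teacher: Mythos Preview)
Your proof is correct and is essentially the same construction as the paper's: the paper simply declares ``let $G_{\tilde X}$ be any generating matrix for $\ker H_X$ and let $\tilde X(0)$ be its rows,'' whereas you give the canonical realization of this via $V=\bF_2^{X(0)}/\im H_X^\top$ and $\tilde x_0=\pi(e_{x_0})$. Your version is more thorough, since you explicitly verify the injectivity of $x_0\mapsto\tilde x_0$ (using $\im G_X\subseteq\ker H_X$) that the paper's two-sentence proof leaves implicit but actually needs.
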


The requirement above that $H_{\tilde{X}}=H_X$ says that $\tilde{X}$ has the same incidence structure as $X$. Yet because $\im G_{\tilde{X}}=\ker H_{\tilde{X}}=\ker H_X$ may be larger than $\im G_X\subseteq\ker H_X$, the vertices in $\tilde{X}$ may span a larger vector space, that is, $\tilde{X}$ may be sparser in its ambient vector space than $X$. Equivalently, the existence of $\tilde{X}$ implies that the parity check code $\ker H_X$ may always be realized as a generator code $\im G_{\tilde{X}}$ for some rank-1 Grassmannian complex $\tilde{X}$. We will show below that $\im G_X$ has good distance if $X$ is a good expander, so it follows that $\ker H_X$ does too.

\begin{proof}[Proof of Lemma~\ref{lem:liftgrass}]
  Define $G_{\tilde{X}}$ to be any generating matrix for the code $\ker H_X$, and then let the elements of $\tilde{X}(0)$ be the rows of $G_{\tilde{X}}$, and let $\tilde{X}(1)$ consist of the same triangles as in $X(1)$, but with elements of $\tilde{X}(0)$ replacing the respective elements of $X(0)$.
\end{proof}

Lemma~\ref{lem:liftgrass} shows that we may think of a rank-1 Grassmannian complex $X$ over $\bF_2$ as a parity check matrix $H_X$ for an LDPC code with check-degree 3. Specifically, the code $\ker H_X=\ker H_{\tilde{X}}$ is realized as the image of the generator matrix $G_{\tilde{X}}$ associated to $\tilde{X}$. If we ever wish to recover $X$ as opposed to $\tilde{X}$, we may simply restrict to the message subspace $\spn X(0)\subseteq\spn\tilde{X}(0)$, which corresponds to restricting to the subcode $\im G_X\subseteq\im G_{\tilde{X}}$. We will show below how passing to such restrictions allows us to manipulate the homology of the resulting Cayley simplicial complex $\Cay(\spn X(0),X)$.

Thus we have essentially reduced the problem of finding good rank-1 Grassmannian expanders to that of finding LDPC codes of check-degree 3 such that the incidence structure of the constraints forms a good expander. Note that these objects are not generic, as good expansion can only be obtained if the parity checks have many redundancies. Specifically, the 1-skeleton graph $\bfG_X=\beta(X)$ associated to a rank-1 complex $X$ has $|X(0)|$ vertices and $3|X(1)|$ edges. Assuming for simplicity that this graph is regular, then the Alon-Boppana bound implies that as the expansion $\lambda(X)=\lambda(\bfG_X)\rightarrow 0$, the degree $6|X(1)|/|X(0)|\rightarrow\infty$. Thus for small expansion $\lambda(X)$, the parity check matrix $H_X\in\bF_2^{X(1)\times X(0)}$ must have many more rows (i.e.~constraints) than columns (i.e.~code components).

Let $k=\dim\spn X(0)$ denote the dimension of the code $\im G_X$. Then by definition $\Cay(\spn X(0),X)$ is a rank-2 simplicial complex with $2^k$ vertices and $2^k\cdot|X(1)|$ rank-2 faces, so that each vertex is contained in $3|X(1)|$ faces. Therefore to have $\Cay(\spn X(0),X)$ be low-degree (i.e.~have few faces relative to the number of vertices), for a given $|X(1)|$ we want $k$ to be large. Hence for a given incidence structure $H_X$, the complex $\tilde{X}$ by definition maximizes $\tilde{k}=\dim\spn\tilde{X}(0)$ and therefore yields the minimum degree Cayley simplicial complex. Note that if $X$ is connected then $3|X(1)|+1\geq|X(0)|$, so it must be that $k\leq|X(0)|\leq 3|X(1)|+1$. Therefore if $\Cay(\spn X(0),X)$ has connected links, that is, if $X$ is connected, then the number of rank-2 faces containing a given vertex in $\Cay(\spn X(0),X)$ is at least logarithmic in the number of vertices. This limitation is unsurprising, as Cayley expanders over abelian groups must have at least logarithmic degree.

The discussion above explains how the rate of the code $\im G_X$ determines the sparsity of $X$ in its ambient vector space, which in turn determines the degree of $\Cay(\spn X(0),X)$. We now show how the distance of $\im G_X$ is related to the expansion of $X$ and of $\Cay(\spn X(0),X)$. For this relationship to hold, we will use the equivalence of small-bias sets and well-balanced codes, which requires the technical assumption that $X$ is \textbf{regular}, meaning that all vertices have the same weight. Specifically, if rank-1 faces are given the uniform distribution, then all vertices must be contained in the same number of rank-1 faces.

Below we define $\epsilon$-balanced codes, which in particular have relative distance $\geq(1-\epsilon)/2$.

\begin{definition}
  A linear code $C\subseteq\bF_2^n$ is \textbf{$\epsilon$-balanced} if all nonzero codewords $c\in C\setminus\{0\}$ have Hamming weight $|c|\in[(1-\epsilon)n/2,(1+\epsilon)n/2]$.
\end{definition}

We also define $\epsilon$-biased sets.

\begin{definition}
  A set $S\subseteq\bF_2^k$ is \textbf{$\epsilon$-biased} if for every $u\in\bF_2^k$, it holds that $\Pr_{s\sim\Unif(S)}[s\cdot u=1]\in[(1-\epsilon)n/2,(1+\epsilon)n/2]$.
\end{definition}

The following well known lemma shows the equivalence between $\epsilon$-balanced codes, $\epsilon$-biased sets, and $\epsilon$-expanding Cayley graphs over $\bF_2^k$. The proof, which we omit, follows from the fact that the eigenvectors of the random walk matrix of a Cayley graph over an abelian group are precisely the Fourier characters of the group.

\begin{lemma}[Well known]
  \label{lem:epbias}
  For a matrix $G\in\bF_2^{n\times k}$, let $S\subseteq\bF_2^k$ be the muliset of all rows of $G$. Then $\lambda(\Cay(\bF_2^k,S))\leq\epsilon$ if and only if $S$ is an $\epsilon$-biased set, which in turn holds if and only if $\im G$ is an $\epsilon$-balanced code.
\end{lemma}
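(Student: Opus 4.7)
The plan is to prove the two equivalences independently via elementary Fourier analysis of Cayley graphs over the abelian group $\bF_2^k$. The proof is self-contained and requires no outside machinery beyond the spectral decomposition of convolution operators on finite abelian groups.

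First I would show that $S$ is $\epsilon$-biased if and only if $\im G$ is $\epsilon$-balanced. For any $u \in \bF_2^k$, the $i$th coordinate of the codeword $Gu$ equals $g_i \cdot u$, where $g_i$ is the $i$th row of $G$ and hence the $i$th element of the multiset $S$. Consequently,
\begin{equation*}
|Gu| \;=\; \bigl|\{i \in [n] : g_i \cdot u = 1\}\bigr| \;=\; n \cdot \Pr_{s \sim \Unif(S)}[s \cdot u = 1].
\end{equation*}
Dividing by $n$, the bias condition $\Pr_{s}[s \cdot u = 1] \in [(1-\epsilon)/2,(1+\epsilon)/2]$ holding for every nonzero $u$ translates directly to $|Gu| \in [(1-\epsilon)n/2,(1+\epsilon)n/2]$ holding for every nonzero codeword $Gu \in \im G$, which is exactly the $\epsilon$-balanced condition.

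Next I would show that $S$ is $\epsilon$-biased if and only if $\lambda(\Cay(\bF_2^k,S)) \leq \epsilon$. The random walk matrix $\bfW$ of $\Cay(\bF_2^k,S)$ is a convolution operator, since the group $\bF_2^k$ acts transitively by translation and the transition probabilities depend only on the difference of vertices. Its eigenvectors are therefore the Fourier characters $\chi_u : v \mapsto (-1)^{u \cdot v}$ indexed by $u \in \bF_2^k$, and a direct computation yields the eigenvalue
\begin{equation*}
\lambda_u \;=\; \bE_{s \sim \Unif(S)}\bigl[(-1)^{s \cdot u}\bigr] \;=\; 1 - 2\,\Pr_{s \sim \Unif(S)}[s \cdot u = 1].
\end{equation*}
Thus $|\lambda_u| \leq \epsilon$ precisely when $\Pr_s[s \cdot u = 1] \in [(1-\epsilon)/2,(1+\epsilon)/2]$. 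The trivial character $\chi_0$ contributes the stationary eigenvalue $\lambda_0 = 1$, which is excluded from the definition of $\lambda(\Cay(\bF_2^k,S))$; maximizing $|\lambda_u|$ over $u \neq 0$ then gives $\lambda(\Cay(\bF_2^k,S)) \leq \epsilon$ if and only if $S$ is $\epsilon$-biased. Since both equivalences reduce to routine identities once the Fourier diagonalization of abelian Cayley graphs is invoked, there is no substantive obstacle in the argument; the only minor care needed is to exclude the trivial character on both sides of the equivalence, which is the standard convention in the definition of $\epsilon$-bias.
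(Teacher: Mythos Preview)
Your proof is correct and follows exactly the approach the paper itself indicates (the paper omits the proof, remarking only that it follows from the fact that the eigenvectors of the random walk matrix of a Cayley graph over an abelian group are the Fourier characters). Your two computations---identifying $|Gu|/n$ with $\Pr_s[s\cdot u=1]$ and identifying $\lambda_u=1-2\Pr_s[s\cdot u=1]$---are precisely the standard argument being alluded to.
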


We now show that good expansion in a rank-1 $\bF_2$-Grassmannian complex $X$ implies good balance (and therefore good distance) in the associated code $\im G_X$.

\begin{proposition}
  \label{prop:exptodis}
  Let $X$ be a rank-1 $\bF_2$-Grassmannian complex in ambient vector space $\bF_2^k=\spn X(0)$ with expansion $\lambda$. Then $X(0)$ is an $\lambda/(1-\lambda)$-biased set, so if $X$ is regular then $\im G_X$ is a $\lambda/(1-\lambda)$-balanced code.
  % TODO: could add back in onesided result?
  % If $X$ has onesided expansion $\lambda_2$ and $X$ is regular, then $\im G_X$ has relative distance at least $(1-\lambda_2/(1-\lambda_2))/2$.
\end{proposition}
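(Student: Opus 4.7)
The plan is to reduce the proposition to a combination of three already-established results: Lemma~\ref{lem:cayleyF2k}, the trickle-down Theorem~\ref{thm:tricklesimp}, and the well-known equivalence in Lemma~\ref{lem:epbias}. The whole argument is essentially the proof sketch given for Proposition~\ref{prop:exptodisinf} in the introduction, but I need to be slightly careful about which ``expansion'' is being referred to at each step.

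First, I would set up the Cayley simplicial complex $Y = \Cay(\bF_2^k,\beta(X))$, where by assumption $\spn X(0) = \bF_2^k$ so that $Y$ has connected 1-skeleton. Since $X$ is rank-$1$, the quantity $\lambda = \lambda(X)$ coincides with $\lambda^{(0)}(X)$ (local expansion at rank $0$, which is just expansion of the 1-skeleton in the rank-$1$ case). By Lemma~\ref{lem:cayleyF2k}, the Cayley complex $Y$ satisfies $\lambda^{(0)}(Y) = \lambda^{(-1)}(X) = \lambda(X) = \lambda$, since $X$ is connected and rank-$1$.

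Second, I would invoke the trickle-down theorem (Theorem~\ref{thm:tricklesimp}) applied to the rank-$2$ simplicial complex $Y$. This gives
\begin{equation*}
  \lambda^{(-1)}(Y) \;\leq\; \frac{\lambda^{(0)}(Y)}{1-\lambda^{(0)}(Y)} \;\leq\; \frac{\lambda}{1-\lambda}.
\end{equation*}
But by definition $\lambda^{(-1)}(Y)$ is the spectral expansion of the 1-skeleton graph $\bfG_Y$, and this graph is precisely the Cayley graph $\Cay(\bF_2^k, X(0))$ (possibly with the multiset structure inherited from the weights of $X(0)$ in $Y$; in the regular case this is just the uniform multiset of the rows of $G_X$).

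Finally, I would apply Lemma~\ref{lem:epbias}, which states that Cayley $\epsilon$-expansion over $\bF_2^k$, $\epsilon$-bias, and $\epsilon$-balance of the associated code $\im G_X$ are all equivalent. Since $\lambda(\Cay(\bF_2^k,X(0))) \leq \lambda/(1-\lambda)$, it immediately follows that $X(0)$ is a $\lambda/(1-\lambda)$-biased set; and when $X$ is regular so that every vertex has equal weight, $\im G_X$ is a $\lambda/(1-\lambda)$-balanced code. The only mildly delicate point — and the one I would be most careful about when writing the proof up — is the bookkeeping between the weighted and unweighted versions of Lemma~\ref{lem:epbias}: the regularity hypothesis on $X$ is exactly what ensures the vertex weights in $\bfG_Y$ are uniform on $X(0)$, so that the hypothesis of the ``$\epsilon$-balanced code'' direction of Lemma~\ref{lem:epbias} applies as stated. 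Without regularity one still gets the $\epsilon$-bias conclusion, just with respect to the weighted uniform distribution, which matches the proposition statement.
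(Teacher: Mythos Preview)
Your proposal is correct and follows essentially the same approach as the paper's proof: apply Lemma~\ref{lem:cayleyF2k} to get $\lambda^{(0)}(Y)=\lambda$, use trickle-down (Theorem~\ref{thm:tricklesimp}) to bound $\lambda^{(-1)}(Y)\leq\lambda/(1-\lambda)$, identify the $1$-skeleton of $Y$ with $\Cay(\bF_2^k,X(0))$, and conclude via Lemma~\ref{lem:epbias}. One small slip: for a rank-$1$ complex $X$ the relevant local expansion is $\lambda^{(-1)}(X)$, not $\lambda^{(0)}(X)$ (the latter is undefined since $r-2=-1$); your subsequent chain $\lambda^{(0)}(Y)=\lambda^{(-1)}(X)=\lambda$ is correct, so this is only a cosmetic issue in the parenthetical.
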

\begin{proof}
  By Lemma~\ref{lem:cayleyF2k}, $\Cay(\spn X(0),X)$ has rank-0 local expansion $\lambda$, so by the trickle-down theorem Theorem~\ref{thm:tricklesimp}, $\Cay(\spn X(0),X)$ has rank-$(-1)$ local expansion $\leq\lambda/(1-\lambda)$. But the 1-skeleton of $\Cay(\spn X(0),X)$ is by definition the Cayley graph $\Cay(\spn X(0),X(0))$. Thus $X(0)$ is $\lambda/(1-\lambda)$-biased and $\im G_X$ is $\lambda/(1-\lambda)$-balanced by Lemma~\ref{lem:epbias}.
  % The onesided result follows by the same reasoning, as we get that $\Cay(\spn X(0),X)$ has onesided expansion $\leq\lambda_2/(1-\lambda_2)$.
\end{proof}

By Lemma~\ref{lem:liftgrass}, Proposition~\ref{prop:exptodis} also applies to $\ker H_X=\im G_{\tilde{X}}$, as $\tilde{X}$ by definition has the same graph structure, and therefore the same expansion, as $X$.

It is an interesting question whether good expanders $X$ of rank $>1$ have codes $\im G_X$ with stronger properties than large distance, such as local testability properties.

\section{Homology of Cayley simplicial complexes}
\label{sec:cayleyhom}
In this this section, we show a general characterization of the 1-homology groups with coefficients in $\bF_2$ of $\bF_2$-Cayley simplicial complexes. We then apply this characterization with our construction from Section~\ref{sec:cayconstruct} to construct $N$-vertex local spectral simplicial HDXs with 1-homology of dimension $\Omega(\log^2(N))$. To the best of our knowledge, prior $N$-vertex HDX constructions such as Ramanujan complexes and the Kaufman-Oppenheim coset complexes are at best known to have 1-homology of dimension order $\log N$; see Remark~\ref{rem:homdim}.
% TODO: ask Tali about the footnote above
We create large homology groups by a quotienting procedure on Grassmannian complexes that enlarges the 1-homology group of the Cayley complex, while increasing the degree of the Cayley complex to polynomially large in the number of vertices. By our framework, sparser constructions of Cayley HDXs over $\bF_2^k$ would immediately yield HDXs with larger 1-homology.

In this section, all chains, cycles, boundaries, and homology groups (see Section~\ref{sec:topprelim} for a refresher) are implicitly assumed to be have coefficients in $\bF_2$.

\subsection{General result}
\label{subsec:generalhom}
In this section, we present a general characterization of the homology group of Cayley simplicial complexes generated by rank-$1$ $\bF_2$-Grassmannian complexes. Our characterization uses the following definition.

\begin{definition}
  Let $G$ be an abelian group and let $S$ be a rank-1 simplicial complex (that is, a graph) with vertex set $S(0)\subseteq G\setminus\{0\}$ such that $S$ satisfies the symmetry condition of Definition~\ref{def:cayleycomplex}, so that $\Cay(G,S)$ is a rank-1 Cayley simplicial complex. Define the \textbf{swap cycles} $S_1(\Cay(G,S))\subseteq Z_1(\Cay(G,S))$ to be the subgroup of 1-cycles
  \begin{equation*}
    S_1(\Cay(G,S)) = \spn\left\{\1_{\{\{v,v+x_0\},\{v+x_0,v+x_0+x_0'\},\{v+x_0',v+x_0+x_0'\},\{v,v+x_0'\}\}}:v\in G,x_0,x_0'\in S(0)\right\}.
  \end{equation*}
\end{definition}

That is, the swap cycles are generated by the length-4 1-cycles that start at some base point $v\in G$, and traverse the 4 edges with respective labels $x_0,x_0',-x_0,-x_0'$. More informally, swap cycles are generated by cycles that swap the order of two adjacent edge labels.

\begin{theorem}
  \label{thm:hommodswap}
  Let $X$ be a rank-1 $\bF_2$-Grassmannian complex, and let $Y=\Cay(\spn X(0),X)$. There is a natural isomorphism
  \begin{equation*}
    H_1(Y)/(S_1(Y)+B_1(Y)) \cong \ker G_X^\top/\im H_X^\top.
  \end{equation*}
\end{theorem}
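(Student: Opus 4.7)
The plan is to exhibit the isomorphism by building the forward map and its inverse: a labeling homomorphism $\phi$ together with a section $\sigma$ given by canonical closed walks based at $0$. For the forward map, each edge $e=\{v,v+x_0\}\in Y(1)$ carries a well-defined label $\ell(e)=x_0\in X(0)$ (unambiguous because $-x_0=x_0$ in $\bF_2^k$); setting $\phi(\1_e)=\1_{\ell(e)}$ and extending linearly gives $\phi:C_1(Y)\to\bF_2^{X(0)}$. For $z\in Z_1(Y)$, writing each edge as $e=\{v_e,v_e'\}$, the computation
\begin{equation*}
G_X^\top\phi(z)=\sum_e z_e\,\ell(e)=\sum_e z_e(v_e+v_e')=\sum_{w\in Y(0)}\Big(\sum_{e\ni w} z_e\Big)\,w=0,
\end{equation*}
using $\partial_1 z=0$ at the last step, shows $\phi(Z_1(Y))\subseteq\ker G_X^\top$. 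The triangle $\{v,v+x_0,v+x_0'\}\in Y(2)$ has edges with labels $x_0,x_0',x_0+x_0'$, whose sum is precisely the row of $H_X$ indexed by $\{0,x_0,x_0',x_0+x_0'\}\in X(1)$; a swap cycle uses each of its two labels twice and so maps to $0$. Hence $\phi$ descends to a well-defined $\bar\phi:Z_1(Y)/(S_1(Y)+B_1(Y))\to\ker G_X^\top/\im H_X^\top$.

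To construct a candidate inverse, given $c\in\ker G_X^\top$ fix any enumeration $\supp c=\{y_1,\dots,y_m\}$ and let $\sigma(c)\in C_1(Y)$ be the 1-chain of the closed walk $0\to y_1\to y_1+y_2\to\cdots\to y_1+\cdots+y_m=0$; this is a cycle exactly because $c\in\ker G_X^\top$, and $\phi(\sigma(c))=c$ by construction. Two orderings of the $y_i$ differ by a sequence of adjacent transpositions, and swapping consecutive labels $(y_i,y_{i+1})$ is implemented by adding the swap cycle based at $y_1+\cdots+y_{i-1}$ with generators $y_i,y_{i+1}$; hence $\sigma(c)$ is well-defined modulo $S_1(Y)$. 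Moreover, for each row $h$ of $H_X$ coming from a face $\{0,x_0,x_0',x_0+x_0'\}\in X(1)$, $\sigma(h)$ is the 3-cycle $0\to x_0\to x_0+x_0'\to 0$, which is precisely the boundary of the triangle $\{0,x_0,x_0+x_0'\}\in Y(2)$, so $\sigma(\im H_X^\top)\subseteq B_1(Y)$. Therefore $\sigma$ descends to $\bar\sigma:\ker G_X^\top/\im H_X^\top\to Z_1(Y)/(S_1(Y)+B_1(Y))$ with $\bar\phi\circ\bar\sigma=\id$.

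The main remaining step, and the most delicate one, is verifying $\bar\sigma\circ\bar\phi=\id$: every $z\in Z_1(Y)$ should satisfy $z\equiv\sigma(\phi(z))\pmod{S_1(Y)+B_1(Y)}$. I plan to decompose $z$ into edge-disjoint closed walks (possible because $\partial_1 z=0$ forces the support of $z$ to be Eulerian) and then process one closed walk $w$ based at some vertex $v$ at a time. To re-base $w$ at $0$, pick any path $P$ from $0$ to $v$ (using $\spn X(0)=\bF_2^k$) and replace $w$ by $P\cdot w\cdot P^{-1}$; this leaves the 1-chain unchanged since the edge set of $P^{-1}$ coincides with that of $P$ so $P+P^{-1}=0$ in $\bF_2$, but exhibits $w$ as a closed walk based at $0$ with an augmented label sequence. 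Swap cycles then implement arbitrary permutations of the labels along any closed walk (not just those at $0$), and any two adjacent equal labels form a backtrack whose coincident edges cancel in $\bF_2$. Iterating these reorder-and-cancel moves pairs off and eliminates both the augmentation labels from $P$ and every label in $w$ whose multiplicity in $\phi(w)$ is $0\bmod 2$, leaving the canonical walk $\sigma(\phi(w))$ after a final swap-cycle reordering. Additivity $\sigma(c_1)+\sigma(c_2)\equiv\sigma(c_1+c_2)\pmod{S_1(Y)}$ then reassembles the per-walk conclusions into the desired identity for $z$. The hard part will be the careful bookkeeping that every intermediate modification lies inside $S_1(Y)+B_1(Y)$ and that each cancellation is a legitimate edge-coincidence in $Y(1)$, making this a mod-$2$ simplicial analog of the Tietze-move analysis of abelianized group presentations.
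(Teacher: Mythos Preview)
Your proposal is correct and follows essentially the same approach as the paper: the paper defines the same label map $\phi$ and the same inverse $\psi$ (your $\sigma$) via closed walks based at $0$, verifies well-definedness of $\psi$ using adjacent-transposition swap cycles, and checks $\psi\phi=\id$ by decomposing an arbitrary $1$-cycle into traditional graph cycles re-based at $0$ by adjoining a path and its reverse. The only cosmetic difference is that the paper allows $\psi$ to use any label sequence with $\sum_i\1_{x_0^{(i)}}=u$ (not just an enumeration of $\supp u$), which lets $\psi\phi(\alpha)=\alpha$ be read off directly without your reorder-and-cancel bookkeeping.
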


In words, Theorem~\ref{thm:hommodswap} says that up to swap cycles, the 1-homology of $\Cay(\spn X(0),X)$ is the space of linear dependencies among elements of $X(0)$ that are not implied by the 3-term dependencies specified by faces in $X(1)$.

\begin{proof}[Proof of Theorem~\ref{thm:hommodswap}]
  Let $C_1,Z_1,B_1,H_1,S_1$ all refer to the respective groups for $Y$. Recall that $H_1=Z_1/B_1$, so $H_1/(S_1+B_1)=Z_1/(S_1+B_1)$. Define the label function $L:Y(1)\rightarrow X(0)$ by $L(\{y_0,y_0'\})=v+v'$, that is, $L$ maps an edge $y_1=\{y_0,y_0'\}\in Y(1)$ to the label $L(y_1)\in X(0)$ of Cayley generator for $y_1$.

  Define a map $\phi:Z_1\rightarrow\bF_2^{X(0)}$ such that for every 1-cycle $\alpha\in Z_1$,
  \begin{equation*}
    \phi(\alpha) = \sum_{y_1\in\supp\alpha}\1_{L(y_1)} \in \bF_2^{X(0)}.
  \end{equation*}
  First observe that for every $\alpha\in Z_1$, so that $\partial_1\alpha=0$, then
  \begin{align*}
    G_X^\top\phi(\alpha)
    &= \sum_{y_1\in\supp\alpha}L(y_1) = \sum_{y_1\in\supp\alpha}\sum_{y_0\in y_1}y_0 = \sum_{y_0\in\supp\partial_1\alpha}y_0 = 0.
  \end{align*}
  Thus $\im\phi=\phi(Z_1)\subseteq\ker G_X^\top$. We next show that $\phi(B_1),\phi(S_1)\subseteq\im H_X^\top$. By definition, $B_1$ is generated by elements $\1_{\partial_2y_2}$ for $y_2\in Y(2)$, where $y_2$ is of the form $y_2=\{y_0,y_0+x_0,y_0+x_0'\}$ for some $x_0,x_0'\in X(0)$ such that $x_1=\{x_0,x_0',x_0+x_0'\}\in X(1)$. Therefore $\phi(\1_{\partial_2y_2})=\1_{x_0}+\1_{x_0'}+\1_{x_0+x_0'}=H_X^\top\1_{x_1}$. Thus $\phi(B_1)\subseteq\im H_X^\top$. Similarly, $S_1$ is generated by elements $\alpha=\1_{y_1^{(1)}}+\1_{y_1^{(2)}}+\1_{y_1^{(3)}}+\1_{y_1^{(4)}}$ where $L(y_1^{(1)})=L(y_1^{(3)})$ and $L(y_1^{(2)})=L(y_1^{(4)})$, so every such $\alpha$ has $\phi(\alpha)=0$. Therefore we have shown that $\phi(Z_1)\subseteq\ker G_X^\top$ and $\phi(B_1+S_1)\subseteq\im H_X^\top$, so $\phi$ induces a map
  \begin{equation*}
    \phi:H_1/(S_1+B_1)\rightarrow\ker G_X^\top/\im H_X^\top.
  \end{equation*}

  We now construct an inverse to the map $\phi$ above. Define $\psi:\ker G_X^\top\rightarrow H_1/(S_1+B_1)$ as follows. For $u\in\ker G_X^\top\subseteq\bF_2^{X(0)}$, let $x_0^{(1)},\dots,x_0^{(k)}\in X(0)$ be an arbitrary sequence of elements such that $\sum_{i\in[k]}\1_{x_0^{(i)}}=u\in\bF_2^{X(0)}$, or equivalently such that the elements occuring an even number of times in $x_0^{(1)},\dots,x_0^{(k)}$ are precisely elements of $\supp u$. For $0\leq i\leq k$ let $v_i=\sum_{j\in[i]}x_0^{(j)}$ denote the $i$th partial sum, so that $v_k=v_0=0$ because $u\in\ker G_X^\top$. Then let
  \begin{equation*}
    \psi(u) = \sum_{i\in[k]}\1_{\{v_{i-1},v_i\}} + B_1+S_1 \in Z_1/(B_1+S_1).
  \end{equation*}
  It follows from $v_0=v_k$ that $\partial_1\sum_{i\in[k]}\1_{\{v_{i-1},v_i\}}=\sum_{i\in[k]}(\1_{v_{i-1}}+\1_{v_i})=0$, so $\psi(u)$ indeed belongs in $Z_1/(B_1+S_1)$. To show that $\psi:\ker G_X^\top\rightarrow H_1/(S_1+B_1)$ is a well defined linear map, it suffices to show that the output $\psi(u)$ does not depend on the choice of the sequence $x_0^{(1)},\dots,x_0^{(k)}$. For this purpose, fix $u\in\ker G_X^\top$. For $\vec{x}_0=(x_0^{(1)},\dots,x_0^{(k)})$ with $\sum_{i\in[k]}\1_{x_0^{(i)}}=u$, let $\psi^{\vec{x}_0}(u)$ denote the output of $\psi$ for the sequence $\vec{x}_0$. Also fix some enumeration $\vec{x}'_0=({x'}_0^{(1)},\dots,{x'}_0^{(\ell)})$ of $\supp u$. Our goal is to show that every $\vec{x}_0$ with $\sum_{i\in[k]}\1_{x_0^{(i)}}=u$ satisfies $\psi^{\vec{x}_0}(u)=\psi^{\vec{x}'_0}(u)$. Observe that the output $\psi^{\vec{x}_0}(u)$ is unchanged if two consecutive elements of $\vec{x}_0$ are swapped, as performing such a swap simply adds an element of $S_1$ to $\sum_{i\in[k]}\1_{\{v_{i-1},v_i\}}$. Then because every permutation $\pi:[k]\rightarrow[k]$ can be expressed as a composition of swaps of adjacent elements, it follows that every permutation $\pi(\vec{x}_0)=(x_0^{(\pi(1))},\dots,x_0^{(\pi(k))})$ of the sequence $\vec{x}_0$ has $\psi^{\vec{x}_0}(u)=\psi^{\pi(\vec{x}_0)}(u)$. Therefore define $\pi$ so that the first $\ell$ components of $\pi(\vec{x}_0)$ equals $\vec{x}_0'$, and the remaining $k-\ell$ components all come in consecutive pairs of equal elements, that is $\vec{x}_0^{(\ell+2i-1)}=\vec{x}_0^{(\ell+2i)}$ for all $1\leq i\leq(k-\ell)/2$. Such a permutation $\pi$ must exist by the definition of $\vec{x}_0$ and $\vec{x}_0'$. Letting $v_i^\pi$ and $v_i'$ denote the partial sums for the sequences $\pi(\vec{x}_0)$ and $\vec{x}_0'$ respectively, it follows by construction $v_i^\pi=v_i'$ for every $0\leq i\leq\ell$, and that $v_{\ell+2(i-1)}^\pi=v_{\ell+2i}^\pi$ for every $1\leq i\leq (k-\ell)/2$, so $\sum_{i=\ell+1}^k\1_{\{v_{i-1}^\pi,v_i^\pi\}}=0$, and therefore $\sum_{i\in[k]}\1_{\{v_{i-1}^\pi,v_i^\pi\}}=\sum_{i\in[\ell]}\1_{\{v_{i-1}^\pi,v_i^\pi\}}=\sum_{i\in[\ell]}\1_{\{v_{i-1}',v_i'\}}$. Thus $\psi^{\vec{x}_0}(u)=\psi^{\pi(\vec{x}_0)}(u)=\psi^{\vec{x}_0'}(u)$, as desired, so $\psi:\ker G_X^\top\rightarrow H_1/(S_1+B_1)$ is indeed a well defined linear map. It also follows that $\psi(\im H_X^\top)=0$, as $\im H_X^\top$ is generated by elements $\1_{x_1}\in\ker G_X^\top\subseteq\bF_2^{X(0)}$ for $x_1=\{x_0,x_0',x_0+x_0'\}\in X(1)$. Then letting $y_2=\{0,x_0,x_0'\}\in Y(2)$, we have $\psi(\1_{x_1})=\1_{\{0,x_0\}}+\1_{\{x_0,x_0'\}}+\1_{\{x_0',0\}}+B_1+S_1=\partial_2\1_{y_2}+B_1+S_1=B_1+S_1=0\in H_1/(S_1+B_1)$. Thus indeed $\psi(\im H_X^\top)=0$, so $\psi$ induces a map
  \begin{equation*}
    \psi:\ker G_X^\top/\im H_X^\top\rightarrow H_1/(S_1+B_1).
  \end{equation*}
  
  It remains to be shown that $\phi$ and $\psi$ are inverses, that is, that $\phi\psi$ and $\psi\phi$ are the identity map. The fact that $\phi\psi=I$ follows directly from tracing through the definitions. Specifically, for $u\in\ker G_X^\top\subseteq\bF_2^{X(0)}$, let $x_0^{(1)},\dots,x_0^{(k)}\in X(0)$ be some enumeration of $\supp u$, and let $v_i=\sum_{j\in[i]}x_0^{(j)}$ denote the $i$th partial sum. Then by definition $\psi(u+\im H_X^\top)=\sum_{i\in[k]}\1_{\{v_{i-1},v_i\}}+B_1+S_1$ and $\phi(\sum_{i\in[k]}\1_{\{v_{i-1},v_i\}}+B_1+S_1)=\sum_{i\in[k]}\1_{x_0^{(i)}}+\im H_X^\top=u+\im H_X^\top$.

  To show that $\psi\phi=I$, it suffices to show that $\psi\phi(\alpha+B_1+S_1)=\alpha+B_1+S_1$ for a set of 1-cycles $\alpha$ that generate $Z_1$. It specifically suffices to show this equality for all $\alpha$ that form a cycle passing through $0$, in the traditional sense of a graph cycle being a circular sequence of adjacent edges. Specifically, any element of $Z_1$ can be decomposed into such ``traditional'' graph cycles by greedily performing a walk on edges in $Z_1$ to find such a cycle, and then removing it and repeating until there are no edges left. By definition the 1-skeleton $\Cay(\spn X(0),X(0))$ of $Y$ is connected, so every such cycle can be modified to pass through $0$ by adding two copies of a path from $0$ to a some point in the cycle. Thus we may assume that $\alpha$ is of the form $\alpha=\sum_{i\in[k]}\1_{\{v_{i-1},v_i\}}$ for some sequence of points $v_0,v_1,\dots,v_k\in\spn X(0)$ such that $v_0=v_k=0$, and $L(\{v_{i-1},v_i\})=v_{i-1}+v_i\in X(0)$. Letting $x_0^{(i)}=v_{i-1}+v_i$, it follows that $\phi(\alpha+B_1+S_1)=\sum_{i\in[k]}\1_{x_0^{(i)}}+\im H_X^\top\in\ker G_X^\top/\im H_X^\top$, and then by definition $\psi(\sum_{i\in[k]}\1_{x_0^{(i)}}+\im H_X^\top)=\sum_{i\in[k]}\1_{\{v_{i-1},v_i\}}+B_1+S_1=\alpha+B_1+S_1$. Thus $\psi\phi$ acts as the identity on a set of $\alpha+B_1+S_1$ that generate $H_1/(S_1+B_1)$, so $\psi\phi=I$, which completes the proof that $\phi^{-1}=\psi$.
\end{proof}

Because $\ker G_X^\top/\im H_X^\top = (\ker H_X/\im G_X)^*$, Theorem~\ref{thm:hommodswap} shows that $H_1/(S_1+B_1)$ is the dual of $\ker H_X/\im G_X$. Therefore $\dim H_1\geq\dim(H_1/(S_1+B_1))=\dim(\ker G_X^\top/\im H_X^\top)$. The following lemma and corollary apply this observation to show that if $|\spn X(0)|$ is sufficiently large compared to $|X(0)|$, we may enlarge $H_1$ by restricting to an appropriate subcode of $\im G_X$.

\begin{lemma}
  \label{lem:quotient}
  Let $X$ be a $\bF_2$-Grassmannian complex. If $|\spn X(0)|\geq|X(0)|^2/2+|X(0)|/2+2$, then there exists a 1-dimensional subspace $V\subseteq\spn X(0)$ such that the ``quotient'' $\bF_2$-Grassmannian complex $X'/V$ in ambient vector space $\spn X(0)/V$ given by $X'/V=\{\spn\{x,V\}/V:x\in X\}$ has the same poset incidence structure as $X$, that is, $X$ and $X'/V$ are isomorphic as posets. In particular, $H_{X'}=H_X$. Furthermore, the following 3 equivalent conditions hold:
  \begin{itemize}
  \item $\im G_{X'}$ is a codimension-1 subspace of $\im G_X$
  \item $\ker G_{X}^\top$ is a codimension-1 subspace of $\ker G_{X'}^\top$
  \item $\spn X'(0)$ is the quotient of $\spn X(0)$ by a 1-dimensional subspace.
  \end{itemize}
\end{lemma}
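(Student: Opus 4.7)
The plan is to identify the prospective $1$-dimensional subspace $V$ with its unique nonzero element $v \in \spn X(0) \setminus \{0\}$, and to reduce the construction to choosing $v$ outside an explicit ``bad set.'' The preliminary observation driving everything is that, because a $\bF_2$-Grassmannian complex is closed under taking subspaces, every nonzero element of every face of $X$ is itself a vertex of $X$; that is, under the usual identification of the vertex $\{0,x_0\}$ with $x_0$, one has $\bigcup_{x\in X} x = \{0\} \cup X(0)$. I will then argue that if $v$ satisfies (a) $v\neq 0$, (b) $v\notin X(0)$, and (c) $v \neq x_0 + x_0'$ for every pair of distinct $x_0,x_0'\in X(0)$, then the map $\phi : x \mapsto (x+V)/V$ is a rank-preserving poset isomorphism from $X$ onto $X/V$, which in particular yields $H_{X/V}=H_X$.

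The main step is the verification of this claim. Conditions (a) and (b) combined with the preliminary observation give $V \cap x = \{0\}$ for every face $x$, hence $\dim\phi(x) = \dim x$, and condition (c) gives injectivity of $\phi$ on vertices. For injectivity on higher-rank faces and for preservation of order in both directions, the central step is this: whenever $x_1,x_2 \in X$ satisfy $x_1 + V \subseteq x_2 + V$ but $x_1 \not\subseteq x_2$, there must exist $y\in x_1$ with $y+v \in x_2 \setminus x_1$; by the preliminary observation both $y$ and $y+v$ lie in $\{0\}\cup X(0)$, and a short case split over the four combinations forces $v$ to be $0$, an element of $X(0)$, or a sum $x_0+x_0'$ of two distinct vertices, contradicting (a)--(c). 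The same reduction handles face-level injectivity, which is the symmetric instance $x_1+V = x_2+V$.

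It remains to count the forbidden values, which lie in $\{0\} \cup X(0) \cup \{x_0+x_0' : x_0\neq x_0' \in X(0)\}$, of size at most $1+|X(0)|+\binom{|X(0)|}{2} = |X(0)|^2/2 + |X(0)|/2 + 1$, so the hypothesis on $|\spn X(0)|$ leaves at least one valid $v \in \spn X(0)$. The three equivalent bulleted conclusions are then essentially formal: since $\phi$ is a bijection on $X(0)$, we may identify $\bF_2^{(X/V)(0)} \cong \bF_2^{X(0)}$, and under this identification $G_{X/V}^\top$ is just $G_X^\top$ composed with the quotient $\spn X(0) \twoheadrightarrow \spn X(0)/V$, whose kernel is the $1$-dimensional space $V$. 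Hence $\ker G_X^\top$ has codimension $1$ in $\ker G_{X/V}^\top$; dually $\im G_{X/V}$ has codimension $1$ in $\im G_X$; and the statement on the ambient vector spaces is true by construction.

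The main obstacle is the case analysis in the second paragraph: one must ensure that (a)--(c) suffice not merely for vertex injectivity and dimension preservation, but for coincidences among entire faces of arbitrary rank and for the reverse order-preservation direction. The preliminary observation that every face element is either $0$ or a vertex is precisely what makes this analysis finite and reduces it to the three excluded linear identities.
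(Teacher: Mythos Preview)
Your proposal is correct and follows essentially the same approach as the paper: choose $v\in\spn X(0)$ outside the set $\{0\}\cup X(0)\cup (X(0)+X(0))$, count that this set has size at most $|X(0)|^2/2+|X(0)|/2+1$, and read off the codimension-$1$ statements from the fact that $G_{X'}^\top$ is $G_X^\top$ post-composed with the quotient by $V$. The only difference is one of thoroughness: the paper simply asserts that vertex-level injectivity and nonvanishing suffice for ``$X'$ is well defined with the same poset incidence structure as $X$,'' whereas you actually verify this for faces of all ranks via the case split on $y$ and $y+v$ lying in $\{0\}\cup X(0)$, which is a welcome addition.
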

\begin{proof}
  Because $|\spn X(0)|\geq|X(0)|^2/2+|X(0)|/2+2$, there exists some $v\in\spn X(0)$ such that $v\notin\{0\}\cup X(0)\cup X(0)+X(0)$. That is, $v$ cannot be expressed as the sum of $\leq 2$ elements of $X(0)$. Let $V=\spn\{0,v\}$. Then letting $X'$ be defined as in the lemma statement, because $v\notin\{0\}\cup X(0)\cup X(0)+X(0)$, all $\spn\{x_0,V\}\in X'(0)$ for $x_0\in X(0)$ are nonzero and distinct subspaces of $\spn(X(0))/V$, so $X'$ is indeed well defined with the same poset incidence structure as $X$. By construction every linear constraint satisfied by elements of $X(0)$ is also satisfied the the respective elements of $X'(0)=X(0)/V$, so $\ker G_X^\top\subseteq\ker G_{X'}^\top$. But as the matrix $G_{X'}^\top$ has one fewer row, that is rank 1 less, than $G_X^\top$, it follows that $\im G_{X'}$ is a codimension-1 subspace of $\im G_X$, and $\ker G_X^\top$ is a codimension-1 subspace of $\ker G_{X'}^\top$. Here the linear constraints that are satisfied by $X'(0)$ but not by $X(0)$, or equivalently that lie in $\ker G_{X'}^\top\setminus\ker G_X^\top$, are given by those sets of elements in $X(0)$ that sum to $v\in\spn X(0)$, so that the respective elements in $X'(0)$ sum to $0\in\spn X(0)/V$.
\end{proof}

\begin{corollary}
  \label{cor:increasehom}
  Let $X$ be a $\bF_2$-Grassmannian complex with $|\spn X(0)|\geq|X(0)|^2/2+|X(0)|/2+2$, and let $X'$ be the quotient complex given by Lemma~\ref{lem:quotient}. Let $Y=\Cay(\spn X(0),X)$ and $Y'=\Cay(\spn X'(0),X')$. Then $H_1(Y)/(S_1(Y)+B_1(Y))$ is isomorphic to a codimension-1 subspace of $H_1(Y')/(S_1(Y')+B_1(Y'))$.
  % TODO: decide if to include below statement and associated proof below; seems mostly useful if we do manage to prove that for our construction, all swap cycles are boundaries?
  % Furthermore, the projection $\spn X(0)\rightarrow\spn X'(0)$ from Lemma~\ref{lem:quotient} induces a surjective map $S_1(Y)/B_1(Y)\rightarrow S_1(Y')/B_1(Y')$. So in particular if $S_1(Y)\subseteq B_1(Y)$, then $S_1(Y')\subseteq B_1(Y')$ and thus $H_1(Y)$ is a codimension-1 subspace of $H_1(Y')$.
\end{corollary}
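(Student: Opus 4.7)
The plan is to combine Theorem~\ref{thm:hommodswap} with Lemma~\ref{lem:quotient} essentially by inspection, so I expect no real obstacle here; the content of the corollary is already packaged in those two results.

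First I would apply Theorem~\ref{thm:hommodswap} twice, once to $X$ and once to $X'$, to obtain natural isomorphisms
\begin{equation*}
  H_1(Y)/(S_1(Y)+B_1(Y)) \cong \ker G_X^\top/\im H_X^\top, \qquad H_1(Y')/(S_1(Y')+B_1(Y')) \cong \ker G_{X'}^\top/\im H_{X'}^\top.
\end{equation*}
Thus the problem reduces to comparing the two codes-modulo-checks on the right.

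Next I would invoke Lemma~\ref{lem:quotient}. The lemma gives two key inputs. First, the poset isomorphism $X\cong X'$ implies $H_{X'}=H_X$ (possibly up to relabeling rows/columns, which we can fix by the poset isomorphism). In particular $\im H_{X'}^\top=\im H_X^\top$ as subspaces of $\bF_2^{X(0)}\cong\bF_2^{X'(0)}$. Second, under the same identification of $\bF_2^{X(0)}$ with $\bF_2^{X'(0)}$, we have the inclusion $\ker G_X^\top\subseteq\ker G_{X'}^\top$ with codimension exactly $1$ (every linear dependency among the vertices of $X$ is still a dependency among the vertices of $X'$, and there is exactly one new dependency class coming from sets of vertices in $X(0)$ that sum to $v$). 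By Lemma~\ref{lem:GH0} (applied to $X$) we also have $\im H_X^\top\subseteq\ker G_X^\top$.

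Finally I would conclude with the elementary observation that whenever $C\subseteq A\subseteq B$ are $\bF_2$-vector spaces with $A$ of codimension one in $B$, the induced map $A/C\hookrightarrow B/C$ is injective with codimension-one image. Applying this with $C=\im H_X^\top$, $A=\ker G_X^\top$, $B=\ker G_{X'}^\top$ and chaining with the isomorphisms from Theorem~\ref{thm:hommodswap} produces the desired identification of $H_1(Y)/(S_1(Y)+B_1(Y))$ with a codimension-$1$ subspace of $H_1(Y')/(S_1(Y')+B_1(Y'))$, completing the proof.
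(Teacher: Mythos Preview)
Your proposal is correct and follows essentially the same approach as the paper: apply Theorem~\ref{thm:hommodswap} to both $X$ and $X'$, then use Lemma~\ref{lem:quotient} to get $H_{X'}=H_X$ and $\ker G_X^\top\subseteq\ker G_{X'}^\top$ of codimension one, and conclude. The only difference is that you spell out the elementary quotient step $A/C\hookrightarrow B/C$ and the inclusion $\im H_X^\top\subseteq\ker G_X^\top$ explicitly, whereas the paper leaves these implicit.
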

\begin{proof}
  Theorem~\ref{thm:hommodswap} shows that $H_1(Y)/(S_1(Y)+B_1(Y))\cong\ker G_X^\top/\im H_X^\top$, and that an analogous isomorphism holds for $Y'$. Lemma~\ref{lem:quotient} implies that $H_X=H_X'$ and $\ker G_X^\top$ is a codimension-1 subspace of $\ker G_{X'}^\top$, so it follows that $H_1(Y)/(S_1(Y)+B_1(Y))$ is isomorphic to a codimension-1 subspace of $H_1(Y')/(S_1(Y')+B_1(Y'))$.
  % For the second statement in the result, let $\pi:\spn X(0)\rightarrow\spn X'(0)$ denote the quotienting map from Lemma~\ref{lem:quotient}. Then $\pi$ naturally induces a surjective map from each face in $Y$ to a face in $Y'$, so $\pi$ induces a surjective projection $\pi:S_1(Y)\rightarrow S_1(Y')$. Similarly, $\pi$ commutes with the boundary map $\partial$, so $\pi$ induces a projection $\pi:B_1(Y)\rightarrow B_1(Y')$, that is, $\pi$ maps boundaries to boundaries. Thus the result follows.
\end{proof}

By repeatedly applying Corollary~\ref{cor:increasehom} from an initial rank-1 Grassmannian complex $X$ that is sparse in its ambient vector space, meaning that $|\spn X(0)|\gg|X(0)|^2$, we can construct a Cayley simplicial complex with links isomorphic to $X$ and with large 1-homology. In particular, if $X$ is a good expander, then the result Cayley simplicial complex will be a high-dimensional expander with large 1-homology. The following section applies this approach to the construction from Section~\ref{sec:grassHDX}.

\subsection{Application to low-rank-matrix construction}
Below, we combine the results of Section~\ref{sec:grassHDX} and Section~\ref{subsec:generalhom} to construct $N$-vertex high-dimensional expanders with $\dim H_1=\Omega(\log N)^2$.

\begin{theorem}
  \label{thm:bighom}
  For integers $r\geq 1$, $b\geq 5$, $n\geq 2^{r+1}$, let $q=2^b$, and let $X=X^{r,b,n}$ be the rank-$r$ $\bF_2$-Grassmannian in Definition~\ref{def:grassconstruct}. Then repeated quotienting (in the sense of Lemma~\ref{lem:quotient}) gives a rank-$r$ $\bF_2$-Grassmannian complex $X'$ with rank-$(r+1)$ Cayley simplicial complex $Y'=\Cay(\spn X'(0),\beta(X'))$ that has $|Y'(0)|=2^{2^{r+3}bn}$ vertices, 1-homology of dimension $\dim H_1(Y')\geq bn^2-2^{r+3}bn$, and local expansion $\lambda(Y)\leq 11/(q-11)$.
\end{theorem}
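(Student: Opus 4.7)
The plan is to iteratively apply the quotienting procedure of Lemma~\ref{lem:quotient} (and hence Corollary~\ref{cor:increasehom}) starting from $X=X^{r,b,n}$, balancing the rate at which $|\spn X'(0)|$ shrinks against the (fixed) size of $|X(0)|$. The two key facts that make this work are: (i) by Lemma~\ref{lem:spanX0}, the initial ambient vector space has large $\bF_2$-dimension $bn^2$; and (ii) the number of rank-$2^r$ matrices, and hence $|X(0)|$, is only singly exponential in $n$, so $|\spn X(0)|$ starts out vastly larger than $|X(0)|^2$.

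First, I would bound $|X(0)|\le 2^{2^{r+2}bn}$ exactly as in the proof of Theorem~\ref{thm:maincayley} (choosing row and column generators for each rank-$2^r$ vertex), so that $|X(0)|^2/2+|X(0)|/2+2\le 2^{2^{r+3}bn}$. Then I would quotient one dimension at a time. After $t$ quotients I obtain a Grassmannian complex $X_t$ with the same poset incidence structure as $X$ (same $|X_t(0)|$, same $H_{X_t}=H_X$) but with $\dim_{\bF_2}\spn X_t(0)=bn^2-t$. The hypothesis of Lemma~\ref{lem:quotient} at stage $t+1$ reduces to
\begin{equation*}
  2^{bn^2-t}\ \ge\ |X(0)|^2/2+|X(0)|/2+2,
\end{equation*}
so I may continue for any $t\le bn^2-2^{r+3}bn$. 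Setting $t^*=bn^2-2^{r+3}bn$, the final complex $X'=X_{t^*}$ has $|\spn X'(0)|=2^{2^{r+3}bn}$, hence $|Y'(0)|=2^{2^{r+3}bn}$ as claimed.

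For the homology lower bound, Corollary~\ref{cor:increasehom} shows that each quotient step strictly enlarges $H_1(Y)/(S_1(Y)+B_1(Y))$ by at least one dimension (embedding the previous group as a codimension-1 subspace). Iterating $t^*$ times gives
\begin{equation*}
  \dim H_1(Y')\ \ge\ \dim \bigl(H_1(Y')/(S_1(Y')+B_1(Y'))\bigr)\ \ge\ t^*\ =\ bn^2-2^{r+3}bn.
\end{equation*}
For the local expansion, since $X'$ is poset-isomorphic to $X$, the Grassmannian links of $X'$ agree with those of $X$. By Lemma~\ref{lem:cayleyF2k}, all non-vertex links of $Y'$ inherit the local expansion of $X'$ (hence of $X$), and by Corollary~\ref{cor:applytrickle} this is bounded by $11/q$. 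The rank-$(-1)$ expansion (global 1-skeleton expansion) of $Y'$ then follows by one application of the simplicial trickle-down Theorem~\ref{thm:tricklesimp}, giving $\lambda(Y')\le (11/q)/(1-11/q)=11/(q-11)$.

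The only delicate point is the bookkeeping in the quotient condition $|\spn X'(0)|\ge |X(0)|^2/2+|X(0)|/2+2$ throughout the iteration; the worst case is the last step, which is why I pick the exponent $2^{r+3}bn$ (exactly matching $\log_2(|X(0)|^2)$ up to additive constants). Everything else is essentially a consequence of the structural results already established: Lemma~\ref{lem:spanX0} for the starting ambient dimension, the vertex count from the proof of Theorem~\ref{thm:maincayley}, Lemma~\ref{lem:quotient} and Corollary~\ref{cor:increasehom} for the iteration, and Lemma~\ref{lem:cayleyF2k} together with Corollary~\ref{cor:applytrickle} for the expansion guarantee.
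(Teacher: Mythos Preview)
Your proposal is correct and follows essentially the same approach as the paper: bound $|X(0)|\le 2^{2^{r+2}bn}$, apply Lemma~\ref{lem:quotient} a total of $bn^2-2^{r+3}bn$ times, invoke Corollary~\ref{cor:increasehom} for the homology growth, and use Lemma~\ref{lem:cayleyF2k} together with Corollary~\ref{cor:applytrickle} and Theorem~\ref{thm:tricklesimp} for the expansion bound. Your write-up is in fact more careful than the paper's in verifying the quotient hypothesis at each step.
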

\begin{proof}
  Recall that $X$ lies in ambient vector space $\spn X(0)=\bF_2^{bn^2}$ and has $|X(0)|\leq 2^{2^{r+2}bn}$ vertices. Therefore we may apply $bn^2-2^{r+3}bn$ iterations of Lemma~\ref{lem:quotient} to obtain $X'$, so that $Y'$ by definition has $|Y'(0)|=2^{2^{r+3}bn}$ vertices, 1-homology of dimension $\dim H_1(Y')\geq 2^{bn^2}-2^{2^{r+3}bn}$ by Corollary~\ref{cor:increasehom}, and local expansion $\lambda(Y)\leq 11/(q-11)$ by Corollary~\ref{cor:applytrickle} and Theorem~\ref{thm:tricklesimp} because by Lemma~\ref{lem:quotient} the quotienting preserves poset structure and thus preserves local expansion.
\end{proof}

Letting $r,b$ be fixed constants as $n\rightarrow\infty$, Theorem~\ref{thm:bighom} gives simplicial complexes $Y'$ on $N=2^{\Theta(n)}$ vertices with local expansion $11/(2^b-11)$, and with 1-homology of dimension with 1-homology of dimension $\Omega(n^2)=\Omega(\log^2N)$.

Because the links of $Y'$ are isomorphic to $X$, each vertex of $Y'$ is contained in $|X|=2^{\Theta(n)}=\poly(N)$ faces, so $Y'$ in Theorem~\ref{thm:bighom} has polynomially large degree. Observe that when we choose the number of quotienting iterations with Lemma~\ref{lem:quotient}, we face a tradeoff between the degree and the dimension of the 1-homology. For instance, we could alternatively apply only $bn^2/2$ iterations of Lemma~\ref{lem:quotient} to obtain a complex on $N'=2^{\Theta(n^2)}$ vertices with subpolynomial degree $2^{\Theta(n)}=2^{\Theta(\sqrt{\log N'})}$ and 1-homology of logarithmic dimension $\Omega(n^2)=\Omega(\log N')$. Alternatively, we could apply $bn^2-bn\log n$ iterations to obtain a complex on $N''=2^{\Theta(n\log n)}$ vertices still with subpolynomial degree $2^{\Theta(n)}=2^{\Theta(\log N''/\log\log N'')}$ and 1-homology of dimension $\Omega(n^2)=\Omega(\log N''/\log\log N'')^2=\tilde{\Omega}(\log^2N'')$. That is, if we are willing to incur a very slight loss in the 1-homology dimension in Theorem~\ref{thm:bighom} ($\dim H_1=\tilde{\Omega}(\log^2N)$ instead of $\Omega(\log^2N)$ where $N$ is the number of vertices), we can ensure the complex has subpolynomial degree.

More generally, Corollary~\ref{cor:increasehom} implies that if we had Grassmannian HDXs that were sparser in their ambient vector space than our construction in Section~\ref{sec:grassHDX}, we would immediately obtain simplicial HDXs with larger 1-homology. In particular, if the Grassmannian HDX $X$ was optimally sparse, so that $|X(0)|=\Theta(\dim\spn X(0))$, then we could quotient $\dim\spn X(0)-2\log|X(0)|=\Theta(|X(0)|)$ times to obtain simplicial complexes on $N=|X(0)|^2$ vertices with 1-homology of dimension $\Theta(|X(0)|)=\Theta(\sqrt{N})$. That is, with this approach we could hope to obtain simplicial HDXs with polynomially large dimension of the 1-homology.

\section{Acknowledgments}
% TODO: Remove for FOCS double blind
The author thanks Ran Tessler for numerous helpful discussions and suggestions over the course of this work. The author also thanks Omar Alrabiah, Venkat Guruswami, Dan Karliner, Siqi Liu, Sidhanth Mohanty, and Salil Vadhan for helpful discussions, and thanks Omar Alrabiah, Sidhanth Mohanty, and Salil Vadhan for helping improve the presentation of this paper.

The author is supported by a National Science Foundation Graduate Research Fellowship under Grant No.~DGE 2146752.

\bibliography{library}
\bibliographystyle{alpha}

\end{document}